\documentclass[10 pt]{amsart}

\usepackage{amsmath}
\usepackage{amsfonts}
\usepackage{amssymb}
\usepackage{amstext}
\usepackage{amsbsy}
\usepackage{amsopn}
\usepackage{amsthm}
\usepackage{amsxtra}
\usepackage{graphicx}
\usepackage{hyperref}
\usepackage{color}
\usepackage[all]{xy}
\usepackage{enumitem}
\usepackage{tikz-cd}
\usepackage{mathtools}
\usepackage{array}
\usepackage{comment}
\usepackage{bm}
\usepackage{soul}

\newtheorem{theorem}{Theorem}[section]
\newtheorem{lemma}[theorem]{Lemma}
\newtheorem{proposition}[theorem]{Proposition}
\newtheorem{corollary}[theorem]{Corollary}
\newtheorem{question}[theorem]{Question}

\theoremstyle{definition}
\newtheorem*{definition*}{Definition}
\newtheorem{definition}[theorem]{Definition}

\newtheorem{example}[theorem]{Example}

\theoremstyle{remark}
\newtheorem{remark}[theorem]{Remark}
\newtheorem*{remark*}{Remark}

\newcommand{\R}{\mathbb{R}}
\newcommand{\Z}{\mathbb{Z}}

\newcommand{\N}{\mathbb{N}}

\newcommand{\CA}{\mathcal{A}}
\newcommand{\CB}{\mathcal{B}}

\newcommand{\CI}{\mathcal{I}}
\newcommand{\CL}{\mathcal{L}}
\newcommand{\CM}{\mathcal{M}}

\newcommand{\CS}{\mathcal{S}}

\newcommand{\CK}{\mathcal{K}}
\newcommand{\CJ}{\mathcal{J}}
\newcommand{\CF}{\mathcal{F}}
\newcommand{\CC}{\mathcal{C}}

\newcommand{\CE}{\mathcal{E}}

\newcommand{\one}{\boldsymbol{1}}

\newcommand{\aut}{\textnormal{Aut}}

\newcommand{\Fix}{{\rm{Fix}}}

\newcommand{\supp}{\textnormal{supp}}
\newcommand{\sym}{\textnormal{Sym}}

\newcommand{\fin}{\textnormal{fin}}
\newcommand{\ct}{\textnormal{ct}}
\newcommand{\diam}{\textnormal{diam}}

\newcommand{\autinf}{\textnormal{Aut}^{(\infty)}}

\newcommand{\inv}[1]{\textnormal{Inv}(\sigma_{n})}
\newcommand{\infinv}[1]{\textnormal{Inv}^{\infty}(\sigma_{n})}

\newcommand{\Id}{{\rm Id}}

\newcommand{\Homeo}{{\rm Homeo}}

\newcommand{\T}{{\mathbb T}}

\newcommand{\Ext}{\textrm{Ext}}

\usepackage{todonotes}
\begin{document}

\title{Invariant random compacts}
\author{Bryna Kra}
\address{Department of Mathematics, Northwestern University, 2033 Sheridan Road, 
Evanston, IL 60208}
\email{kra@math.northwestern.edu}

\author{Scott Schmieding}
\address{Department of Mathematics, 
107 McAllister Building
University Park, State College, PA 16802}
\email{sks7247@psu.edu}

\thanks{BK was partially supported by the Simons Foundation and NSF grant DMS-2348315 
and SS was partially supported by NSF grant DMS-2247553.}

\begin{abstract}
For a compact metric space $X$ with a group $G$ acting on it continuously, an invariant random compact is a Borel probability measure on the space of nonempty compact subsets of $X$ that is invariant under the action of $G$. The action is IC-rigid if, with respect to every invariant random compact, every compact set is almost surely either finite or $X$. 
We give sufficient conditions for an action to be IC-rigid, and show there are natural examples of such actions. We further consider a notion of weak IC-rigidity, and prove that the Chacon system is weakly IC-rigid but not IC-rigid. As an application, we prove results concerning multiplicative largeness of dilations of sets on the circle. 
\end{abstract}
\maketitle

\section{Introduction} 
\subsection{Summary of the main results}

Consider a group, or more generally a semigroup, $G$ acting continuously on a compact metric space $X$. 
The study of the induced action of $G$ on $\CK(X)$, the space of nonempty compact subsets of $X$ endowed with the Hausdorff metric, has a long history (see for example~\cite{BauerSigmund, Banks, glasner, GW1, LOYZ}). 
This induced action captures topological information about the orbits of compact sets in $\CK(X)$, and it is natural to consider the statistical distribution of the orbits of compact sets. 
To do so, we study invariant measures for the induced action on $\CK(X)$ and their relation to the dynamics of the $G$-action on $X$. 

We call a Borel probability measure $\mu$ on $\CK(X)$ which is invariant under the action of $G$ on $\CK(X)$ an \emph{invariant random compact} (IRC for short). This terminology is partly inspired by analogy with the notion of an invariant random subgroup of a group (see for example~\cite{AGV, ABB+}), which is a conjugation-invariant measure on the space of subgroups of a group.  
A compact subset $Y\subset X$ that is invariant under $G$, meaning that $gY = Y$ for all $g \in G$,  gives rise to an IRC by considering the delta mass $\delta_Y$. In this sense, we view an IRC as a generalization of an invariant subset for the action.

Any system with a $G$-invariant Borel probability measure has an IRC in a trivial way: pushing such a measure $\mu$ forward via the map $X \to \CK(X)$ defined by $x \mapsto \{x\}$ yields an IRC for the system. Such an IRC is supported on the set of singleton subsets in $\CK(X)$.  
Analogously, there are IRCs supported on the subset $\CK_{\fin}(X)\subset\CK(X)$ of finite subsets of $X$ arising from invariant measures for the diagonal action on the product of finitely many copies of $X$ with itself. We refer to these IRCs, meaning ones supported on the collection of finite subsets, as \emph{finitary}. The finitary IRCs are connected to self-joinings of the system, and in the amenable case, all finitary IRCs are combinations of pushforwards of invariant measures for the diagonal actions on self-products of $X$ (see Section~\ref{sec:finitaryircs} for these results). 

Thus we focus on IRCs that do not arise in this manner:  IRCs $\mu$ for which $\mu$-almost every compact subset of $X$ is infinite. We call such IRCs \emph{nonfinitary}. Assuming $X$ is infinite and each $g \in G$ acts surjectively on $X$ (we make this mild assumption, which always holds when $G$ is a group, throughout), there is always a nonfinitary IRC, namely $\delta_{X}$. 
More generally, an infinite compact invariant set $Y\subset X$ yields a nonfinitary IRC $\delta_{Y}$. However, even among minimal systems, typically, there are nonfinitary IRCs that are not $\delta_{X}$ (see Section~\ref{sec:examples-of-IRC} for several examples).  For instance, any amenable group acting by isometries, or any system having an invariant measure for which the action is essentially free and has a rigidity sequence, has nonfinitary IRCs other than $\delta_{X}$ (see Proposition~\ref{prop:act-by-isometry} and Theorem~\ref{thm:rigiditygivesIRCs}).  

Motivated by this, we say that the action of $G$ on $X$ is \emph{IC-rigid} if the only nonfinitary IRC is $\delta_{X}$. 
It is immediate that an infinite IC-rigid system must be \emph{almost minimal}, meaning every proper compact $G$-invariant set is finite. 
But IC-rigidity is a stronger notion, and this raises the question of whether there are natural examples of such systems.

To answer this, we introduce two strong forms of transitivity for actions on Cantor spaces, called \emph{deep transitivity} and \emph{extreme transitivity} (see Section~\ref{sec:deeply-extremely} for the precise definitions). 
In Section~\ref{sec:finitaryircs}, we prove that deeply transitive actions are IC-rigid, and completely classify all of their IRCs.
To state the result, for an action of $G$ on $X$, let $\CM_{G}(X)$ denote the space of $G$-invariant Borel probability measures on $X$.
\begin{theorem}
\label{th:deeply}
If the group $G$ acts deeply transitively on a Cantor set $X$, then all of the following hold:
\begin{enumerate}
\item
The action is IC-rigid.
\item
If $\CM_{G}(X) = \emptyset$, then the induced action of $G$ on $\CK(X)$ is uniquely ergodic with unique invariant measure $\delta_{X}$.
\item
\label{item:the-measures}
If $\CM_{G}(X) \ne \emptyset$, then the simplex $\CM_{G}(\CK(X))$ is isomorphic to the infinite-dimensional probability simplex ${\Delta(\mathbb{N}) =  \{(x_{i})_{i=0}^{\infty} : 0 \le x_{i} \textrm{ and } \sum_{i=0}^{\infty}x_{i}=1\}}$. 
\end{enumerate}
\end{theorem}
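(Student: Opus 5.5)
The plan is to split an arbitrary $\mu\in\CM_G(\CK(X))$ into its finitary and nonfinitary parts. I will show the nonfinitary part is a multiple of $\delta_X$, and then classify the finitary part using the results of Section~\ref{sec:finitaryircs}. I am relying on my reading of deep transitivity, whose precise definition I have not restated: for every proper closed (equivalently, contained in a proper clopen) set $C\subsetneq X$ and every nonempty open $U$, there is $g\in G$ with $gC\subset U$, together with the multiple-transitivity-type clause of the definition. If the actual definition differs, the compression step below is where it must be adapted.

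\emph{Step 1 (IC-rigidity).} Let $\mu$ be an IRC and suppose the set of infinite proper compacts has positive $\mu$-measure. Every proper compact $K$ is disjoint from some member $W$ of a fixed countable clopen base. So for some $W$ the Borel set
\[E=\{K\in\CK(X): K \text{ infinite},\ K\cap W=\emptyset\}\]
has $\mu(E)>0$. Since $X$ is a Cantor set, choose pairwise disjoint nonempty clopen sets $U_1,U_2,\dots$. By deep transitivity pick $g_i$ with $g_i(X\setminus W)\subset U_i$. Then $g_iE\subset A_i=\{K: K\subset U_i\}$, and by invariance $\mu(A_i)\ge\mu(E)$. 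The $A_i$ are pairwise disjoint because compacts are nonempty, so $\sum_i\mu(A_i)=\infty$, a contradiction. Hence $\mu$-a.e.\ infinite compact equals $X$, which is (1). Moreover every IRC decomposes uniquely as $\mu=\mu(\{X\})\delta_X+\mu'$ with $\mu'$ finitary.

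\emph{Step 2 (the case $\CM_G(X)=\emptyset$).} A finitary IRC restricted to the invariant Borel set $\CK_n(X)$ of $n$-element sets yields an invariant measure on $X$ via $\nu(B)=\int |K\cap B|/n\,d\mu(K)$. If $\CM_G(X)=\emptyset$, every finitary part therefore vanishes, so $\CM_G(\CK(X))=\{\delta_X\}$. This gives (2).

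\emph{Step 3 (the case $\CM_G(X)\neq\emptyset$).} Here I expect the main obstacle. The goal is to show that for each $n\ge1$ there is exactly one ergodic invariant measure $\lambda_n$ on $\CK_n(X)$. The extreme points of $\CM_G(\CK(X))$ are then the countable family $\{\delta_X,\lambda_1,\lambda_2,\dots\}$, and every invariant measure is uniquely a convex combination of them (as in the decomposition of Step 1 and the finitary results of Section~\ref{sec:finitaryircs}), which gives the isomorphism with $\Delta(\N)$.

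For existence, I take $\nu\in\CM_G(X)$. Deep transitivity should force $\nu$ to be nonatomic, since an invariant atom would give a finite invariant set. So I take $\lambda_n$ to be the pushforward of $\nu^{\otimes n}$ under $(x_1,\dots,x_n)\mapsto\{x_1,\dots,x_n\}$, which lands in $\CK_n(X)$ almost surely.

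For uniqueness, I first show $\CM_G(X)$ is a single point. By the compression argument of Step 1, any invariant measure of a proper clopen set forces that set to be ``small''. Concretely, I would compare $\nu(C)$ and $\nu'(C)$ for clopen $C$ by moving $C$ into, or over, other clopen sets with group elements, obtaining that $\nu(C)$ is determined by the combinatorics of the action. Then I apply the analogous compression to $n$-tuples, using the $n$-point clause of deep transitivity, to get uniqueness on $X^{(n)}$ modulo the symmetric group, invoking the classification of finitary IRCs from Section~\ref{sec:finitaryircs}. If this uniqueness argument fails as stated, the fallback is to show directly that any ergodic invariant measure on $X^{(n)}$ is a product measure, via a Hopf-type or independence argument exploiting that the group can move disjoint clopen sets independently.
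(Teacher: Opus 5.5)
The gap is in Step~1. The compression property you assume (any proper closed $C$ can be moved by some $g$ into any nonempty open $U$) is not what deep transitivity says. The definition only gives, at each level $i$, transitivity of $G$ on the $r$-element subsets of the finite partition $\CC_i$. Moreover, the property is false exactly in the setting of part~(3). If $\mu\in\CM_G(X)$, transitivity on cells gives each cell of $\CC_i$ mass $1/\kappa(i)$. Take $U$ a cell of $\CC_i$ and $C$ the complement of another cell, with $\kappa(i)\ge 3$. Then $\mu(gC)=\mu(C)>\mu(U)$, so $gC\subset U$ is impossible. A sanity check shows your argument proves too much: it never uses that $K$ is infinite. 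If valid, it would give $\mu(\{K: K\cap W=\emptyset\})=0$ for every $W$ and every IRC, i.e.\ $\CM_G(\CK(X))=\{\delta_X\}$. That contradicts part~(3): for $\mu\in\CM_G(X)$, the pushforward under $x\mapsto\{x\}$ is an IRC supported on singletons (e.g.\ $\CS_n$ acting on $X_n$ with the uniform Bernoulli measure). So (1) is not established, and Steps~2 and~3, which rest on it, inherit the gap.

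The missing idea is to combine set-transitivity at finite levels with a counting argument; this is where infiniteness enters.
\begin{itemize}
\item Through $\CK(X)\cong\varprojlim\CK(\CC_m)$ (Theorem~\ref{thm:inverselimitpresentationK}), an IRC projects to measures $\nu_m$ on the finite sets $\CK(\CC_m)$.
\item Set-transitivity forces $\nu_m$ to give equal mass to subsets of $\CC_m$ of equal cardinality (Lemma~\ref{lemma:equalsizeequalmass}).
\item For a nonfinitary IRC, $\nu_m$ concentrates on subsets of size $>R$ as $m\to\infty$ (Lemma~\ref{lemma:decreasingmass}).
\item By tree condition~(3), a uniformly random $r$-subset of $\CC_{k+i}$ meets every cell of $\CC_k$ with probability at least $1-\kappa(k)e^{-rL_k}$ (Lemma~\ref{lemma:good-range}).
\end{itemize}
Together these give $\nu_k(\{\CC_k\})\ge(1-\varepsilon)^2$ for every $k$, hence $\mu=\delta_X$.

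Step~3 has the same defect. Uniqueness is to come either from the invalid compression or from invoking the classification of Section~\ref{sec:finitaryircs}, which is what you must prove. The working mechanism is again level-wise. Transitivity on cells gives each cell of $\CC_i$ mass $1/\kappa(i)$, so the invariant measure is the uniform tree measure (Theorem~\ref{thm:deeptransitivityuniqueergodicity}). Transitivity on $n$-subsets of $\CC_i$ makes all basic sets $[C_{u_1},\ldots,C_{u_n}]$ with distinct cells equally weighted. This determines any invariant measure on $\CK_n(X)$ (Lemma~\ref{lemma:knowonevalueknowbasis}).

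Your Step~2, which produces an invariant measure on $X$ via $B\mapsto\int|K\cap B|/n\,d\mu$, is correct. It is a simpler substitute for Corollary~\ref{co:lift}, but it only becomes useful once (1) is proved correctly.
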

Furthermore, the description of the ergodic measures in part~\eqref{item:the-measures} of this theorem is  explicit; see Theorem~\ref{cor:deeply}.

Returning to the question of familiar systems which are IC-rigid, in Section~\ref{sec:ICridigexamples} we give several examples of well-studied group actions which are deeply transitive. This includes the action of stabilized automorphism groups of full shifts, the action of Higman-Thompson groups on one-sided full shifts, as well as certain AF full groups acting on Cantor spaces. We note that in each of these examples, there exists an invariant Borel probability measure for the action.

Extremely transitive actions are deeply transitive. For extremely transitive actions, we also give a complete classification of all of the self-joinings. In fact, all of the examples we give in Section~\ref{sec:ICridigexamples} are also 
extremely transitive, yielding a classification of their self-joinings (see Section~\ref{sec:selfjoiningsexttransitivity}). The AF full groups also act by prefix-permutation on real numbers, and we use the extreme transitivity together with Theorem~\ref{th:deeply} to prove a result about $\varepsilon$-density of images of infinite sets under random permutations (Theorem~\ref{thm:prefixpermutcircle}). 

The definition of IC-rigidity can be weakened: an action of $G$ on $X$ is \emph{weakly IC-rigid} if $\mu$-almost every compact subset of $X$ is either countable or $X$ for every IRC $\mu$ of the system. This definition requires some care, as a theorem of Hurewicz shows that for $X$ uncountable, the set of uncountable subsets of $X$ is $\Sigma_{1}^{1}$-complete in $\CK(X)$ and hence not Borel, but we defer the technical discussion of these issues until Section~\ref{sec:weakly-rigid}. The difference between weakly IC-rigid and IC-rigid is nontrivial, and is witnessed by the Chacon system.

\begin{theorem}\label{thm:chaconintro}
The Chacon system is weakly IC-rigid but not IC-rigid.
\end{theorem}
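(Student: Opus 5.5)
Two separate things have to be shown: that the Chacon system $(X,\sigma)$ is not IC-rigid, and that it is weakly IC-rigid.

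\emph{Not IC-rigid.} The Chacon system is uniquely ergodic, with measure $m$, and under $m$ it is rigid? Not quite: Chacon is weakly mixing and not rigid, so Theorem~\ref{thm:rigiditygivesIRCs} does not apply directly. Instead I would build a nonfinitary IRC from the self-joinings. Chacon has minimal self-joinings, so the off-diagonal joinings are the graph joinings of powers of $\sigma$. The idea is to take a point $x$ that is generic for $m$ and the compact set $K_x=\overline{\{\sigma^{n}x : n\in S\}}$ for a suitable infinite set $S$.

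A cleaner route uses the tower structure. Let $B_n$ be the Chacon blocks, and let $x\mapsto C_n(x)$ be the set of points of $X$ that agree with $x$ on the $n$-th level tower coordinates outside a fixed window. For a concrete choice, take the countable set $\{\sigma^{h_k}x\}_{k}\cup\{x\}$. Along the subsequence $h_k=|B_k|$ we have $\sigma^{h_k}x\to x$ for $m$-most $x$; this partial rigidity holds with constant $1/3$. Let $\Phi(x)$ be the closure of the set of accumulation points realized along $h_k$. The map $\Phi$ is Borel and equivariant, and I would check that $\Phi(x)$ is infinite but countable for $m$-a.e.\ $x$, for instance $\{x\}\cup\{\sigma^{\pm h_k}x\}\cup\ldots$ converging to $x$. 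Then $\Phi_*m$ is a nonfinitary IRC different from $\delta_X$. The key point is equivariance: $\Phi(\sigma x)=\sigma\Phi(x)$ holds because $\sigma$ commutes with $\sigma^{h_k}$. The simplest version is $\Phi(x)=\{x\}\cup\{\sigma^{h_k}x: \sigma^{h_k}x\in U_k(x)\}$ made compact. A cleaner alternative is $\Phi(x)=\{x\}\cup\{\sigma^{h_k}x : k\ge1\}\cup\{\text{limits}\}$, which is compact as soon as the limit set is controlled. Since Chacon has the property that $d(\sigma^{h_k}x,x)\to 0$ holds only on a set of measure $1/3$ per $k$, it is easier to use the orbit-closure map $x\mapsto\overline{\{\sigma^{h_k}x\}_k}$. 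Its accumulation points lie in $\{\sigma^{j}x:|j|\le 1\}$, since the Chacon spacer structure makes $\sigma^{h_k}x$ near $x$, $\sigma^{-1}x$ or $\sigma x$. So the set is countable, infinite and not $X$.

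\emph{Weakly IC-rigid.} Take an ergodic IRC $\mu$. The goal is to show that $\mu$-a.e.\ $K$ is either countable or equal to $X$. I would take the perfect kernel $P(K)$ (Cantor–Bendixson), which is a Borel, equivariant map $\CK(X)\to\CK(X)\cup\{\emptyset\}$. Then $K$ is uncountable iff $P(K)\neq\emptyset$, which also settles the measurability issue. On the set where $P(K)\neq\emptyset$, push $\mu$ forward to get an IRC $\nu$ supported on perfect sets, and show that $\nu=\delta_X$. Using unique ergodicity, choose $\nu$-generic $K$ and consider the measure $\lambda=\int m_K\,d\nu$, where $m_K$ is a measure built on $K$. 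One natural construction is the following: for each $K$, the set $\{(x,y)\in K\times K\}$ gives, by averaging Dirac masses along orbits, an invariant measure on $X\times X$ supported on $\{(x,y):x,y\in K\}$, which must be a self-joining. By minimal self-joinings it is a combination of $m\times m$ and the off-diagonals. A perfect $K$ carries pairs $(x,y)$ with $y\in K$ near $x$ but $y\ne\sigma^{j}x$ for small $j$; this forces the product joining $m\times m$ to appear, which in turn forces $K$ to meet every open set, so $K=X$ by minimality and compactness.

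\emph{Main obstacle.} The hardest step is making this last argument rigorous, that is, extracting from a perfect random compact a joining with a nontrivial product component. I would first try to define a measurable selection of non-orbit pairs in $K$ at each scale, average it, and use minimal self-joinings.
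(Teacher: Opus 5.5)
\textbf{Weak IC-rigidity: the gap is at the step you flag.}

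Two things go wrong before that step.
\begin{itemize}
\item The perfect-kernel map $K\mapsto P(K)$ is not Borel. If it were, $\CK_{\textrm{uc}}(X)=\{K : P(K)\neq\emptyset\}$ would be Borel, contradicting Hurewicz. It is only universally measurable: the preimage of $[U]$ is the analytic set of $K$ with $K\cap U$ uncountable. So it does not remove the need to pass to the completion.
\item Averaging $\delta_{(\sigma^n x,\sigma^n y)}$ over $n$ for $x,y\in K$ gives a self-joining, but not one supported on $K\times K$. To remember that the sampled points lie in the compact, you must carry $K$ along. That means working on $\CK(X)\times X^{r}$ with $\rho_0=\int\delta_K\otimes m_K^{\otimes r}\,d\mu(K)$, where $m_K$ is a measurably selected nonatomic measure on $K$. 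You then F\o{}lner-average and take a weak* limit $\rho$; the condition $x_i\in K$ is closed, so it persists.
\end{itemize}

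The real problem is the chain ``perfect $K$ $\Rightarrow$ product component $\Rightarrow$ $K$ meets every open set''. Minimal self-joinings only tell you that the $X^r$-marginal of $\rho$ is some mixture of products of off-diagonal joinings, not which one. Points of $K$ in distinct orbits can still fail to be generic for $m\otimes m$, for example asymptotic-type pairs. Also, in the limit the measures $\sigma^n_*m_K$ can concentrate, so off-diagonal components may appear. Finally, a product component of weight $c<1$ does not by itself force $\mu(\{K : K\cap U=\emptyset\})=0$.

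\textbf{The missing ingredient.} What is needed is the del Junco--Keane property that the paper calls countable exceptions for products: for every $x_1$ and $r\ge 2$, all but countably many $(x_2,\dots,x_r)$ make $(x_1,\dots,x_r)$ generic for $m^{\otimes r}$. Since $m_K$ is nonatomic, $\rho_0$-almost every tuple is then generic. By dominated convergence, the $X^r$-marginal of $\rho$ is exactly $m^{\otimes r}$ for every $r$. Together with $x_i\in K$ $\rho$-almost surely, this gives $\mu(\{K: K\cap U=\emptyset\})\le m(X\setminus U)^r\to 0$ for every nonempty open $U$ (using $m(U)>0$ by minimality). Hence $\mu=\delta_X$. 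This is Theorem~\ref{thm:msjweaklyicrigid}, and it needs no perfect kernel, no ergodic decomposition, and no classification of joinings.

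\textbf{Non-IC-rigidity: your final version is correct, by a different route.} Push $m$ forward under $\Phi(x)=\overline{\{\sigma^{h_k}x\}_k}$ with $h_k=|b_k|$.
\begin{itemize}
\item $\Phi$ is Borel and equivariant, exactly as in the proof of Theorem~\ref{thm:rigiditygivesIRCs}.
\item The points $\sigma^{h_k}x$ are pairwise distinct by aperiodicity, so $\Phi(x)$ is infinite.
\item A Borel--Cantelli argument controls the accumulation points. The set of points whose origin is within distance $k$ of an end of its $b_k$-block, or sits in a level-$k$ spacer, has measure summable in $k$. So for $m$-a.e.\ $x$, $\sigma^{h_k}x$ eventually agrees with $x$ or $\sigma^{-1}x$ on windows of growing length.
\item Hence $\Phi(x)$ is countably infinite, so the IRC is nonfinitary and $\Phi(x)\neq X$.
\end{itemize}
Discard your earlier variants. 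In particular, the closure of the accumulation set alone would be finite and give a finitary IRC.

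The paper instead fixes one compact $Y=\overline{\{T^{\ell}x_C:\ell\in L\}}$ with $L=\{|b_n|-1\}$. It proves combinatorially that the orbit of $Y$ stays uniformly away from $X$ and from each $\CK_{\le r}(X)$, then takes a Ces\`aro weak* limit. The uniform separation is needed there because weak* limits can charge finite sets. Your route avoids those estimates and yields an explicit IRC concentrated on countable sets, essentially a ``rigidity up to bounded shift'' version of Theorem~\ref{thm:rigiditygivesIRCs}. The cost is an a.e.\ analysis of tower positions.
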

Thus the Chacon system possesses nonfinitary IRCs besides $\delta_{X}$, and with respect to such IRCs, almost surely every compact set is either countable or $X$. 

The weak IC-rigidity of Chacon is a consequence of a more general result we prove in Theorem~\ref{thm:msjweaklyicrigid}, which is of interest in its own right: a minimal, uniquely ergodic system satisfying a certain property which we call countable exceptions for products is weakly IC-rigid. That Chacon satisfies these conditions was proved in~\cite{DRS}.
On the other hand, proving that the Chacon  system is not IC-rigid relies on more subtle combinatorial properties of its structure as a rank one system, 
which we carry out in Section~\ref{sec:Chacon-not-rigid}.

We use the notion of weak IC-rigidity to derive results about dilations on the circle. Consider the multiplicative semigroup $\mathbb{N}$ acting on the circle $\mathbb{T} = \mathbb{R}/\mathbb{Z}$ by multiplication.  Glasner~\cite{glasner} showed that if $A\subset\T$ is infinite and $\varepsilon > 0$, then there is some integer $n\geq 1$ such that the dilation $\{na: a\in A\}$ is $\varepsilon$-dense in $\T$.  This property has been studied in a variety of settings (see for example~\cite{Ber-Bos, Bu-Fi, KL}).  Stronger quantitative versions are given in~\cite{AP, Ber-Per}, and Berend and Peres~\cite{Ber-Per} show that for every infinite compact set $A \subset \mathbb{T}$ there exists a sequence $n_{i}$ in $\mathbb{N}$ of natural density one such that $n_{i}A \to \mathbb{T}$ in $\CK(\mathbb{T})$. Rephrasing their result, the sets $F_{m} = [1,m]$ form a F\o{}lner sequence for the additive semigroup $\mathbb{N}$, and Berend and Peres show the existence of a set $I \subset \mathbb{N}$ such that $$\lim_{m\to\infty}\frac{|I \cap F_{m}|}{|F_{m}|}\to 1  \quad \text{ and } \quad \lim_{n \to \infty, n \in I} nY \to \mathbb{T} \text{ in } \CK(\mathbb{T}).$$
Viewed in this way, their result concerns the additive largeness of a set of dilations of $Y$ converging to $\mathbb{T}$. We prove a result for sets which are multiplicatively large. For $p \in \mathbb{N}$, let $T_{p} \colon \mathbb{T} \to \mathbb{T}$ denote the map $T_{p}(x) = px \mod 1$.

\begin{theorem}\label{thm:dilationsintro1}
Let $Y \subset \mathbb{T}$ be a compact subset invariant under $T_{p}$ for some $p \ge 2$, and suppose $T_{p} \colon Y \to Y$ has positive topological entropy. Then for every F\o{}lner sequence $F_{m}$ of the multiplicative semigroup $\mathbb{N}$, there exists a set $J \subset \mathbb{N}$ such that
$$\lim_{m\to\infty}\frac{|J \cap F_{m}|}{|F_{m}|} \to 1 \quad \text{ and } \quad \lim_{n \to \infty, n \in J}nY = \mathbb{T} \textrm{ in } \CK(\mathbb{T}).$$
\end{theorem}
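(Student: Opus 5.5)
Consider the multiplicative semigroup $\mathbb{N}$ acting on $\T$ by $x\mapsto nx$, together with the induced action on $\CK(\T)$. The plan is to produce an invariant random compact for this action concentrated on the orbit closure of $Y$, and then apply weak IC-rigidity. We cannot average $\delta_{nY}$ along $F_m$ directly: that gives only some invariant measure, and says nothing about where $nY$ sits for a density-one set of $n$. So we argue by contradiction. Suppose the conclusion fails. Then there are $\varepsilon>0$ and a set $B=\{n : d_H(nY,\T)\ge\varepsilon\}$ with positive upper density along $F_m$. (If every such $B$ had density zero, a standard diagonal argument over $\varepsilon=1/k$ would give the required $J$.) Choose a subsequence $m_j$ with
\[
\tfrac{1}{|F_{m_j}|}\textstyle\sum_{n\in F_{m_j}}\delta_{nY}\to\mu \ \text{weak}^*, \qquad \tfrac{|B\cap F_{m_j}|}{|F_{m_j}|}\to c>0 .
\]
By the F\o lner property, $\mu$ is an IRC for the $\mathbb{N}$-action on $\CK(\T)$. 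The set $\{K: d_H(K,\T)\ge\varepsilon\}$ is closed, so $\mu(\{K: d_H(K,\T)\ge\varepsilon\})\ge c>0$. Hence $\mu$ gives positive mass to compacts different from $\T$.

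Next we show that $\mu$-almost every $K$ is uncountable. Every $K=\lim n_iY$ in the support contains limits of images of large subsets of $Y$. Since $T_p$ restricted to $Y$ has positive entropy, a standard argument (for instance via Furstenberg's dimension or an invariant measure of positive entropy) gives a uniform lower bound on the Hausdorff, or lower box, dimension of the sets $nY$ at scales $\gg 1/n$. Precisely: $Y$ contains a $T_p$-invariant subset on which the $\delta$-covering numbers satisfy $N(Y,p^{-k})\ge p^{hk/\log p}$. Because $nY$ is a union of boundedly many affine-distorted copies of pieces of $Y$, $N(nY,\delta)\ge c\,\delta^{-s}$ for $\delta\ge 1/n$ with $s>0$ uniform. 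This lower covering bound passes to Hausdorff limits at every fixed scale, so every $K$ in the support of $\mu$ has positive lower box dimension in a scale-uniform sense. That is enough to force $K$ to be uncountable, or at least to place it outside the countable-compact part where weak IC-rigidity allows mass. This step I expect to be the main obstacle: box dimension alone does not rule out countable compacts, so the bound must be in a form (e.g. $K$ contains a Cantor-type tree of scales, built from a positive-entropy subshift structure of $Y$ pushed forward) that persists under limits. What I would try first is to encode a uniformly branching tree: at every scale $p^{-k}$, each $p^{-k}$-piece of $nY$ contains at least two well-separated subpieces at scale $p^{-k-L}$. Such branching is preserved under Hausdorff limits and gives a perfect subset.

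Finally, we invoke weak IC-rigidity of the multiplicative $\mathbb{N}$-action on $\T$. This would follow from the criterion of Theorem~\ref{thm:msjweaklyicrigid}, or from Furstenberg's $\times p,\times q$ type results: the action is "minimal" in Furstenberg's sense because $\{n\}$ contains a nonlacunary semigroup, with countable exceptions (Furstenberg's theorem says closed sets invariant under a nonlacunary semigroup are finite or $\T$, and the countable-exception statement for products can be adapted). Weak IC-rigidity makes $\mu$-almost every compact either countable or $\T$. Since by the previous step $\mu$-almost every $K$ is uncountable, $\mu=\delta_{\T}$, which contradicts $\mu(\{K:d_H(K,\T)\ge\varepsilon\})>0$.
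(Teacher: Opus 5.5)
Your framework matches the paper's: average $\delta_{nY}$ along $F_m$, take a weak$^*$ limit $\mu$, show $\tilde\mu(\CK_{\ct}(\T))=0$, invoke weak IC-rigidity, and diagonalize over $\varepsilon=1/k$. Your contradiction setup, the invariance of $\mu$, and the Portmanteau step are fine. But the step you flag as the main obstacle is a genuine gap, and neither of your suggested routes closes it.

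\emph{Dimension bounds.} Covering-number or dimension bounds cannot work. Countable compact sets can have positive box dimension, and Hausdorff dimension does not survive Hausdorff limits (e.g.\ $[0,1/j]\to\{0\}$).

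\emph{Branching.} The branching claim is false as stated, because positive entropy does not prevent isolated points. Adjoin to a proper positive-entropy invariant set a $T_p$-periodic orbit disjoint from it; since $p^jY=Y$, the isolated points persist in $nY$ for $n=p^j$. The repaired version would be a closed subset of $Y$ branching at uniformly boundedly spaced scales. That is an unproved structural claim, and bounded gaps are essential: an $r$-piece of $nY$ corresponds to an $(r/n)$-piece of $Y$, so any gap that grows with depth degrades with $n$.

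\emph{The missing idea} is to let entropy itself be the closed property. Fix $\eta\in\CM_{T_p}(Y)$ with $h_\eta(T_p)=c>0$, and let $\CE_c$ be the set of $Z\in\CK(\T)$ carrying a $T_p$-invariant probability measure of entropy at least $c$.
\begin{itemize}
\item Every $nY$ lies in $\CE_c$: $T_n$ commutes with $T_p$ and is $n$-to-one, so $(T_n)_*\eta$ is a $T_p$-invariant measure on $nY$ of entropy $c$.
\item $\CE_c$ is closed, by weak$^*$ compactness and upper semicontinuity of entropy for the expansive map $T_p$.
\item $\CE_c\cap\CK_{\ct}(\T)=\emptyset$, since an invariant measure on a countable set is atomic and has zero entropy.
\end{itemize}
Hence $\mu(\CE_c)=1$ and $\tilde\mu(\CK_{\ct}(\T))=0$, with no geometry needed.

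\emph{Weak IC-rigidity.} Your justification here is also incorrect. Theorem~\ref{thm:msjweaklyicrigid} does not apply. The multiplicative action fixes $0$ and preserves finite sets of rationals, so it is neither minimal nor uniquely ergodic. Countable exceptions for products also fails outright, since no tuple with $x_1=0$ is generic for $\lambda^{\otimes r}$. Furstenberg's theorem concerns invariant closed sets, not invariant measures on $\CK(\T)$.

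The correct input is Corollary~\ref{thm:nontweaklyicrigid}, proved via Theorem~\ref{thm:propertyPgivesdeltaT}:
\begin{itemize}
\item Inside an uncountable $K$, pick $x_1,\dots,x_m$ rationally independent, inductively.
\item Weyl equidistribution of $n(x_1,\dots,x_m)$ in $\T^m$ bounds the frequency of $n$ with $nK\cap I=\emptyset$ by $(1-\lambda(I))^m$.
\item Invariance plus dominated convergence then give $\mu(\{K:K\cap I=\emptyset\})=0$ for every interval $I$, so $\mu=\delta_{\T}$.
\end{itemize}
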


The theorem does not hold if one considers arbitrary, even uncountable, compact subsets (see Example~\ref{example:dilationset}). Along similar lines we prove the following.

\begin{corollary}\label{cor:dilationsintro2}
Let $Y \subset \mathbb{T}$ be a compact subset invariant under $T_{p}$ for some $p \ge 2$, and suppose $T_{p} \colon Y \to Y$ has positive topological entropy. Then for every $\varepsilon > 0$, the set
$$\{n \in \mathbb{N} : nY \textrm{ is }\varepsilon\textrm{-dense in } \mathbb{T}\}$$
is a multiplicatively syndetic subset of $\mathbb{N}$.
\end{corollary}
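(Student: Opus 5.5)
The plan is to deduce the corollary directly from Theorem~\ref{thm:dilationsintro1} by contradiction. The key fact is the standard characterization of syndeticity in a countable amenable (cancellative, commutative) semigroup: a set $S \subset \mathbb{N}$ is multiplicatively syndetic if and only if its complement contains no multiplicative \emph{thick} set. Equivalently, if $S$ is not syndetic, then $\mathbb{N}\setminus S$ is thick. In that case, for every finite $F \subset \mathbb{N}$ there is some $n$ with $Fn \subset \mathbb{N}\setminus S$.

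Fix $\varepsilon>0$ and let $S=\{n : nY \text{ is } \varepsilon\text{-dense}\}$. Suppose $S$ is not multiplicatively syndetic, so that $\mathbb{N}\setminus S$ is thick. Since $(\mathbb{N},\cdot)$ is amenable, any thick set contains translates of the sets of some F\o{}lner sequence. Concretely, take any F\o{}lner sequence $(\Phi_m)$ for $(\mathbb{N},\cdot)$; one exists, for instance built from boxes in the exponent coordinates $\bigoplus_p \mathbb{Z}_{\ge 0}$. Thickness gives $n_m$ with $\Phi_m n_m \subset \mathbb{N}\setminus S$. Set $F_m=\Phi_m n_m$. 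This is again a F\o{}lner sequence: since the semigroup is commutative and cancellative, $|gF_m \triangle F_m| = |g\Phi_m \triangle \Phi_m|$ for each $g$, so the F\o{}lner ratios are unchanged.

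Apply Theorem~\ref{thm:dilationsintro1} to $(F_m)$. This gives $J$ of density one along $F_m$ such that $nY \to \mathbb{T}$ in $\CK(\mathbb{T})$ as $n\to\infty$ with $n\in J$. Hausdorff convergence to $\mathbb{T}$ means that for all sufficiently large $n\in J$ the set $nY$ is $\varepsilon$-dense, so $J\setminus S$ is finite. Since $|F_m|\to\infty$, we get $|J\cap F_m|/|F_m| \le |J\cap S\cap F_m|/|F_m| + o(1) = o(1)$, because $F_m\cap S=\emptyset$. This contradicts density one, and so $S$ is syndetic.

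The main point needing care is the claim $|F_m| \to \infty$ together with translate-invariance of the F\o{}lner property. Both are routine for the infinite cancellative commutative semigroup $(\mathbb{N},\cdot)$. The other point is the syndetic/thick duality, which follows directly from the definitions: $S$ is syndetic if finitely many translates $g_1^{-1}S\cup\dots\cup g_k^{-1}S$ cover $\mathbb{N}$, where $g^{-1}S=\{n: gn\in S\}$. So $S$ fails to be syndetic exactly when, for every finite $F$, some $n$ has $Fn\cap S=\emptyset$, which is precisely thickness of the complement.
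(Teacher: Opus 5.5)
Your proof is correct and essentially the paper's own argument: if $S$ is not syndetic, translate a F\o{}lner sequence into $\mathbb{N}\setminus S$ (translates stay F\o{}lner by cancellativity and commutativity), then contradict a density-one statement that holds along every F\o{}lner sequence. The only difference is that the paper applies Theorem~\ref{thm:invsetsdilations} directly to $S$, which gives $|S\cap F_m|/|F_m|\to 1$ at once. Your route through the set $J$ of Theorem~\ref{thm:dilationsintro1} needs the extra step $|F_m|\to\infty$, which you handle correctly.
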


Our methods for these results are completely different from those of Berend and Peres. We make use of IRCs, and the main step is to prove that the action of $\mathbb{N}$ on $\mathbb{T}$ is weakly IC-rigid. As a second step, we must work to avoid the orbit of $Y$ distributing in a way which assigns mass to countable subsets. This is where the positive entropy assumption is used.  

\subsection{Questions}
The study of invariant random compacts leads to several natural directions for further research.

The first asks whether the weak IC-rigidity for the multiplicative action of $\mathbb{N}$ on $\mathbb{T}$ can be upgraded to IC-rigidity.

\begin{question}
\label{conj:IRC-N}
Is $\delta_{\mathbb{T}}$ the only nonfinitary invariant random compact for the action of the multiplicative semigroup $\N$ on the torus $\T = \R/ \Z$?
\end{question}

A positive answer to Question~\ref{conj:IRC-N} would yield stronger versions of the results in Section~\ref{subsec:dilations}.

Furstenberg~\cite{furstenberg} showed that if $p,q\geq 2$ are multiplicatively independent natural numbers, then the only compact infinite subset of $\mathbb{T}$ invariant under multiplication by $p$ and $q$ is $\mathbb{T}$. In light of this, it is natural to consider whether these actions are IC-rigid.

\begin{question}  
\label{conj:IRC-non-lac}
If $p,q \ge 2$ are multiplicatively independent natural numbers, is $\delta_{\mathbb{T}}$ the only nonfinitary invariant random compact for the action of multiplication by $p$ and $q$ on the torus $\mathbb{T}$?
\end{question}
Clearly, a positive answer to Question~\ref{conj:IRC-N} implies a positive answer to Question~\ref{conj:IRC-non-lac}. More generally, there is the problem of determining all of the IRCs for the action of multiplication by $p$ and $q$ on the torus $\mathbb{T}$. The case of finitary IRCs supported on singleton sets is a trivial reformulation of Furstenberg's question about invariant measures for multiplication by $p$ and $q$ on the $\mathbb{T}$. Maucourant's result~\cite{maur} shows there are finitary IRCs for these systems, supported on sets of size at least four, which do not arise by pushing forward Lebesgue on an entire torus.

The automorphism group $\aut(\sigma_{n})$ of a full shift $(X_{n},\sigma_{n})$ is the group of self-homeomorphisms of $X_{n}$ which commute with the shift $\sigma_{n}$. The stabilized automorphism group $\autinf(\sigma_{n})$ of $(X_{n},\sigma_{n})$ is the group of self-homeomorphisms of $X_{n}$ which commute with $\sigma_{n}^{m}$ for some $m$. For $n \ge 2$, the group $\autinf(\sigma_{n})$ acts deeply transitively on $X_{n}$ (see Subsection~\ref{sec:ICridigexamples}), and so by Theorem~\ref{th:deeply} the action is IC-rigid. The group $\aut(\sigma_{n})$ does not act deeply transitively on $X_{n}$ however, and we ask the following.

\begin{question}
Is the action of the automorphism group $\aut(\sigma_{n})$ on the full shift $X_{n}$ IC-rigid?
\end{question}

The action of the Higman-Thompson groups on one-sided full shifts give examples of IC-rigid actions of finitely-generated groups. However, we do not know of any examples of IC-rigid actions where the acting group is more constrained: for example, abelian. 

\begin{question}
Does there exist an IC-rigid action of an abelian group? In particular, does there exist an IC-rigid $\mathbb{Z}$-action?
\end{question}

\subsection{Brief outline of the paper}
We start in Section~\ref{sec:background} with a summary of the dynamical background necessary to precisely state our results. 
 In Section~\ref{sec:ircs}, we define our main object of study, the invariant random compacts. We use the remainder of the section to develop the basic properties of IRCs, exhibit examples, and define IC-rigidity. In Theorem~\ref{thm:rigiditygivesIRCs} we show that measure theoretic rigidity gives rise to nontrivial nonfinitary IRCs. 
As a corollary, we note that this implies that a generic set of subshifts, considered in the space of infinite transitive subshifts of any full shift, have nontrivial nonfinitary IRCs. 

In Section~\ref{sec:weakening}, we introduce a weaker notion of IC-rigidity, and prove Theorem~\ref{thm:chaconintro}. In Section~\ref{sec:dilations} we turn to dilations of the circle. We prove weak IC-rigidity of the action of $\N$ on $\mathbb{T}$ by multiplication, and also classify the self-joinings of this system. We then prove several results about dilations including Theorem~\ref{thm:dilationsintro1} and Corollary~\ref{cor:dilationsintro2}.

Section~\ref{sec:treestructuresdeeptransitivity} introduces deep and extreme transitivity, which are defined via tree structures. Here examples of extremely (and hence deeply) transitive actions are given, and we then prove the IC-rigidity of deeply transitive actions.

Section~\ref{sec:finitaryircs} concerns the finitary IRCs for deeply transitive actions, where we complete the proof of Theorem~\ref{th:deeply}. Section~\ref{sec:selfjoiningsexttransitivity} is devoted to the classification of all self-joinings of extremely transitive actions.

\section{Background}\label{sec:background}
\subsection{Systems}
By a {\em topological dynamical system} $(X,G)$, or {\em system} for short, we mean a group or semigroup $G$ acting continuously on a compact metric space $X$. Throughout, we assume that all groups and semigroups are countable, the space $X$ is infinite, and let $d_X$ denote the metric on $X$. In the case of a semigroup, we always assume each $g \in G$ acts surjectively on $X$, and that all semigroups are left cancellative. The system is {\em minimal} if $\overline{\{gx: g\in G\}} = X$ for all $x\in X$ and is {\em transitive} if 
 $\overline{\{gx: g\in G\}} = X$ for some $x\in X$.
 The system $(X,G)$ is {\em totally minimal} if for every finite index subgroup $H \subset G$, the action of $H$ on $X$ is also minimal.
We say $Y \subset X$ is {\em invariant} if $g(Y) = Y$ for every $g \in G$. For a set $A \subset X$ and $\varepsilon > 0$ we denote the ball of radius $\varepsilon$ around $A$ by $B_{\varepsilon}(A) = \{x \in X : d(x,A) < \varepsilon\}$. For a $\mathbb{Z}$-action defined by a homeomorphism $T \colon X \to X$ we often write simply $(X,T)$ for the system.

Throughout, we assume that for a semigroup $G$, every $g\in G$ acts surjectively on $X$ and that 
 every $g \in G$ acts finite-to-one,
in the sense that $g^{-1}(x)$ is finite for every $x \in X$.  These assumptions obviously always hold when $G$ is a group, and we assume them for semigroup actions for technical reasons. In particular, they hold for the multiplicative action of the semigroup $\N$ on the circle $\T$.

For a compact metric space $X$, we let $\CM_G(X)$ denote  the set of all $G$-invariant Borel probability measures on $X$. 
By a {\em measure-preserving system}, we mean a triple $(X, \mu, G)$ where $(X, G)$ is a topological dynamical system and $\mu\in\CM_G(X)$.  Throughout, we assume that for any measure-preserving system, the space $X$ is a compact metric space and the system is endowed with the Borel $\sigma$-algebra (we omit the $\sigma$-algebra from the notation). 
The set $Y\subset X$ is {\em $G$-invariant} if $gY = Y$ for all $g\in G$, where as usual equality is meant up to sets of measure zero. 
A measure-preserving system is {\em ergodic} if any $G$-invariant Borel subset of $X$ has measure zero or one, and we let $\CM_G^e(X)$ denote the subset of ergodic measures in $\CM_G(X)$.

If $(X,G)$ and $(Y,G)$ are systems, then a continuous map $f \colon X \to Y$ is \emph{equivariant} with respect to the actions if $f \circ g = g \circ f$ for all $g \in G$. An equivariant map $f \colon X \to Y$ which is onto is called a \emph{factor map} and we say that $(Y, G)$ is a {\em factor} of $(X, G)$ (note that we make the usual slight abuses of notation, using the same letter to denote the action on $X$ and $Y$ and often omitting the composition symbol $\circ$ from the notation). 
For measure-preserving systems $(X, \mu, G$) and $(Y, \nu, G)$, we say that $(Y, \nu, G)$ is a {\em factor} of $(X, \mu, G)$ is there exists a measurable map $\pi\colon X\to Y$ such that  $\pi\circ g = g\circ \pi$ for all $g \in G$ almost everywhere and $\pi_{*}\mu = \nu$. As it is always clear from the context if we mean topological or measurable factor, we do not distinguish them in the terminology.

\subsection{Symbolic systems}
\label{sec:subshifts}
Many of our examples are symbolic systems.  Let $\CA$ be a finite set and let $\CA^\Z$ denote all functions $x\colon\Z\to\CA$. 
Writing $x\in \CA^\Z$ as $x = (x_i)_{i\in\Z}$, the space $\CA^\Z$ is a compact metric space with metric 
$$
d(x, y) = d\bigl((x_i)_{i\in\Z}, (y_i)_{i\in\Z}\bigr) = 
2^{-\inf \{|i|: x_i\neq y_i\}}.
$$
Define the {\em left shift} $\sigma\colon \CA^\Z\to\CA^\Z$ by setting
$(\sigma x)_i = x_{i+1}$ for all $i\in\Z$ and note that this is a homeomorphism.  Thus $(X, \sigma)$ is a system. 

When $\CA = \{0,\ldots,n-1\}$, the system 
$(X_{n} = \{0,\ldots,n-1\}^{\mathbb{Z}}, \sigma)$ is the {\em full shift on $n$ symbols}.  Endowing the set $X_{n}^{+} = \{0,\ldots,n-1\}^{\mathbb{N}}$ with the (non-invertible) shift $\sigma$, we obtain the {\em one-sided full shift on $n$ symbols}.  More generally, if $X\subset\CA^\Z$ is closed and  $\sigma$-invariant, then $(X, \sigma)$ is a {\em subshift}.  

For $w = w_{-k}\dots w_k\in \CA^{2k+1}$, set 
$$[w] = \{x\in \CA^\Z: x_i = w_i \text{ for } i=-k, \dots, k\}, 
$$
and we refer to such $[w]$ as a {\em cylinder set}. 
We write $[w]^+$ to be the cylinder set determined by the entries $w_1\dots w_{k}$.  
When $(X, \sigma)$ is a subshift, the {\em language} $\CL(X)$ is defined to be
$$
\CL(X) = \{w\in\CA^*: [w]^+\cap X\neq \emptyset\}, 
$$
and note that $\CL(X) = \bigcup_{k=1}^\infty\CL_k(X)$, where $\CL_k(X)$ denotes the words of length $k$ in $\CL(X)$. 

\subsection{Amenable groups} 
For a group $G$, a sequence $(F_n)_{n\in\N}$ of finite subsets of $G$ is a {\em F\o lner sequence} if for every $g \in G$ and $\delta > 0$, there is some $N\in\N$ such that for all $n\geq N$ we have $|gF_n\Delta F_n| < \delta |F_n|$, and the group $G$ is {\em amenable} if it admits a F\o lner sequence. The  F\o lner sequence 
$(F_n)_{n\in\N}$ is {\em tempered} if for some constant $C > 0$ and for all $n\in\N$, we have 
\[
\big\vert \bigcup_{k < n} F_k^{-1}F_n
\big\vert \leq C|F_n|.
\]
Note that any F\o{}lner sequence for a group admits a tempered F\o{}lner subsequence. Analogously, a semigroup is  {\em (left) amenable} if it admits a (left) F\o lner sequence.

We  make use of the following well-known facts for an amenable semigroup $G$. The Krylov-Bogoliubov Theorem 
 states that if $G$ acts continuously on a compact metric space $X$, then there exists a $G$-invariant Borel probability measure on $X$. Furthermore, if $G$ acts on $Y$ and $\pi \colon X \to Y$ is a factor map of the action, then the induced map on measures $\pi_{*} \colon \CM_{G}(X) \to \CM_{G}(Y)$ is surjective.

 \subsection{Joinings}
A {\em joining} of the systems $(X, \mu, G)$ and $(Y, \nu, G)$ is a measure $\lambda$ on $X\times Y$ which is invariant under the diagonal action $g \cdot (x,y) = (gx,gy)$, for which $\mu$ and $\nu$ respectively are the marginals under the natural projections onto each of the coordinates. 
 When $(X, \mu, G) = (Y, \nu, G)$, the joining $\lambda$ is a {\em self-joining of order $2$}, or just a {\em self-joining}, and a measure on the $k$-fold product of $(X, \mu, G)$ with itself that has $\mu$ as each of its marginals is a {\em $k$-fold self-joining}.

For a group $G$ and $g_1, \dots, g_k\in G$, we define the measure $\mu_{g_1, \dots, g_k}$ on $X^{k}$ by
$$\mu_{g_1, \dots, g_k}(A_1\times \dots A_k) = \mu(g_1A_1\cap \dots\cap g_kA_k).$$
This is a {\em $k$-fold off-diagonal self-joining} when $g_ig_j^{-1}$ lies in the center of $G$ for all $1\leq i,j\leq k$. A $k$-fold self-joining is a {\em product of off-diagonal self-joinings} if there exists a partition of $\{1, \dots, k\}$ such that each block is an off-diagonal self-joining and any two coordinates in distinct subsets are independent. The system has {\em minimal self-joinings of all orders} if every $k$-fold self-joining is a product of off-diagonal self-joinings.

\section{Invariant random compacts}\label{sec:ircs}
\subsection{The space of compact subsets}
Let $\CK(X)$ denote the space of nonempty compact subsets of $X$ endowed with the Hausdorff metric $d_H$, defined for $K_1, K_2\in \CK(X)$ by 
$$
d_H(K_1, K_2) = \max\{\sup_{x\in K_1}d(x, K_2), \sup_{y\in K_2}d(y, K_1) \}, 
$$
where 
$$d(x, K_2) = \inf_{y\in K_2}d(x,y) \quad 
\text{ and } \quad 
d(y, K_1) = \inf_{x\in K_1}d(y,x).
$$
The action of $G$ on $X$ induces an action of $G$ on $\CK(X)$ given by 
$$g \cdot A = g(A)$$
for $g\in G$, and this action is continuous in the Hausdorff metric. When $f \colon X \to X$ is a continuous map, we occasionally write $f^{\CK} \colon \CK(X) \to \CK(X)$ to denote the induced map.

The space $X$ embeds isometrically into $\CK(X)$ via the map $x \mapsto \{x\}$, sending each point $x\in X$ to the set consisting only of that point. 
More generally, for each $k \ge 1$,  there is a continuous map
$\rho_{k} \colon X^{k} \to \CK(X)$
defined by
\begin{equation}
    \label{def:rho-k}
\rho_{k}\bigl((x_{1},\ldots,x_{k}) \bigr) =\{x_{1},\ldots,x_{k}\}
\end{equation}
where $X^{k}$ denotes the product $\prod_{i=1}^{k}X$.
For $k\geq 2$, the map $\rho_k$ is not injective except when $X$ only consists of a single point.  
Letting $\CK_{\le k}(X)$ denote all subsets of size at most $k$, the image of $\rho_{k}$ is exactly $\CK_{\le k}(X)$, and is compact for each $k\geq 1$. We write $\CK_{\fin}(X) = \bigcup_{k=1}^{\infty}\CK_{\le k}(X)$ and $\CK_{\inf}(X) = \CK(X) \setminus \CK_{\fin}(X)$. The subset $\CK_{\fin}(X)$ is an $F_{\sigma}$-set, and hence Borel, and it is straightforward to check that it is dense in $\CK(X)$.

The diagonal action of $G$ on $X^{k}$ is given by
$$g \cdot (x_{1},\ldots,x_{k}) \mapsto (g \cdot x_{1},\ldots,g \cdot x_{k})$$
for $x_1, \dots, x_k\in X$ and $g\in G$. 
When $(X,G)$ is a system, unless otherwise stated, when we speak of a $G$-action on $X^{k}$ we  mean the diagonal action. For every $k \ge 1$,  the map $\rho_{k}$ is $G$-equivariant and hence is a factor map between the diagonal action of $G$ on $X^{k}$ and the induced $G$-action on $\CK_{\le k}(X)$.

At times it is convenient to work with the Vietoris topology on $\CK(X)$, which generates the same topology as the Hausdorff distance (see~\cite[4.F]{kechris}). A subbase for this topology consists of sets of the form $[U] = \{A \in \CK(X) : A \cap U \ne \emptyset\}$ and $\langle U \rangle = \{A \in \CK(X) : A \subset U\}$, where $U$ is any open set in $X$.

\subsection{Measures on $\CK(X)$}

We formulate the main definition for our setting. 
\begin{definition}
For a topological dynamical system $(X, G)$, an \emph{invariant random compact for the action of $G$ on $X$} is a Borel probability measure $\mu$ on $\CK(X)$ that is invariant under the induced action of $G$ on $\CK(X)$.  When the context is clear, we shorten this and refer to $\mu$ as an IRC for $(X,G)$.
\end{definition}
In other words, a Borel probability measure $\mu$ on $\CK(X)$ is an IRC if $g_*\mu = \mu$ for all $g\in G$, or equivalently that $\mu(g^{-1}(E)) = \mu(E)$ for all Borel sets $
E\subset \CK(X)$. 

For $Y\subset X$ compact, let $\delta_Y$ denote the Dirac measure (delta mass) concentrated on $Y$.  
Since we assume each $g \in G$ acts surjectively, the delta mass $\delta_X$ of the space $X$ is always an IRC for the the $G$-action on $X$. Thus we refer to an IRC that is not $\delta_X$ as \emph{nontrivial}.  
More generally, for any 
compact invariant set $Y\subset X$, the measure $\delta_Y$ is an IRC.

A conjugacy between systems $(X, G)$ and $(Y, G)$ induces a conjugacy of the corresponding systems $(\CK(X), G)$ and $(\CK(Y), G)$.  It follows that if the systems $(X,G)$ and $(Y,G)$ are topologically conjugate, then their spaces of IRCs are isomorphic. 

An IRC $\mu$ for an action of $G$ on $X$ has a canonical decomposition  $\mu = \mu_{F} + \mu_{I}$ into its finitary and  nonfinitary parts, where 
the measures $\mu_{F}, \mu_{I}$ on $\CK(X)$ are defined by
$$\mu_{F}(A) = \mu(A \cap \CK_{\fin}(X)) \qquad \text{ and } \qquad \mu_{I}(A) = \mu(A \cap \CK_{\inf}(X)).$$
The subset $\CK_{\fin}(X)$ is $G$-invariant, and
since we assume every $g \in G$ acts finite-to-one,
the subset $\CK_{\inf}(X)$ is $G$-invariant as well. In this case, both $\mu_F$ and $\mu_I$ are $G$-invariant and $\mu(\CK_{\inf}(X)) = \mu_{I}(\CK(X))$ and $\mu(\CK_{\fin}(X)) = \mu_{F}(\CK(X))$.

We say the invariant random compact $\mu = \mu_{F} + \mu_{I}$ is 
\emph{nonfinitary} if $\mu_{F} = 0$ and we say  $\mu$ is \emph{finitary} if $\mu = \mu_{F}$. 
Note that $\mu$ being nonfinitary is equivalent to  $\mu(\CK_{\fin}(X)) = 0$, and $\mu$ being finitary is equivalent to $\mu(\CK_{\fin}(X)) = 1$.

    It may seem natural to replace the condition that a measure $\mu$ be nonfinitary by the stronger condition that $\supp(\mu)\cap\CK_{\leq k}(X) = \emptyset$ for all $k\geq 1$, which clearly implies that $\mu(\CK_{\leq k}(X)) = 0$ for all $k\geq 1$.  However, these two conditions are not equivalent.  To check this, let $X$ be an infinite compact space, fix some nonisolated $x\in X$, and let $(B_n)_{n\in\N}$ be a sequence of closed neighborhoods of $x$ whose diameters shrink to $0$ as $n\to\infty$. Define a measure $\mu$ on $\CK(X)$ by setting $\mu = \sum_n\omega_n\delta_{B_n}$ for some weights $\omega_n > 0$ satisfying $\sum_n\omega_n = 1$.  Then $\mu$ assigns every finite subset of $X$ measure $0$ but  the singleton set $\{x\}$ lies in the support of $\mu$, and so the stronger condition does not hold.

For $k\geq 1$ and $\mu \in \CM_G(X)$, we write $\mu^{\otimes k}$ for the product measure $\mu \otimes \cdots \otimes \mu$ on $X^{k}$. 
If $\nu$ is a measure on $X^{k}$ that is invariant for the diagonal action of $G$, then the pushforward measure $(\rho_{k})_{*}(\nu)$ is a finitary IRC for $(X,G)$. Given $\mu \in \CM_G(X)$ and $k\geq 1$, we use  $\mu^{\CK}_{k}$ to denote the measure $(\rho_{k})_{*}(\mu^{\otimes k}) \in \CM_G(\CK(X))$.

\begin{proposition}\label{prop:purelyfinitaryIRCs}
If $G$ is amenable, then every finitary IRC is a pushforward of a measure on $\prod_{i=1}^{k}X$ invariant under the diagonal action.
\end{proposition}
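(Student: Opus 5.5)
Since $\mu$ is finitary, write $\mu = \sum_{k\ge 1} \mu_k$, where $\mu_k$ is the restriction of $\mu$ to the set $\CK_{=k}(X) = \CK_{\le k}(X)\setminus \CK_{\le k-1}(X)$ of subsets of cardinality exactly $k$. This set is Borel: $\CK_{\le k}(X)$ is compact, being the image of $\rho_k$. The statement should be read as saying that a finitary IRC is a (countable convex) combination of such pushforwards, as the introduction indicates. So it suffices to treat each $\mu_k$ separately, and to show that each nonzero $\mu_k$, normalized, equals $(\rho_k)_*\nu_k$ for some diagonally invariant probability $\nu_k$ on $X^k$.

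\textbf{Invariance of the pieces.} First I check that each $\mu_k$ is $G$-invariant. For a group $G$, each $g$ acts bijectively, so $g$ preserves cardinality and $\CK_{=k}(X)$ is invariant; hence $\mu_k$ is invariant. For a semigroup, $g$ can decrease cardinality, and then only the tails $\CK_{\le k}(X)$ satisfy $g(\CK_{\le k}(X))\subset \CK_{\le k}(X)$. In that case I would argue via preimages. The set $g^{-1}(\CK_{\le k}(X))$ contains $\CK_{\le k}(X)$, and both sets have the same $\mu$-measure, so the difference is null. This shows that $\CK_{>k}(X)$ is invariant modulo null sets, and hence so is $\CK_{=k}(X)$, which suffices for restricting the measure.

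\textbf{Lifting each piece.} Fix $k$ and consider the factor map $\rho_k\colon X^k\to \CK_{\le k}(X)$. This is a continuous equivariant surjection between compact metric systems, so the preliminaries give that $(\rho_k)_*\colon \CM_G(X^k)\to\CM_G(\CK_{\le k}(X))$ is surjective for amenable $G$. Applying this to the normalized $\mu_k$, viewed as a measure on $\CK_{\le k}(X)$, yields a diagonally invariant $\nu_k$ with $(\rho_k)_*\nu_k = \mu_k/\mu_k(\CK(X))$.

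\textbf{The surjectivity step.} This is the only nontrivial input, and I would prove it directly if needed. Take any lift $\tilde\nu$ of $\mu_k$ to $X^k$; one exists by the Hahn--Banach theorem, or by a Borel section of $\rho_k$. Then average $g_*\tilde\nu$ over a F\o lner sequence $F_n$ and take a weak-$*$ limit point $\nu$. This limit is invariant, and $(\rho_k)_*\nu=\lim \frac{1}{|F_n|}\sum_{g\in F_n} g_*\mu_k=\mu_k$ by continuity of $\rho_k$. A cleaner alternative is available: averaging $\tilde\nu$ over the symmetric group $S_k$ gives the canonical lift, and for a group action this lift is already invariant, so amenability is not even needed there.

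\textbf{Conclusion.} Finally,
\[
\mu=\sum_k \mu_k(\CK(X))\,(\rho_k)_*\nu_k .
\]
If only finitely many $k$ occur, this can be packaged as $(\rho_K)_*\nu$ on a single $X^K$ by padding coordinates, via the equivariant map $X^k\to X^K$ that repeats the last coordinate. Otherwise $\mu$ is a convex combination of pushforwards. The main obstacle I expect is the semigroup invariance issue in the first step; the lifting step is standard.
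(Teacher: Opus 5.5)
Your proof is correct and rests on the same key step as the paper's one-line argument: for amenable $G$, the factor map $\rho_k\colon X^k\to\CK_{\le k}(X)$ induces a surjection $(\rho_k)_*$ onto the invariant measures on $\CK_{\le k}(X)$. Your decomposition by cardinality (with the mod-null invariance argument for semigroups) supplies bookkeeping the paper omits, correctly showing that unbounded cardinalities only yield a convex combination of such pushforwards; your $S_k$-symmetrized lift is also valid, is essentially the fiber-averaging of Proposition~\ref{prop:lift}, and shows amenability is unnecessary for group actions.
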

\begin{proof} 
The map $\rho_{k} \colon \prod_{i=1}^{k}X \to \CK(X)$ defined in~\eqref{def:rho-k} is a factor map surjecting onto $\CK_{\le k}(X)$, so the pushforward map on measures is surjective onto the set of $G$-invariant Borel probability measures on $\CK_{\le k}(X)$.
\end{proof}

We note two basic properties, which follow immediately from the definitions.

\begin{proposition}
\label{prop:basics}
Suppose $(X,G)$ is a system.
\begin{enumerate}
\item
\label{item:basic-one}
A subset $Y \subset X$ is invariant if and only if $Y$ is a fixed point of the action of $G$ on $\CK(X)$. 
In particular, $(X,G)$ is minimal if and only if the only fixed point of $(\CK(X),G)$ is $X$.
\item
\label{item:basic-three}
The system $(X,G)$ is totally minimal if and only if the only periodic point of $(\CK(X),G)$ is $X$.
\end{enumerate}
\end{proposition}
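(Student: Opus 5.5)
Both parts come down to unwinding the definition of the induced action $g\cdot A = g(A)$ on $\CK(X)$, together with a comparison between finite index subgroups and periodic orbits.

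\textbf{Part (1).} Observe that $Y\in\CK(X)$ is a fixed point of the induced action exactly when $g(Y)=Y$ for all $g\in G$. This is verbatim the definition of $Y$ being invariant, so the first sentence needs no further work. For the ``in particular,'' I will use the standard characterization: $(X,G)$ is minimal if and only if the only nonempty closed invariant subset is $X$. One direction is that the orbit closure of any point is a nonempty closed invariant set. For semigroups, I will check that $\overline{Gx}$ is invariant using surjectivity: $g(\overline{Gx})=\overline{gGx}\subset\overline{Gx}$ by compactness, and minimality then forces equality. Conversely, any nonempty closed invariant $Y$ contains $\overline{Gy}$ for $y\in Y$. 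With this characterization, the fixed points of $(\CK(X),G)$ are exactly the nonempty compact invariant sets, and these reduce to $\{X\}$ precisely when the system is minimal.

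\textbf{Part (2).} This part requires $G$ to be a group, since total minimality is defined via finite index subgroups. I will take a periodic point of $(\CK(X),G)$ to mean a point $A$ with finite orbit, equivalently a point whose stabilizer $H=\{g: gA=A\}$ has finite index.

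For the forward direction, suppose the system is totally minimal and $A$ has finite orbit. Then the stabilizer $H$ has finite index, and $A$ is a nonempty closed $H$-invariant set. Since the $H$-action is minimal, part (1) applied to $H$ gives $A=X$.

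For the converse, suppose $H\le G$ has finite index and $Y$ is a nonempty closed $H$-invariant set. Its stabilizer contains $H$, so the orbit $GY$ is finite and $Y$ is a periodic point. By hypothesis $Y=X$, so the $H$-action is minimal by the characterization in part (1).

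\textbf{Main obstacle.} I do not expect a real difficulty; the only delicate points are definitional. Part (2) needs $G$ to be a group and needs the convention that ``periodic'' means finite orbit. If the paper instead means periodic under a $\mathbb{Z}$-action, the same argument applies with $H=n\mathbb{Z}$.
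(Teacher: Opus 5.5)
\textbf{Verdict.} For groups your argument is correct. It is exactly the definitional unwinding the paper intends, since the paper gives no proof beyond saying both parts ``follow immediately from the definitions.'' Part (2), handled through stabilizers of finite orbits, is fine, and you are right that it only makes sense for groups.

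\textbf{The gap: the semigroup case of part (1).} The problem is the sentence handling semigroups in the ``in particular'' of part (1). The orbit-closure fact is needed in the direction ``$X$ is the only fixed point of $(\CK(X),G)$ $\Rightarrow$ $(X,G)$ is minimal.'' There you need $\overline{Gx}$ to be a fixed point, i.e.\ $g(\overline{Gx})=\overline{Gx}$ for all $g$. What you actually show is only forward invariance, $g(\overline{Gx})=\overline{g(Gx)}\subset\overline{Gx}$; this uses $gG\subset G$, not surjectivity. Saying ``minimality then forces equality'' is circular, because minimality is what you are trying to prove in that direction.

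\textbf{Why it cannot be patched in general.} Let the free monoid on $a,b$ act on the Cantor set $\partial F_2$ by left multiplication. This satisfies all of the paper's standing assumptions on semigroup actions.
\begin{itemize}
\item Each generator acts by a homeomorphism. So a compact $Y$ with $aY=Y=bY$ is also invariant under $a^{-1}$ and $b^{-1}$, hence under $F_2$. Minimality of $F_2$ on $\partial F_2$ then forces $Y=\partial F_2$.
\item On the other hand, for every positive word $w$, the point $w\cdot a^{-\infty}$ is either $a^{-\infty}$ or begins with a positive letter. So this monoid orbit misses the open cylinder of boundary points beginning with $b^{-1}$, and the monoid action is not minimal.
\end{itemize}

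\textbf{How to fix it.} Restrict the ``in particular'' to groups, where $gG=G$ gives invariance of orbit closures directly. Alternatively, restrict to commutative semigroups such as the multiplicative $\N$, the only semigroup the paper actually uses. In the commutative case the argument goes as follows. Suppose $\overline{Gx}\neq X$. By Zorn's lemma and compactness, choose a minimal nonempty closed $M\subset\overline{Gx}$ with $hM\subset M$ for all $h$. For each $g$, the set $gM$ is nonempty, closed, contained in $M$, and satisfies $h(gM)=g(hM)\subset gM$. Minimality of $M$ gives $gM=M$, so $M$ is a proper fixed point of $(\CK(X),G)$.
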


It follows immediately that a totally minimal action has no atomic IRCs apart from $\delta_{X}$.

\begin{proposition}
\label{prop:pass-to-factor}
Suppose $G$ is amenable, $(X,G)$ and $(Y,G)$ are systems, and $\pi \colon X \to Y$ is a factor map. If $(Y,G)$ has nontrivial nonfinitary IRCs, then so does $(X,G)$.
\end{proposition}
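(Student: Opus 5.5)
The plan is to lift a nontrivial nonfinitary IRC on $Y$ to one on $X$ through the map induced by $\pi$ on compact sets, using the surjectivity of pushforward on invariant measures for amenable $G$ recalled in the background section.

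First, let $\pi^{\CK} \colon \CK(X) \to \CK(Y)$ be the induced map $A \mapsto \pi(A)$. It has three properties:
\begin{enumerate}
\item It is well defined, since the continuous image of a nonempty compact set is nonempty and compact.
\item It is continuous for the Hausdorff metric. By uniform continuity of $\pi$, if $d_H(A,B)<\delta$ then $d_H(\pi(A),\pi(B))<\varepsilon$.
\item It is $G$-equivariant, since $\pi(gA) = g\pi(A)$.
\end{enumerate}
It is also surjective. Given $B \in \CK(Y)$, the set $\pi^{-1}(B)$ is closed in $X$, hence compact. It is nonempty and satisfies $\pi(\pi^{-1}(B)) = B$ because $\pi$ is onto. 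Thus $\pi^{\CK}$ is a factor map from the system $(\CK(X),G)$ onto $(\CK(Y),G)$.

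Second, let $\nu$ be a nontrivial nonfinitary IRC for $(Y,G)$. Since $G$ is amenable, the induced map $\pi^{\CK}_{*} \colon \CM_G(\CK(X)) \to \CM_G(\CK(Y))$ is surjective. Hence there is an IRC $\mu$ for $(X,G)$ with $\pi^{\CK}_{*}\mu = \nu$.

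Third, we check that $\mu$ has the required properties.
\begin{enumerate}
\item $\mu$ is nonfinitary. If $A\in\CK_{\fin}(X)$ then $\pi(A)\in\CK_{\fin}(Y)$, so $\CK_{\fin}(X)\subset(\pi^{\CK})^{-1}(\CK_{\fin}(Y))$. Therefore $\mu(\CK_{\fin}(X)) \le \nu(\CK_{\fin}(Y)) = 0$.
\item $\mu$ is nontrivial. If $\mu=\delta_X$, then $\nu=\pi^{\CK}_{*}\delta_X=\delta_{\pi(X)}=\delta_Y$, contradicting the nontriviality of $\nu$.
\end{enumerate}

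There is no real obstacle here. The only points needing care are the surjectivity of $\pi^{\CK}$ and that the amenable-surjectivity fact applies to the induced systems on $\CK(X)$ and $\CK(Y)$. These are compact metric spaces on which each $g$ acts continuously and surjectively, since $g$ is onto $X$ and $gA$ ranges over all of $\CK(X)$ via $A=g^{-1}(B)$. So the standard Krylov–Bogoliubov and Hahn–Banach-type surjectivity statement for factor maps of amenable actions applies verbatim.
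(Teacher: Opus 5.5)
Your proposal is correct and matches the paper's proof: lift the IRC along the factor map $\pi^{\CK}$ using amenability, get nonfinitarity because $\pi^{\CK}$ sends finite sets to finite sets, and get nontriviality because $\pi(X)=Y$. The differences are cosmetic. You argue nontriviality by noting $\pi^{\CK}_{*}\delta_X=\delta_Y$, whereas the paper bounds the lifted measure's mass at $\{X\}$ by the original measure's mass at $\{Y\}$. You also check that $\pi^{\CK}$ is surjective, which the paper takes for granted.
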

\begin{proof}
The map $\pi \colon X \to Y$ induces a factor map 
$\pi^{\CK} \colon \CK(X) \to \CK(X)$ of the respective $G$-actions. Suppose $\mu$ is a nontrivial nonfinitary IRC for $(Y,G)$. Since $G$ is amenable, there exists $\nu \in \CM_{G}(\CK(X))$ such that $\pi^{\CK}_{*}(\nu) = \mu$. We have $(\pi^{\CK})^{-1}(\CK_{\inf}(Y)) \subset \CK_{\inf}(X)$ and $\mu(\CK_{\inf}(Y)) = 1$, so
$$\nu(\CK_{\inf}(X)) \ge \nu((\pi^{\CK})^{-1}(\CK_{\inf}(Y))) = \pi^{\CK}_{*}(\nu)(\CK_{\inf}(Y)) = \mu(\CK_{\inf}(Y)) = 1$$
and $\nu$ is nonfinitary. Since $\mu$ is nontrivial, we have $\mu(\{Y\}) < 1$. As $X \in (\pi^{\CK})^{-1}(\{Y\})$, it follows that
$$\nu(\{X\}) \le \nu((\pi^{\CK})^{-1}(\{Y\})) = \pi^{\CK}_{*}(\nu)(\{Y\})=\mu(\{Y\})<1$$
and so $\nu$ is also nontrivial. 
\end{proof}

\begin{proposition}
\label{prop:act-by-isometry}
If $G$ is an amenable group acting by isometries on an infinite compact space $X$, then $(X,G)$ has nontrivial nonfinitary IRCs.
\end{proposition}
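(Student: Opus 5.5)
The plan is to produce a nontrivial nonfinitary IRC as an invariant measure supported on the orbit closure of a fixed infinite proper compact set, using isometries to control the orbit. Since $X$ is an infinite compact metric space, it has an accumulation point $x_0$. Choose $\varepsilon>0$ smaller than $\diam(X)/3$ and set $K = \overline{B_{\varepsilon}(x_0)}$, or more carefully the closed ball $\{x : d(x,x_0)\le \varepsilon\}$. Because $x_0$ is nonisolated, $K$ is infinite; since $\diam(K)\le 2\varepsilon<\diam(X)$, we have $K\neq X$.

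Next, consider the orbit closure $Z=\overline{\{gK : g\in G\}}\subset \CK(X)$. This is a compact $G$-invariant subset of $\CK(X)$, and the induced action of $G$ on $\CK(X)$ is continuous. Since $G$ is amenable, the Krylov--Bogoliubov Theorem gives a $G$-invariant Borel probability measure $\nu$ on $Z$, which is an IRC. It remains to check that every element of $Z$ is infinite and different from $X$.

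For the first property, each $g$ is an isometry, so $\diam(gK)=\diam(K)\le 2\varepsilon$. The map $A\mapsto\diam(A)$ is continuous in the Hausdorff metric, so every $A\in Z$ has $\diam(A)\le 2\varepsilon<\diam(X)$. Hence $X\notin Z$, and therefore $\nu(\{X\})=0$.

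For infiniteness, I would use an isometry-invariant quantity that separates infinite from finite sets and is semicontinuous in the right direction. One option is the following. Since $K$ is infinite and compact, for each $n$ it contains $n$ distinct points, which are pairwise at distance at least some $\eta_n>0$. Because the $g$ are isometries, every $gK$ contains an $\eta_n$-separated set of size $n$. This property passes to Hausdorff limits with a slightly smaller separation: if $A_j\to A$ and each $A_j$ contains an $\eta_n$-separated $n$-set, then after passing to a convergent subsequence of these $n$-tuples, the limit points lie in $A$ and remain $\eta_n$-separated. Thus every $A\in Z$ contains $n$ distinct points for every $n$, so $A$ is infinite. This gives $Z\subset\CK_{\inf}(X)$, so $\nu$ is nonfinitary, and it is nontrivial since it gives no mass to $X$.

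The main obstacle is this limit argument: limits of infinite sets can be finite in general, and the isometry hypothesis is exactly what keeps the separation uniform along the orbit. The other points need only minor care: the choice $\varepsilon<\diam(X)/3$ ensures $K\ne X$, and the isometry hypothesis means the action is by homeomorphisms, so surjectivity holds.
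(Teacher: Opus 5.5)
Your proof is correct and follows essentially the same route as the paper: both fix a proper infinite compact set, use quantities preserved by isometries to keep its orbit uniformly away from $X$ and from each $\CK_{\le m}(X)$, and take an invariant measure by F\o{}lner averaging (your Krylov--Bogoliubov step). The paper tracks $d_H(\cdot,X)$ and $d_H(\cdot,\CK_{\le m}(X))$ where you track diameter and $\eta_n$-separated $n$-sets. Your explicit choice of a small closed ball around an accumulation point also supplies the existence of such a set, which the paper takes for granted; note that in your limit argument the separation $\eta_n$ is preserved exactly, not merely approximately.
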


\begin{proof}
Let $A$ be a proper infinite compact subset of $X$. Then there exists some $\delta > 0$ such that $d_{H}(g(A),X) = d_{H}(A,X) > \delta$ for every $g \in G$. Let $(F_{n})_{n\in\N}$ be a F\o{}lner sequence for $G$ and let $\mu$ be any weak* limit of $\frac{1}{|F_{n}|}\sum_{g \in F_{n}}\delta_{g(A)}$. Since $d_{H}(g(A),X) > \delta$ for all $g \in G$, it follows that $\mu$ is nontrivial.   Moreover, given $m \ge 1$ there exists $\varepsilon_{m}> 0$ such that $d_{H}(A,\CK_{\le m}(X)) \ge \varepsilon_{m}$ and hence $d_{H}(g(A),\CK_{\le m}(X)) \ge \varepsilon_{m}$ for all $g \in G$ as well. It follows that $\mu(\CK_{\le m}(X)) = 0$ for every $m \ge 1$, and hence $\mu$ is nonfinitary. 
\end{proof}

For example, suppose $T \colon \mathcal{O}_{n} \to \mathcal{O}_{n}$ is an $n$-adic odometer. Then $\mathcal{O}_{n}$ is a Cantor set and $\CK(\mathcal{O}_{n})$ is also a Cantor set (see~\cite[Section 8]{choquet}). Moreover, the induced action of $T$ on $\CK(\mathcal{O}_{n})$ is also an isometry. Since any isometry of a Cantor system is a union of its minimal components~\cite[Corollary 10, Chapter 1 and Theorem 2, Chapter 2]{Auslander} and each minimal component is, up to topological conjugacy, an odometer, the induced action of $T$ on $\CK(\mathcal{O}_{n})$ decomposes into odometers. Thus in this case, every IRC is a combination of Haar measures on these odometers.

The existence of nonfinitary invariant random compacts for a system $(X, G)$, roughly speaking, concerns the 
existence of infinite compact sets $Y \subset X$
and frequencies of elements $g$ for which $gY$ becomes either $\varepsilon$-dense or $\varepsilon$-close to finite sets.  
More precisely, for an amenable group $G$ with F\o lner sequence $(F_n)_{n\in\N}$ acting on $X$, infinite compact set $Y\subset X$, and $\varepsilon > 0$, define  
\begin{equation}
    \label{def:Dn}
    \mathcal{D}_n(Y, \varepsilon) = 
    \frac{1}{|F_n|}\big\vert
    \{g\in F_n: d(gY, X) \geq  \varepsilon \}
    \big\vert.
\end{equation}
We also define for each $r \ge 1$
\begin{equation}
    \label{def:En}
    \mathcal{E}_n(Y, r, \varepsilon) = 
    \frac{1}{|F_n|}\big\vert
    \{g\in F_n: d(gY, \CK_{\le r}(X)) \geq  \varepsilon \}
    \big\vert.
\end{equation}

\begin{proposition}
\label{prop:amenable-stretch}
Suppose an amenable group $G$ acts on $X$. If the system $(X,G)$ admits a nontrivial nonfinitary IRC, then for every $0 < c \le 1$ there exists an infinite compact set $Y \subset X$ and $\varepsilon_{r} > 0$ for $1 \le r \le \infty$ satisfying
$$\lim_{n \to \infty} \mathcal{E}_{n}(Y,r,\varepsilon_{r}) > 1-c\quad \text{ and } \quad  \lim_{n \to \infty} \mathcal{D}_{n}(Y,\varepsilon_{\infty}) > 0.$$
\end{proposition}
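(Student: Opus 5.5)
The idea is to reduce to an ergodic nontrivial nonfinitary IRC, choose the thresholds $\varepsilon_r$ from that measure, and then take $Y$ to be a generic point for a pointwise ergodic theorem.

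\textbf{Reduction to an ergodic IRC.} Let $\mu$ be a nontrivial nonfinitary IRC for $(X,G)$ and consider its ergodic decomposition $\mu=\int \mu_\omega\,d\omega$. The set $\CK_{\inf}(X)$ is $G$-invariant and has $\mu$-measure one. Hence almost every ergodic component $\mu_\omega$ is again nonfinitary. If almost every component were $\delta_X$, then $\mu=\delta_X$, so some component is nontrivial. We may therefore assume $\mu$ is ergodic.

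Since $\{X\}$ is a $G$-invariant Borel set, ergodicity gives $\mu(\{X\})\in\{0,1\}$. Nontriviality then forces $\mu(\{X\})=0$.

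\textbf{Choice of the $\varepsilon_r$.} For finite $r$, nonfinitarity gives $\mu(\CK_{\le r}(X))=0$. Since $\CK_{\le r}(X)$ is compact, $\mu(\{K: d_H(K,\CK_{\le r}(X))<\varepsilon\})\to 0$ as $\varepsilon\to 0$. The sets $\{K: d_H(K,\CK_{\le r}(X))=\varepsilon\}$ are disjoint for distinct $\varepsilon$, so all but countably many $\varepsilon$ give a $\mu$-null level set. We choose such an $\varepsilon_r$ with
$$\mu(\{K: d_H(K,\CK_{\le r}(X))\ge\varepsilon_r\})>1-c.$$
Similarly, $\mu(\{X\})=0$ implies $\mu(\{K : d_H(K,X)\ge\varepsilon\})\to 1$ as $\varepsilon\to0$. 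We pick $\varepsilon_\infty$ with this set of positive measure and with $\mu$-null level set.

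\textbf{Pointwise ergodic theorem and choice of $Y$.} Apply the pointwise ergodic theorem for amenable groups (Lindenstrauss) to the countably many indicator functions
$$\one_{\{d_H(\cdot,\CK_{\le r}(X))\ge\varepsilon_r\}}\ (r\ge 1) \qquad\text{and}\qquad \one_{\{d_H(\cdot,X)\ge\varepsilon_\infty\}}.$$
For $\mu$-almost every $K$, the averages over $F_n$ of these functions along the orbit of $K$ converge to their $\mu$-integrals. The theorem applies along tempered F\o lner sequences, so we either pass to a tempered subsequence (which exists by the remark in Section~\ref{sec:background}) or take the F\o lner sequence in the definitions~\eqref{def:Dn} and~\eqref{def:En} to be tempered. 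The null-boundary choice of the thresholds would only be needed if one argued instead through weak$^*$ convergence of the empirical measures; for the pointwise theorem it is harmless.

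Intersecting these countably many full-measure sets with $\CK_{\inf}(X)$, we pick a point $Y$ in the intersection. Then $Y$ is an infinite compact set, and along the F\o lner sequence
$$\mathcal{E}_n(Y,r,\varepsilon_r)\to\mu(\{d_H(\cdot,\CK_{\le r}(X))\ge\varepsilon_r\})>1-c$$
for each $r$, while
$$\mathcal{D}_n(Y,\varepsilon_\infty)\to\mu(\{d_H(\cdot,X)\ge\varepsilon_\infty\})>0.$$

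\textbf{Main obstacle.} The delicate point is the ergodic-theorem step: making sure the limits exist along the given F\o lner sequence, which requires temperedness, and handling countably many $r$ at once, which is why $Y$ is taken as a generic point for all of them simultaneously. The reduction to the ergodic case is what makes $\mu(\{X\})=0$ available and hence gives $\lim_n \mathcal{D}_n(Y,\varepsilon_\infty)>0$.
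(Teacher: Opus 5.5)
Your argument is correct and follows the same route as the paper: pick the thresholds $\varepsilon_r$ from the IRC using $\mu(\CK_{\le r}(X))=0$ and $\mu(\{X\})<1$, then take $Y$ to be a generic point for Lindenstrauss's pointwise ergodic theorem along a tempered F\o lner sequence. The one real difference is your preliminary reduction to an ergodic component, which the paper's proof omits.

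That reduction is genuinely needed. For non-ergodic $\mu$, the ergodic averages converge to conditional expectations onto the invariant $\sigma$-algebra, not to $\mu$-measures. The paper only requires $\mu(\{K: d_H(K,\CK_{\le r}(X))\ge\varepsilon_r\})>1-c$, and this alone does not produce a single $Y$ at which all countably many limits exceed $1-c$ and the $\mathcal{D}_n$-limit is positive. (An alternative fix keeps $\mu$ non-ergodic and chooses the $\varepsilon_r$ with summable deficits $1-\mu(\cdot)<c\,\mu(D)2^{-r-1}$, then applies Markov's inequality to the conditional expectations; your ergodic decomposition is cleaner.)

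Your closing remark misplaces the role of ergodicity. The bound $\mu(\{X\})<1$ already lets you choose $\varepsilon_\infty$ with $\mu(\{K: d_H(K,X)\ge\varepsilon_\infty\})>0$, exactly as the paper does. What ergodicity actually buys is that the pointwise limits equal these $\mu$-integrals. You use that correctly when you say the averages converge to their $\mu$-integrals.
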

\begin{proof}
Let $\mu$ be a nontrivial nonfinitary IRC for $(X,G)$ and $0 < c \le 1$. Since $\mu(\{X\}) < 1$ and $\mu(\CK_{\le r}(X)) = 0$ for every $1 \le r < \infty$, there exists $\varepsilon_{r} > 0$ such that
$$\mu(\CK(X)\setminus B_{\varepsilon_{\infty}}(X)> 0 \quad \text{ and } \quad \mu(\CK(X) \setminus B_{\varepsilon_{r}}(\CK_{\le r}(X))) > 1-c.$$
Letting $(F_n)_{n\in\N}$ be a tempered F\o lner sequence, the Pointwise Ergodic Theorem~\cite{lindenstrauss} gives the existence of an infinite compact set $Y\subset X$ satisfying
\begin{equation*}
    \lim_{n \to \infty} \mathcal{E}_{n}(Y,r,\varepsilon_{r}) > 1-c
 \text{ for every } r \ge 1 \quad \text{ and } \quad 
\lim_{n \to \infty} \mathcal{D}_{n}(Y,\varepsilon_{\infty}) > 0. \qedhere
\end{equation*} 
\end{proof}

A natural way to try to construct nontrivial nonfinitary IRCs is by starting with an infinite compact set $Y$ and considering weak* limits of $\frac{1}{|F_{m}|}\sum_{g \in F_{m}}\delta_{gY}$. Interesting cases where such a process can only produce measures with nontrivial finitary component can be seen in examples due to Berend and Peres~\cite{Ber-Per} for the multiplicative semigroup action of $\mathbb{N}^{2}$ given by $\times 2$ and $\times 6$ on the torus $\mathbb{T}$. For instance, letting $Y = \{2^{-r(m)}\}_{m \in \mathbb{N}} \cup \{0\}$ where $r(m)$ is a rapidly growing sequence (such as $r(m) = 2^m$), then the only accumulation points of $Y$ in $\CK(\mathbb{T})$ are subsets of the form $\{0,x\}$ for $x \in \mathbb{T}$.

\subsection{IC-rigidity}

\begin{definition}
The action of $G$ on $X$ is \emph{infinite compact rigid} if the only nonfinitary IRC is $\delta_{X}$, and we refer to such an action as  \emph{IC-rigid}. 
\end{definition}

In other words, an action of $G$ on $X$ is IC-rigid if and only if for every IRC $\mu$, the decomposition into of $\mu$ into its finitary and infinitary parts has the form $\mu = \mu_{F} + c \cdot \delta_{X}$ for some $c \ge 0$.  
It follows immediately from Proposition~\ref{prop:pass-to-factor} that for actions of amenable groups, every factor of an IC-rigid system is also IC-rigid.  
We defer examples of IC-rigid actions until  Section~\ref{sec:ICridigexamples}, after we have developed further tools for building such actions. 

Recall that the action of $G$ on $X$ is {\em almost minimal} if every proper compact $G$-invariant set is finite.
\begin{proposition}\label{prop:icrigidweakmixing}
If $(X,G)$ is IC-rigid, then $(X,G)$ is almost minimal. If in addition $G$ is amenable, then $(X,G)$ has trivial maximal equicontinuous factor. In particular, if $G$ is abelian and $(X,G)$ is minimal, then $(X,G)$ is topologically weak mixing.
\end{proposition}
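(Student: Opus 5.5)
For almost minimality, let $Y\subset X$ be a proper compact $G$-invariant set. By Proposition~\ref{prop:basics}\eqref{item:basic-one}, $Y$ is a fixed point of the induced action on $\CK(X)$, so $\delta_Y$ is an IRC. If $Y$ were infinite, then $\delta_Y(\CK_{\fin}(X))=0$, so $\delta_Y$ would be nonfinitary. IC-rigidity would then force $\delta_Y=\delta_X$, i.e.\ $Y=X$, contradicting properness. Hence $Y$ is finite.

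For the equicontinuous factor, I will argue by contradiction. Suppose $G$ is amenable and $\pi\colon X\to Z$ is a factor map onto a nontrivial maximal equicontinuous factor $(Z,G)$. First I reduce to an isometric action. Since $(Z,G)$ is equicontinuous, the metric $d'(z,w)=\sup_{g\in G}d_Z(gz,gw)$ is compatible with the topology of $Z$ and $G$ acts on $(Z,d')$ by isometries. To apply Proposition~\ref{prop:act-by-isometry} I need $Z$ to be infinite, and this is where the main care lies.

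\begin{itemize}
\item \emph{$X$ almost minimal.} If $Z$ is finite but has more than one point, I first claim that $X$ cannot be minimal in this situation. Indeed, $X$ is almost minimal by the first part, and with an infinite $X$ and a finite nontrivial factor the fibres $\pi^{-1}(z)$ are clopen and permuted by $G$.
\item \emph{Stabilizer of a fibre.} The stabilizer $H$ of one fibre is a finite-index subgroup, and $\pi^{-1}(z)$ is an infinite proper compact set invariant under $H$. Then $\frac{1}{[G:H]}\sum_{gH}\delta_{g\pi^{-1}(z)}$ is a nontrivial nonfinitary IRC. This contradicts IC-rigidity.
\item \emph{Semigroup caveat.} For semigroups one must take care with inverses; since we are in the group case here, the orbit of the fibre under $G$ is a finite set of fibres and the average is $G$-invariant.
\end{itemize}

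So $Z$ is infinite. Proposition~\ref{prop:act-by-isometry} then gives a nontrivial nonfinitary IRC on $Z$, and Proposition~\ref{prop:pass-to-factor} lifts it to a nontrivial nonfinitary IRC on $X$. This again contradicts IC-rigidity, so the maximal equicontinuous factor is trivial.

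Finally, suppose $G$ is abelian and $(X,G)$ is minimal. By the standard characterization for minimal actions of abelian groups (Keynes--Robertson / Petersen), $(X,G)$ is topologically weak mixing if and only if it has no nontrivial equicontinuous factor. Abelian groups are amenable, so the previous paragraph gives the conclusion.

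The main obstacle is the finite-factor case in the second paragraph. I expect the finite-orbit averaging argument above to handle it, but the definition of ``trivial'' for the factor has to be read carefully: a one-point factor.
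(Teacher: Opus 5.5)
Your proof is correct. For almost minimality, for the case of an infinite equicontinuous factor, and for the abelian case it follows the paper's route: take $\delta_Y$ for an invariant $Y$; pass to a $G$-invariant compatible metric on the maximal equicontinuous factor; apply Proposition~\ref{prop:act-by-isometry} and lift with Proposition~\ref{prop:pass-to-factor}; then quote the characterization of weak mixing for minimal abelian actions.

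The real difference is your finite-factor case, which the paper's proof does not treat. The paper applies Proposition~\ref{prop:act-by-isometry} to the equicontinuous factor without checking that it is infinite. A finite factor carries no nonfinitary IRC at all, so lifting via Proposition~\ref{prop:pass-to-factor} cannot work there, and your extra argument is genuinely needed. Your fibre-averaging fix is sound. It is really Theorem~\ref{th:finite-fiber} applied to the uniform measure on a finite $G$-orbit in $Z$, and it does not even use amenability.

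Two small repairs:
\begin{enumerate}
\item Choose $z$ so that $\pi^{-1}(z)$ is infinite, which is possible since $X$ is infinite and $Z$ is finite. Then note that every $g\pi^{-1}(z)=\pi^{-1}(gz)$ is infinite because $g$ is a homeomorphism, so the average is nonfinitary. It is nontrivial because $|Z|\ge 2$ makes every fibre proper.
\item Delete the sentence claiming $X$ cannot be minimal. The reason you give does not imply it: the $2$-adic odometer is minimal and factors onto a rotation of $\Z/2\Z$. Your contradiction comes from the second bullet anyway.
\end{enumerate}
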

\begin{proof}
The first part follows from Part~\ref{item:basic-one} of Proposition~\ref{prop:basics}. If $(X,G)$ has a nontrivial maximal equicontinuous factor, then it has a nontrivial factor onto an isometry (any equicontinuous action can be made into an isometric action by changing to an equivalent metric; see~\cite[Chapter 2]{Auslander}). Combining Propositions~\ref{prop:pass-to-factor} and~\ref{prop:act-by-isometry}, it follows that $(X,G)$ has a nontrivial nonfinitary IRC. For $(X,G)$ minimal with abelian $G$, topologically weak mixing is equivalent to trivial maximal equicontinuous factor (see~\cite[Chapter 9]{Auslander}). 
\end{proof}

As a consequence, a distal group action on an infinite compact space always has nontrivial nonfinitary IRCs.

\subsection{Examples of IRCs}
\label{sec:examples-of-IRC}
We start with a simple example of a nonfinitary and nonatomic IRC that does not arise from isometries. 
\begin{example}[Skew extensions]
Suppose $\varphi \colon \mathbb{T} \to \mathbb{R}$ is a continuous function, $\alpha \in \mathbb{R}$ is irrational, and consider the skew extension $\Phi \colon \mathbb{T}^{2} \to \mathbb{T}^{2}$ defined by $\Phi(x,y) = (x+\alpha,y+\varphi(x))$. The map $f \colon \mathbb{T} \to \CK(\mathbb{T}^{2})$ defined by $f(x) = \{(x,s) : 0 \le s \le 1\}$ is continuous and $f \circ T_{\alpha} = \Phi \circ f$ where $T_{\alpha}\colon \T\to\T$ is the rotation by $\alpha$. 
The pushforward $f_{*}(\lambda)$ of the Lebesgue measure $\lambda$ is a nonatomic nonfinitary IRC for $(\mathbb{T}^{2},\Phi)$.
\end{example}  

Even for systems with simple dynamics, the space of IRCs can be rich.
\begin{example}[Sunny-side up shift]
   The sunny-side up shift over the alphabet $\{0,1\}$ is defined by setting ${X = \{ x \in \{0,1\}^{\mathbb{Z}} : x \textrm{ has at most a single } 1 \}}$ and taking  $G = \mathbb{Z}$ to be the action generated by the shift $\sigma$  on $X$. Given $i \in \mathbb{Z}$, let $x_{i}$ denote the point with a $1$ at position $i$ and $0$ at all other entries and let $x_{e}$ denote the fixed point of all $0$s. For $y \in X$, let $\supp(y) = \{i \in \mathbb{Z} : y_{i} = 1\}$.
    
   Let  $F \colon \{0,1\}^{\mathbb{Z}} \to \CK(X)$ be the function defined 
    by mapping $y \in \{0,1\}^\Z$ to 
    $A_y\in\CK(X)$, where 
\[
A_{y} =
\begin{cases} 
      \{x_{e}\} & \textrm{ if }  y = 0^{\infty}\\
      \{x_{i} : i \in \supp(y)\}  & \textrm{ if supp}(y) \textrm{  is finite } \\
       \{x_{i} : i \in \supp(y)\} \cup \{x_{e}\} & \textrm{ if } \supp(y) \textrm{ is infinite}.
   \end{cases}
\]

It is easy to check that $F$ is continuous. 
Moreover, $F$ is equivariant with respect to the shift actions on $\{0,1\}^{\mathbb{Z}}$ and $\CK(X)$ respectively. 

The map $F$ is close to being a bijection: it is injective, and its image contains all of $\CK(X)$ other than finite subsets that contain the point $x_{e}$.  However, we can extend $F$ to a bijection by extending $\{0,1\}^{\mathbb{Z}}$ to a space $Y$ 
defined as follows: take $\{0,1\}^{\mathbb{Z}}$ and replace each point $y$ which has finitely many $1$s by a pair $\tilde{y} = \{y_{1},y_{2}\}$. Define the topology on $Y$ by the usual topology in the $\{0,1\}^{\mathbb{Z}}$ direction and further having $d(y_{1},y_{2}) \to 0$ if $y \to 0^{\infty}$. The shift map $\sigma$ on $\{0,1\}^{\mathbb{Z}}$ extends to a homeomorphism $\tilde{\sigma}$ on $Y$ by $\tilde{\sigma}(y_{i}) = \sigma(y)_{i}$. 
We then extend $F$ by setting $F(y_{1}) = F(y)$ and $F(y_{2}) = F(y) \cup \{x_{e}\}$. This extended $\tilde{F}$ defines a topological conjugacy between $\tilde{\sigma}$ on $Y$ and the map induced by the shift map on $\CK(X)$.

It follows that the pushforward via $\tilde{F}$ of any shift-invariant Borel probability measure on $\{0,1\}^{\mathbb{Z}}$ gives rise to an invariant random compact for the shift on $X$. Note that the pushforward of the delta mass at the fixed point of all $0$s is  $\delta_{x_{e}}$ in $\CM_G(\CK(X))$, 
and the pushforward of the delta mass at the fixed points of all $1$s is $\delta_{X}$.
\end{example}

A similar construction works for any heteroclinic point. It follows that if a $\mathbb{Z}$-action $T \colon X \to X$ contains either a homoclinic or heteroclinic point, then the system $(X,T)$ contains an abundance of nontrivial nonfinitary IRCs.

\begin{example}[Irrational rotation]
Let $\alpha \in \mathbb{R} \setminus \mathbb{Q}$ and $T_{\alpha} \colon \mathbb{T} \to \mathbb{T}$ by $T_{\alpha}(x) = x + \alpha  \mod 1$ be the associated irrational rotation. Then every IRC for $(\mathbb{T},T_{\alpha})$ is a combination of Haar measures on tori. For instance, given $0 < c < 1$, one can check that the action of $T_{\alpha}^{\CK}$ on the orbit closure of the interval $[0,c]$ in $\CK(\mathbb{T})$ under $T_{\alpha}^{\CK}$ is topologically conjugate to $T_{\alpha}$ on $\mathbb{T}$.  
More generally, $T_{\alpha}$ induces an isometry $\CK(T_{\alpha}) \colon \CK(\mathbb{T}) \to \CK(\mathbb{T})$. This isometry decomposes as a disjoint union of its minimal components $\{\mathcal{D}_{i}\}_{i \in I}$ by~\cite[Corollary 10, Chapter 1 and Theorem 2, Chapter 2]{Auslander}. 

We claim that each of these components $\mathcal{D}_{i}$ is topologically conjugate to rotation by $\alpha$ on $\mathbb{T}$. Given such a component $\mathcal{D} \subset \CK(\mathbb{T})$, let $Y \in \mathcal{D}$. To define a map $\Psi \colon \mathcal{D} \to \mathbb{T}$, we first fix a point $y \in Y$ and set $\Psi (T_{\alpha}^{\CK})^{m}(Y) = T^{m}_{\alpha}(y)$. 
Note then that $\Psi \circ (T_{\alpha}^{\CK})^{m}(Y) = T^{m}_{\alpha} \circ \Psi(Y)$. Then $\Psi$ is uniformly continuous on the dense orbit $\{(T_{\alpha}^{\CK})^{m}(Y)\}_{m \in \mathbb{Z}}$ and extends to a continuous function $\tilde{\Psi} \colon \mathcal{D} \to \mathbb{T}$ satisfying $\tilde{\Psi} \circ T_{\alpha}^{\CK} = T_{\alpha} \circ \tilde{\Psi}$. To see that $\tilde{\Psi}$ is surjective, given $z \in \mathbb{T}$, there exists $n_{k}$ such that $T_{\alpha}^{n_{k}}(y) \to z$. By compactness there exists an accumulation point $Z$ of $(T_{\alpha}^{\CK})^{n_{k}}(Y)$ in $\mathcal{D}$, and then $\tilde{\Psi}(Z) = z$. To see that $\tilde{\Psi}$ is injective, suppose $z_{1} = \tilde{\Psi}(Z_{1}) = \tilde{\Psi}(Z_{2}) = z_{2}$ for some $Z_{1},Z_{2} \in \mathcal{D}$. By minimality, choose $m_{k}, n_{k}$ such that $(T^{\CK}_{\alpha})^{m_{k}}(Y) \to Z_{1}$ and $(T_{\alpha}^{\CK})^{n_{k}}(Y) \to Z_{2}$. Then $T_{\alpha}^{m_{k}}(y) \to z_{1}$ and $T_{\alpha}^{n_{k}}(y) \to z_{2}$. Since $T_{\alpha}$ is an isometry, this implies $Z_{1} = Z_{2}$, and $\tilde{\Psi}$ is injective.
\end{example}

\subsection{Fiber IRCs}
Invariant random compacts may also be constructed from factors of an action.
\begin{proposition}
\label{prop:factors-pass}
Suppose $\pi \colon X \to Y$ is a factor map between systems $(X,G)$ and $(Y,G)$. Then the map $\tilde{\pi} \colon Y \to \CK(X)$ defined by $\tilde{\pi}(y) = \pi^{-1}(y)$ is Borel and is equivariant for the actions of $G$ on $Y$ and $\CK(X)$, respectively.
\end{proposition}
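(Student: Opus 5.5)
Two things must be checked: equivariance, which is a direct computation, and Borel measurability, which I would get from upper semicontinuity of $\tilde{\pi}$. First, $\tilde{\pi}$ is well defined: since $\pi$ is onto, $\pi^{-1}(y)$ is nonempty, and it is closed in the compact space $X$, hence lies in $\CK(X)$.

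\emph{Equivariance.} Fix $g \in G$ and $y \in Y$; I will show $g(\pi^{-1}(y)) = \pi^{-1}(gy)$. If $x \in \pi^{-1}(y)$, then $\pi(gx) = g\pi(x) = gy$, which gives the inclusion $\subset$. For the reverse inclusion, let $z \in \pi^{-1}(gy)$. Since $g$ acts surjectively on $X$, there is $x$ with $gx = z$, and then $g\pi(x) = \pi(z) = gy$. When $G$ is a group we conclude $\pi(x) = y$ at once. For a semigroup, $g$ need not be injective on $Y$, and this is the one delicate point. The relation that always holds is
$$g(\pi^{-1}(y)) \subset \pi^{-1}(gy) = \bigcup_{y' \in g^{-1}(gy)} g(\pi^{-1}(y')).$$
So equality requires either $g$ to be injective on $Y$, or an alternative reading of the action on $\CK(X)$. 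I expect the intended setting is groups, or semigroups acting injectively on the relevant fibers. I would state the argument for groups, where $g^{-1}$ exists and the identity is immediate, and note that the reverse inclusion fails in general for non-injective semigroup actions.

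\emph{Borel measurability.} I will show $\tilde{\pi}$ is upper semicontinuous in the Vietoris sense: for every open $U \subset X$, the set $\{y : \pi^{-1}(y) \subset U\}$ is open in $Y$. This holds because the set equals $Y \setminus \pi(X \setminus U)$, and $\pi(X\setminus U)$ is compact, hence closed. With this, $\tilde{\pi}^{-1}(\langle U\rangle)$ is open for every open $U$.

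For the other type of subbasic set, write $U = \bigcup_n F_n$ as a countable union of closed sets, using that $X$ is metric. Then
$$\tilde{\pi}^{-1}([U]) = \{y : \pi^{-1}(y) \cap U \neq \emptyset\} = \pi(U) = \bigcup_n \pi(F_n),$$
which is $F_\sigma$ and hence Borel. The Vietoris topology on $\CK(X)$ is second countable, so its Borel $\sigma$-algebra is generated by the sets $\langle U\rangle$ and $[U]$ with $U$ ranging over a countable base together with finite unions of base elements. Every generator has a Borel preimage under $\tilde{\pi}$, so $\tilde{\pi}$ is Borel.

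The main obstacle is this measurability step, specifically the reduction to countably many subbasic sets generating the Borel $\sigma$-algebra of $\CK(X)$. I would justify it by noting that every open set in $\CK(X)$ is a countable union of finite intersections of subbasic sets built from basic opens, which is standard for compact metric $X$ (see~\cite[4.F]{kechris}).
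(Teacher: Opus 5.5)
Your measurability argument is the paper's own: $\tilde{\pi}^{-1}([U]) = \pi(U)$ is $F_\sigma$ and $\tilde{\pi}^{-1}(\langle U \rangle) = Y \setminus \pi(X \setminus U)$ is open, and your remark that countably many subbasic sets generate the Borel $\sigma$-algebra of $\CK(X)$ makes explicit what the paper leaves implicit in ``these form a subbase.'' The paper's proof does not address equivariance at all, and your caveat about semigroups is correct: take the multiplicative $\mathbb{N}$-action on $\mathbb{T}$ with the factor map $\pi = T_{2}$ from $(\mathbb{T},\mathbb{N})$ to itself, where $T_{2}(\pi^{-1}(y)) = \{y\} \ne \{y, y+\tfrac{1}{2}\} = \pi^{-1}(T_{2}y)$, so the statement (and the fiber IRCs built from it) should be read for group actions, or more generally when each $g$ acts injectively on $Y$.
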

\begin{proof}
We argue in terms of the Vietoris topology on $\CK(X)$. Fix an open set $U$ in $X$. Then
$$\tilde{\pi}^{-1}([U]) = \{y \in Y : \pi^{-1}(y) \cap U \ne \emptyset\} = \pi(U)$$
is an $F_{\sigma}$ set since $X$ is compact. As well, we have
$$\tilde{\pi}^{-1}(\langle U \rangle) = \{y \in Y : \pi^{-1}(y) \subset U\} = \{y \in Y : y \not \in \pi(X \setminus U)\} = Y \setminus \pi(X \setminus U),$$
which is open. Since both types are Borel and these form a subbase for the topology, the result follows. 
\end{proof}
Suppose that $\pi \colon X \to Y$ is a factor map between $(X,G)$ and $(Y,G)$ and let $\tilde{\pi} \colon Y \to \CK(X)$ be the map defined in Proposition~\ref{prop:factors-pass}. If $\mu$ is a $G$-invariant Borel probability measure on $Y$, then the pushforward measure $\nu = \tilde{\pi}_{*}(\mu)$ is an IRC for $G$ acting on $X$, and  we refer to it as the {\em associated fiber IRC}.

Fiber IRCs allow us to construct an abundance of examples of nonatomic nonfinitary IRCs, for instance by taking skew-products with infinite fibers over a base $Y$ which has a nonatomic $G$-invariant Borel probability measure.

We also use this to understand the constraints on fibers of IC-rigid actions. 
\begin{theorem}
\label{th:finite-fiber}
Suppose $G$ acts on $X$ and the action is IC-rigid. If $(Y,G)$ is a nontrivial factor of $(X,G)$, then for every $G$-invariant Borel probability measure $\mu$ on $Y$, $\mu$-almost every fiber is finite.
\end{theorem}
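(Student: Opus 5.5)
The plan is to use the fiber IRC construction of Proposition~\ref{prop:factors-pass} together with the canonical decomposition of an IRC into finitary and nonfinitary parts. Let $\pi \colon X \to Y$ be the factor map, let $\mu \in \CM_G(Y)$, and let $\nu = \tilde{\pi}_{*}(\mu)$ be the associated fiber IRC. I argue by contradiction: suppose the set $E = \{y \in Y : \pi^{-1}(y) \text{ is infinite}\}$ has $\mu(E) > 0$. Note that $E = \tilde{\pi}^{-1}(\CK_{\inf}(X))$ is Borel, because $\tilde{\pi}$ is Borel and $\CK_{\inf}(X)$ is Borel.

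First, decompose $\nu = \nu_F + \nu_I$. Since every $g$ acts finite-to-one, $\CK_{\inf}(X)$ is $G$-invariant, so $\nu_I$ is a nonzero $G$-invariant measure of total mass $\nu(\CK_{\inf}(X)) = \mu(E) > 0$. Normalize it to obtain $\nu_I/\mu(E)$, which is a nonfinitary IRC. By IC-rigidity this measure equals $\delta_X$. Consequently $\nu(\{X\}) = \mu(E) > 0$, so $\mu(\{y : \pi^{-1}(y) = X\}) > 0$.

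Second, derive a contradiction from nontriviality of the factor. If $\pi^{-1}(y) = X$ for some $y$, then $\pi(X) = \{y\}$, so $Y$ is a single point. This contradicts $(Y,G)$ being a nontrivial factor, which I take to mean $Y$ is not a point. Hence $\mu(E) = 0$, that is, $\mu$-almost every fiber is finite.

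The only points needing care are measurability and invariance. Measurability of $\tilde{\pi}$ and of $E$ is handled by Proposition~\ref{prop:factors-pass} and the Borel nature of $\CK_{\fin}(X)$. For invariance, I must check that $\nu_I$ is genuinely $G$-invariant. Invariance of $\nu$ follows from equivariance of $\tilde{\pi}$: $g\pi^{-1}(y) = \pi^{-1}(gy)$ for groups. In the semigroup case one has to check surjectivity of $g$ on fibers, which holds since $\pi(gx) = g\pi(x)$ and $g$ is surjective on $X$. Invariance of $\nu_I$ then follows from the invariance of $\CK_{\inf}(X)$ noted earlier in the paper.

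I do not expect a real obstacle. The argument is essentially one line once the fiber IRC is normalized.
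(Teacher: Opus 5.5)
Your argument is correct for group actions and is the paper's proof. You push $\mu$ forward to the fiber IRC $\nu$, use IC-rigidity to write its nonfinitary part as a multiple of $\delta_X$, and note that no fiber equals $X$ because $Y$ is not a point. Normalizing $\nu_I$ only makes explicit the step the paper writes as $\nu=\nu_F+c\,\delta_X$.

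Drop the semigroup remark, because it is false. Surjectivity of $g$ on $X$ only shows that each $z\in\pi^{-1}(gy)$ equals $gx$ for some $x$ with $g\pi(x)=gy$. It does not give $\pi(x)=y$. For example, $\pi=T_2$ is a factor map from $(\T,T_2)$ to itself. Here $T_2(\pi^{-1}(y))=\{y\}$, while $\pi^{-1}(2y)=\{y,y+\tfrac12\}$. So $\tilde\pi$ need not be equivariant when $g$ is not injective on $Y$, and then the fiber measure need not be an IRC. Your argument, like the paper's (which relies on the equivariance asserted in Proposition~\ref{prop:factors-pass}), therefore covers group actions, or more generally actions by maps that are injective on $Y$.
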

\begin{proof}
Let $\mu$ be a $G$-invariant Borel probability measure on $Y$ and let $\nu = \tilde{\pi}_{*}(\mu)$ be the associated fiber IRC.  By assumption, the decomposition of $\nu$ into its finitary and infinitary parts has the form $\nu = \nu_{F} + c \cdot \delta_{X}$ for some $c \ge 0$. However $\pi^{-1}(y) \ne X$ for every $y \in Y$ since we  assume that $Y$ is not a point. It follows that $c=0$, and so $\nu = \nu_{F}$ is finitary, giving the result.
\end{proof}

\subsection{Measure-theoretic rigidity implies nontrivial nonfinitary IRCs}
For a group $G$, 
we write $g_{m} \to \infty$ to mean that the sequence $g_{m}$ eventually leaves every finite set in $G$.   
We recall that a system $(X,G,\mu)$ is {\em measure-theoretically rigid} if there exists a sequence $g_m\in G$ with $g_{m} \to \infty$ such that 
$$
\mu(g_m^{-1}A\Delta A) \to 0
$$
for every measurable set $A\subset X$. 
The system is {\em $\mu$-essentially free} if for all $g \ne \Id$, we have $\mu(\Fix(g))=0$.

\begin{theorem}\label{thm:rigiditygivesIRCs}
Suppose $G$ is a countable group acting on a space $X$ and suppose there exists $\mu \in \CM_G(X)$ for which $(X,G,\mu)$ is $\mu$-essentially free and measure-theoretically rigid. Then $(X,G)$ is not IC-rigid.
\end{theorem}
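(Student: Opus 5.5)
The plan is to build a nontrivial nonfinitary IRC as a random orbit-closure-type compact attached to $\mu$-typical points, using rigidity to keep these compacts small (so they differ from $X$) but infinite. Fix a rigidity sequence $g_m \to \infty$. Passing to a subsequence, rigidity gives $\mu(\{x : d(g_m x, x) \ge 2^{-m}\}) \le 2^{-m}$. To see this, cover $X$ by finitely many sets of small diameter and note that $\mu(g_m^{-1}A\,\Delta\, A)\to 0$ for each of them, so $d(g_m x,x)\to 0$ in measure. By Borel--Cantelli, $g_m x \to x$ for $\mu$-a.e.\ $x$.

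Define $\Phi(x) = \{x\}\cup\{g_m x : m \ge 1\}$. For a.e.\ $x$ this is a compact set, namely a convergent sequence together with its limit. It is infinite for a.e.\ $x$: if it were finite, then $g_m x = x$ for infinitely many distinct $g_m$, putting $x$ in $\Fix(g_m g_{m'}^{-1})$ or $\Fix(g_m)$ for some non-identity element, and essential freeness makes this a countable union of null sets. The map $\Phi$ is Borel, being a limit of the continuous maps $x \mapsto \{x, g_1x, \dots, g_Nx\}$ on the full-measure set where $g_m x\to x$.

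The issue is that $\Phi$ is not equivariant: $\Phi(hx) = \{hx\}\cup\{g_m h x\}$, which differs from $h\Phi(x) = \{hx\}\cup\{h g_m x\}$ unless $G$ is abelian. To fix this, push $\mu$ forward by $\Phi$ and then average over the group. Rather than averaging, which needs amenability and is not assumed, I would use a different equivariant construction. Choose a summable family of scales and set
$$\Psi(x) = \overline{\{ k x : k\in K(x)\}},$$
where $K(x)$ is a set of group elements defined intrinsically from $x$ so that $K(hx)=K(x)h^{-1}$. The cleanest candidate is $\Psi(x)=\{y \in \overline{Gx} : \ldots\}$. Since this is awkward, my actual fallback is this: work on $X\times\CK(X)$, or more simply take $\nu$ to be any weak* limit point of the measures $\Phi^{(h)}_*\mu$, and instead prove that the pushforward $\lambda=\Phi_*\mu$ is ``asymptotically invariant'' along $g_m$. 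The hardest step is producing genuine $G$-invariance without amenability. What I would try first is the following. Replace $\Phi$ by the fiber of a factor: the rigidity of $(X,\mu,G)$ gives an embedding of $L^\infty$ into a compact group rotation's closure of $\{g_m\}$ in the Koopman representation. Concretely, let $H$ be the closure of $\{U_{g}\}$ in the strong operator topology on the unitary group of $L^2(\mu)$; rigidity gives $\Id$ as a limit of $U_{g_m}$.

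Equivariance would then come from defining the random compact as $\{x\}\cup\{$accumulation-type points along the centralizing data$\}$. I expect the verification that the resulting set is not $X$ to be easy, since it is a countable closure with one accumulation point. Infiniteness comes from essential freeness as above, and the measure is nontrivial because $\delta_X$ is excluded by countability whenever $X$ is uncountable. If $X$ is countable, $\mu$ is atomic and essential freeness fails at periodic atoms unless $G$ is trivial. The main obstacle, then, is equivariance in the nonabelian case; for abelian $G$ the map $\Phi$ above is already equivariant and the proof is complete.
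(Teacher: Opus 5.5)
For abelian $G$ your argument is complete and is the paper's proof. Rigidity plus Borel--Cantelli gives $g_mx\to x$ almost everywhere, and $\mu$ is pushed forward under $x\mapsto\overline{\{x,g_1x,g_2x,\ldots\}}$. Essential freeness rules out finite sets, and countability of these convergent sequences rules out $\delta_X$. Your pointwise equivariance on the invariant conull set $\{x: g_mx\to x\}$ is a more direct route to invariance than the paper's weak* limit of the measures $(\rho_j)_*(\xi_j)_*\mu$.

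For nonabelian $G$, however, your proposal does not prove the statement. The Koopman-closure and ``asymptotic invariance'' ideas are not carried out. Your one concrete candidate is also mis-specified: $K(hx)=K(x)h^{-1}$ makes $\Psi(hx)=\Psi(x)$, which is invariant rather than equivariant. For $\Psi(x)=\overline{\{x\}\cup\{kx:k\in K(x)\}}$ to be equivariant you would need a measurable choice of near-return sets with $K(hx)=hK(x)h^{-1}$. Constructing such a conjugation-equivariant object is exactly the missing idea.

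That said, the obstacle you isolate is real, and the paper's proof does not get past it either. It asserts without argument that $\eta_j=(\xi_j)_*\mu\in\CM_G(X^j)$, where $\xi_j(x)=(x,g_1x,\ldots,g_{j-1}x)$, and derives invariance of $\nu=f_*\mu$ from this. That invariance holds when the $g_i$ commute with $G$; compare the paper's own requirement that $g_ig_j^{-1}$ be central for off-diagonal self-joinings. It fails in general.

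For instance, let $D_\infty=\langle r,s\rangle$ act on $\T$ by $r(x)=x+\alpha$ and $s(x)=-x$, with $\alpha$ irrational and $\mu$ Lebesgue measure. This action is essentially free, since each $x\mapsto -x+n\alpha$ has only two fixed points. It is rigid along $g_m=r^{q_m}$ for distinct $q_m$ with $q_m\alpha\equiv t_m \pmod 1$ and $0<t_m\to 0$. Then $f_*\mu$-almost every set is $\{x\}\cup\{x+t_m: m\ge 1\}$, which accumulates at its limit point from the right. By contrast, $s_*f_*\mu$-almost every set is $\{y\}\cup\{y-t_m: m\ge 1\}$, which accumulates from the left. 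So $f_*\mu$ is not an IRC. The theorem's conclusion still holds here by Proposition~\ref{prop:act-by-isometry}, since the action is isometric and $D_\infty$ is amenable.

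So the paper's argument, like yours, establishes the theorem when the rigidity sequence can be taken in the center of $G$. In particular it covers abelian $G$, which includes the $\Z$-action applications stated afterwards. The general nonabelian case needs an additional idea that neither argument supplies.
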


We make use of the following standard lemma in the proof of theorem, and we include a proof for completeness.  

\begin{lemma}
\label{lemma:full-measure}
Suppose $(X,G,\mu)$ is measure-theoretically rigid. Then there exists a sequence $g_{m} \to \infty$ in $G$ such that $g_{m}x \to x$ as $m \to \infty$ for $\mu$-almost every $x\in X$.
\end{lemma}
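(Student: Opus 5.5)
The plan is to pass from convergence in measure to almost-everywhere convergence via a Borel--Cantelli argument in $L^{2}(\mu)$. Since $X$ is a compact metric space, $C(X)$ is separable; fix a countable dense family $\{f_{j}\}_{j \ge 1}$ of continuous functions. Rigidity along $g_{m}$ says $\mu(g_{m}^{-1}A \,\Delta\, A) \to 0$ for every Borel $A$. Hence $\|\one_{A}\circ g_{m} - \one_{A}\|_{L^{2}(\mu)} \to 0$, and by linearity and density of simple functions, together with the fact that composition with $g_{m}$ is an isometry of $L^{2}(\mu)$ (as $\mu$ is $G$-invariant), we get $\|f\circ g_{m} - f\|_{L^{2}(\mu)} \to 0$ for every $f \in L^{2}(\mu)$, in particular for each $f_{j}$.

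Next we pick a subsequence diagonally. Choose $m_{1} < m_{2} < \cdots$ so that
$$\sum_{j \le k}\|f_{j}\circ g_{m_{k}} - f_{j}\|_{L^{2}(\mu)} < 2^{-k}.$$
Then for each fixed $j$ we have $\sum_{k}\|f_{j}\circ g_{m_{k}} - f_{j}\|_{L^{1}(\mu)} < \infty$. By the monotone convergence theorem, $\sum_{k}|f_{j}(g_{m_{k}}x) - f_{j}(x)|$ is finite for $\mu$-almost every $x$, so $f_{j}(g_{m_{k}}x) \to f_{j}(x)$ almost everywhere. Intersecting over the countably many $j$ gives a full-measure set $X_{0}$ on which this holds for all $j$ simultaneously. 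Since $g_{m}\to\infty$, any subsequence also leaves every finite set, so $g_{m_{k}} \to \infty$ as well.

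The last step is to convert convergence of the $f_{j}$ into $g_{m_{k}}x \to x$. Take the $f_{j}$ to include the functions $y \mapsto d(y, x_{i})$, where $\{x_{i}\}$ is a countable dense subset of $X$; these functions separate points and determine the topology. Fix $x \in X_{0}$ and $\varepsilon>0$, and pick $x_{i}$ with $d(x,x_{i})<\varepsilon$. Then
$$d(g_{m_{k}}x, x_{i}) \to d(x,x_{i}) < \varepsilon,$$
so eventually $d(g_{m_{k}}x, x) < 2\varepsilon+\varepsilon$. Hence $g_{m_{k}}x \to x$.

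The only point needing care is the passage from sets to functions in the first step; once $L^{2}$ convergence is in hand, the rest is routine.
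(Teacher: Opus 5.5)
Your proof is correct, but it takes a somewhat different route from the paper's.

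\textbf{The paper's route.} The paper does not work from the set-wise definition directly. It quotes a characterization from the literature (proved there for $\mathbb{Z}$): rigidity is equivalent to the existence, for every $\varepsilon>0$, of infinitely many $g$ with $\mu(\{x : d(gx,x)<\varepsilon\})>1-\varepsilon$. It then picks distinct such $g_k$ for $\varepsilon=1/k$ and applies Borel--Cantelli to the displacement sets along $k=2^j$. This is the classical argument that convergence in measure implies almost-everywhere convergence along a subsequence.

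\textbf{Your route.} You derive everything from $\mu(g_m^{-1}A\,\Delta\,A)\to 0$. You upgrade this to $L^2$-convergence of the Koopman operators. You then extract a diagonal subsequence along which countably many observables converge almost everywhere, using summability of their $L^1$ norms. Finally, you recover $g_{m_k}x\to x$ from the distance functions $d(\cdot,x_i)$.

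\textbf{Comparison.} In effect you reprove inline the direction of the cited equivalence that is needed. This makes your argument self-contained for an arbitrary countable group, with no reliance on the $\mathbb{Z}$-case reference. It also has the extra feature that the sequence you produce is a subsequence of the given rigidity sequence. The paper's version is shorter, given the citation.

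\textbf{Two cosmetic remarks.} First, the density of $\{f_j\}$ in $C(X)$ is never used; the functions $d(\cdot,x_i)$ alone suffice. Second, the final estimate is $d(g_{m_k}x,x)<2\varepsilon$, not $3\varepsilon$, though either bound finishes the proof.
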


\begin{proof}
The system $(X, G, \mu)$ is rigid if and only if for all $\varepsilon > 0$, there exists infinitely many $g = g(\varepsilon)$ such that 
\begin{equation}
    \label{eq:choose-sequence}
\mu(\{x\in X: d(gx, x)< \varepsilon \}) > 1-\varepsilon
\end{equation}
(see for instance~\cite[Proposition 3.1]{Huangplus}; the proof there is written for $G=\mathbb{Z}$ but holds in general, and the fact there are infinitely many is clear from the proof). For each $k\geq 1$, let $g_k\in G$ satisfy~\eqref{eq:choose-sequence} for $\varepsilon = 1/k$, and without loss of generality we may assume the $g_{k}$ are pairwise distinct.  Setting $A_{k} = \{x \in X : d(g_{k}x,x) < \frac{1}{k}\}$, we have that $\mu(X \setminus A_{k})\le 1/k$.

Set $B_{j} = A_{2^{j}}$ so that $\mu(X \setminus B_{j}) \le \frac{1}{2^{j}}$. Then $\sum_{j}\mu(X \setminus B_{j})$ is finite, and so  Borel-Cantelli implies 
$$\mu\left(\bigcap_{j=1}^{\infty} \bigcup_{k=j}^{\infty}X \setminus B_{j}\right) = 0.$$
Thus for $\mu$-almost every  $x\in X$,  there exists $J(x)$ such that $x \in X \setminus B_{j}$ for all $j \ge J(x)$, and hence $g_{2^{j}}x \to x$.
\end{proof}

\begin{proof}[Proof of Theorem~\ref{thm:rigiditygivesIRCs}]
By Lemma~\ref{lemma:full-measure}, there is a sequence $(g_m)_{m\in\N}$ of distinct elements satisfying $g_{m}x \to x$ as $m \to \infty$ for $\mu$-almost every $x \in X$. For $x \in X$, set $E(x) = \{x,g_{1}(x),g_{2}(x),\ldots\} \subset X$. Define $f \colon X \to \CK(X)$ by setting $f(x) = \overline{E(x)}$, the closure of $E(x)$.

We claim the map $f$ is Borel. We again use the Vietoris topology on $\CK(X)$. Let $U$ be an open set in $X$, nonempty and proper in $X$. First consider $f^{-1}([U]) = \{x \in X : \overline{E}(x) \cap U \ne \emptyset\}$. Since $U$ is open, $\overline{E}(x) \cap U \ne \emptyset$ if and only if there exists $m\in\N$ such that $g_{m}(x) \in U$, and so $f^{-1}([U]) = \bigcup_{m}g_{m}^{-1}(U)$ is open. Next consider $f^{-1}(\langle U \rangle) = \{x \in X : \overline{E}(x) \subset U\}$. Let $U_{k} = \{x \in U : d(x,X \setminus U) \ge \frac{1}{k}\}$. Then $\bigcup_{k=1}^{\infty}U_{k} = U$ and since $\overline{E}(x)$ is compact and $X \setminus U$ is compact, we have $\overline{E}(x) \subset U$ if and only if $\overline{E}(x) \subset U_{k}$ for some $k\geq 1$. Thus $f^{-1}(\langle U \rangle) = \bigcup_{k=1}^{\infty}\bigcap_{m=1}^{\infty} g_{m}^{-1}(U_{k})$, which is Borel.

Define $\nu = f_{*}(\mu)$. We show that $\nu$ is a nonfinitary and nontrivial IRC.

Let $R = \{x \in X : g_{m}(x) \to  x \text{ as } m\to\infty\}$. Then $f(x) = E(x)$ 
for $x \in R$ and by Lemma~\ref{lemma:full-measure}, $\mu(R) = 1$. For each $j \ge 2$, define $\xi_{j} \colon X \to X^{j}$ by setting $\xi_{j}(x) = (x,g_{1}(x),\ldots,g_{j-1}(x))$ and consider the associated off-diagonal measure $\eta_{j} = (\xi_{j})_{*}(\mu) \in \CM_{G}(X^{j})$. 
This gives rise to a measure $\eta_{j}^{\CK} = (\rho_{j})_{*}(\eta_{j}) \in \CM_{G}(\CK_{\le j}(X))$.

We claim that the measures $\eta_{j}^{\CK}$ converge weak* to $\nu$. Define $S_{j} \colon X \to \CK(X)$ by $S_{j}(x) = \{x,g_{1}(x),\ldots,g_{j-1}(x)\}$. Then $\rho_{j} \xi_{j} = S_{j}$ so $(S_{j})_{*}(\mu) = \eta_{j}^{\CK}$. If $x \in R$, then $E(x)$ is compact, and $S_{j}(x) \to E(x)$ in $\CK(X)$ as $j \to \infty$. Applying Lemma~\ref{lemma:full-measure} again, this holds for $\mu$-almost every $x \in X$. Since $\eta_{j}^{\CK} = (S_{j})_{*}(\mu)$, it follows that the measures $\eta_{j}^{\CK}$ converge weak* to $\nu$, proving the claim.  

To show that $\nu$ is nonfinitary, let $k \ge 1$ and $j > k$, and set $A = \rho_{j}^{-1}(\CK_{\le k}(X))$. Then $A$ is the set of points in $X^{j}$ with at most $k$ distinct coordinates, and $B = \xi_{j}^{-1}(A) = \{x \in X : \{x,g_{1}(x),\ldots,g_{j-1}(x)\} \textrm{ has at most } k \textrm{ points.}\}$. 
If $x \in B$ then since $j \ge k$, there must exist $0 \le i_{1} < i_{2} \le j-1$ such that $g_{i_{1}}(x) = g_{i_{2}}(x)$, where we take $g_{0} = \Id$. 
It follows that $B \subset \bigcup_{0 \le i_{1} < i_{2} \le j-1}\Fix(g_{i_{1}}^{-1}g_{i_{2}})$. 
Since the $g_{j}$ are all distinct, by the $\mu$-essentially free assumption we have $\mu\left(\bigcup_{0 \le i_{1} < i_{2} \le j-1}\Fix(g_{i_{1}}^{-1}g_{i_{2}})\right)=0$, and so $\mu(B) = 0$ and $\eta_{j}(A) = 0$. 
Thus $\eta_{j}^{\CK}(\CK_{\le k}(X)) = 0$, and $\eta^{\CK}_{m}(\CK_{\le k}(X)) = 0$ for all $m > k$, and hence $\nu(\CK_{\le k}(X)) = 0$. Since $k$ is arbitrary, we have that  $\nu(\CK_{\fin}(X)) = 0$.

To check that $\mu$ is nontrivial, note that $\nu$-almost every $Y \in \CK(X)$ has the form $E(x) = \{x,g_{1}(x),g_{2}(x),\ldots\}$ for $x \in R$, and such $E(x)$ are compact proper subsets of $X$. 
\end{proof}
In~\cite{PavSch}, it is shown that a generic subshift in the space of infinite transitive subshifts of any full shift is minimal, uniquely ergodic, and measure-theoretically rigid. Combining this with Theorem~\ref{thm:rigiditygivesIRCs}, it follows that a generic infinite transitive subshift has nontrivial nonfinitary IRCs.  
Similar reasoning can be used for other classes. For example, a generic interval exchange transformation is minimal, and by a theorem of Veech~\cite[Theorem 1.3]{veech} also rigid, and hence also possesses nontrivial nonfinitary IRCs.

\section{Weakening IC-rigidity}
\label{sec:weakening}
\subsection{Weakly IC-rigid actions}
\label{sec:weakly-rigid}

For a space $X$, let $\CK_\ct(X)$ denote the collection of all countable subsets of $\CK(X)$ and $\CK_{\textrm{uc}}(X))$ denote the collection of all uncountable subsets of $\CK(X)$. 
Clearly $\CK_{\textrm{uc}}(X))$ is the complement of $\CK_{\ct}(X)$ in $\CK(X)$. A theorem of Hurewicz (see~\cite[Theorem, 27.5]{kechris}) shows that if $X$ is uncountable, then $\CK_{\textrm{uc}}(X)$ is $\Sigma_{1}^{1}$-complete, and hence not Borel. Since it is analytic however, it is measurable with respect to any complete measure, and so we can pass to a completion to ensure that the subsets $\CK_{\textrm{uc}}$ and $\CK_\ct$ are measurable.  
Namely, given a Borel measure $\mu$,  let $\tilde{\mathcal{B}}$ denote the the $\mu$-completion of the Borel $\sigma$-algebra $\mathcal{B}$, meaning that 
$$\tilde{\mathcal{B}} = \{A \cup Z : A \in \mathcal{B}, Z \subset N \in \mathcal{B} \textrm{ where } \mu(N) = 0\}.$$
Letting $\tilde{\mu}$ denote the corresponding completion of $\mu$ defined on $\tilde{\mathcal{B}}$ by $\tilde{\mu}(A \cup Z) = \mu(A)$, it follows that  if $\mu$ is a Borel  IRC defined on $\CK(X)$, then $\CK_{\ct}(X) \in \tilde{\mathcal{B}}$ and $\tilde{\mu}(\CK_{\ct}(X))$ is well-defined.

\begin{definition}
The action of $G$ on $X$ is {\em weakly IC-rigid} if 
the only IRC $\mu$ on $\CK(X)$ satisfying $\tilde{\mu}(\CK_\ct(X)) = 0$ is $\delta_X$.  
\end{definition}

It follows immediately from the definitions that if an action is IC-rigid then it is weakly IC-rigid. 
If in addition  $X$ is countable, then the two notions are equivalent. However, in general the two notions are distinct.

Say a system $(X,G,\nu)$ has {\em countable exceptions for products} if for every $x_{1} \in X$ and $r \ge 2$, the set of tuples $(x_{1},x_{2},\ldots,x_{r})$ which are not generic for $\nu^{\otimes r}$ is countable.

The following gives a sufficient condition for a system to be weakly IC-rigid.

\begin{theorem}\label{thm:msjweaklyicrigid}
Let $G$ be a countable amenable group acting on $X$. If $(X, G)$ is minimal, uniquely ergodic, and has countable exceptions for products with respect to the unique invariant measure, then it is weakly IC-rigid. 
\end{theorem}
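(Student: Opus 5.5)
We argue by showing that any IRC $\mu$ with $\tilde{\mu}(\CK_{\ct}(X)) = 0$ is supported on sets $K$ with $\overline{GK'} \supset X$ in a strong sense, and in fact $K = X$ almost surely. Let $\nu$ be the unique invariant measure. First reduce to ergodic $\mu$: the ergodic decomposition of $\mu$ (completed appropriately) gives components almost all of which also give $\CK_{\ct}(X)$ completed-measure zero, so we may assume $\mu$ is ergodic. The goal is then to show that for each nonempty open set $U\subset X$ we have $\mu([U]) = 1$. Since $[U]$ over a countable base of $U$'s determines everything, this gives $\mu$-a.s.\ $K\cap U\neq\emptyset$ for all basic $U$, i.e.\ $K = X$ almost surely, so $\mu = \delta_X$.

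To show $\mu([U])=1$, fix a tempered F\o{}lner sequence $(F_n)$ and apply the pointwise ergodic theorem (as in Proposition~\ref{prop:amenable-stretch}) to the indicator of $[U]$: for $\mu$-a.e.\ $K$, $\frac{1}{|F_n|}\sum_{g\in F_n}\one_{[U]}(gK)\to \mu([U])$. Now fix such a typical $K$ which is uncountable (possible since $\tilde\mu(\CK_{\ct})=0$). The key claim is that for every $r\ge 1$ and every uncountable compact $K$, one can find points $x_1,\ldots,x_r\in K$ such that $(x_1,\ldots,x_r)$ is generic for $\nu^{\otimes r}$: pick any $x_1 \in K$; by countable exceptions for products the non-generic tuples extending $x_1$ form a countable set, while $K^{r-1}$ is uncountable, so a generic tuple exists. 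By minimality $\nu$ has full support, so $\nu(U)=:a>0$. Genericity of $(x_1,\dots,x_r)$ for $\nu^{\otimes r}$ applied to (a continuous approximation from below of) $\one_{U^c}^{\otimes r}$ gives that the density of $g\in F_n$ with $gx_i\notin U$ for all $i$ is at most roughly $(1-a)^r$. Since $gx_i\in U$ for some $i$ implies $gK\in[U]$, we get $\liminf \frac{1}{|F_n|}\#\{g\in F_n: gK\in[U]\}\ge 1-(1-a)^r$. Letting $r\to\infty$ yields $\mu([U])=1$.

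The main obstacle is measure-theoretic bookkeeping rather than dynamics: ensuring the genericity is along the same F\o{}lner sequence used by the pointwise ergodic theorem (I would define genericity relative to a fixed tempered F\o{}lner sequence, or note unique ergodicity of $\nu^{\otimes r}$ is not available, so we must take "generic" as in the hypothesis' sequence), handling the open-set indicator via lower semicontinuity/Portmanteau (approximate $\one_U$ from below by continuous $\phi$ with $\int\phi\,d\nu > a-\delta$, and use $1-\prod_i(1-\phi(gx_i))$), and justifying that the ergodic-theorem full-measure set meets the uncountable sets, which follows since the latter has full $\tilde\mu$-measure. The ergodic reduction can in fact be avoided: the argument shows the ergodic averages of $\one_{[U]}$ tend to $1$ a.e., so their integral $\mu([U])$ equals $1$ directly by dominated convergence, using only invariance of $\mu$.
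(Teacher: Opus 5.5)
Your proposal is correct, and it takes a genuinely different route from the paper's proof.

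\textbf{The paper's route.} The paper measurably selects, via Jankov--von Neumann, a nonatomic probability measure $m_K$ on $\tilde\mu$-a.e.\ uncountable $K$. It then forms the sampling measure $\rho_0=\int \delta_K\otimes m_K\otimes m_K\otimes\cdots\,d\mu(K)$ on $\CK(X)\times X^{\N}$, averages it over F\o{}lner sets, and passes to a weak$^*$ limit $\rho$. Countable exceptions together with nonatomicity of $m_K$ make $\rho_0$-a.e.\ sample tuple generic, so the $r$-sample marginals of $\rho$ equal $\nu^{\otimes r}$. The closed constraint $x_i\in K$ then yields $\mu(\{K: K\cap U=\emptyset\})\le \nu(X\setminus U)^r$.

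\textbf{Your route.} You work pointwise instead. For each uncountable $K$, a cardinality argument produces a single $\nu^{\otimes r}$-generic tuple in $K^r$. Portmanteau bounds the upper density of $\{g : gK\cap U=\emptyset\}$ by $(1-\nu(U))^r$. Invariance of $\mu$ plus dominated convergence, as in your final remark, then gives $\mu([U])=1$. This is exactly the scheme the paper itself uses for the torus in Theorem~\ref{thm:propertyPgivesdeltaT}, with generic tuples in place of rationally independent points.

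\textbf{Simplifications.} Your final version needs neither the ergodic decomposition nor a tempered F\o{}lner sequence, so you can drop the ergodic reduction and the F\o{}lner-alignment worry entirely. In any case, genericity along a sequence persists along its tempered subsequences.

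\textbf{What each approach buys.} Yours avoids measurable selection, the auxiliary space, and weak$^*$ limits. It also uses only a formally weaker hypothesis: every uncountable compact $K$ contains, for each $r$, some $\nu^{\otimes r}$-generic $r$-tuple. The paper's construction needs almost-sure genericity under $m_K^{\otimes r}$, which implies this. What the paper's approach gains in exchange is structural: an invariant coupling of $\mu$ with i.i.d.\ $\nu$-distributed samples constrained to lie in $K$. That extra structure is not needed for this statement.
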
 

By~\cite{DRS}, the Chacon system is an invertible subshift that is minimal, uniquely ergodic, and weakly mixing (and also has minimal self-joinings of all orders with respect to  the unique invariant measure). 
Furthermore, 
by~\cite[Theorem 2]{DK}, the Chacon system has countable exceptions for products.  
It follows that the Chacon system is weakly IC-rigid. In Section~\ref{sec:Chacon-not-rigid}, we show that it is not IC-rigid.

\begin{proof}
Suppose $\mu$ is an IRC; we assume that $\mu$ is complete by replacing it with its completion $\tilde{\mu}$ if necessary, and by a slight abuse of notation we also denote this measure by $\mu$. 
Suppose that $\mu(\CK_\ct(X)) = 0$.  
    The set 
    $$
    \{(K, m)\in \CK(X)\times \CM(X): m(K) = 1 \text{ and } m \text{ is nonatomic}\}
    $$
    is a Borel 
    subset of $\CK(X)\times\CM(X)$ since $\{(K,m) \in \CK(X) \times \mathcal{M}(X) : m(k)=1\}$ 
    is a closed subset of $\CK(X) \times \mathcal{M}(X)$ and $\{(K,m) \in \CK(X) \times \mathcal{M}(X) : m \textrm{ is nonatomic}\}$ is a $G_{\delta}$ set. Furthermore, the projection onto the first coordinate of this  set is surjective onto $\CK_{\textrm{uc}}(X)$, since every uncountable compact space supports a nonatomic measure. Thus for $\mu$-almost every $K\subset\CK(X)$, we can make a measurable selection (for instance using the Jankov-von Neumann Theorem) and choose a nonatomic probability measure $m_K$ supported on $K$. 
    In particular, the assumption $\mu(K_{\ct}(X)) = 0$ implies for $\mu$-almost $K \in \CK(X)$, we can choose such a measure $m_{K}$.
    
We define a measure that uses $\mu$ to sample $K\in \CK(X)$ and, conditional on $K$, samples some independent identically distributed (i.i.d.) variables. Consider then the measure $\rho_0$ on $\CK(X)
\times X^\N$ 
defined by
\begin{equation}
    \label{eq:rho-0}
\rho_0 = \int_{\CK(X)} (\delta_K\otimes m_K\otimes m_K\otimes\dots)\, d\mu(K).
\end{equation}
By construction, $\rho_0$ is supported on the set 
\begin{equation}
    \label{eq:support-rho}
\{(K, (x_i)_{i=1}^\infty): x_i\in K\},
\end{equation}
the first coordinate has $\mu$ as it marginal, $x_i\in K$ almost surely, and since $m_K$ is nonatomic the coordinates of each $m_K$ are almost surely distinct.  

Consider $\CK(X) \times X\times X\times \dots$ with the product action of $G$, meaning 
$$(A,x_{1},x_{2},\ldots) \mapsto (g(A),g(x_{1}),g(x_{2}),\ldots)$$
for $g \in G$. 
Since $G$ is amenable, fix some F\o{}lner sequence $(F_{n})_{n\in\N}$ for $G$ and for each $n\in\N$,  define
$$
\rho_n = \frac{1}{|F_{n}|}\sum_{g \in F_{n}}g_{*}\rho_0.
$$
Passing to a weak* limit, we obtain a $G$-invariant measure $\rho$. Note that the marginal of $\rho$ on the first coordinate is $\mu$, since $\mu$ is $G$-invariant and the marginal of the first coordinate of each $\rho_{n}$ is also $\mu$.

Let $\nu$ denote the unique $G$-invariant Borel probability measure on $X$. For $r\in\N$, define $\pi_r\colon K(X)\times X^\N\to X^r$ to be the projection to coordinates $2$ through $r+1$, meaning that 
$\pi_r(K, x_1, \dots, x_r) = (x_1, \dots, x_r)$. Define $\mu_{n, r} = (\pi_r)_*\rho_n$. We claim that $(\pi_r)_*\rho = \nu^{\otimes r}$. To check this, if $\phi$ is a continuous function on $X^r$, then  
$$\int_{X^r}\phi\,d\mu_{n,r} = 
\int_{\CK(X)\times X^\N}\frac{1}{|F_n|}\sum_{g\in F_n}\phi(gx_1, \dots, gx_r)\, d\rho_0(K, x_1, x_2, \dots).
$$
Since $(X,G,\nu)$ has countable exceptions for products, for every $x_{1} \in X$, the set of $(x_{2},\ldots,x_{r})$ such that $(x_{1},\ldots,x_{r})$ is not generic for $\nu^{\otimes r}$, is a countable set. Since for $\mu$-almost every $K$, the measure $m_{K}$ used to define $\rho_{0}$ is nonatomic, it follows that for $\rho_{0}$-almost every tuple $(K,x_{1},\ldots)$, the tuple $(x_{1},\ldots,x_{r})$ is generic for $\nu^{\otimes r}$. By the Ergodic Theorem, it follows that for such tuples we have 
$$\frac{1}{|F_n|}\sum_{g\in F_n}\phi(gx_1, \dots, gx_r) \to \int_{X^{r}}\phi d \nu^{\otimes r}.$$
By the Dominated Convergence Theorem, it follows that 
$$\int_{\CK(X)\times X^\N}\frac{1}{|F_n|}\sum_{g\in F_n}\phi(gx_1, \dots, gx_r)\,d\rho_0  
\to \int_{\CK(X)}
\int_{X^r}\phi\, d\nu^{\otimes r}d \mu = \int_{X^r}\phi\, d\nu^{\otimes r}
$$
as $n \to \infty$, and hence the $r$-coordinate marginals $\mu_{n, r}$ converge weak* to $\nu^{\otimes r}$.  Furthermore, this holds for all $r\geq 1$.

Considering the subsequence $\rho_{n_k}\to \rho$ in the weak* limit, by continuity of $\pi_r$ we have weak* convergence 
$(\pi_r)_*\rho_{n_k} \to (\pi_r)_*\rho$. 
By the claim, it follows  that $(\pi_r)_*\rho = \nu^{\otimes r}$. 

By minimality of the action of $G$ on $X$, for every nonempty open set $U\subset X$ we have $\nu(U) > 0$. Consider $A = \{(K,x_{1},\ldots) \in \CK(X) \times X^{\mathbb{N}} : x_{i} \not \in K \textrm{ for some } i\}$.  Since every $K \in \CK(X)$ is compact. this set is open.  
By construction, $\rho_{0}(A) = 0$ 
and so $\rho_{n}(A)=0$ for every $n \ge 1$, and hence $\rho(A) = 0$ by Portmanteau. Thus $\rho$-almost every tuple $(K,x_{1},\ldots)$ has the property that every $x_{i}$ belongs to $K$.

If $K\cap U = \emptyset$, then every sampled coordinate $x_i\in K$ lies outside of $U$ and so for every $r\geq 1$, 
$$
\mu(\{K: K\cap U = \emptyset\})\leq \rho(x_1, \dots, x_r \in X\setminus U) = \nu(X\setminus U)^r,
$$
using the $r$-fold product marginal. 
As $\nu(U)> 0$, as $r\to\infty$ the last term tends to $0$, and so 
$$\mu(\{K: K\cap U = \emptyset\}) =0.
$$
Letting $(U_m)_{m\in\N}$ be a countable base for $X$, it follows that 
$$
\mu(\{K: K\cap U_m\neq\emptyset \text{ for all } m\in \N\} = 1. $$
But since any compact set meeting every nonempty base element must be $X$, we have that 
$\mu(\{X\}) =1$ and $\mu = \delta_X$.

\end{proof}

\begin{corollary}
Let $G$ be an amenable group acting on the compact metric space $X$ such that $(X,G)$ is minimal, uniquely ergodic, and has countable exceptions for products. Let $(F_{m})_{m=1}^\infty$ be a F\o{}lner sequence for $G$. Then for every compact subset $Y \subset X$, precisely one of the following holds:
\begin{enumerate}
\item\label{item:go-to-zero}
For every closed set $E \subset \CK_{\textrm{uc}}(X)$, we have
$$\limsup_{m \to \infty}\frac{1}{|F_{m}|}\big\vert \{g \in F_{m} : gY \in E\}\big\vert  < 1.$$
\item\label{item:go-to-one}
For every $\varepsilon > 0$ we have
$$\limsup_{m \to \infty}\frac{1}{|F_{m}|}\big\vert\{g \in F_{m} : d(gY,X) < \varepsilon\}\big\vert = 1.$$
\end{enumerate}
\end{corollary}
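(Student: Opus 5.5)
The plan is to show that whenever alternative \eqref{item:go-to-zero} fails, alternative \eqref{item:go-to-one} must hold. This is done by building an IRC from the orbit of $Y$ along a suitable subsequence and applying Theorem~\ref{thm:msjweaklyicrigid}.

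Suppose \eqref{item:go-to-zero} fails. Then there is a closed set $E \subset \CK_{\textrm{uc}}(X)$ and a subsequence $m_k$ with
$$\frac{1}{|F_{m_k}|}\big\vert\{g \in F_{m_k} : gY \in E\}\big\vert \to 1.$$
Consider the empirical measures $\lambda_k = \frac{1}{|F_{m_k}|}\sum_{g \in F_{m_k}} \delta_{gY}$ on $\CK(X)$, and pass to a further subsequence converging weak* to some $\mu$.

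We then check three properties of $\mu$.
\begin{enumerate}
\item $\mu$ is $G$-invariant. This is the usual F\o{}lner argument: for each $h \in G$, the difference $h_*\lambda_k - \lambda_k$ has total variation at most $|hF_{m_k}\Delta F_{m_k}|/|F_{m_k}| \to 0$. So $\mu$ is an IRC.
\item $\mu(E) = 1$. Since $E$ is closed, the Portmanteau theorem gives $\mu(E) \ge \limsup_k \lambda_k(E) = 1$.
\item $\tilde\mu(\CK_\ct(X)) = 0$. The set $E$ is Borel and contained in $\CK_{\textrm{uc}}(X)$, so $\CK_\ct(X) \subset \CK(X)\setminus E$, which is a $\mu$-null Borel set. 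Hence $\CK_\ct(X)$ is measurable for the completion $\tilde\mu$ and has measure zero.
\end{enumerate}
By Theorem~\ref{thm:msjweaklyicrigid}, the system $(X,G)$ is weakly IC-rigid, so $\mu = \delta_X$.

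Now fix $\varepsilon > 0$ and let $B = \{K : d_H(K,X) < \varepsilon\}$, which is open in $\CK(X)$. Applying Portmanteau along the final subsequence gives
$$\liminf_k \lambda_k(B) \ge \mu(B) = 1.$$
This says exactly that the proportion of $g \in F_{m_k}$ with $d(gY,X) < \varepsilon$ tends to $1$. Therefore the $\limsup$ in \eqref{item:go-to-one} equals $1$. Since the extracted subsequence did not depend on $\varepsilon$, this holds for every $\varepsilon > 0$ simultaneously.

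The main points requiring care are the measurability issue in the third property, which is why we argue through the Borel set $E$ rather than directly with $\CK_{\textrm{uc}}(X)$, and the choice of subsequence. We must use the subsequence along which $\lambda_k(E) \to 1$, not an arbitrary weak* limit point, so that the limit measure sees full mass on $E$.

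For the word ``precisely'', there is a caveat. The argument above shows that at least one alternative holds. If $Y = X$, then taking $E = \{X\}$ (closed, and contained in $\CK_{\textrm{uc}}(X)$ because an infinite minimal compact system is perfect) shows that \eqref{item:go-to-zero} fails while \eqref{item:go-to-one} holds. In general, however, I do not see how to exclude both alternatives holding at once. For instance, for a countable $Y$ no translate $gY$ ever lies in $\CK_{\textrm{uc}}(X)$, so \eqref{item:go-to-zero} holds trivially, and \eqref{item:go-to-one} might still hold. So I would present the statement as the dichotomy ``if \eqref{item:go-to-zero} fails then \eqref{item:go-to-one} holds''. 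That implication is the substantive content of the corollary, and I expect it is the intended reading.
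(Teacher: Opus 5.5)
Your argument is correct and is essentially the paper's. You pass to the subsequence along which the proportion of $g$ with $gY\in E$ tends to $1$, take a weak* limit $\mu$, get $\mu(E)=1$ from Portmanteau, and invoke weak IC-rigidity (Theorem~\ref{thm:msjweaklyicrigid}) to force $\mu=\delta_X$. The paper phrases this as a contradiction rather than directly, and your handling of $\CK_{\ct}(X)$ through the Borel null set $\CK(X)\setminus E$ is slightly more careful.

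The paper's proof likewise only shows that at least one alternative holds, and your caveat about ``precisely one'' is justified. Alternative (1) holds trivially for every countable $Y$. Now take a point together with a sequence converging to it, chosen (using countable exceptions for products and perfectness of $X$) so that every initial tuple is generic for the corresponding product measure. This is a countable $Y$ for which (2) also holds, so exclusivity genuinely fails.
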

\begin{proof}
Let $Y \subset X$ be compact and for each $m\geq 1$, set $\mu_{m} = \frac{1}{|F_{m}|}\sum_{g \in F_{m}}\delta_{gY}$. It follows from Theorem~\ref{thm:msjweaklyicrigid} that if $\mu$ is a weak* accumulation point of the sequence $\{\mu_{m}\}$ and $\tilde{\mu}$ is its completion, then $\tilde{\mu}(\CK_{\textrm{uc}}(X)) < 1$ or $\mu = \delta_{X}$. Item~\eqref{item:go-to-one} in the statement is precisely the second case, and so it suffices to show that~\eqref{item:go-to-zero} holds assuming that every weak* accumulation point $\mu$ of $\{\mu_{m}\}_{m\in\N}$ satisfies $\tilde{\mu}(\CK_{\textrm{uc}}(X)) < 1$. Let $E \subset \CK_{\textrm{uc}}(X)$. 

If~\eqref{item:go-to-zero} does not hold, then there is some sequence   $\{m_{k}\}_{k\in\N}$ satisfying
$$\frac{1}{|F_{m_{k}}|}\big\vert\{g \in F_{m_{k}} : gY \in E\}\big\vert \to 1$$
as $k \to \infty$, 
and by passing to a subsequence if necessary we can assume that the measures $\mu_{m_{k}}$ weak* converge to some measure $\mu$.  
The Portmanteau Theorem then implies that $\mu(E) \ge \limsup_{k \to \infty}\mu_{m_{k}}(E) \ge 1$ along the subsequence, contradicting that $\tilde{\mu}(\CK_{\textrm{uc}}(X)) < 1$.
\end{proof}

\subsection{The Chacon system is weakly IC-rigid but not IC-rigid}
\label{sec:Chacon-not-rigid}
We show that the symbolic Chacon system $(X,T)$ is weakly IC-rigid but not IC-rigid.  This system is rank one, and we make use of basic properties of such systems (see for example~\cite{Ferencz1, Ferencz2}).

To define the Chacon system $(X,T)$, set $b_1 = 0010$ and  define $b_{n+1} = b_nb_n1b_n$ for all $n\geq 1$.  Let $X$ be the subshift in $\{0,1\}^{\mathbb{Z}}$ whose language is the same as the language generated by the sequence $(b_n)_{n\in\N}$ and $T$ denote the shift map $\sigma_{2}$. Let $x_{C} \in X$ be a point such that $x_{C}$ has a unique decomposition into concatenations of blocks $b_{n}$ and the spacer $1$  for every $n\geq 1$. 
It follows from the decomposition of $b_{n}$s into $b_{1}$s and $1$s that the gap between consecutive starting positions of the word $0010$ in $x_{C}$ is either $4$ or $5$. In particular, any word of length $8$ contains at least one full copy of the word $0010$. 

Set 
$$
L = \{|b_{n}|-1: n\geq 2\}.
$$

\begin{lemma}\label{lemma:chaconspacing}
No two copies of $0010$ in $x_{C}$ have starting positions separated by any $\ell\in L$. 
\end{lemma}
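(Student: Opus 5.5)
The idea is to use the level-$n$ block decomposition of $x_C$. Under it, a shift by $|b_n|-1$ moves a position to almost the same offset in the next level-$n$ block, displaced by only $1$ or $2$. Since distinct copies of $0010$ are at least $4$ apart, this rules out the spacing $\ell=|b_n|-1$.

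\emph{Step 1: locate the copies of $0010$.} I will first check that the copies of $0010$ in $x_C$ start exactly at the starting positions of the $b_1$-blocks in the level-$1$ decomposition of $x_C$. By the recursion, consecutive $b_1$-blocks are separated by either nothing or a single spacer $1$. So it suffices to inspect the two words $b_1b_1=00100010$ and $b_1 1 b_1=001010010$. In each, $0010$ occurs only at the block starts: the only other factor $00$ in $b_1b_1$ is followed by $0$, not $1$, and $b_1 1 b_1$ has no extra $00$ at all. Consequently the set $S_n\subset[0,|b_n|)$ of offsets of $0010$-starts inside $b_n$ has the following properties:
\begin{itemize}
\item $0\in S_n$;
\item $\max S_n=|b_n|-4$;
\item consecutive elements of $S_n$ differ by $4$ or $5$.
\end{itemize}
In particular, no two elements of $S_n$ differ by $1$ or $2$.

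\emph{Step 2: compare offsets in consecutive level-$n$ blocks.} Fix $n\ge 2$, $\ell=|b_n|-1$, and suppose $p$ and $q=p+\ell$ are both starts of $0010$. In the level-$n$ decomposition, $x_C$ is a concatenation of $b_n$'s with at most one spacer between consecutive blocks. This holds because $b_{n+1}=b_nb_n1b_n$ and level-$(n+1)$ blocks are themselves separated by at most one spacer. The position $p$ is not a spacer, so it lies in some $b_n$-block at offset $i\in S_n$. I then split into cases.
\begin{itemize}
\item If $i=0$, then $q$ is the last position of the same block, at offset $|b_n|-1\notin S_n$. This is a contradiction.
\item If $i\ge 1$ and the next block follows directly, then $q$ sits at offset $i-1$ of the next $b_n$.
\item If $i\ge 1$ and there is one spacer, then $q$ is the spacer when $i=1$, which is impossible. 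Otherwise $q$ sits at offset $i-2$ of the next $b_n$.
\end{itemize}
In each case $q$ lies inside the next block, since $q<(\text{start of next block})+|b_n|$. So $q$ has offset $i-1$ or $i-2$ in $S_n$, while $i\in S_n$. This contradicts the gap property from Step 1.

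The only real care needed is in Step 1, namely that no spurious copies of $0010$ straddle block boundaries. That is the finite check above. Everything else is bookkeeping with the recursion $|b_{n+1}|=3|b_n|+1$ and the one-spacer structure.
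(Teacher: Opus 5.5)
Your proof is correct and is essentially the paper's argument: parse $x_C$ into level-$n$ blocks, note that the second occurrence must land in the adjacent $b_n$-block at an offset shifted by $1$ or $2$, and contradict the minimal spacing of $0010$-starts inside $b_n$. Your Step 1 usefully makes explicit a point the paper uses without comment: every $0010$ in $x_C$ begins at a $b_1$-start, so it lies inside some $b_n$-block. One small flaw is your justification of ``at most one spacer between consecutive $b_n$-blocks'' by appeal to level $n+1$, which is an infinite regress; it follows directly from the fact that $11$ never occurs in $x_C$.
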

\begin{proof}
Toward contradiction, suppose $\ell = |b_{n}|-1$, and assume the word $0010$ occurs at entries $p$ and at $p+\ell$ of $x_{C}$.  Considering the unique way to parse the point $x$ into blocks $b_n$ and the spacer $1$, if two copies occur in the same $b_n$, then the starting positions would be at most $|b_{n}|-4$ apart (or they would not both fit), but $\ell = |b_{n}|-1$.  
So the two copies must lie  in different $b_n$ blocks.  Say they start at places $q_1$ and $q_2$ with $q_1 < q_2$. Then $p = q_1+a$ and $p+\ell = q_2+b$, where $a, b$ are starting places of $0010$ in the same $b_n$. 
Since $\ell < |b_{n}|$, these two $b_n$ blocks need to be consecutive: if they were not, then $q_2-q_1\geq 2|b_{n}|$, and it follows that the distance between the starts of the two copies of $0010$ would be strictly greater than $b_{n}-1$, a contradiction.  
But adjacent blocks are either $|b_{n}|$ apart (when there is no spacer  $1$ inserted) or $|b_{n}|+1$ apart (with a spacer inserted), and so 
$\ell = (q_2-q_1)+(b-a)$ must be either $|b_{n}|+(b-a)$ or $|b_{n}|+1+(b-a)$.  Thus $b-a$ is either $-1$ or $-2$, meaning the starts of these are either $1$ or $2$ apart. But $a$ and $b$ were chosen to be starting positions of $0010$ within the same $b_{n}$, and hence $b-a$ cannot be $-1$ or $-2$.
\end{proof}

By the lemma, for all $n\geq 2$, when the word $0010$ starts at some location in $x_{C}$, any other occurrence of $0010$ in $x_{C}$ does not occur $|b_{n}|-1$ entries away, meaning that each $\ell\in L$ is a forbidden difference for occurrences of $0010$ in $x_{C}$.  

Set 
\begin{equation}
\label{def:Y}
    W = \{T^\ell x_{C}: \ell \in L\} \quad  \text{ and } \quad Y = \overline{W}, 
\end{equation}
and note that  $Y\subset X$ is compact and infinite. 
\begin{lemma}\label{lemma:chaconnondensity}
There exists $\delta > 0$ such that $d(T^{j}(Y),X) \ge \delta$ for all $j \in \mathbb{Z}$. 
\end{lemma}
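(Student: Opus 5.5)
The plan is to produce a point $z\in X$ together with an open neighborhood of $z$ that, for every $j\in\mathbb{Z}$, is disjoint from $T^{j}(Y)$. Then $d_H(T^j(Y),X)\ge \delta$ follows with $\delta$ the radius of that neighborhood. The point $z$ will be built so that its central window contains two occurrences of $0010$ at a separation that Lemma~\ref{lemma:chaconspacing} forbids inside $x_C$.

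\textbf{Choosing the window.} Since the shift commutes with the construction, $T^{j}(Y)=\overline{\{T^{j+\ell}x_C:\ell\in L\}}$. A point $y\in T^j(Y)$ whose coordinates agree with some word $w$ on a window $[-N,N]$ must arise as a limit of points $T^{j+\ell}x_C$ that carry the same word there. In other words, $w$ appears in $x_C$ starting at position $j+\ell-N$ for some $\ell\in L$. So I want a single word $w$ that the shifted family $\{T^{j+\ell}x_C\}_{\ell\in L}$ can never display at the center, whatever $j$ is. The naive idea of using one copy of $0010$ fails, because some $\ell$ will line up with it. The fix is to use the fact that $L$ has a fixed structure, namely $|b_{n+1}|=3|b_n|+1$, with $|b_2|-1=12$ the smallest element. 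Instead of trying to exclude a word directly, I will exploit the relation between $x_C$ and its shifts by elements of $L$.

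\textbf{Reformulation.} For $y=T^{j+\ell}x_C$, consider $T^{-\ell}y=T^{j}x_C$, which is independent of $\ell$. Take a central word $w$ of length $M$, to be fixed large. Suppose $w$ appears at the center of both $T^{j+\ell}x_C$ and $T^{j+\ell'}x_C$ for $\ell<\ell'$ in $L$. Then $x_C$ contains $w$ at two positions differing by $\ell'-\ell$. That difference is not itself in $L$, so this alone gives no contradiction. A better plan is to exhibit two disjoint nonempty open sets $U_1,U_2\subset X$ with the following property: for each $j$, at most one of them meets $\{T^{j+\ell}x_C\}$, and accumulation points add nothing new near them. Take $U_1=[0010]$ at coordinate $0$ and $U_2=[0010]$ at coordinate $-\ell_0$ with $\ell_0=|b_2|-1\in L$. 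A point of $T^j(Y)$ near $T^{j+\ell}x_C$ lying in $U_1$ means $0010$ occurs in $x_C$ at position $j+\ell$. Lying in $U_2$ means it occurs at position $j+\ell-\ell_0$. The hope is to combine these into a forbidden difference in $L$ via Lemma~\ref{lemma:chaconspacing}, for instance when $\ell=\ell_0$ appears in one constraint and $\ell'$ in the other.

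\textbf{Main obstacle.} Making the combinatorics close is the difficult part. Concretely, I would pick a point $z\in X$ whose window $[-13,13]$ contains $0010$ at both $0$ and $-12$. Such a $z$ exists because $b_2=b_1b_11b_1$ has length $13$, and a $b_2$ followed by a $b_1$ gives copies of $0010$ at distance $13$ or $12$; I would check which pattern is legal. Then any $y\in T^j(Y)$ close to $z$ would force a copy of $0010$ at distance $12\in L$ inside one shifted copy of $x_C$, contradicting Lemma~\ref{lemma:chaconspacing}. This step needs the two occurrences to be read in the same point $T^{j+\ell}x_C$, which holds because closeness is tested on a single point. The existence of such a $z$ in $X$, however, conflicts with the lemma itself. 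That conflict shows the correct $z$ must instead be chosen with occurrences placed so that membership in $T^j(Y)$ requires $x_C$ to have $0010$ at positions $p$ and $p+\ell$ for some $\ell\in L$. I expect to spend most of the effort pinning down this correct window, likely using a point whose $0010$-occurrences are at distances $|b_n|$ or $|b_n|+1$, compared against the shift $\ell=|b_n|-1$.
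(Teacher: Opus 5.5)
Your proposal does not reach a proof, and its organizing plan cannot succeed. You look for a single point $z\in X$ with a fixed neighborhood that misses $T^{j}(Y)$ for every $j$. No such $z$ exists. Since $(X,T)$ is minimal, the orbit of $x_C$ is dense, so for any $z\in X$ and $\delta>0$ there is $m$ with $d(T^{m}x_C,z)<\delta$. Then for any $\ell\in L$, the choice $j=m-\ell$ gives $T^{m}x_C=T^{j}(T^{\ell}x_C)\in T^{j}(Y)$.

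For a related reason, no fixed window pattern can be forbidden. Every point of $T^{j}(W)$ is a shift of $x_C$, so any word it displays is a word of $X$, and a contradiction can only come from comparing \emph{two} points. Your observation that the $z$ with $0010$ at $0$ and $-12$ is not in $X$ is an instance of this. The two-set variant with $U_1,U_2$ is not carried out either. There, Lemma~\ref{lemma:chaconspacing} only prevents a single point $T^{j+\ell}x_C$ from lying in both sets (the case $\ell=\ell'$). For $\ell\ne\ell'$ the separation $|\ell-\ell'+\ell_0|$ need not lie in $L$, so nothing stops $T^{j}(Y)$ from meeting both sets.

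The missing idea is that the witness must depend on $j$, and the right choice is $T^{j}x_C$ itself. Suppose $d(T^{j+\ell}x_C,T^{j}x_C)<\frac{1}{16}$ for some $\ell\in L$. Then the two points agree on $[-4,4]$, so $x_C$ carries the same word of length $9$ on $[j-4,j+4]$ and on $[j+\ell-4,j+\ell+4]$. That word contains a full copy of $0010$, starting at some $j+a$ with $-4\le a\le 1$. So $0010$ occurs at both $j+a$ and $j+a+\ell$, contradicting Lemma~\ref{lemma:chaconspacing}. Hence $T^{j}(W)$ misses the open ball $B_{1/16}(T^{j}x_C)$, so does its closure $T^{j}(Y)$, and $\delta=\frac{1}{16}$ works for all $j$.

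You already wrote down the key relation $T^{-\ell}y=T^{j}x_C$ in your reformulation. The point is to compare $y=T^{j+\ell}x_C$ with $T^{j}x_C$ rather than with another member $T^{j+\ell'}x_C$ of the family. That comparison yields a separation of exactly $\ell\in L$ instead of $\ell'-\ell$.
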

\begin{proof}
It suffices to prove there exists $\delta > 0$ such that $d(T^{j}(W),X) \ge \delta$ for all $j \in \mathbb{Z}$. For this, it is enough to show that for every $j \in \mathbb{Z}$, there exists some $y(j) \in X$ such that $T^{j}(Y) \cap B_{\frac{1}{16}}(y(j)) = \emptyset$. 
Fixing some $j\in \Z$, we show that $y(j) = T^{j}x_{C}$ satisfies this  conclusion. Note that all $z\in B_{\frac{1}{16}}(T^jx_{C})$ agree on the center block of length $9$, meaning on entries in $[-4,\dots, 4]$.  
This means that if $d(T^jx_{C}, T^{j+\ell}x_{C}) < 1/16$ for some $\ell\in L$, then 
$$
(x_{C})_{[j-4, j+4]} = (x_{C})_{[j+\ell-4, j+\ell+4]}.$$
Since this is a window of size 9, there is at least one occurrence of $0010$, and if it starts at the entry $j+a$ with $-4\leq a \leq 1$, then we have  occurrences at $j+a$ and $j+\ell+a$, a contradiction of $\ell\in L$ by Lemma~\ref{lemma:chaconspacing}.  
\end{proof}  

\begin{lemma}
\label{lemma:R-exists}
Given $m\geq 1$, there exists $R(m)$ such that for all $j\in\Z$, there exist $y^{(1)}, \dots, y^{(m)}\in T^j(W)$ such that the center blocks $(y^{(i)})_{[R(m), R(m)]}$ are all distinct.
\end{lemma}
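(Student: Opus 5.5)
The plan is to fix $m$ elements of $L$ once and for all, independently of $j$, and to choose $R(m)$ large enough that two points of $T^{j}(W)$ can only share a long center block if $x_{C}$ contains a long periodic word. I read the statement as requiring the blocks $(y^{(i)})_{[-R(m),R(m)]}$ to be distinct.

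Set $\ell_{i} = |b_{i+1}|-1 \in L$ for $1 \le i \le m$. These are pairwise distinct because $|b_{n}|$ is strictly increasing. For each $j$ take $y^{(i)} = T^{j+\ell_{i}}x_{C} \in T^{j}(W)$. Then $(y^{(i)})_{[-R,R]} = (x_{C})_{[j+\ell_{i}-R,\, j+\ell_{i}+R]}$. Suppose $(y^{(a)})_{[-R,R]} = (y^{(b)})_{[-R,R]}$ for some $a<b$, and put $d = \ell_{b}-\ell_{a} > 0$. Then $(x_{C})_{p+k} = (x_{C})_{p+d+k}$ for $0 \le k \le 2R$, where $p = j+\ell_{a}-R$. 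Consequently the word $(x_{C})_{[p,\,p+2R+d]}$, of length $2R+d+1$, has period $d$. This word lies in $\CL(X)$.

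The main step is to show that, for each fixed $d \ge 1$, there is $N(d)$ such that no word of $\CL(X)$ of length at least $N(d)$ has period $d$. I would argue by compactness. Suppose there were words in $\CL(X)$ with period $d$ of arbitrarily large length. Shift each one so that it is centered, and extend it to a point of $X$. A limit point $z \in X$ of these points satisfies $z_{k} = z_{k+d}$ for all $k$, so $T^{d}z = z$, and $z$ is periodic. But $X$ is minimal and infinite (it is the Chacon system, see~\cite{DRS}), so its orbit closures are all of $X$ and it cannot contain a periodic point. This is a contradiction.

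Finally, let $D = \{\ell_{b}-\ell_{a} : 1 \le a < b \le m\}$, which is a finite set of positive integers. Choose $R(m)$ with $2R(m)+1 \ge \max_{d \in D} N(d)$. By the previous paragraph, the $m$ center blocks are then pairwise distinct for every $j \in \mathbb{Z}$. The bound does not depend on $j$ because $N(d)$ is a property of the language alone.

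The only delicate point is the aperiodicity and compactness step, and it is standard. An alternative that avoids it would be an explicit combinatorial argument in the spirit of Lemma~\ref{lemma:chaconspacing}, using occurrences of $b_{n}$ for large $n$. I would use the compactness argument because it is shorter and uniform.
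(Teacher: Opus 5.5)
Your proof is correct, but it takes a different route from the paper's. The paper argues combinatorially from the rank-one structure. It uses the shifts $|b_i|-1$ and sets $R(m)=|b_m|$. It then notes that any window of length $2|b_m|+1$ in $x_C$ contains a full copy of $b_m$. So equal center blocks would give two occurrences of $b_m$ at a positive distance less than $|b_m|$. It rules this out by asserting that occurrences of $b_m$ in $x_C$ are at least $|b_m|$ apart.

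You instead reduce equal center blocks to a long word of period $d$, and exclude such words by compactness together with aperiodicity of the infinite minimal system $X$.

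The paper's route gives an explicit $R(m)$. However, it relies on a recognizability property of the Chacon blocks (that $b_m$ occurs in $x_C$ only at its natural positions), which it states without proof. Your route uses only that $X$ is infinite and minimal. It therefore works for any infinite minimal subshift, any point, and any finite set of distinct shifts. The cost is a non-explicit $R(m)$, and mere existence is all that Lemma~\ref{lemma:chaconboundfromfinites} needs.

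Your indexing $\ell_i=|b_{i+1}|-1$ also avoids a small slip in the paper. There, the choice for $i=1$ is the shift $|b_1|-1=3\notin L$, so that point does not actually lie in $T^j(W)$. You also read the window correctly as $[-R(m),R(m)]$.
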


\begin{proof}
Fix $m\geq 1$. For each $i$ write $\ell_{i} = |b_{i}|$. Consider the points
$$
y^{(i)} = T^{j+\ell_i-1}x_{C} \quad\text{ for } i=1, \dots, m
$$
and let $R = R(m) = \ell_m$. 
The center blocks of the $y^{(i)}$ correspond to the blocks in $x$ at locations: 
$$
[j+\ell_i-1-\ell_m, j+\ell_i-1+\ell_m] 
\quad\text{ for } i=1, \dots, m.
$$
If two of these blocks are the same, then setting $p_i = j+\ell_i-1$, we have $1\leq a< b\leq m$ with the center indices $p_a$ and $p_b$ giving rise to the same block. But
$p_b-p_a = \ell_b-\ell_a < \ell_m$, and so we have a block of length $2\ell_m+1$ with two occurrences a distance  $\ell_b-\ell_a$ apart.  Looking in the first block, it must contain the word $b_m$ since it has length $2\ell_m+1$, and so we have two occurrences of $b_m$ that are a distance $\ell_b-\ell_a< \ell_m$ apart. But two occurrences of $b_m$ can only be $\ell_m$ or $\ell_m+1$ apart, and this distance is smaller, 
and so the center blocks $(y_{i})_{[R(m),R(m)]}$ are all distinct.
\end{proof}

\begin{lemma}\label{lemma:chaconboundfromfinites}
For every $r \ge 1$, there exists $\delta(r) > 0$ such that the $T^{\CK}$-orbit of $Y$ in $\CK(X)$ is $\delta(r)$-bounded away from $\CK_{\le r}(X)$, 
meaning that $d(T^{j}(Y),\CK_{\le r}(X)) \ge \delta(r)$ for all $j \in \mathbb{Z}$.
\end{lemma}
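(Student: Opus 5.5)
The plan is to derive this directly from Lemma~\ref{lemma:R-exists} and a standard estimate for Hausdorff distance to sets with few points. Fix $r \ge 1$ and apply Lemma~\ref{lemma:R-exists} with $m = r+1$, obtaining $R = R(r+1)$. Set $\delta(r) = 2^{-(R+1)}$, or a comparable quantity such as half of it. With the metric $d(x,y) = 2^{-\inf\{|i| : x_i \ne y_i\}}$, two points whose center blocks on $[-R,R]$ differ are at distance at least $2^{-R}$. The lemma therefore says that for every $j$, the set $T^j(W) \subset T^j(Y)$ contains $r+1$ points $y^{(1)},\ldots,y^{(r+1)}$ that are pairwise at distance at least $2^{-R}$. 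This bound is uniform in $j$.

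Next, suppose toward contradiction that $d_H(T^j(Y), F) < \delta$ for some $F \in \CK_{\le r}(X)$, with $\delta \le 2^{-R}/2$. Each $y^{(i)}$ lies within $\delta$ of some point $f_{\phi(i)} \in F$. Since $F$ has at most $r$ points and there are $r+1$ indices $i$, the pigeonhole principle gives $a \ne b$ with $\phi(a) = \phi(b)$. The triangle inequality then yields $d(y^{(a)},y^{(b)}) < 2\delta \le 2^{-R}$, which contradicts the separation. Hence $d(T^j(Y),\CK_{\le r}(X)) \ge 2^{-R-1}$ for all $j \in \mathbb{Z}$. The separation is measured in the unshifted metric on the points $y^{(i)} \in T^j(W)$ themselves, so no issue arises from the shift not being an isometry.

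The main point to check is that Lemma~\ref{lemma:R-exists} indeed provides distinct blocks on a fixed window $[-R,R]$, independent of $j$. The statement has a typo, $[R(m),R(m)]$, which should read $[-R(m),R(m)]$. Its proof compares windows of length $2\ell_m+1$ centered at $p_i = j + \ell_i - 1$, so I would confirm that the window matches the metric's cylinder scale. I would also check that the points $T^{j+\ell_i-1}x_C$ used there lie in $T^j(W)$, which requires $\ell_i - 1 \in L$, that is $i \ge 2$. If necessary I would reindex using $b_2,\ldots,b_{r+2}$; this does not affect the argument. Beyond this bookkeeping, the argument is the pigeonhole estimate above.
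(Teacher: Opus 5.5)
Your proposal is correct and follows the paper's proof: apply Lemma~\ref{lemma:R-exists} with $m=r+1$ to get $r+1$ points of $T^{j}(W)$ pairwise at distance at least $2^{-R(r+1)}$, then take $\delta(r)=2^{-R(r+1)-1}$, using the disjointness of the $2^{-R(r+1)-1}$-balls, which is exactly your pigeonhole and triangle-inequality step. Your bookkeeping remarks are also right: the window should be $[-R(m),R(m)]$, and since $|b_{1}|-1=3\notin L$ the point $T^{j+|b_{1}|-1}x_{C}$ need not lie in $T^{j}(W)$, so reindexing with $b_{2},\ldots,b_{r+2}$ is a harmless and appropriate fix.
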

\begin{proof}
Fix $r \ge 1$. Applying Lemma~\ref{lemma:R-exists} with $m=r+1$, there exists $R(r+1) \ge 1$ such that for all $j \in \mathbb{Z}$ there exist $r+1$ points $y^{(1)},\ldots,y^{(r+1)} \in T^{j}(W)$ such that $d(y^{(i)},y^{(j)}) \ge 2^{-R(r+1)}$ for all $i \ne j$. 
Then the balls $B(y^{(i)},2^{-R(r+1)-1})$ are pairwise disjoint, and so any set of size $r$ in $X$ can intersect at most $r$ of the balls $B(y^{(i)},2^{-R(r+1)-1})$. Setting $\delta(r) = 2^{-R(r+1)-1}$, it follows that for every $j \in \mathbb{Z}$, we have $d(T^{j}(Y),\CK_{\le r}(X)) \ge \delta(r)$.
\end{proof}

\begin{theorem}
The Chacon system is weakly IC-rigid but not IC-rigid.
\end{theorem}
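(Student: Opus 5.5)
The proof has two independent parts. Weak IC-rigidity is immediate from Theorem~\ref{thm:msjweaklyicrigid}. By~\cite{DRS} the Chacon system is a minimal, uniquely ergodic $\mathbb{Z}$-action, and $\mathbb{Z}$ is amenable. By~\cite[Theorem 2]{DK} it has countable exceptions for products with respect to its unique invariant measure. So all hypotheses of Theorem~\ref{thm:msjweaklyicrigid} hold, and the action is weakly IC-rigid.

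For the failure of IC-rigidity, I will build a nontrivial nonfinitary IRC by averaging along the orbit of the set $Y=\overline{W}$ from~\eqref{def:Y}. Fix the F\o{}lner sequence $F_n=[0,n-1]$ for $\mathbb{Z}$ and set
$$\mu_n=\frac{1}{n}\sum_{j=0}^{n-1}\delta_{T^{j}(Y)}.$$
Let $\mu$ be a weak* accumulation point of the $\mu_n$; since $\CK(X)$ is compact, such a $\mu$ exists. The F\o{}lner property gives $T^{\CK}$-invariance of $\mu$ in the standard way: $\|T^{\CK}_*\mu_n-\mu_n\|\le 2/n$. Hence $\mu$ is an IRC.

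Next I show $\mu$ is nonfinitary. By Lemma~\ref{lemma:chaconboundfromfinites}, for each $r\ge 1$ every $\mu_n$ is supported in the closed set $\{A\in\CK(X): d_H(A,\CK_{\le r}(X))\ge \delta(r)\}$. By Portmanteau this closed set carries full $\mu$-measure, so $\mu(\CK_{\le r}(X))=0$ for every $r$. Taking the countable union over $r$ gives $\mu(\CK_{\fin}(X))=0$.

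Then I show $\mu\neq\delta_X$. By Lemma~\ref{lemma:chaconnondensity}, every $\mu_n$ is supported in the closed set $\{A: d_H(A,X)\ge\delta\}$, which does not contain $X$. Portmanteau again gives full $\mu$-measure to this set, so $\mu(\{X\})=0$.

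No step here is a real obstacle, since the combinatorial work is already done in Lemmas~\ref{lemma:chaconspacing}--\ref{lemma:chaconboundfromfinites}. The only points needing care are checking that the sets used are closed (they are, by continuity of $A\mapsto d_H(A,\CK_{\le r}(X))$ and $A\mapsto d_H(A,X)$), so that Portmanteau applies in the direction $\mu(C)\ge\limsup\mu_n(C)=1$, and noting that the uniform-in-$j$ bounds in those lemmas are exactly what make the bounds survive the limit.
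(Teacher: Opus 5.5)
Your proposal is correct and follows the paper's proof. Weak IC-rigidity comes from Theorem~\ref{thm:msjweaklyicrigid} via \cite{DRS} and \cite{DK}. Failure of IC-rigidity comes from a weak* limit of $\frac{1}{n}\sum_{j=0}^{n-1}\delta_{T^{j}Y}$ together with Lemmas~\ref{lemma:chaconnondensity} and~\ref{lemma:chaconboundfromfinites}. You also spell out two points the paper leaves implicit: the invariance of the limit, and the Portmanteau argument on the closed sets $\{A : d_H(A,\CK_{\le r}(X))\ge\delta(r)\}$ and $\{A : d_H(A,X)\ge\delta\}$.
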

\begin{proof}
The Chacon system satisfies the hypotheses of Theorem~\ref{thm:msjweaklyicrigid} by combining~\cite{DRS} and~\cite{DK}, and so is weakly IC-rigid. It suffices to show that it is not IC-rigid, meaning we need to show the existence of a nonfinitary nontrivial IRC. 
Taking $x_C$ and $Y$ as in~\eqref{def:Y}, let $\mu$ be a weak* limit of $\frac{1}{n}\sum_{k=0}^{n-1}\delta_{T^{k}Y}$. It follows from Lemma~\ref{lemma:chaconnondensity} that $\mu \ne \delta_{X}$. By Lemma~\ref{lemma:chaconboundfromfinites}, we have 
that $\mu(\CK_{\le r}(X)) = 0$ for every $r \ge 1$, and so $\mu(\CK_{\fin}(X)) = 0$ and $\mu$ is nonfinitary.
\end{proof}

\section{Dilations of the torus}\label{sec:dilations}
\subsection{Dilations of the torus and weak IC-rigidity}
We consider the action of the multiplicative semigroup $\mathbb{N}$ on $\mathbb{T}$ defined by $x \mapsto nx \mod  1$.  We show  that this action is weakly IC-rigid, and then we use this to derive some consequences for dilations of compact subsets of $\mathbb{T}$ satisfying an invariance and entropy assumption.  

Let $T_{n} \colon \mathbb{T} \to \mathbb{T}$ denote the map $T_{n}(x) = nx \mod 1$, and  for $d \in \mathbb{N}$ we also let  $T_{n} \colon \mathbb{T}^{d} \to \mathbb{T}^{d}$ denote the associated diagonal action. Call a subset $S = \{s_{1} < s_{2} < \ldots\} \subset \mathbb{N}$ a \emph{Weyl set} if for all $d\in\N$ and every $\alpha = (\alpha_{1},\ldots,\alpha_{d}) \in \mathbb{T}^{d}$ whose components are independent over the rationals, the sequence $\left(T_{s_{i}} (\alpha)\right)_{i \ge 1}$ is equidistributed, i.e. 
$\frac{1}{N}\sum_{n=1}^{N}\delta_{T_{s_{n}}(\alpha)} \to \lambda$ in the weak* topology (again $\lambda$ denotes Lebesgue measure on $\T^d$). The classic equidistribution theorem of Weyl~\cite{weyl} shows that polynomial subsets are Weyl sets. 

\begin{theorem}\label{thm:propertyPgivesdeltaT}
Suppose $S \subset \mathbb{N}$ is a Weyl set, $\mu$ is a Borel probability measure on $\CK(\mathbb{T})$ whose completion satisfies $\tilde{\mu}(\CK_{\ct}(\mathbb{T})) = 0$, and $\mu$ is invariant under the maps induced by $T_{s}$ for all $s \in S$. Then $\mu = \delta_{\mathbb{T}}$.
\end{theorem}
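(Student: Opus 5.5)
We adapt the proof of Theorem~\ref{thm:msjweaklyicrigid}, replacing the F\o{}lner averages by Ces\`aro averages along the Weyl set $S=\{s_1<s_2<\cdots\}$. Replace $\mu$ by its completion. Since $\tilde{\mu}(\CK_{\ct}(\T))=0$, $\mu$-almost every $K$ is uncountable. Using the Borel set of pairs $(K,m)$ with $m(K)=1$ and $m$ nonatomic, together with the Jankov--von Neumann selection exactly as before, we choose measurably a nonatomic probability measure $m_K$ supported on $K$. Then we form
\[
\rho_0=\int_{\CK(\T)}(\delta_K\otimes m_K\otimes m_K\otimes\cdots)\,d\mu(K)
\]
on $\CK(\T)\times\T^{\N}$. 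Next set $\rho_N=\frac1N\sum_{n=1}^N (T_{s_n})_*\rho_0$, with $T_{s_n}$ acting diagonally on all coordinates, and let $\rho$ be a weak* limit point. The first marginal of each $\rho_N$ is $\mu$, since $\mu$ is $T_s$-invariant for all $s\in S$; hence the first marginal of $\rho$ is $\mu$. As before, the set of $(K,x_1,\dots)$ with some $x_i\notin K$ is open and $\rho_0$-null, so by Portmanteau $\rho$-almost surely all $x_i\in K$.

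The key step is to show that for each $r$ the marginal of $\rho$ on $(x_1,\dots,x_r)$ is Lebesgue measure $\lambda^{\otimes r}$ on $\T^r$. This replaces ``countable exceptions for products'', and it is where the Weyl property enters. For a character $\chi_k(x)=e^{2\pi i\langle k,x\rangle}$ with $0\ne k\in\Z^r$, we need
\[
\frac1N\sum_{n\le N}\chi_k(s_n x)\to 0
\]
for $m_K^{\otimes r}$-almost every $x$ and $\mu$-almost every $K$; dominated convergence then finishes the step. Now $\chi_k(s_nx)=e^{2\pi i s_n\langle k,x\rangle}$, so we only need the single number $\beta=\langle k,x\rangle$ to be irrational. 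Then $(s_n\beta)$ is equidistributed by the Weyl property with $d=1$, and the average tends to $0$. The exceptional set is $\{x:\langle k,x\rangle\in\mathbb{Q}\}$, a countable union of translates of closed subgroups of codimension one. Pick $j$ with $k_j\ne0$ and fix the other coordinates; the admissible $x_j$ then form a countable set. Since $m_K$ is nonatomic, Fubini shows this set is $m_K^{\otimes r}$-null. Taking the countably many $k$ together, the Fourier coefficients of the limit marginal vanish, so the marginal is $\lambda^{\otimes r}$.

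Finally, fix a nonempty open $U\subset\T$. As in the earlier proof,
\[
\mu(\{K:K\cap U=\emptyset\})\le\rho(x_1,\dots,x_r\notin U)=(1-\lambda(U))^r\to0.
\]
Running over a countable base shows that $\mu$-almost every $K$ meets every basic open set, so $K=\T$ and $\mu=\delta_{\T}$.

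The main obstacle is the equidistribution step. One must check that the pointwise Ces\`aro convergence along $S$ passes to the weak* limit of the $\rho_N$ along a subsequence; this is routine via dominated convergence applied to continuous test functions, after approximating them by trigonometric polynomials. One must also handle the measurability of the sets of rationally dependent tuples, which are Borel.
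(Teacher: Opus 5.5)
Your proof is correct, but it takes a different route from the paper. You rerun the coupling argument of Theorem~\ref{thm:msjweaklyicrigid}: a measurable selection of nonatomic $m_K$, the measure $\rho_0$ on $\CK(\T)\times\T^{\N}$, Ces\`aro averages along $S$, and a weak* limit $\rho$ whose finite marginals you identify as $\lambda^{\otimes r}$ through Fourier coefficients.

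The paper instead argues pointwise in $K$. For each uncountable compact $K$ it picks $x_1,x_2,\ldots\in K$ with $1,x_1,\ldots,x_m$ rationally independent for every $m$. The $m$-dimensional Weyl property then shows that the frequency of $n\le N$ with $s_nx_i\notin I$ for all $i\le m$ tends to $(1-\lambda(I))^m$. From this it concludes that $A_N(K)$, the proportion of $n\le N$ with $s_nK\cap I=\emptyset$, tends to $0$. Invariance gives $\mu(E_I)=\int A_N\,d\mu$, and dominated convergence finishes.

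What the paper's route buys:
\begin{itemize}
\item It is more economical: it needs no Jankov--von Neumann selection, no auxiliary measure, and no weak* compactness.
\item It gives a deterministic statement in which $\mu$ appears only at the last line: for every uncountable compact $K$ and interval $I$, the set of $n$ with $s_nK\cap I=\emptyset$ has density zero along $S$.
\end{itemize}

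What your route buys:
\begin{itemize}
\item It uses only the one-dimensional Weyl property. By Weyl's criterion this already implies the $d$-dimensional one.
\item It isolates exactly the input the coupling argument consumes: non-generic tuples are $m^{\otimes r}$-null whenever $m$ is nonatomic.
\end{itemize}
That formulation genuinely holds in this setting, whereas a literal countable-exceptions condition would not. Tuples with $x_2=x_3$ are never generic, yet for fixed $x_1$ they form an uncountable set.
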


\begin{proof}
Suppose $\mu$ is such a measure; we assume that $\mu$ is complete by replacing it with its completion. We show that $\mu$-almost every compact set is $\T$. 

Fix a nonempty open interval $I\subset\T$ and set
$$E_I = \{K\in\CK(\T): K\cap I = \emptyset\}.
$$
We show that  that $\mu(E_I) = 0$ for all open intervals $I$.  
Let $K\subset\T$ be an uncountable compact set. 
Then we can choose points $x_1, x_2, \dots\in K$ such that any finite collection of these points is rationally independent in $\T$: first choose $x_1\in K$ irrational, and then inductively choose $x_m$ such that $x_m$ is rationally independent of $x_1, \dots, x_{m-1}$. 

Enumerate $S = \{s_{1}< s_{2}< \ldots\}$. 
 For each fixed $m\in\N$, the sequence $\{T_{s}(x_1, \dots, x_m): s \in S\}$ is equidistributed in $\T^m$ by assumption, and so 
$$
\frac{1}{N}\big\vert \{1\leq n\leq N: T_{s_{n}}(x_i) \notin I \text{ for all } i = 1, \dots, m \}\big\vert\to (1-\lambda(I))^m, 
$$
as $N\to\infty$.    
Set 
$$A_N(K) = \frac{1}{N}\sum_{n=1}^N\one_{\{K : T_{s_{n}}(K)\cap I = \emptyset\}}.
$$
If $T_{s_{n}}(K)\cap I = \emptyset$, then $T_{s_{n}}(x_i) \notin I$ for all $i$ and so 
$$
A_N(K) \leq 
\frac{1}{N}\big\vert \{1\leq n\leq N: T_{s_{n}}(x_i) \notin I \text{ for } i=1, \dots, m\} \big\vert. 
$$
Thus 
$$\limsup_{N\to\infty} A_N(K) \leq (1-\lambda(I))^m.$$
This holds for all $m\geq 1$ and so taking $m\to\infty$, we have that for every uncountable compact set $K$, 
$$
\lim_{N\to\infty} A_N(K) = 0.
$$
Since $\mu$ is $T_{s_{n}}$ invariant for all $n\in\N$, this implies that 
$$
\mu(E_I) = \frac{1}{N}\sum_{n=1}^N\mu(E_I) = 
\frac{1}{N}\sum_{n=1}^N\mu(T_{s_{n}}^{-1}E_I) = 
\int_{\CK(\T)} A_N(K)\,d\mu(K).
$$
The averages $A_N$ are bounded by $1$ and converge to $0$ on the complement of the set of countable compact sets, which by assumption has full $\mu$-measure.  By Dominated Convergence, 
$$
\mu(E_I) = \lim_{N\to\infty}\int_{\CK(\T)} A_N\,d\mu(K) = 0.
$$
Taking a countable basis $\CB$ of open intervals in $\T$, then 
$$
\mu\big(\bigcap_{I\in\CB}\{K:K\cap I\neq\emptyset\}
\big) = 1
$$
and so for $\mu$-almost every compact $K$, we have $K\cap I \neq \emptyset $ for every basis interval $I$, and so $K$ is dense in $\T$.  Since $K$ is compact, we have $K = \T$ and $\mu = \delta_\T$. 
\end{proof}

Theorem~\ref{thm:propertyPgivesdeltaT} immediately implies the following corollaries. 
\begin{corollary}\label{cor:propertyPweaklyicrigid}
If $S \subset \mathbb{N}$ is a semigroup generated by some Weyl set, then the action of $S$ on $\mathbb{T}$ is weakly IC-rigid.
\end{corollary}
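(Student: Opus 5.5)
The corollary should follow directly from Theorem~\ref{thm:propertyPgivesdeltaT}. Let $S$ be the semigroup generated by a Weyl set $S_0 \subset S$, and consider its action on $\mathbb{T}$ by the maps $T_s$. Unwinding the definition of weak IC-rigidity, we must show the following: if $\mu$ is an IRC for the $S$-action on $\mathbb{T}$ whose completion satisfies $\tilde{\mu}(\CK_{\ct}(\mathbb{T})) = 0$, then $\mu = \delta_{\mathbb{T}}$.

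So fix such an IRC $\mu$. Being an IRC for $S$ means $(T_s^{\CK})_*\mu = \mu$ for every $s \in S$. In particular this holds for every $s \in S_0$, since $S_0 \subset S$. Thus $\mu$ is a Borel probability measure on $\CK(\mathbb{T})$ with $\tilde{\mu}(\CK_{\ct}(\mathbb{T}))=0$ that is invariant under $T_s$ for all $s$ in the Weyl set $S_0$. These are exactly the hypotheses of Theorem~\ref{thm:propertyPgivesdeltaT}, which gives $\mu = \delta_{\mathbb{T}}$.

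One standing assumption needs checking: that each $T_s$ acts surjectively and finite-to-one, as required for semigroup actions in this paper. This is clear for $T_s(x) = sx \bmod 1$ with $s \ge 1$. It is also immediate that $\delta_{\mathbb{T}}$ itself is an IRC. Hence the only IRC with countable sets of measure zero is $\delta_{\mathbb{T}}$, which is weak IC-rigidity.

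There is no real obstacle here. The only point to note is that invariance under the whole semigroup trivially implies invariance under its generators. No closure property of Weyl sets under products is needed, since Theorem~\ref{thm:propertyPgivesdeltaT} only uses invariance along $S_0$.
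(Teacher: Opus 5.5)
Your proposal is correct and matches the paper's approach: the paper states that this corollary follows immediately from Theorem~\ref{thm:propertyPgivesdeltaT}. As you argue, invariance under the semigroup generated by a Weyl set gives invariance under the Weyl set itself, which is all that theorem needs.
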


\begin{corollary}\label{thm:nontweaklyicrigid}
The action of $\mathbb{N}$ on $\mathbb{T}$ is weakly IC-rigid.
\end{corollary}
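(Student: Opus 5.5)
The plan is to deduce the statement directly from Theorem~\ref{thm:propertyPgivesdeltaT}, applied with a Weyl set contained in $\mathbb{N}$. Let $\mu$ be an IRC for the multiplicative action of $\mathbb{N}$ on $\mathbb{T}$ whose completion satisfies $\tilde{\mu}(\CK_{\ct}(\mathbb{T})) = 0$. By definition of an IRC, $\mu$ is invariant under the map induced by $T_{n}$ for every $n \in \mathbb{N}$. In particular, it is invariant under $T_{s}$ for every $s$ in any subset $S \subset \mathbb{N}$.

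For $S$ I will take the set of squares $\{n^{2} : n \ge 1\}$, or even simply $S = \mathbb{N}$ itself. Either choice is a Weyl set by Weyl's equidistribution theorem, recalled just before Theorem~\ref{thm:propertyPgivesdeltaT}. For $S = \mathbb{N}$ the verification is elementary. If $\alpha \in \mathbb{T}^{d}$ has components independent over $\mathbb{Q}$, take a nonzero character $k \in \mathbb{Z}^{d}$. Then $k\cdot\alpha$ is irrational, so the Weyl sums $\frac{1}{N}\sum_{n\le N} e^{2\pi i n (k\cdot \alpha)}$ tend to $0$. By Weyl's criterion, $(n\alpha)_{n\ge 1}$ is equidistributed with respect to Lebesgue measure on $\mathbb{T}^{d}$. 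Since this holds for every $d$, $\mathbb{N}$ is a Weyl set.

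Theorem~\ref{thm:propertyPgivesdeltaT} then gives $\mu = \delta_{\mathbb{T}}$. This is exactly the definition of weak IC-rigidity. Alternatively, one can invoke Corollary~\ref{cor:propertyPweaklyicrigid} with the semigroup generated by $\mathbb{N}$, which is $\mathbb{N}$ itself.

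There is no real obstacle. The only point to check is that the Weyl-set hypothesis quantifies over all dimensions $d$ and all rationally independent tuples, and the character computation above handles this uniformly.
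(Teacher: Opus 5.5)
Your proposal is correct and follows the paper's own argument: the paper deduces this corollary directly from Theorem~\ref{thm:propertyPgivesdeltaT}, applied to a Weyl set generating $\mathbb{N}$ (such as $\mathbb{N}$ itself or a polynomial set), using that an IRC for the $\mathbb{N}$-action is invariant under every $T_{s}$. Your Weyl-criterion verification that $\mathbb{N}$ is a Weyl set is a detail the paper leaves implicit, and it is valid under the reading that $1,\alpha_{1},\ldots,\alpha_{d}$ are independent over $\mathbb{Q}$; the paper's proof of the theorem uses the same reading.
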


\subsection{Dilations of compact subsets}\label{subsec:dilations}

Throughout this section, for ease of exposition 
 we restrict to the $\N$-action by dilation, rather than considering more general subsemigroups of $\mathbb{N}$. However, using Corollary~\ref{cor:propertyPweaklyicrigid} instead of Corollary~\ref{thm:nontweaklyicrigid}, the results immediately generalize to a semigroup action generated by some Weyl set. 

We use weak IC-rigidity of the action of $\mathbb{N}$ on $\mathbb{T}$ to prove some results about dilations of subsets of $\mathbb{T}$. 
Throughout this section, F\o{}lner sequences for $\mathbb{N}$ always mean with respect to the multiplicative, not additive, semigroup structure of $\N$. 
We start with an example showing that for general uncountable compact subsets of $\mathbb{T}$, weak IC-rigidity does not rule out that the images may be condensing onto finite sets. 
\begin{example}\label{example:dilationset}
There exists an uncountable compact subset $Y \subset \mathbb{T}$ such that for the F\o{}lner sequence $F_n = \{p_1^{i_1}p_2^{i_2}\dots p_n^{i_n}: 0 \leq i_j\leq n, 1 \leq j\leq n\}$,   where $\{p_j\}$ is an enumeration of the primes, every accumulation point of the sequences of measures 
$$\mu_{n} = \frac{1}{|F_{n}|}\sum_{m \in F_{n}}\delta_{mY}$$ gives at least weight $\frac{1}{2}$ on $\delta_{\{0\}}$. We sketch the proof. Let $M_{k} = \max F_{k} = \prod_{j=1}^{k}p_{j}^{k-j+1}$ and set $Q(i) = \prod_{j=1}^{m_{i}}p_{j}$, where we choose an increasing sequence of natural numbers $m_{i} \to \infty$ that increases sufficiently quickly such that for all $i$ we have $Q(i+1) > 4 M_{2m_{i}} Q(i)$ and $Q(i+1) > 2^{i+2}M_{2m_{i}}$.  Set $Y = \{\sum_{\ell=1}^{\infty}\frac{\varepsilon_{\ell}}{Q(\ell)} : \varepsilon_{\ell} \in \{0,1\}\}$. Then $Y$ is uncountable and compact. Given $i \ge 1$, if $Q(i)$ divides $n$, then for any $y \in Y$ we have  $d(ny,0) < \frac{1}{2^{i}}$ and hence $d_{H}(nY,0) < \frac{1}{2^{i}}$. But $$\frac{|\{n \in F_{m_{i}} : Q(i) \textrm{ divides } n\}|}{|F_{m_{i}}|} \to \frac{1}{2}$$ as $i \to \infty$.

A similar construction can be used to show that for every $\varepsilon > 0$, there is an uncountable set $Y_{\varepsilon} \subset \mathbb{T}$ such that every accumulation point of $\mu_{n} = \frac{1}{|F_{n}|}\sum_{m \in F_{n}}\delta_{mY}$ gives weight at least $1-\varepsilon$ on $\delta_{\{0\}}$.
\end{example}

Toward proving Theorem~\ref{thm:dilationsintro1}, we need a property to rule out this condensing behavior. 
It turns out that some invariance along with positive entropy suffices for this. We continue to let $T_{n} \colon \mathbb{T} \to \mathbb{T}$ denote the map $T_{n}(x) = nx \mod 1$.

\begin{theorem}\label{thm:invsetsdilations}
Let $Y \subset \mathbb{T}$ be a compact subset invariant under $T_{p}$ for some $p \ge 2$, and suppose $T_{p} \colon Y \to Y$ has positive topological entropy. Then for every F\o{}lner sequence $(F_{m})_{m\in\N}$ of the multiplicative semigroup $\mathbb{N}$ and $\varepsilon > 0$, we have
$$\frac{1}{|F_{m}|}\big\vert\{n \in F_{m} : nY \textrm{ is } \varepsilon\textrm{-dense in } \mathbb{T}\}\big\vert \to 1 $$
 as $m \to \infty$. 
\end{theorem}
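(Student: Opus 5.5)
We argue by contradiction. Suppose the conclusion fails: there are $\varepsilon>0$, a F\o{}lner sequence $(F_m)$, and a subsequence $m_k$ along which
\[
\frac{1}{|F_{m_k}|}\big\vert\{n\in F_{m_k}: d_H(nY,\T)<\varepsilon\}\big\vert \le 1-c
\]
for some $c>0$. Pass to a further subsequence so that $\mu_{m_k}=\frac{1}{|F_{m_k}|}\sum_{n\in F_{m_k}}\delta_{nY}$ converges weak* to some $\mu$. Since $(F_m)$ is F\o{}lner for the multiplicative semigroup, $\mu$ is an IRC for $\N$ acting on $\T$. By Portmanteau applied to the closed set $\{K: d_H(K,\T)\ge\varepsilon\}$, we get $\mu(\{K: d_H(K,\T)\ge \varepsilon\})\ge c$, so $\mu\neq\delta_\T$.

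We then decompose $\mu$ into the part living on countable sets and the part living on uncountable sets, working with the completion $\tilde\mu$. Since $\CK_{\ct}(\T)$ is $\N$-invariant (each $T_n$ is finite-to-one and surjective), both pieces are invariant after normalization. If $\tilde\mu(\CK_{\textrm{uc}}(\T))>0$, the normalized uncountable part is an IRC giving zero mass to countable sets. By Corollary~\ref{thm:nontweaklyicrigid} it equals $\delta_\T$. Hence $\mu = a\,\delta_\T + (1-a)\mu_{\ct}$ with $\mu_{\ct}$ concentrated on countable compacts and $a<1$. It therefore suffices to prove that $\tilde\mu(\CK_{\ct}(\T))=0$. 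This is the main obstacle, and it is exactly where invariance and positive entropy must enter, since Example~\ref{example:dilationset} shows the claim fails for general uncountable $Y$.

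To rule out mass on countable sets, we use $T_p$-invariance of $Y$: $pnY=nY$ for every $n$, so $\mu$-a.e. $K$ is itself $T_p$-invariant. Indeed, the closed condition $T_p(K)=K$ holds for every $nY$ and so passes to the limit. Positive entropy should give a uniform quantitative lower bound on the ``size'' of $nY$. Since $T_p$ on $Y$ has entropy $h>0$, for suitable scales $N(\delta)$ the set $Y$ contains at least $e^{hk/2}$ points that are $p^{-k}$-separated. Equivalently, $Y$ has covering number $N_\delta(Y)\ge \delta^{-\beta}$ with $\beta=h/(2\log p)>0$ for small $\delta$, and more usefully every point of $Y$ has this property at all scales along its $T_p$-invariant structure. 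We then transfer this to $nY$: $nY$ is again $T_p$-invariant, and $T_p$ restricted to $nY$ is a factor of $T_p$ on $Y$ via $x\mapsto nx$, which is at most $n$-to-one. Finite-to-one factors preserve topological entropy, so $h_{\mathrm{top}}(T_p|_{nY})=h$ for every $n$.

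Next we show the set $\mathcal{H}_h=\{K\in\CK(\T): T_pK=K,\ h_{\mathrm{top}}(T_p|_K)\ge h\}$ is (contained in) a set whose closure avoids $T_p$-invariant countable compacts. A countable compact $T_p$-invariant set has zero entropy, since its invariant measures are atomic and so supported on periodic orbits. The subtlety is that entropy is only upper semicontinuous in the wrong direction in general. We therefore expect to use expansiveness of $T_p$: for the expansive map $T_p$, the entropy $K\mapsto h_{\mathrm{top}}(T_p|_K)$ is upper semicontinuous on $T_p$-invariant compacts in the Hausdorff topology. Because $T_p$ is positively expansive, entropy is computed at a fixed scale by counting separated sets, and a limit of $K_j$ with many $(k,\delta)$-separated points has nearly as many. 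This gives $h_{\mathrm{top}}(T_p|_K)\ge h$ for every $K$ in the support of $\mu$, hence $\mu$-a.e. $K$ is uncountable, so $\tilde\mu(\CK_{\ct}(\T))=0$.

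Then $\mu=\delta_\T$, contradicting $\mu(\{d_H(K,\T)\ge\varepsilon\})\ge c$. The step to verify carefully is the semicontinuity: precisely, that a Hausdorff limit of $T_p$-invariant compacts with $(k,\delta_0)$-separated sets of size at least $e^{hk}$ for all $k$ (with $\delta_0$ the expansivity constant) retains $(k,\delta_0/2)$-separated sets of size at least $e^{hk}$. This follows from continuity of $T_p^i$ for $i<k$ at each fixed $k$.
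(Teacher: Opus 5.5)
Your proof is correct and has the same skeleton as the paper's. By weak IC-rigidity (Corollary~\ref{thm:nontweaklyicrigid}) it suffices that every weak* limit $\mu$ of $\frac{1}{|F_m|}\sum_{n\in F_m}\delta_{nY}$ satisfies $\tilde\mu(\CK_{\ct}(\T))=0$. This in turn follows from an entropy lower bound that holds for every $nY$, is closed in $\CK(\T)$, and fails for countable sets. Your preliminary splitting of $\mu$ into countable and uncountable parts is harmless but unnecessary; the paper applies weak IC-rigidity to $\mu$ directly.

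The two routes differ in how closedness is obtained. The paper uses measure-theoretic entropy:
\begin{itemize}
\item $\CE_c$ is the set of compacts carrying a $T_p$-invariant measure of entropy at least $c$;
\item $(T_n)_*\eta$ witnesses $nY\in\CE_c$;
\item $\CE_c$ is closed in one line by upper semicontinuity of $\eta\mapsto h_\eta(T_p)$ on $\CM_{T_p}(\T)$.
\end{itemize}
This needs neither $T_p$-invariance of the limit sets nor Bowen's theorem on finite-to-one factors. You instead prove upper semicontinuity of $K\mapsto h_{\mathrm{top}}(T_p|_K)$ on $T_p$-invariant compacts by hand with separated sets. That is more elementary, but it rests on a step you assert without proof.

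The missing step is the uniform bound: each $nY$ has a $(k,\delta_0)$-separated set of size at least $e^{hk}$ for \emph{every} $k$, with $\delta_0$ independent of $n$. Saying that ``entropy is computed at a fixed scale'' only gives a $\limsup$ in $k$. For different $n_jY$ the large values could occur at different $k$, and then your perturbation argument at a fixed $k$ yields nothing.

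The fix is subadditivity. Take an open cover $\mathcal{U}$ of $\T$ with diameter below the expansivity constant of $T_p$ and Lebesgue number greater than $2\delta_0$. So $\delta_0$ should be a fixed scale below the expansivity constant, not the constant itself. Then $\mathcal{U}|_K$ is a generator for $T_p|_K$ on any $T_p$-invariant compact $K$, and for every $k$,
\[
s_K(k,\delta_0)\;\ge\; N\Bigl(\bigvee_{i=0}^{k-1}T_p^{-i}\,\mathcal{U}|_K\Bigr)\;\ge\; e^{k\,h_{\mathrm{top}}(T_p|_K)} .
\]
The first inequality holds because a maximal $(k,\delta_0)$-separated set is spanning, and each of its closed Bowen balls lies in a single element of the join. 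The second holds because $\log N$ of the join is subadditive in $k$, with limit $h_{\mathrm{top}}(T_p|_K)$ for a generator.

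With this bound, your perturbation step gives $s_K(k,\delta_0/2)\ge e^{hk}$ for all $k$ and every $K$ in the orbit closure of $Y$. The rest of your argument then goes through.
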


\begin{proof}
Let $\mu_{m} = \frac{1}{|F_{m}|}\sum_{n \in F_{m}}\delta_{nY}$. It suffices to show that the only weak* limit of $\mu_{m}$ is $\delta_{\mathbb{T}}$. By Corollary~\ref{thm:nontweaklyicrigid}, the action of $\mathbb{N}$ on $\mathbb{T}$ is weakly IC-rigid, and so it suffices to show that for every weak* limit $\mu$ of $\mu_{m}$, we have $\tilde{\mu}(\CK_{\textrm{ct}}(\mathbb{T})) = 0$.

Fix $\varepsilon > 0$. Since $T_{p} \colon Y \to Y$ has positive topological entropy, by the Variational Principle there exists $\eta \in \mathcal{M}_{T_{p}}(Y)$ such that $h_{\eta}(T_{p}) > 0$. Set $c = h_{\eta}(T_{p})$ and define
$$\CE_{c} = \{Z \in \CK(X) : \textrm{there exists } \eta \in \mathcal{M}_{T_{p}}(Z) \textrm{ satisfying } h_{\eta}(T_{p}) \ge c\}.$$
By assumption, $Y \in \mathcal{E}_{c}$. We claim $nY \in \mathcal{E}_{c}$ for every $n \ge 1$.
Set $\eta_{n} = (T_{n})_{*}(\eta)$. Then $\eta_{n} \in \mathcal{M}_{T_{p}}(nY)$ and $h_{\eta_{n}}(T_{p}) = h_{\eta}(T_{p})$ since $T_{n}$ is $n$-to-one, so the claim follows. Thus $\mu_{m}(\mathcal{E}_{c}) = 1$ for all $m \ge 1$. Note that $\mathcal{E}_{c}$ is closed since the entropy function is upper semicontinuous on the space $\mathcal{M}_{T_{p}}(\mathbb{T})$. Let $\mu$ be a weak* limit of $\mu_{m}$. Then $\mu(\mathcal{E}_{c}) = 1$, and since no countable subset supports a $T_{p}$-invariant measure of positive entropy, we have $\CK_{\textrm{ct}}(\mathbb{T}) \cap \mathcal{E}_{c} = \emptyset$. It follows that $\tilde{\mu}(\CK_{\textrm{ct}}(\mathbb{T})) = 0$, and hence $\mu = \delta_{\mathbb{T}}$.
\end{proof}

\begin{corollary}
\label{cor:syndetic}
Let $Y \subset \mathbb{T}$ be a compact subset invariant under $T_{p}$ for some $p \ge 2$, and suppose $T_{p} \colon Y \to Y$ has positive topological entropy. Then for every $\varepsilon > 0$, the set
$$\{n \in \mathbb{N} : nY \textrm{ is }\varepsilon\textrm{-dense in } \mathbb{T}\}$$
is multiplicatively syndetic.
\end{corollary}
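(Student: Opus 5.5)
We derive Corollary~\ref{cor:syndetic} from Theorem~\ref{thm:invsetsdilations} by contradiction, using the standard link between syndeticity and F\o{}lner density. Fix $\varepsilon>0$ and put
$$S_\varepsilon=\{n\in\mathbb{N} : nY \textrm{ is } \varepsilon\textrm{-dense in } \mathbb{T}\}.$$
We take ``multiplicatively syndetic'' to mean that there is a finite set $F\subset\mathbb{N}$ such that for every $m\in\mathbb{N}$ some $f\in F$ satisfies $fm\in S_\varepsilon$. (If the paper's convention is the other side, $mf\in S_\varepsilon$, nothing changes, since $\mathbb{N}$ is commutative.) Equivalently, the union $\bigcup_{f\in F} f^{-1}S_\varepsilon$ is all of $\mathbb{N}$, where $f^{-1}S_\varepsilon=\{m : fm\in S_\varepsilon\}$.

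Suppose $S_\varepsilon$ is not syndetic. Then its complement $C=\mathbb{N}\setminus S_\varepsilon$ is \emph{thick}: for every finite $F\subset\mathbb{N}$ there is some $m$ with $Fm\subset C$. We use this to build a F\o{}lner sequence lying entirely inside $C$. Start from any multiplicative F\o{}lner sequence $(F_k)$, for example the sets $F_n$ of Example~\ref{example:dilationset}. For each $k$, thickness gives $m_k$ with $F_k m_k\subset C$. Since $\mathbb{N}$ is commutative and cancellative, translates of F\o{}lner sets are again F\o{}lner. Indeed, for every $g$,
$$|g(F_k m_k)\,\Delta\, F_k m_k| = |gF_k\,\Delta\, F_k|,$$
because multiplication by $m_k$ is injective. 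Hence $(F_k m_k)_k$ is a F\o{}lner sequence.

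Along this sequence the proportion of $n\in F_km_k$ with $nY$ $\varepsilon$-dense is $0$ for every $k$. This contradicts Theorem~\ref{thm:invsetsdilations}, which says that this proportion tends to $1$ along every multiplicative F\o{}lner sequence. So $S_\varepsilon$ is syndetic.

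The main point needing care is the equivalence between ``not syndetic'' and ``complement thick.'' The direction we need is direct: if for every finite $F$ there is an $m$ with $Fm\cap S_\varepsilon=\emptyset$, then that is exactly thickness of $C$. Negating syndeticity gives precisely this statement, with no further argument required. We also need that the F\o{}lner property is stated for left translation in the paper's convention and that translation preserves it, which is immediate from commutativity as computed above.
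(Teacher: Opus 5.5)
Your proposal is correct and follows the paper's own argument: non-syndeticity makes the complement thick, so each $F_k$ of a F\o{}lner sequence can be translated to $F_km_k$ inside the complement. This yields a F\o{}lner sequence on which the proportion of $\varepsilon$-dense dilations is $0$, contradicting Theorem~\ref{thm:invsetsdilations}; your check that $|g(F_km_k)\,\Delta\,F_km_k| = |gF_k\,\Delta\,F_k|$ fills in the one step the paper asserts without comment.
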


\begin{proof}
This follows by combining Theorem~\ref{thm:invsetsdilations} and the fact that if $S \subset \mathbb{N}$ satisfies $\frac{|S \cap F_{k}|}{|F_{k}|} \to 1$ as $k \to \infty$ for every F\o{}lner sequence for the multiplicative semigroup $\mathbb{N}$, then $S$ is multiplicatively syndetic. Indeed, if $S$ is not multiplicatively syndetic, then for every finite $F\subset\N$, there exists some $t\in\N$ such that 
$FT\subset \mathbb{N} \setminus S$. Let $F_m$ be a F\o{}lner sequence for the multiplicative semigroup $\mathbb{N}$.  For each $m$, pick $t_m\in\N$ such that $F_mt_m\subset \mathbb{N} \setminus S$ and set 
$\Psi_m = F_mt_m$.  Then $(\Psi_m)$ is also a F\o{}lner sequence for $\mathbb{N}$, and by construction lies entirely in $\mathbb{N} \setminus S$.
\end{proof}

\begin{theorem}
\label{th:dilate-revised}
Let $Y \subset \mathbb{T}$ be a compact subset invariant under $T_{p}$ for some $p \ge 2$, and suppose $T_{p} \colon Y \to Y$ has positive topological entropy. Then for every F\o{}lner sequence $(F_{m})_{m\in\N}$ of the multiplicative semigroup $\mathbb{N}$, there exists a set $J$ such that
$$\frac{|J \cap F_{m}|}{|F_{m}|} \to 1 \quad \textrm{ as } \quad  m \to \infty$$
and
$$\lim_{n \to \infty, n \in J}nY = \mathbb{T} \quad \textrm{ in } \quad \CK(\mathbb{T}).$$
\end{theorem}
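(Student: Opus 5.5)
The plan is a standard diagonal argument that upgrades Theorem~\ref{thm:invsetsdilations}, which holds for each fixed $\varepsilon$, to a single density-one set along which the $\varepsilon$ shrinks. For $k \ge 1$ set
$$S_{k} = \{n \in \mathbb{N} : nY \textrm{ is } 2^{-k}\textrm{-dense in } \mathbb{T}\}.$$
Since $nY \subset \mathbb{T}$, being $2^{-k}$-dense is equivalent to $d_{H}(nY,\mathbb{T}) \le 2^{-k}$ (up to the strict versus non-strict inequality, which is harmless). The sets $S_k$ are decreasing in $k$. By Theorem~\ref{thm:invsetsdilations}, applied with the fixed F\o{}lner sequence $(F_m)$ and $\varepsilon = 2^{-k}$, we have $|S_k \cap F_m|/|F_m| \to 1$ as $m \to \infty$ for each $k$.

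First choose thresholds $M_1 < M_2 < \cdots$ such that $|S_k \cap F_m| \ge (1 - 2^{-k})|F_m|$ for all $m \ge M_k$. For $m$ with $M_k \le m < M_{k+1}$, the natural choice would be to put $F_m \cap S_k$ into $J$. However, $J$ must be a single set, and the $F_m$ overlap and need not be nested, so instead I would define $J$ via size cutoffs on the integers. Choose integers $N_1 < N_2 < \cdots$ and set
$$J = \bigcup_{k}\bigl(S_k \cap [N_k, N_{k+1})\bigr).$$
Then the second conclusion is immediate: if $n \in J$ and $n \ge N_k$, then $n \in S_{k'}$ for some $k' \ge k$, so $n \in S_k$ by monotonicity, and hence $d_H(nY,\mathbb{T}) \le 2^{-k}$. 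Thus $nY \to \mathbb{T}$ as $n \to \infty$ along $J$.

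The main obstacle is the density requirement. Because $S_k$ is decreasing, $J \supseteq S_k \cap [N_k,\infty)$ fails, and in fact $J \cap [N_j,\infty) \subseteq S_j$ is what we have. So for $F_m$ we need most of its elements either to be below the current cutoff or to lie in the right $S_{k'}$ for every $k'$ simultaneously. I would take the $N_k$ growing extremely fast, chosen using the finite sets $F_1, \dots, F_{M_{k+1}}$: pick $N_{k+1}$ larger than $\max\bigl(F_1\cup\cdots\cup F_{M_{k+1}}\bigr)$. Then for $M_k \le m < M_{k+1}$, every element of $F_m$ is below $N_{k+1}$, so
$$F_m \setminus J \subseteq \bigl(F_m \setminus S_k\bigr) \cup \bigcup_{j<k}\bigl(F_m \cap [N_j,N_{j+1}) \setminus S_j\bigr) \cup \bigl(F_m \cap [1,N_1)\bigr).$$
Since $S_j \supseteq S_k$ for $j < k$, the middle terms are also contained in $F_m \setminus S_k$. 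The last term is a fixed finite set, and its density in $F_m$ tends to $0$ because F\o{}lner sets in the cancellative semigroup $\mathbb{N}$ have $|F_m| \to \infty$. Hence $|F_m \setminus J|/|F_m| \le 2^{-k} + |[1,N_1)|/|F_m| \to 0$.

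One subtlety must be checked: with $N_{k+1}$ bounding $\max F_m$ for $m < M_{k+1}$, some elements of such $F_m$ may fall in $[N_k, N_{k+1})$ while others fall below $N_k$. Those below $N_k$ lie in some earlier window $[N_j, N_{j+1})$ and need membership in $S_j \supseteq S_k$, which holds for elements of $S_k$. So the estimate above is correct, and the whole argument rests only on monotonicity of the $S_k$ together with Theorem~\ref{thm:invsetsdilations}.
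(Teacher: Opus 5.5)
Your proof is correct. It follows the same overall strategy as the paper: diagonalize Theorem~\ref{thm:invsetsdilations} over thresholds $\varepsilon \to 0$. Where it differs is in how you build $J$.

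The paper picks an index $r(m) \to \infty$ growing slowly enough that $|E_{r(m)} \cap F_m|/|F_m| > 1 - 2^{-r(m)}$, and then simply sets $J = \bigcup_m (E_{r(m)} \cap F_m)$. This means the concern that steered you away from the ``natural choice'' is unfounded. Overlaps among the $F_m$ only add elements to $J$. Density is automatic, since $J \cap F_m \supseteq E_{r(m)} \cap F_m$. Convergence holds because any $n \in J$ that violates a fixed threshold must lie in one of the finitely many finite sets $E_{r(m)} \cap F_m$ with $m < M$.

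Your windowed set $J = \bigcup_k S_k \cap [N_k, N_{k+1})$ moves the work to the other side.
\begin{itemize}
\item Convergence along $J$ becomes immediate, with an explicit rate depending only on the size of $n$: if $n \in J$ and $n \ge N_k$, then $d_H(nY, \mathbb{T}) \le 2^{-k}$.
\item Density becomes the harder part. It needs the monotonicity $S_k \subseteq S_j$ for $j \le k$ together with the bookkeeping $N_{k+1} > \max(F_1 \cup \cdots \cup F_{M_{k+1}})$. You handle both correctly, including the point that elements of $F_m$ falling in earlier windows only need membership in the larger sets $S_j$.
\end{itemize}

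One small simplification: you could take $N_1 = 1$, which removes the need to appeal to $|F_m| \to \infty$. That claim is true, though, since no finite nonempty subset of $\mathbb{N}$ is invariant under multiplication by $2$.
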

\begin{proof}
Let $(F_{m})_{m\in\N}$ be a F\o{}lner sequence for the multiplicative action of $\mathbb{N}$. Given $n \in \mathbb{N}$, let $\ell_{n}$ denote the maximal length of an interval in the complement of $nY$ in $\mathbb{T}$. For $r \ge 1$, set $E_{r} = \{n \in \mathbb{N} : \ell_{n} < \frac{1}{r}\}$. By Theorem~\ref{thm:invsetsdilations}, for every $r \ge 1$ we have
$$\frac{|E_{r} \cap F_{m}|}{|F_{m}|} \to 1 \quad \textrm{ as } m \to \infty.$$
Let $I_{r}$ be an increasing sequence of natural numbers such that if $m \ge I_{r}$ then $\frac{|E_{r} \cap F_{m}|}{|F_{m}|} > 1-2^{-r}$.  Define $r(m) = \max\{r : I_{r} \le m\}$. Note that $r(m)$ is increasing and satisfies $r(m) \to \infty$ as $m \to \infty$. Defining $J_{m} = E_{r(m)} \cap F_{m}$, we then have that 
$$\frac{|J_{m}|}{|F_{m}|} = \frac{|E_{r(m)} \cap F_{m}|}{|F_{m}|} > 1 - 2^{-r(m)} \to 1$$ 
as $m \to \infty.$ 
Moreover, since $J_{m} \subset E_{r(m)}$ for every $m$, we have
$$\sup_{n \in J_{m}}\ell_{n} \le \frac{1}{r(m)} \to 0.$$
Setting  $J = \bigcup_{m=1}^{\infty}J_{m}$, it follows that $$\frac{|J \cap F_{m}|}{|F_{m}|} \ge \frac{|J_{m}|}{|F_{m}|} \to 1 
$$ 
as $m \to\infty$.
Given $\varepsilon > 0$,  there exists $M\in\N$ such that for all $m \ge M$ we have $\frac{1}{r(m)} < \varepsilon$, and hence $\sup_{n \in J_{m}} \ell_{n} < \varepsilon$. Since $\bigcup_{m=1}^{M-1}J_{m}$ is finite, there are only finitely many $n \in J$ such that $\ell_{n} \ge \varepsilon$. Altogether it follows that $\lim_{n \to \infty, n \in J}\ell_{n} = 0$ and $\lim_{n \to \infty, n \in J} nY = \mathbb{T}$ in $\CK(\mathbb{T})$.
\end{proof}

\begin{corollary}
Let $Y \subset \mathbb{T}$ be a compact subset invariant under $T_{p}$ for some $p \ge 2$, and suppose $T_{p} \colon Y \to Y$ has positive topological entropy. Let $W \subset \N$ be finite and fix $\varepsilon > 0$. Then for every F\o{}lner sequence $(F_{m})_{m\in\N}$ of the multiplicative semigroup $\N$, the set
$$\frac{1}{|F_{m}|}\lvert \{n \in F_{m} : wnY \textrm{ is }\varepsilon \textrm{-dense in }\mathbb{T} \textrm{ for all } w \in W\}| \to 1 \quad$$
as $m \to \infty$.
\end{corollary}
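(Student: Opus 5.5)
The plan is to reduce the statement to Theorem~\ref{thm:invsetsdilations}, applied once for each $w \in W$, together with the Følner property of $(F_m)$ under multiplication by a fixed element. Since $W$ is finite, it suffices to show that for each fixed $w \in W$,
$$\frac{1}{|F_{m}|}\big\vert\{n \in F_{m} : wnY \textrm{ is } \varepsilon\textrm{-dense in } \mathbb{T}\}\big\vert \to 1.$$
Granting this, the set in the statement is a finite intersection of subsets of $F_m$, each of relative density tending to $1$. A union bound on the complements then gives relative density tending to $1$ for the intersection.

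To handle a fixed $w$, set $S = \{k \in \mathbb{N} : kY \textrm{ is } \varepsilon\textrm{-dense}\}$. Then $\{n \in F_m : wnY \textrm{ is } \varepsilon\textrm{-dense}\} = \{n\in F_m : wn \in S\}$, which is in bijection (via $n\mapsto wn$, injective by cancellation) with $S \cap wF_m$. So the count equals $|S\cap wF_m|$.

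There are two ways to conclude. The first is to note that $(wF_m)_m$ is itself a Følner sequence for the multiplicative semigroup $\mathbb{N}$, by commutativity: $|u(wF_m)\,\Delta\, wF_m| = |w(uF_m\,\Delta\, F_m)| = |uF_m \Delta F_m|$. Then Theorem~\ref{thm:invsetsdilations} applied to $(wF_m)$ gives $|S\cap wF_m|/|F_m| \to 1$, using $|wF_m| = |F_m|$. The second is to compare directly: $|S\cap wF_m| \ge |S\cap F_m| - |F_m\setminus wF_m|$. Here $|S\cap F_m|/|F_m|\to 1$ by Theorem~\ref{thm:invsetsdilations} for $(F_m)$, and $|F_m \setminus wF_m|/|F_m| \to 0$ by the Følner condition. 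Either route works.

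No step is a genuine obstacle. The only point needing care is the precise form of the Følner condition for a semigroup. We need translation by $w$ to be injective and $|wF_m \,\Delta\, F_m| = o(|F_m|)$. Both hold in the commutative cancellative semigroup $(\mathbb{N},\cdot)$, so the corollary follows.
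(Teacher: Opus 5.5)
Your proposal is correct and is essentially the paper's argument. Your second route is exactly the paper's proof, which bounds $|F_m\setminus B_\varepsilon(w)| = |wF_m\setminus A_\varepsilon|$ by $|wF_m\setminus F_m| + |F_m\setminus A_\varepsilon|$, applies Theorem~\ref{thm:invsetsdilations} and the F\o{}lner property, and then takes a union bound over the finite set $W$. Your first route, observing that $(wF_m)_m$ is itself a F\o{}lner sequence, is an equally valid minor variant; the paper uses the same trick with $F_m t_m$ in the proof of Corollary~\ref{cor:syndetic}.
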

\begin{proof}
Let $Y$ be such a set, $\varepsilon > 0$, and $(F_{m})_{m \in \N}$ be a F\o{}lner sequence for $\N$. Set $$A_\varepsilon = \{n \in \mathbb{N} : nY \textrm{ is }\varepsilon\textrm{-dense in } \mathbb{T}\}.$$
Let $W \subset \mathbb{N}$ be finite and for $w \in W$ define
\begin{equation}
    \label{eq:R-e-W}
B_{\varepsilon}(w) =  \{n\in\N: wn\in A_\varepsilon(Y)\}.
\end{equation}
Then for each $w \in W$ we have
$$|F_{m} \setminus B_{\varepsilon}(w)| = |wF_{m} \setminus A_{\varepsilon}|.$$
Since
$$wF_{m} \setminus A_{\varepsilon} \subset \left(wF_{m} \setminus F_{m}\right) \cup (F_{m} \setminus A_{\varepsilon}), $$
we have that 
$$\frac{|F_{m} \setminus B_{\varepsilon}(w)|}{|F_{m}|} \le \frac{|wF_{m} \setminus F_{m}|}{|F_{m}|} + \frac{|F_{m} \setminus A_{\varepsilon}|}{|F_{m}|}.$$
As $m \to \infty$, the first term on the right tends to zero since $(F_{m})_{m\in\N}$ is a F\o{}lner sequence, and the second tends to zero by Theorem~\ref{thm:invsetsdilations}. Since $W$ is finite, it follows that
$$\frac{\lvert F_{m} \setminus \bigcup_{w \in W}B_{\varepsilon}(w) \rvert}{|F_{m}|} \to 0 \textrm{ as } m \to \infty$$
which proves the statement.
\end{proof}

\begin{remark}
If one can show that the action of $\mathbb{N}$ on $\mathbb{T}$ is IC-rigid, then one can remove the positive entropy assumption on $Y$ in Theorem~\ref{thm:invsetsdilations}, Corollary~\ref{cor:syndetic}, and Theorem~\ref{th:dilate-revised} and simply assume $Y$ is infinite and invariant under multiplication by $p$ for some $p \ge 2$. 
\end{remark}

\section{Tree structures and deeply transitive actions}\label{sec:treestructuresdeeptransitivity}
\subsection{Tree structures}
Let $X$ be a Cantor set. By a {\em tree structure for $X$} we mean a sequence of clopen partitions $(\mathcal{C}_{i})_{i\in\N} = \{C_{1}^{i},\ldots,C_{\kappa(i)}^{i}\}_{i\in\N}$ of $X$ satisfying  the following:

\begin{enumerate}
\item 
\label{item:tree-one}
For every $i \geq 1$, we have that 
$\CC_{i+1}$ refines $\CC_{i}$,
in the sense that for every $i \ge 1$ and $1 \le j \le \kappa(i)$, $C^{i}_{j}$ is the disjoint union of clopen sets $C^{i+1}_{k}$ for  $k$ belonging to some indexing set $I(i,j)$,
\item 
\label{item:tree-two}
The diameters of the clopens satisfy
$\max \{\diam(C): C \in {\CC}_{i}\} 
\longrightarrow 0$ as  $i\to\infty$.
\item 
\label{item:tree-three}
For all $k\geq 1$ and for all $C \in \CC_k$, 
 $$
 \inf_{n\geq k}\frac{|\{D\in \CC_n: D\subset C\}|}{\kappa(n)} > 0. 
 $$
\end{enumerate}

When the setting is clear, we write $(\CC_{i})_{i\in\N}$ for the tree structure, omitting the enumeration of the clopen sets at each level.  
It is clear that, up to homeomorphism, every Cantor set admits a tree structure.

We note that condition~\eqref{item:tree-two} implies that the set of all $C_{j}^{i}$ form a base for the topology on $X$.  Namely, 
if $x\in X$ and $U$ is an open neighborhood $x$, by setting $\varepsilon = d_H(x, X\setminus U)$, then by condition~\eqref{item:tree-two} we can choose $j\geq 1$ such that the maximum of the diameters is bounded by $\varepsilon$. Since the clopens form a partition of $X$, we can take  the unique clopen containing $x$ and thus obtain a clopen contained in $U$. Furthermore, the set of all finite unions of $C_{j}^{i}$ together with $\emptyset$ form a $\pi$-system, in the sense that the collection is closed under intersections. 

A useful example is the following.
\begin{example}\label{example:treestructureXn}
    Fix $n \ge 2$ and recall (see Section~\ref{sec:subshifts}) that  $X_{n} = \{0,\ldots,n-1\}^{\mathbb{Z}}$ and $X_{n}^{+} = \{0,\ldots,n-1\}^{\mathbb{Z}}$ are the two-sided and one-sided full shifts on $n$ symbols, respectively. 
Setting $\CC_{k} = \{C^{k}_{w} : w \in \CL_{2k+1}(n)\}$ for each $k \ge 1$, where 
$\CL_{2k+1}$ denotes the words  of length $2k+1$ defined by the (symmetric) cylinder sets $[w]$, this 
defines a tree structure on $X_{n}$. 
An analogous tree structure is defined on $X_{n}^{+}$ using cylinder sets of words based at 1.
\end{example}

Given a tree structure $(\CC_{i})_{i\in\N}$ on $X$,  for every $m \ge 2$ there are projection maps
$P_{m} \colon \CC_{m} \to \CC_{m-1}$
defined by $P_{m}(C^{m}_{i}) = C^{m-1}_{j(i)}$ where $C^{m-1}_{j(i)}$ is the unique clopen in $C_{m-1}$ which contains $C^{m}_{i}$.

The sets $\CC_{m}$ are finite, and so endowing them with the discrete topology, we obtain induced maps
$$P_{m}^{\CK} \colon \CK(\CC_{m}) \to \CK(\CC_{m-1}).$$
We note that $\CK(\CC_{m})$ is the power set (omitting the empty set) of $\CC_{m}$, but we use the $\CK$ notation for consistency.

We give a useful result that allows us to work with the space $\varprojlim\{\CK(\CC_{m}),P_{m}^{\CK}\}$, rather than $\CK(X)$.
\begin{theorem}\label{thm:inverselimitpresentationK}
Suppose that $\{\CC_{m}\}_{m\in\N}$ is a tree structure for $X$. Then there is a homeomorphism $F \colon \CK(X) \to \varprojlim\{\CK(C_{m}),P_{m}^{\CK}\}$. 
\end{theorem}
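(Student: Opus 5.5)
I will define $F$ explicitly by recording which clopens each compact set meets. For $K \in \CK(X)$ and $m \ge 1$ set
$$F_{m}(K) = \{C \in \CC_{m} : C \cap K \ne \emptyset\},$$
which is a nonempty subset of $\CC_{m}$ because $K \ne \emptyset$ and $\CC_{m}$ partitions $X$. Let $F(K) = (F_{m}(K))_{m\in\N}$.

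\textbf{Compatibility.} First I check that $F(K)$ lies in the inverse limit, i.e.\ $P_{m}^{\CK}(F_{m}(K)) = F_{m-1}(K)$.
\begin{itemize}
\item If $C \in \CC_{m}$ meets $K$, then its parent $P_{m}(C) \supset C$ also meets $K$.
\item Conversely, if $D \in \CC_{m-1}$ meets $K$, then by condition~\eqref{item:tree-one} $D$ is a disjoint union of children in $\CC_{m}$. One of these children meets $K$, and its image under $P_{m}$ is $D$.
\end{itemize}

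\textbf{Injectivity.} I will show that $K$ is recovered from $F(K)$ via
$$K = \bigcap_{m} \bigcup_{C \in F_{m}(K)} C.$$
The inclusion $\subseteq$ is clear. For $\supseteq$: if $x \notin K$, then $d(x,K) > 0$. By condition~\eqref{item:tree-two}, for large $m$ the clopen of $\CC_{m}$ containing $x$ has diameter less than $d(x,K)$, so it misses $K$, and hence $x$ is not in the union at level $m$.

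\textbf{Surjectivity.} Given a compatible sequence $(S_{m})$, set $K_{m} = \bigcup_{C\in S_{m}} C$. Each $K_{m}$ is clopen, and compatibility gives $K_{m} \subset K_{m-1}$. Let $K = \bigcap_{m} K_{m}$, which is compact and nonempty as a decreasing intersection of nonempty compacts. To see $F_{m}(K) = S_{m}$:
\begin{itemize}
\item $F_{m}(K) \subseteq S_{m}$, since $K \subset K_{m}$ and the elements of $\CC_m$ are disjoint.
\item For the reverse inclusion, take $C \in S_{m}$. Compatibility (surjectivity of $P_{n}^{\CK}$ restricted along the sequence) lets me choose descendants $C = C_{m} \supset C_{m+1} \supset \cdots$ with $C_{n} \in S_{n}$. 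The intersection $\bigcap_{n} C_{n}$ is a nonempty compact set lying in every $K_{n}$, so it lies in $K \cap C$.
\end{itemize}

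\textbf{Topology.} Since $\CK(X)$ is compact and the inverse limit of finite discrete spaces is Hausdorff, it suffices to prove $F$ is continuous. This reduces to showing each $F_{m}$ is locally constant. I will argue with the Vietoris subbase. The condition $C \cap K \ne \emptyset$ is the open set $[C]$. Its negation $C \cap K = \emptyset$ is the open set $\langle X\setminus C \rangle$, using that $C$ is clopen. Hence $F_{m}^{-1}(S) = \bigcap_{C\in S}[C] \cap \bigcap_{C\notin S}\langle X\setminus C\rangle$ is open.

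A continuous bijection from a compact space to a Hausdorff space is a homeomorphism, which completes the argument.

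\textbf{Main obstacle.} The only delicate point is surjectivity, specifically showing that each $C \in S_{m}$ really meets $K$. This needs the chain of descendants built from the surjectivity of the $P_{n}^{\CK}$ along the compatible sequence, together with compactness of the nested intersection. Note that condition~\eqref{item:tree-three} of the tree structure is not needed anywhere in this argument.
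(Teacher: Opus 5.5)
Your proof is correct and follows essentially the same route as the paper: the same map $F$ recording which level-$m$ clopens a compact set meets, the same compatibility check, and the same recovery formula $K=\bigcap_m\bigcup_{C\in F_m(K)}C$. The differences are minor. You get continuity of the inverse from the compact-to-Hausdorff argument, whereas the paper defines $H$ and checks its continuity directly. You use the Vietoris subbase rather than the paper's metric $\delta$-argument for continuity of $F$. Your nested chain of descendants plus compactness makes explicit the step that each $C\in S_m$ meets $K$, which the paper only sketches.
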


\begin{proof}
Given $Y \in \CK(X)$, let $A_{m}(Y) \in \CK(\CC_{m})$ be the set of $C \in \CC_{m}$ such that $Y \cap C \ne \emptyset$.  We  define $F(Y) = (A_{m}(Y))_{m \in \mathbb{N}}$, and check that this is well defined.  First we claim that $P_{m}^{\CK}(A_{m}(Y)) = A_{m-1}(Y)$ for every $m \ge 2$. Given $P_{m}^{\CK}(C) \in P_{m}^{\CK}(A_{m}(Y))$, choose $y \in Y$ such that $y \in C$. Then $y \in P_{m}(C)$ since $C \subset P_{m}(C)$, and so $P_{m}(C) \in A_{m-1}(Y)$ and $P_{m}^{\CK}(A_{m}(Y)) \subset A_{m-1}(Y)$. For the other inclusion, suppose $C \in A_{m-1}(Y)$ with $y \in Y \cap C$. There exists $D \in \CC_{m}$ such that $D \subset C$ and $y \in D$. Then $D \in A_{m}(Y)$ and $P_{m}(D)=C$, and so $A_{m-1}(Y) \subset P_{m}^{\CK}(A_{m}(Y))$. Thus it follows that $(A_{m}(Y))_{m \in \mathbb{N}} \in \varprojlim\{\CK(\CC_{m}),P_{m}^{\CK}\}$, and we have the opposite inclusion, and $F$ is well defined. 
To check that $F$ is continuous, suppose $m \ge 1$. Let $\delta = \frac{1}{2}\min \{d(C,D) : C \ne D \in \CC_{m}\}$. If $d(Y,Y^{\prime}) < \delta$, then $Y \cap E \ne \emptyset$ if and only if $Y^{\prime} \cap E \ne \emptyset$ for all $E \in \CC_{m}$, and hence $A_{m}(F(Y)) = A_{m}(F(Y^{\prime}))$. 

We next define a map $H \colon \varprojlim\{\CK(\CC_{m}),P_{m}^{\CK}\} \to \CK(X)$ which is an inverse to $F$. Given $(A_{m})_{m \in \mathbb{N}} \in \varprojlim\{\CK(\CC_{m}),P_{m}^{\CK}\}$, set $H(A_{m}) = \bigcap_{m=1}^{\infty}\bigcup_{C \in A_{m}} C$. Note that the image $H(A_{m})$ is compact and nonempty, since it is the intersection of a nested sequence of compact sets. To check continuity of $H$, given $m \ge 1$, say a pair $Y,Y^{\prime}$ are $m$-close if $Y \cap C \ne \emptyset$ if and only if $Y^{\prime} \cap C \ne \emptyset$ for all $C \in \CC_{m}$. Then given $\varepsilon > 0$, there exists $m \ge 1$ such that if $Y,Y^{\prime}$ are $m$-close, then $d(Y,Y^{\prime}) < \varepsilon$, so $H$ is continuous. 

We are left with checking that $F$ and $H$ are inverses. We claim that 
$$Y = H(A_{m}(Y)) = \bigcap_{m=1}^{\infty}\bigcup_{C \in A_{m}(Y)}C.$$ 
Since $Y \subset \bigcup_{C \in A_{m}(Y)}C$ for all $m\geq 1$, it follows that $Y \subset \bigcap_{m=1}^{\infty}\bigcup_{C \in A_{m}(Y)}C$. Conversely, if $y \in \bigcap_{m=1}^{\infty}\bigcup_{C \in A_{m}(Y)}C$, then for every $m\geq 1$ there exists $y_{m} \in Y$ and $C_{m}$ such that $y,y_{m} \in C_{m}$. Since $\diam(C_{m}) \to 0$ as $m \to \infty$, it follows that $y_{m} \to y$. Then since $Y$ is compact, we get that $y \in Y$. This proves the claim, and we have that $H(F(Y)) = Y$. 
For the other direction, consider $(A_{m})_{m\in\N} \in \varprojlim\{\CK(\CC_{m}),P_{m}^{\CK}\}$. We have
$$H((A_{m})_{m\in\N}) = \bigcap_{m=1}^{\infty}\bigcup_{C \in A_{m}}C \subset \bigcup_{C \in A_{m}}C.$$
But also $H(A_{m})$ intersects each $C$ for $C \in A_{m}$, since if $C \in A_{m}$, then for every $k \ge 1$ there exists $C_{k} \in A_{m+k}$ such that $P_{m,k}(C_{k}) = C$, and hence $C_{k} \subset C$. It follows that $F(H((A_{m})_{m\in\N})) = (A_{m})_{m\in\N}$.
\end{proof}

Given a tree structure $(\CC_{i})_{i\in\N}$ on $X$, there is associated to it a natural measure on $X$, the uniform tree measure $\mu_{\CC}$. It is defined inductively by first setting $\mu_{\CC}(C^{1}_{i}) = \frac{1}{\kappa(1)}$ for every $1 \le i \le \kappa(1)$. Then, given $m\geq 1$ we set 
$$\mu_{\CC}(C^{m+1}_{j}) = \frac{\mu(C_{i}^{m})}{|\{k : C^{m+1}_{k} \subset C^{m}_{i}\}|} \quad\text{  for every } C^{m+1}_{j} \subset C^{m}_{i}.
$$
For example, for the tree structure $(\CC_{i})$ on $X_{n}$ defined in Example~\ref{example:treestructureXn}, $\mu_{\CC}$ is the $(\frac{1}{n},\ldots,\frac{1}{n})$-Bernoulli measure on $X_{n}$.

\subsection{Deeply transitive actions}
\label{sec:deeply-extremely}

\begin{definition} 
    Let $G$ be a group acting on $X$ and let $(\CC_{i})_{i \in \mathbb{N}}  = \{C_{1}^{i},\ldots,C_{\kappa(i)}^{i}\}_{i\in\N}$ be a tree structure on $X$. 
    \begin{enumerate}
        \item The action of $G$ is 
    \emph{deeply transitive with respect to the tree structure $(\CC_{i})_{i\in\N}$} if for each $i\geq 1$, $1 \le r \le \kappa(i)$, and all pairs of sets $\{A_1^i, \dots, A_r^i\}, \{B_1^i, \dots, B_r^i\}$ where $A_k^i, B_k^i \in \CC_i$ for $k=1, \dots, r$, there exists $g\in G$ satisfying $\{g(A_1^i), \dots, g(A_r^i)\}$ $= \{B_1^i, \dots, B_r^i\}$.  
    \item 
    The action of $G$ is 
     \emph{extremely transitive with respect to the tree structure $(\CC_{i})_{i\in\N}$} if for each $i\geq 1$, $1 \le r \le \kappa(i)$, and all pairs of distinct tuples of sets $(A_1^i, \dots, A_r^i)$, $(B_1^i, \dots, B_r^i)$ where $A_k^i, B_k^i \in \CC_i$ for $k=1, \dots, r$, there exists $g\in G$ such that  $(g(A_1^i), \dots, g(A_r^i)) = (B_1^i, \dots, B_r^i)$.  
    \end{enumerate}
    We say $G$ acts {\em deeply (respectively, extremely) transitively} on $X$ if there exists a tree structure $(\CC_{i})_{i \in \mathbb{N}}$ on $X$ such that $G$ acts deeply (respectively, extremely) transitively with respect to  $(\CC_{i})_{i \in \mathbb{N}}$.
\end{definition}
In other words, when $G$ acts deeply transitively, at each level $i$ in the tree, $G$ acts transitively on $r$-element subsets of $\CC_i$ for every $r\geq 1$, meaning that it is set-multiply transitive on every finite level. 

If $G$ acts deeply (or extremely) transitively on $X$, then we refer to an associated tree structure for which it acts deeply (respectively, extremely) transitively as a {\em witness}. Clearly if $G$ acts extremely transitively then it acts deeply transitively. We note that, as the definitions are existence statements, if $G \subset G^{\prime}$ is a subgroup and $G$ acts deeply (extremely) transitively on $X$, then so does $G^{\prime}$.

\begin{proposition}\label{prop:deeptransitivecenter}
Suppose $G$ acts faithfully and deeply transitively on a Cantor set $X$. Then $G$ has trivial center.
\end{proposition}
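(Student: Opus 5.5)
Suppose $h$ is a central element of $G$ with $h \ne \Id$; we aim for a contradiction with faithfulness by showing $h$ fixes every point of $X$. Fix a witnessing tree structure $(\CC_i)_{i\in\N}$. The idea is that a central element must commute with every element realizing the set-transitivity at each level, and deep transitivity (in particular transitivity on $1$-element subsets, and the ability to move one clopen while fixing another) is strong enough to force $h$ to preserve every clopen in the tree. Since the clopens form a base (by condition~\eqref{item:tree-two}), preserving all of them forces $h = \Id$.

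Concretely, fix a level $i$ and $C \in \CC_i$. First I would show $h(C)$ meets $C$. Suppose instead $h(C) \cap C = \emptyset$ for some $C\in\CC_i$. Pass to a deeper level $n \ge i$ where $\kappa(n) \ge 3$ and pick $D \in \CC_n$ with $D \subset C$; since $h$ is a homeomorphism, $h(D)$ is a clopen inside $h(C)$, disjoint from $C$. Refining further, choose $n$ large so that some $E\in\CC_n$ satisfies $E \subset h(D)$ (possible by condition~\eqref{item:tree-two}). Now use deep transitivity with $r=2$: choose distinct $D' \in \CC_n$ with $D'\subset C$, $D'\neq D$ (condition~\eqref{item:tree-three} plus refinement guarantees $C$ eventually contains many level-$n$ clopens), and find $g$ with $g\{D, E\} = \{D', E\}$ or similar configurations, arranged so that $g$ fixes the set $E$-position while moving $D$ inside $C$ to something whose $h$-image cannot lie in $g(E)$. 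Then $hg(D) = gh(D)$ gives $h(g D) \supset g(E)$ in a way incompatible with $h(gD)$ lying where it must. The cleanest version: deep transitivity on pairs lets me find $g$ with $g(D)=D$ as a set-pair member but $g(E)$ equal to some clopen $E'$ far from $h(D)$; then $g h(D) \supset g(E) = E'$ while $h g(D) = h(D)\not\supset E'$, contradicting $gh = hg$.

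Running the same argument with $h(C)\ne C$ (rather than disjoint) should yield that $h$ maps each tree clopen onto itself: if $h(C) \neq C$ at level $i$, then since $h(C)$ is clopen and the partitions refine, at a deeper level some cell $D\subset C$ has $h(D)$ containing a cell $E$ with $E \ne D$, and I choose $g$ (via $r=2$ set-transitivity at that level) fixing the pair-assignment on $D$ but sending $E$ to a different cell $E'$ outside $h(D)$. Commutation then fails. Thus $h(C)=C$ for every $C$ in every $\CC_i$, so for each $x$, $h(x)$ lies in every tree clopen containing $x$, whence $h(x)=x$ and $h=\Id$ by faithfulness.

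The main obstacle is the bookkeeping in the key step: deep transitivity only gives transitivity on unordered $r$-sets, so I cannot directly demand $g(D)=D$ and $g(E)=E'$ separately. I would first try to use $r=1$ and $r=2$ together, or use an $r$-set containing $D$ alone versus the pair, or exploit that the action is transitive on unordered pairs with $\kappa(n)\ge 3$ to get enough freedom (e.g.\ map $\{D,E\}$ to $\{D,E'\}$ and handle the possible swap $g(D)=E'$ by a separate size comparison of the clopens' images). If that fails, I would instead pick $g$ with $g(D)$ disjoint from $h(D)\cup D$ and compare $hg(D)$ with $gh(D)$ containment-wise.
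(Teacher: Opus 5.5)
Your basic mechanism, testing a central $h$ against some $g$ that keeps one cell in place and moves another, is the right one and is what the paper uses. As written, though, the proposal does not close, for two concrete reasons.

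First, the decisive step needs $g$ with $g(D)=D$ \emph{and} $g(E)=E'$, which is an ordered condition. Deep transitivity with $r=2$ only yields $\{g(D),g(E)\}=\{D,E'\}$. In the swap case $g(D)=E'$, $g(E)=D$, your containment comparison gives no contradiction. You flag this, but none of the fallbacks is carried out. The last one (take $g(D)$ disjoint from $h(D)\cup D$) says nothing about $hg(D)$ versus $gh(D)$ unless you also control where $g$ sends $h(D)$.

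Second, you need cells $D,E\in\CC_n$ of the \emph{same} level with $E\subset h(D)$, and condition~(2) does not provide this. Increasing $n$ shrinks $D$ too, and for a general homeomorphism no such pair need exist at any level. For example, on $\{0,1\}^{\N}$ with cylinder cells, take a coordinate permutation with no invariant initial segment $\{1,\dots,n\}$. The two-stage plan of proving $h(C)=C$ for every cell is also unnecessary: a single failure of commutation suffices.

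The paper sidesteps the same-level issue by working with a point. Take $x$ with $y=h(x)\neq x$; this is where faithfulness enters. Choose distinct level-$i$ cells $A\ni x$ and $B\ni y$, and $i'\ge i$ so large that the level-$i'$ cell $P\ni x$ satisfies $h(P)\subset B$. Let $Q$ be the level-$i'$ cell of $y$, so $Q\subset B$. If $g(P)=P$ and $g(Q)=R$ for a cell $R$ disjoint from $B$, then $gh(x)\in R$ while $hg(x)\in h(P)\subset B$.

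The paper obtains this $g$ simply by citing deep transitivity, so it also passes over the ordered/unordered point you raised. The gap is closed by choosing $R$ more carefully:
\begin{itemize}
\item Pick $z\in A\setminus\{x,h^{-1}(x)\}$.
\item Enlarge $i'$ so that the level-$i'$ cell $R\ni z$ satisfies $R\neq P$ and $h(R)\cap P=\emptyset$. This is possible because $h(z)\neq x$ and cell diameters tend to $0$.
\item Note that $P,R\subset A$ and $Q\subset B$, so $P,Q,R$ are distinct and $R\cap B=\emptyset$.
\end{itemize}
Now take $g$ with $\{g(P),g(Q)\}=\{P,R\}$. If $g(P)=P$ and $g(Q)=R$, then $gh(x)\in R\subset A$ while $hg(x)\in h(P)\subset B$. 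If $g(P)=R$ and $g(Q)=P$, then $gh(x)\in P$ while $hg(x)\in h(R)$, which misses $P$. In both cases $gh(x)\neq hg(x)$, so $h$ is not central.
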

\begin{proof}
Let $(\CC_{i})_{i\in\N} = \{C_{1}^{i},\ldots,C_{\kappa(i)}^{i}\}_{i\in\N}$ be a tree structure witnessing the deeply transitive action. Suppose $g\in G$ with $g \ne \Id$, and choose $x \ne y \in X$ such that $g(x) = y$. 
Then there exist $i$ and $j \ne k$ such that $x \in C^{i}_{j}, y \in C^{i}_{k}$, 
and we can further assume that $i$ is sufficiently large such that $\kappa(i) \ge 3$. Choose $i^{\prime} \ge i$ such that $x \in C^{i^{\prime}}_{p}$ for some $p$, and $g(C^{i^{\prime}}_{p}) \subset C^{i}_{k}$, and $y \in C^{i^{\prime}}_{q}$ for some $q$. Since $\kappa(i) \ge 3$, we can choose some $r$ such that $C^{i^{\prime}}_{r}$ is disjoint from $C^{i}_{k}$. 
By the assumption that the action is deeply transitive, there exists $h \in G$ such that $h(C^{i^{\prime}}_{p}) = C^{i^{\prime}}_{p}$ and $h(C^{i^{\prime}}_{q}) = C^{i^{\prime}}_{r}$. Then $g(x)=y \in C^{i^{\prime}}_{q}$ and so $hg(x) \in C^{i^{\prime}}_{r}$. On the other hand, $h(x) \in C^{i^{\prime}}_{p}$ so $gh(x) \in C^{i}_{k}$ which is disjoint from $C^{i^{\prime}}_{r}$. 
Thus we have that $gh(x) \ne hg(x)$, and since $g$ is an arbitrary non-identity element, the center is trivial. 
\end{proof}

Recall that $(X, G)$ is {\em prime} if it has  no nontrivial factors.

\begin{proposition}\label{prop:deeptransitivityprime}
If $G$ acts deeply transitively on a Cantor set $X$, then $(X,G)$ is minimal and prime.
\end{proposition}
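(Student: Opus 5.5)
Both properties should follow from deep transitivity applied with $r=1$ at a single level of a witnessing tree structure $(\CC_i)_{i\in\N}$, together with the fact that the clopens $C^i_j$ form a base for the topology of $X$.

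\emph{Minimality.} Fix $x\in X$ and a nonempty open set $U\subset X$. Because the clopens form a base, we can choose a level $i$ and some $B\in\CC_i$ with $B\subset U$. Let $A\in\CC_i$ be the clopen at level $i$ containing $x$. Deep transitivity with $r=1$ gives $g\in G$ with $g(A)=B$. Then $gx\in B\subset U$. Since $U$ was arbitrary, the orbit of $x$ is dense, so the action is minimal.

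\emph{Primeness.} Let $\pi\colon X\to Y$ be a factor map with $Y$ not a single point; we want to show $\pi$ is injective. Suppose instead that $\pi(x)=\pi(x')$ for some $x\neq x'$. Choose $y_1\neq y_2$ in $Y$ and disjoint open neighborhoods $V_1\ni y_1$ and $V_2\ni y_2$.
\begin{enumerate}
\item Pick a level $i$ large enough that $x$ and $x'$ lie in distinct clopens $A,A'\in\CC_i$.
\item Since $\pi$ is surjective, $\pi^{-1}(V_1)$ and $\pi^{-1}(V_2)$ are nonempty open sets. Refining further, for large $i$ we can also find $B_1,B_2\in\CC_i$ with $B_1\subset\pi^{-1}(V_1)$, $B_2\subset\pi^{-1}(V_2)$ and $B_1\neq B_2$. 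These sets may be taken at the same level $i$ as $A,A'$, because partition elements only subdivide under refinement and so every open set eventually contains a clopen of level $i$.
\item The goal is $g\in G$ with $g(A)=B_1$ and $g(A')=B_2$. Then $g x\in B_1$ and $gx'\in B_2$, so $\pi(gx)\in V_1$ and $\pi(gx')\in V_2$. But $\pi(gx)=g\pi(x)=g\pi(x')=\pi(gx')$, and $V_1\cap V_2=\emptyset$, a contradiction.
\end{enumerate}

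The main obstacle is step 3. Deep transitivity with $r=2$ only yields $g$ with $\{g(A),g(A')\}=\{B_1,B_2\}$, so $g$ may send $A$ to $B_2$ and $A'$ to $B_1$. This does no harm: in either case one of $gx,gx'$ lands in $\pi^{-1}(V_1)$ and the other in $\pi^{-1}(V_2)$, so $\pi(gx)\neq\pi(gx')$ still holds and still contradicts $\pi(gx)=\pi(gx')$.

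Hence $\pi$ is injective, and being a continuous bijection from a compact space to a Hausdorff space, it is a conjugacy. So every factor with more than one point is trivial, and $(X,G)$ is prime. Only the cases $r=1$ and $r=2$ of deep transitivity are used, and $\kappa(i)\ge 2$ holds at large levels since the diameters tend to $0$ on the infinite space $X$.
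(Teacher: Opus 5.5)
Your proof is correct and follows essentially the paper's argument. Minimality comes from deep transitivity with $r=1$, and primeness comes from deep transitivity with $r=2$ applied to a pair $x\neq x'$ in one fiber, with the swap ambiguity harmless in both arguments. The only difference is packaging: the paper shows that the closed, symmetric, $G$-invariant relation $\{(u,v):\pi(u)=\pi(v)\}$ is dense in $X^2$ and hence equals $X^2$, whereas you reach a direct contradiction using disjoint neighborhoods in $Y$, which is slightly cleaner.
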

\begin{proof}
To show minimality, let $(\CC_i)_{i\in\N}$ be a tree structure witnessing the deep transitivity. Let $x,y \in X$ and $U$ be an open neighborhood of $y$. By definition, there exist $i,j,k$ such that $y \in C^{i}_{j} \subset U$ and $x \in C^{i}_{k}$. The deep transitivity then gives $g \in G$ such that $g(C^{i}_{k}) = C^{i}_{j}$, and so $g(x) \in U$. This implies the orbit of $G$ is dense, and $(X,G)$ is minimal.

To check that $(X, G)$ is prime, suppose that $(Y, G)$ is a factor with factor map $\pi \colon X \to Y$. Consider $F(\pi) = \{(x,y) \in X^{2} : \pi(x) = \pi(y)\}$.  
Then $F(\pi)$ is invariant under the diagonal action of $G$ on $X^{2}$. 
Suppose that $\pi$ is not injective, and so $F(\pi) \ne \Delta(X)$, and let $(a_{1},a_{2}) \in F(\pi)$ satisfy $a_1 \ne a_2$. Let $(x_{1},x_{2}) \in X^{2}$ and $\varepsilon > 0$. Since $X$ is Cantor, we may choose $(y_{1},y_{2}) \in X^{2}$ such that $y_{1} \ne y_{2}$ and $d(x_{i},y_{i}) < \varepsilon$ for $i=1,2$. Let $(\CC_i)_{i\in\N} = \{C_{i}^{1},\ldots,C^{i}_{\kappa(i)}\}$ be a tree structure witnessing the deep transitivity. We may choose distinct $C^{m}_{k_{1}}, C^{m}_{k_{2}}, C^{m}_{j_{1}}, C^{m}_{j_{2}}$ such that $y_{i} \in C^{m}_{k_{i}}, a_{i} \in C^{m}_{j_{i}}$, and the diameters of all the $C^{m}$ are less than $\varepsilon$. By the assumption of deep transitivity, there exists $g \in G$ such that $\{g(C^{m}_{k_{1}}),g(C^{m}_{k_{2}})\} = \{C^{m}_{j_{1}},C^{m}_{j_{2}}\}$. Since the diameters of the $C^{m}$ are less than $\varepsilon$, this implies either $d(g(a_{i}),y_{i}) < \varepsilon$ for $i=1,2$ or $d(g(a_{i}),y_{(i+1)\!\!\mod 2} ) < \varepsilon$ for $i=1,2$. Since $F(\pi)$ is $G$-invariant, it follows that either $(x_{1},x_{2}) \in F(\pi)$ or $(x_{2},x_{1}) \in F(\pi)$. But $F(\pi)$ is invariant under the map $(u,v) \mapsto (v,u)$, and so in either case, $(x_{1},x_{2}) \in F(\pi)$ and $F(\pi) = X^{2}$. Thus we have that  $\pi(u) = \pi(v)$ for all $u,v \in X$, and so $Y$ is a trivial one point system.
\end{proof}

\subsection{Examples}
\label{sec:ICridigexamples}
We give some examples of deeply transitive actions; in fact, all of the examples are extremely transitive. Given a space $X$, we let $\Homeo(X)$ denote the group of self-homeomorphisms of $X$.

\subsubsection{Stabilized automorphism groups.} Let $n \ge 2$ and consider the full shift on $n$ symbols $\sigma_{n} \colon X_{n} \to X_{n}$. The automorphism group $\aut(\sigma_{n}) = \aut(X_n, \sigma_{n})$ of the system $(X_{n},\sigma_{n})$ is defined to be the group 
$$\aut(\sigma_{n}) = \{\varphi \in \Homeo(X_{n}) : \varphi \sigma_{n} = \sigma_{n} \varphi\}$$
where the group operation is composition (see~\cite{hedlund, BLR}  for background on this well studied group). The stabilized automorphism group of $(X_{n},\sigma_{n})$ is defined to be the group $$\autinf(\sigma_{n}) = \{\varphi\in\Homeo(X_n): \varphi \sigma_{n}^{k} = \sigma_{n}^{k} \varphi \textrm{ for some } k\geq 1\}$$
(see~\cite{HKS, S22} for background on this group). 
It is clear that $\aut(\sigma_{n}) \subset \autinf(\sigma_{n})$. The group $\autinf(\sigma_{n})$ acts extremely transitively on $X_{n}$ for the tree structure defined in Example~\ref{example:treestructureXn}. We show this is a consequence of the extreme transitivity of a certain locally finite subgroup. 

For each $k \ge 1$, we define an action of  the group $\sym(n^{2k+1})$ on $X_{n}$. Enumerating the words of length $2k+1$ over $\{0,\ldots,n-1\}$ as $\mathcal{L}_{2k+1}(X_{n}) = \{w_{1},\ldots,w_{n^{2k+1}}\}$, we  have $\tau\in \sym(n^{2k+1})$ act on $\mathcal{L}_{2k+1}(X_{n})$ by $\tau(w_{i}) = w_{\tau(i)}$.
Then given $x \in X_{n}$, consider $x$ as the concatenation of words of length $2k+1$
$$x = \ldots u_{-2}u_{-1}u_{0}u_{1}u_{2} \ldots$$
where $u_{i} = x_{[i(2k+1)-k,i(2k+1)+k]}$, and for $\tau\in\sym(n^{2k+1})$, define
$$\tau(x) = \ldots \tau(u_{-2}) \tau(u_{-1}) \tau(u_{0}) \tau(u_{1}) \tau(u_{2}) \ldots$$
and so $\tau(x)_{[i(2k+1)-k,i(2k+1)+k]} = \tau(u_{i})$. For each $k\geq 1$ we identify $\sym(n^{2k+1})$ with a subgroup $S(k)$ of $\Homeo(X_{n})$ via these actions, and then define
$$\CS_{n} = \bigcup_{k=1}^{\infty}S(k)$$
where the union is taken in $\Homeo(X_{n})$. Since each $S(k)$ is contained in $\aut(\sigma_{n}^{2k+1})$, it follows that $\CS_{n}$ is a subgroup of $\autinf(\sigma_{n})$. These subgroups play an important role in~\cite{HKS, S22, salo}, and they are simple for every $n \ge 2$. 

It is straightforward to check that the action of $\CS_{n}$ on $X_{n}$ is extremely transitive for the tree structure defined in Example~\ref{example:treestructureXn}. It follows that $\autinf(\sigma_{n})$ also acts extremely transitively on $X_{n}$ for the same tree structure.

Note that since $\aut^{\infty}(\sigma_{n})$ is not simple (it factors onto a nontrivial free abelian group, see~\cite{HKS}) and acts faithfully, a group $G$ with a faithful deeply transitive action need not be simple.

\subsubsection{AF full groups.}\label{subsubsection:affullgroups} Another collection of locally finite examples is obtained as follows. Again fix $n \ge 2$, and for each $k \ge 1$ enumerate the words $\mathcal{L}_{k}(X_{n}^{+}) = \{w_{1},\ldots,w_{n^{k}}\}$. We then have $\sym(n^{k})$ act on $\mathcal{L}_{k}(X_{n}^{+})$ by $\tau(w_{i}) = w_{\tau(i)}$, and act on $X_{n}^{+}$ as follows: for $\tau \in \sym(n^{k})$ and $x \in X_{n}^{+}$ we define
$$\tau(x) = \tau\left(x_{1}\ldots x_{k}\right)x_{k+1}\ldots.$$
This identifies each $\sym(n^{k})$ with a subgroup $F(k)$ of $\Homeo(X_{n}^{+})$, and we define
$$\mathcal{F}_{n} = \bigcup_{k=1}^{\infty}F(k)$$
where the union is taken in $\Homeo(X_{n}^{+})$. Then for the tree structure defined by setting $\mathcal{C}_{i}$ to be the collection of cylinder sets of words of length $i$ based at $1$ (see Example~\ref{example:treestructureXn}), the group $\mathcal{F}_{n}$ acts extremely transitively with respect to $\mathcal{C}_{i}$.

The group $\mathcal{F}_{n}$ is the AF full group of the $n$-adic odometer, and much has been studied concerning their invariant random subgroups and characters, see for instance~\cite{Dudko,DudkoMedynetsAF,DudkoMedynetsAF2}).

\subsubsection{Thompson's group $V$.} Thompson's group $V$ acts on the one-sided full shift $X_{2}^{+} = \{0,1\}^{\mathbb{N}}$ and this action is extremely transitive for the standard tree structure on $X_{2}^{+}$ (see~\cite{higman, CFP} for background on $V$). More generally, for each $d \ge 2$ the Higman-Thompson group $V_{d,1}$ acts extremely transitively on $X_{d}^{+}$ with respect to the standard tree structure (this can be deduced from~\cite{CFP} and is made explicit in~\cite{BrinBook}) Note that these are examples of finitely presented simple groups acting extremely transitively.

\begin{remark}
It is possible for a group $G$ to act deeply but not extremely transitively with respect to a given tree structure $\mathcal{C}_{i}$. Indeed, consider the group $\mathcal{F}_{n}$ defined in Example~\ref{subsubsection:affullgroups} and consider the subgroup $\mathcal{A}_{n} = \bigcup_{k=1}^{\infty}\textrm{Alt}(n^{k}) \subset \mathcal{F}_{n}$. Consider the tree structure $\mathcal{C}_{i}$ on $X_{n}^{+}$ defined in Example~\ref{example:treestructureXn}. Since the alternating group $\textrm{Alt}(m)$ acts $k$-multiply-set-transitively on $\{1,\ldots,m\}$ for each $1 \le k \le m$ but only $k$-transitively for $1 \le k \le m-2$, it follows that $\mathcal{A}_{n}$ acts deeply but not extremely transitively with respect to the tree structure $\mathcal{C}_{i}$ on $X_{n}^{+}$. 
\end{remark}

\subsection{Deeply transitive actions and IC-rigidity}
The goal of this section is to prove that deeply transitive actions are IC-rigid. 
\begin{theorem}
\label{th:FI}
Every deeply transitive action on a Cantor set $X$ is IC-rigid.
\end{theorem}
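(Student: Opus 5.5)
Here is the plan. Let $\mu$ be a nonfinitary IRC and let $(\CC_i)_{i\in\N}$ be a tree structure witnessing deep transitivity. We pass to the inverse limit presentation of Theorem~\ref{thm:inverselimitpresentationK}. For each level $m$, write $A_m(Y)\in\CK(\CC_m)$ for the set of clopens in $\CC_m$ that meet $Y$. The map $Y\mapsto A_m(Y)$ is continuous and equivariant, so it pushes $\mu$ forward to a $G$-invariant probability measure $\mu_m$ on the finite set $\CK(\CC_m)$. Because $\CC_m$ is permuted by $G$ (the definition of deep transitivity presupposes that each $g$ maps elements of $\CC_m$ to elements of $\CC_m$), deep transitivity says exactly that $G$ acts transitively on the $r$-element subsets of $\CC_m$ for every $r$. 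A $G$-invariant measure on a finite set is constant on orbits. Hence $\mu_m$ is a convex combination $\sum_r c^{(m)}_r u^{(m)}_r$, where $u^{(m)}_r$ is the uniform measure on the $r$-element subsets of $\CC_m$.

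By Theorem~\ref{thm:inverselimitpresentationK}, $\mu=\delta_X$ holds if and only if $\mu_m(\{\CC_m\})=1$ for every $m$, that is, $c^{(m)}_{\kappa(m)}=1$. So it suffices to show, for each fixed $k$ and each $C\in\CC_k$, that $\mu(\{Y: Y\cap C=\emptyset\})=0$. Let $N_m(Y)=|A_m(Y)|$. Since $\mu$ is nonfinitary, $\mu$-almost every $Y$ is infinite. The key point is that $N_m(Y)\to\infty$ for infinite $Y$, because by condition~\eqref{item:tree-two} distinct points of $Y$ are eventually separated. Consequently, for each fixed $R$, $\mu(N_m\le R)\to 0$ as $m\to\infty$.

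For $m\ge k$, let $\alpha_m=|\{D\in\CC_m: D\subset C\}|/\kappa(m)$. By condition~\eqref{item:tree-three}, $\alpha_m\ge\alpha>0$ uniformly in $m$. Conditional on $N_m=r$, $A_m(Y)$ is a uniformly random $r$-subset of $\CC_m$. The probability that such a subset avoids all $\alpha_m\kappa(m)$ children of $C$ is a hypergeometric quantity bounded by $(1-\alpha_m)^r\le(1-\alpha)^r$. Since $Y\cap C=\emptyset$ exactly when $A_m(Y)$ contains no children of $C$, we get, for every $m\ge k$ and every $R$,
$$\mu(\{Y:Y\cap C=\emptyset\})\le \mu(N_m\le R)+(1-\alpha)^{R}.$$
We first let $m\to\infty$ and then $R\to\infty$, which gives $0$. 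Running this over the countably many clopens $C$ shows that $\mu$-almost every $Y$ meets every clopen of the base, hence is dense and equals $X$, so $\mu=\delta_X$.

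The main obstacle I expect is the orbit-classification step, which rests on two points. First, the action on $\CK(\CC_m)$ must really be a permutation action of $G$ compatible with $A_m$; I will verify that $A_m(gY)=gA_m(Y)$ because $g$ permutes the partition. Second, the invariance of $\mu_m$ must give uniformity on each cardinality class. The hypergeometric bound itself is routine: sampling without replacement avoids a set of proportion $\alpha_m$ with probability at most $(1-\alpha_m)^r$. Condition~\eqref{item:tree-three} is used precisely here, to keep that proportion from degenerating as $m$ grows.
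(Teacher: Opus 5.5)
Your proof has the same structure as the paper's. You push $\mu$ to the level-$m$ hit sets and use deep transitivity to make that distribution uniform on each cardinality class. You use nonfinitarity to push mass onto large cardinalities, and condition~(3) of a tree structure to get the hypergeometric bound $(1-\alpha)^r$. Working clopen by clopen instead of taking the union bound over $\CC_k$ (as in Lemma~\ref{lemma:good-range}) is only cosmetic.

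\textbf{The gap is in the uniformity step.} Deep transitivity does not say that every $g\in G$ maps members of $\CC_m$ to members of $\CC_m$. It only says that for each pair of $r$-element collections there is \emph{some} $g$ carrying one onto the other. The stronger property fails in the paper's own examples. In $\mathcal{F}_2$, the element of $F(2)$ transposing the words $01$ and $10$ sends the level-one cylinder $[0]=[00]\cup[01]$ to $[00]\cup[10]$, which is not a member of $\CC_1$. So $G$ does not act on the finite set $\CK(\CC_m)$, and the identity $A_m(gY)=gA_m(Y)$ is not available for all $g$ and $Y$. Hence ``an invariant measure on a finite set is constant on orbits'' cannot be invoked, and the verification you planned would fail.

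\textbf{The fix.} The conclusion is still true, by the argument of Lemma~\ref{lemma:equalsizeequalmass}. Given $A,B\subset\CC_m$ with $|A|=|B|$, take the particular $g$ with $\{g(C):C\in A\}=B$. This $g$ is a bijection of $X$ carrying $\bigcup A$ onto $\bigcup B$ and each member of $A$ onto a member of $B$. Hence it maps
\[
\{Y: A_m(Y)=A\}=\{Y : Y\subset \textstyle\bigcup A \text{ and } Y\cap C\neq\emptyset \text{ for all } C\in A\}
\]
onto $\{Y: A_m(Y)=B\}$. Invariance of $\mu$ under this single $g$ then gives $\mu_m(\{A\})=\mu_m(\{B\})$.

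With this substitution the rest of your argument goes through:
\begin{itemize}
\item $N_m$ is monotone and $\mu(N_m\le R)\to 0$;
\item $\mu(\{Y: Y\cap C=\emptyset\})\le \mu(N_m\le R)+(1-\alpha)^R$;
\item density follows by running over the countable clopen base.
\end{itemize}
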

In other words, the only nonfinitary IRC for a deeply transitive action is $\delta_{X}$.

The remainder of this section is devoted to the proof of Theorem~\ref{th:FI}. We fix, for the remainder of the section, a space $X$, a group $G$ acting on $X$, and a tree structure $(\CC_{i})_{i\in\N}$ on $X$.
For every $m \ge k \ge 1$, we define
$$\CF_{m}(k) = (P^{\CK}_{m,k})^{-1}(\CC_{k}) \subset \CK(\CC_{m}),$$
where we are considering $\CC_{k} \in \CK(\CC_{k})$. Thus $\CF_{m}(k)$ is the collection of all sets of partition elements from  level $m$ which intersect every element of $\CC_{k}$. In other words,
$$\CF_{m}(k) = \{W \subset \CC_{m}: \textrm{ for every } C \in \CC_{k} \textrm { there exists  } D \in W  \textrm{ such that } D \subset C\}.$$

Let $\CF_{m,r}(k) = F_{m}(k) \cap \CK_{r}(\CC_{m})$, meaning this is the collection of sets of size $r$ of partition elements from level $m$ which intersect every element of $\CC_{k}$.

The main technical lemma is the following.
\begin{lemma}
\label{lemma:good-range}
Let $k \ge 1$ and let $\varepsilon > 0$. There exists $R > 0$, which only depends on $k$ and $\varepsilon$, such that for all $i\in\N$ sufficiently large and all $R \le r \le \kappa(k+i)$, we have
$$\frac{|\CF_{k+i,r}(k)|}{|\CK_{r}(\CC_{k+i})|} > 1 - \varepsilon.$$
\end{lemma}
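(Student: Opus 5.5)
We count the complement: for a fixed $C\in\CC_{k}$, the $r$-subsets of $\CC_{k+i}$ that contain no element below $C$ are exactly the $r$-subsets of the set of level-$(k+i)$ elements not contained in $C$. Write $N=\kappa(k+i)$ and $N_{C}=|\{D\in\CC_{k+i}: D\subset C\}|$. Then
$$\frac{|\CK_{r}(\CC_{k+i})\setminus\CF_{k+i,r}(k)|}{|\CK_{r}(\CC_{k+i})|}\le\sum_{C\in\CC_{k}}\frac{\binom{N-N_{C}}{r}}{\binom{N}{r}},$$
by a union bound over the finitely many $C\in\CC_{k}$. This reduces everything to bounding a ratio of binomial coefficients.

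Next we use condition~\eqref{item:tree-three} of the tree structure. Since $\CC_{k}$ is finite, there is $c=c(k)>0$ with $N_{C}/N\ge c$ for every $C\in\CC_{k}$ and every level $k+i\ge k$; this $c$ depends only on $k$. The hypergeometric ratio satisfies
$$\frac{\binom{N-N_{C}}{r}}{\binom{N}{r}}=\prod_{j=0}^{r-1}\frac{N-N_{C}-j}{N-j}\le\Bigl(1-\frac{N_{C}}{N}\Bigr)^{r}\le(1-c)^{r},$$
since each factor is at most $(N-N_{C})/N$ (when $r>N-N_{C}$ the binomial is zero and the bound is trivial). Thus the total exceptional proportion is at most $\kappa(k)(1-c)^{r}$.

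We now choose $R$ so that $\kappa(k)(1-c)^{R}<\varepsilon$. Then $R$ depends only on $k$ and $\varepsilon$, and for every $r\ge R$ the ratio in the lemma exceeds $1-\varepsilon$. The phrase ``$i$ sufficiently large'' is needed only so that the range $R\le r\le\kappa(k+i)$ is nonempty. By condition~\eqref{item:tree-two} and the Cantor property, $\kappa(k+i)\to\infty$, so for large $i$ we have $\kappa(k+i)\ge R$.

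The main point requiring care is the uniformity in the level. This is exactly what the infimum over $n\ge k$ in condition~\eqref{item:tree-three} supplies. Without that condition, $N_{C}/N$ could decay with the level and no $R$ independent of $i$ would exist. The remaining steps are routine.
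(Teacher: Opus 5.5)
Your proof is correct and is essentially the paper's argument: a union bound over $C\in\CC_{k}$, the hypergeometric bound $\binom{N-N_C}{r}/\binom{N}{r}\le (1-N_C/N)^{r}$, and the uniform constant from condition (3), which gives the bound $\kappa(k)(1-c)^{r}$ (the paper writes this as $\kappa(k)e^{-rL_{k}}$). Your treatment of the case $r>N-N_C$ and your choice of $R$ giving the strict inequality $>1-\varepsilon$ are slightly more careful than the paper's version.
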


\begin{proof}
Let $C \in \CC_{k}$. Define $\Ext(C,i)$ to be the set of $D$ in $\CC_{k+i}$ which are contained in $C$ and set $e(C,i) = |\Ext(C,i)|$. 
Then the number of subsets of size $r$ of $\CC_{k+i}$ containing no element from $\Ext(C,i)$ is $\binom{\kappa(k+i)-e(C,i)}{r}$. Thus the probability of selecting a subset of size $r$ from $\CC_{k+i}$ which contains no element from $\Ext(C,i)$ is 
$$\frac{\binom{\kappa(k+i)-e(C,i)}{r}}{\binom{\kappa(k+i)}{r}} 
= \prod_{p=0}^{r-1}\frac{\kappa(k+i)-e(C,i)-p}{\kappa(k+i)-p}.$$ 
Set $f(C,i) = \frac{\kappa(k+i)}{e(C,i)}$. Then for each integer $p\geq 0$, we have
$$\frac{\kappa(k+i)-e(C,i)-p}{\kappa(k+i)-p} \le \frac{\kappa(k+i)-e(C,i)}{\kappa(k+i)}=1 - \frac{e(C,i)}{\kappa(k+i)} = 1 - \frac{1}{f(C,i)}.$$
Thus
$$\prod_{p=0}^{r-1}\frac{(\kappa(k+i)-e(C,i)-p}{\kappa(k+i)-p} \le \left(1- \frac{1}{f(C,i)}\right)^{r} \le e^{-\frac{r}{f(C,i)}}.$$
It follows that the probability that a randomly chosen subset of $\CC_{k+2i}$ of size $r$ does not belong to $\CF_{k+i,r}(k)$ is bounded above by
$$\sum_{C \in \CC_{k}}e^{\frac{-r}{f(C,i)}}.$$

By property~\eqref{item:tree-three} in the definition of a tree structure, for each $C \in \CC_{k}$ there exists $L(C) > 0$ such that $\frac{e(C,i)}{\kappa(k+i)} \ge L(C)$. Setting $L_{k} = \min\{L(C) : C \in \CC_{k}\}$, we have that 
$$f(C,i) = \frac{\kappa(k+i)}{e(C,i)} \le \frac{1}{L_{k}} \quad \text{ and } \quad \frac{-r}{f(C,i)} \le -r \cdot L_{k}$$
for all $C \in \CC_{k}$. Thus
$$\sum_{C \in \CC_{k}}e^{\frac{-r}{f(C,i)}} \le \kappa(k)e^{-rL_{k}}$$
and  it follows that
$$1 - \frac{|\CF_{k+i,r}(k)|}{|\CK_{r}(\CC_{k+i})|} \le \kappa(k)e^{-rL_{k}}.$$
Since $k$ is fixed and both $\kappa(k)$ and $L_{k}$ only depend on $k$, the right hand side tends to zero as $r \to \infty$. Thus there exists $R > 0$, which only depends on $k$ and $\varepsilon$, such that $\kappa(k)e^{-rL_{k}} \le \varepsilon$ for $r \ge R$, and hence
$$\frac{|\CF_{k+i,r}(k)|}{|\CK_{r}(\CC_{k+i})|} \ge 1-\varepsilon$$
for such $r$.
\end{proof}

For each $m\geq 1$,  let $P^{\infty}_{m} \colon \varprojlim\{\CK(\CC_{i}),P^{\CK}_{i}\} \to \CK(\CC_{m})$ denote the projection to the $m\textsuperscript{th}$ coordinate. Recall that by Theorem~\ref{thm:inverselimitpresentationK},  we have a homeomorphism $F \colon \CK(X) \to \varprojlim\{\CK(\CC_{m}),P_{m}^{\CK}\}$.

\begin{lemma}\label{lemma:decreasingmass}
Let $\mu \in \CM(\CK(X))$, let $\nu = F_{*}(\mu) \in \CM(\varprojlim\{\CK(\CC_{m}),P^{\CK}_{m}\}))$, and for each $m\geq 1$, let $\nu_{m} = (P^{\infty}_{m})_{*}(\nu)$ be the projection of $\nu$ to $\CM(\CK(\CC_{m}))$. If $\mu(\mathcal{K}_{\fin}(X)) = 0$, then for every $r \ge 1$ we have $\nu_{m}(\CK_{\le r}(\CC_{m})) \to 0$ as $m \to \infty$.
\end{lemma}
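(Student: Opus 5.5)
We will reduce the statement to continuity of measure along a decreasing sequence of sets. Fix $r \ge 1$ and for each $m$ set
$$\mathcal{B}_{m} = \{Y \in \CK(X) : |A_{m}(Y)| \le r\} = F^{-1}\bigl((P^{\infty}_{m})^{-1}(\CK_{\le r}(\CC_{m}))\bigr),$$
where $A_{m}(Y)$ is the set of $C \in \CC_{m}$ meeting $Y$, as in the proof of Theorem~\ref{thm:inverselimitpresentationK}. By definition of $\nu_{m}$ we have $\nu_{m}(\CK_{\le r}(\CC_{m})) = \nu\bigl((P^{\infty}_{m})^{-1}(\CK_{\le r}(\CC_{m}))\bigr) = \mu(\mathcal{B}_{m})$. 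Each $\mathcal{B}_{m}$ is clopen in $\CK(X)$, being the preimage of a subset of a finite discrete space under a continuous map, so in particular it is Borel.

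The first step is to check that the sets $\mathcal{B}_{m}$ are decreasing in $m$. Since $P^{\CK}_{m+1}(A_{m+1}(Y)) = A_{m}(Y)$, which was shown in the proof of Theorem~\ref{thm:inverselimitpresentationK}, and the image of a set under a map has cardinality at most that of the set, we get $|A_{m}(Y)| \le |A_{m+1}(Y)|$. Hence $\mathcal{B}_{m+1} \subset \mathcal{B}_{m}$.

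The second step is to identify the intersection: we claim $\bigcap_{m}\mathcal{B}_{m} = \CK_{\le r}(X)$. If $|Y| \le r$, then $Y$ meets at most $r$ partition elements at each level, so $Y \in \mathcal{B}_{m}$ for all $m$. Conversely, suppose $Y$ contains $r+1$ distinct points. Their pairwise distances are bounded below by some $\delta > 0$. By property~\eqref{item:tree-two} of a tree structure, for $m$ large every element of $\CC_{m}$ has diameter less than $\delta$, so these points lie in distinct elements of $\CC_{m}$ and $|A_{m}(Y)| \ge r+1$, i.e.\ $Y \notin \mathcal{B}_{m}$. This identification is the only step with real content, and it is where the diameter condition on the tree structure is used.

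Finally, by continuity from above of the probability measure $\mu$,
$$\mu(\mathcal{B}_{m}) \to \mu(\CK_{\le r}(X)) \le \mu(\CK_{\fin}(X)) = 0,$$
which gives $\nu_{m}(\CK_{\le r}(\CC_{m})) \to 0$ as $m \to \infty$.
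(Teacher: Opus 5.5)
Your proposal is correct and follows the paper's proof. Both arguments note that the sets $(P^{\infty}_{m})^{-1}(\CK_{\le r}(\CC_{m}))$ are nested with intersection $F(\CK_{\le r}(X))$, which is $\nu$-null, and conclude by continuity from above. You also justify two points the paper only asserts: the nestedness, via $P^{\CK}_{m+1}(A_{m+1}(Y)) = A_{m}(Y)$, and the identification of the intersection, via the diameter condition on the tree structure.
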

\begin{proof}
Let $r \ge 1$. Note that for every $m \ge 1$, we have
$$\nu_{m}(\CK_{\le r}(\CC_{m})) = (P_{m}^{\infty})_{*}(\nu)(\CK_{\le r}(\CC_{m})) = \nu((P_{m}^{\infty})^{-1}(\CK_{\le r}(\CC_{m}))).$$
Moreover,
$$F(\CK_{\le r}(X_{n})) = \bigcap_{m=1}^{\infty}(P^{\infty}_{m})^{-1}(\CK_{\le r}(\CC_{m}))$$
and $\mu(\CK_{\le r}(X)) = 0$, so
$$\nu\left(\bigcap_{m=1}^{\infty}(P^{\infty}_{m})^{-1}(\CK_{\le r}(\CC_{m}))\right) = 0.$$
Since the sets $(P^{\infty}_{m})^{-1}(\CK_{\le r}(\CC_{m}))$ form a nested sequence, we have $$\nu_{m}(\CK_{\le r}(\CC_{m})) = \nu((P^{\infty}_{m})^{-1}(\CK_{\le r}(\CC_{m}))) \to 0$$
as $m \to \infty$.
\end{proof}

\begin{lemma}\label{lemma:eventualfullweight}
Suppose $\mu \in \CM(\CK(X))$, let $\nu = F_{*}(\mu) \in \CM(\varprojlim\{\CK(\CC_{m}),P^{\CK}_{m}\}))$, and for each $m\geq 1$,  let  $\nu_{m} = (P_{m}^{\infty})_{*}(\nu)$ be the $m\textsuperscript{th}$ projection of $\nu$. If $\nu_{m}(\CC_{m}) = 1$ for all sufficiently large  integers $m$, then $\mu = \delta_{X}$.
\end{lemma}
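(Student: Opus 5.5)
The plan is to show that $\nu$ is concentrated on the single point $F(X) = (\CC_{m})_{m\in\N}$ of the inverse limit, and then pull this back through the homeomorphism $F$ of Theorem~\ref{thm:inverselimitpresentationK}. First I will check that the point $(\CC_m)_{m}$ really lies in $\varprojlim\{\CK(\CC_m),P^{\CK}_m\}$ and equals $F(X)$. Each $P_m$ is surjective, since every element of $\CC_{m-1}$ is a disjoint union of elements of $\CC_m$ and so is nonempty, which gives $P^{\CK}_m(\CC_m)=\CC_{m-1}$. Also $A_m(X)=\CC_m$ for every $m$, because $X$ meets every clopen set in the partition. Hence $F(X)=(\CC_m)_m$.

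Next, fix $M$ such that $\nu_m(\{\CC_m\})=1$ for all $m\ge M$; here $\CC_m$ is viewed as a point of the finite discrete space $\CK(\CC_m)$. The sets
$$E_m=(P^\infty_m)^{-1}(\{\CC_m\})$$
are clopen in the inverse limit and satisfy $\nu(E_m)=\nu_m(\{\CC_m\})=1$ for $m\ge M$. Their intersection $\bigcap_{m\ge M}E_m$ is a countable intersection of full-measure sets, so it also has $\nu$-measure $1$.

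This intersection is exactly the singleton $\{(\CC_m)_m\}$. For $m<M$ the coordinates are forced by the compatibility relation, since $P^{\CK}_{M,m}(\CC_M)=\CC_m$ by iterating the surjectivity above. So $\nu=\delta_{F(X)}$, and since $F$ is a bijection, $\mu=(F^{-1})_*\nu=\delta_X$.

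An alternative that avoids the inverse-limit bookkeeping goes directly: $\mu$-almost every $K$ meets every element of $\CC_m$ for all large $m$. By condition~\eqref{item:tree-two} in the definition of a tree structure, such a $K$ is dense, and since it is compact, $K=X$.

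No step is a real obstacle. The only points needing care are that the hypothesis is read as $\nu_m(\{\CC_m\})=1$ (the atom at the full set), and that the surjectivity of $P_m$ is used to identify the coordinates below level $M$.
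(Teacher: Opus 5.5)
Your proof is correct and follows the paper's argument: the paper also intersects the full-measure sets $(P^{\infty}_{m})^{-1}(\CC_{m})$ to conclude $\nu(\{F(X)\})=1$, and then pulls back through the homeomorphism $F$. Your care about the coordinates below level $M$, handled via surjectivity of the $P_m$ (equivalently, the nestedness of these sets), fills in a detail the paper passes over when it intersects over all $m$ although the hypothesis is only for large $m$.
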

\begin{proof}
For each $m$, let $B_{m} = (P^{\infty}_{m})^{-1}(\CC_{m}) \subset \varprojlim\{\CK(\CC_{m}),P^{\CK}_{m}\}$. Then
$$\nu(B_{m}) = \nu((P^{\infty}_{m})^{-1}(\CC_{m})) = \nu_{m}(\CC_{m}) = 1$$
for $m$ sufficiently large. Since
$$\bigcap_{m=1}^{\infty}B_{m} = (\ldots,\CC_{m},\CC_{m+1},\ldots) \in \varprojlim\{\CK(\CC_{m}),P^{\CK}_{m}\},$$
it follows that $\nu(\{\ldots,\CC_{m},\CC_{m+1},\ldots\}) = 1$. But $F(X) = (\ldots,\CC_{m},\CC_{m+1},\ldots)$, and so $\mu(\{X\}) = 1$ and $\mu = \delta_{X}$.
\end{proof}

\begin{lemma}\label{lemma:equalsizeequalmass}
Suppose $G$ acts deeply transitively on a Cantor set $X$ and $\mu$ is an IRC for the action. Let $\nu = F_{*}(\mu) \in \CM(\varprojlim\{\CK(\CC_{m}),P^{\CK}_{m}\})$ and $\nu_{m} = (P^{\infty}_{m})_{*}(\nu)$ be the projections of $\nu$. For every $m \ge 1$, if $A,B \in \CK(\CC_{m})$ satisfy $|A|=|B|$, then $\nu_{m}(A) = \nu_{m}(B)$.
\end{lemma}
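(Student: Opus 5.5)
The plan is to transport the $G$-invariance of $\mu$ through the homeomorphism $F$ of Theorem~\ref{thm:inverselimitpresentationK} down to each finite level $\CK(\CC_m)$. Here $(\CC_i)_{i\in\N}$ is the witnessing tree structure fixed for this section. The point is that, on the cylinder sets
$$E_A = (P^{\infty}_m)^{-1}(\{A\}) = F(\{Y \in \CK(X) : A_m(Y) = A\}),$$
an element $g\in G$ sending the clopens of $A$ onto those of $B$ maps one cylinder onto the other.

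First I need $g(C)\in \CC_m$ for every $C\in\CC_m$, so that $g$ acts on $\CC_m$ and hence on $\CK(\CC_m)$. Deep transitivity with $r=1$ at level $m$ only produces elements mapping one chosen clopen to another, so this does not follow from the definition as stated. The cleanest route is to use the case $r=\kappa(m)$, or to interpret the definition (as in all the examples) as saying that $G$ permutes each $\CC_i$. I will state this explicitly: the witnessing elements act on each level by permutations, so $g$ induces a bijection of $\CC_m$. This is the step I expect to need the most care, since it is where the definition has to be read correctly.

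Granting this, fix $A,B\in\CK(\CC_m)$ with $|A|=|B|=r$. Deep transitivity gives $g\in G$ with $\{g(C):C\in A\}=B$. For $Y\in\CK(X)$ and $C\in\CC_m$, we have $Y\cap C\neq\emptyset$ if and only if $g(Y)\cap g(C)\neq\emptyset$, because $g$ is a homeomorphism. Since $g$ permutes $\CC_m$, this gives $A_m(g(Y)) = g(A_m(Y))$. In particular $A_m(Y)=A$ if and only if $A_m(gY)=B$, that is,
$$g^{-1}\bigl(\{Z : A_m(Z)=B\}\bigr) = \{Y : A_m(Y)=A\}.$$

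Invariance of $\mu$ then gives
$$\nu_m(\{B\}) = \mu(\{Z: A_m(Z)=B\}) = \mu\bigl(g^{-1}\{Z:A_m(Z)=B\}\bigr) = \mu(\{Y: A_m(Y)=A\}) = \nu_m(\{A\}).$$
Measurability is automatic: $Y\mapsto A_m(Y)$ is continuous into a finite discrete set, as shown in the proof of Theorem~\ref{thm:inverselimitpresentationK}.
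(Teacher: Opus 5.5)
Your strategy is the paper's: choose $g$ with $\{g(C):C\in A\}=B$, show that $g$ maps the cylinder $\{Y: A_m(Y)=A\}$ onto $\{Y: A_m(Y)=B\}$, and use invariance of $\mu$. The gap is in the step you flagged, and neither of your two justifications for it works.

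\begin{itemize}
\item The instance $r=\kappa(m)$ of deep transitivity is vacuous: both sets are all of $\CC_m$, and the identity is a witness. It therefore says nothing about the particular $g$ that carries $A$ to $B$.
\item Reading the definition as saying that $G$, or the witnessing elements, permute each $\CC_i$ adds a hypothesis that is not in the definition. In the examples it holds only for suitably chosen witnesses. For instance, an element of $\sym(n^{k})\subset\mathcal{F}_n$ generally does not map level-$m$ cylinders to level-$m$ cylinders when $m<k$.
\end{itemize}

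So, as written, your identity $A_m(gY)=g(A_m(Y))$ for all $Y$ is unsupported. For a general witness $g$, the set $g(A_m(Y))$ need not even consist of elements of $\CC_m$.

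The repair is that you never need $g$ to permute $\CC_m$. Since $\CC_m$ is a partition of $X$, $A_m(Y)=A$ holds if and only if $Y\subset\bigcup_{C\in A}C$ and $Y\cap C\neq\emptyset$ for every $C\in A$. Because $g$ is a bijection of $X$, the map $C\mapsto g(C)$ is a bijection from $A$ onto $B$, and $g\bigl(\bigcup_{C\in A}C\bigr)=\bigcup_{D\in B}D$. Both conditions therefore transfer exactly:
\begin{itemize}
\item $Y\subset\bigcup_{C\in A}C$ if and only if $gY\subset\bigcup_{D\in B}D$;
\item $Y$ meets every $C\in A$ if and only if $gY$ meets every $D\in B$.
\end{itemize}
Hence $A_m(Y)=A$ if and only if $A_m(gY)=B$ for this $g$. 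This is exactly the equivalence the paper asserts without comment. With this substitution, the rest of your computation, including the measurability remark, goes through unchanged.
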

\begin{proof}
Since $F \colon \CK(X) \to  \varprojlim\{\CK(\CC_{m}),P^{\CK}_{m}\}$ is a homeomorphism, we can push the action of $G$ on $\CK(X)$ to an action of $G$ on  $\varprojlim\{\CK(\CC_{m}),P^{\CK}_{m}\}$, and $\nu$ is invariant under this action. Suppose that $A,B \in \CK(\CC_{m})$ satisfy $|A|=|B|$, and write $A = \{C^{m}_{1},\ldots,C^{m}_{r}\}, B = \{D^{m}_{1},\ldots,D^{m}_{r}\}$. Since $\nu_{m} = (P^{\infty}_{m})_{*}(\nu)$, it suffices to show that $\nu((P^{\infty}_{m})^{-1}(A)) = \nu((P^{\infty}_{m})^{-1}(B))$.

It can be seen from the proof of 
Theorem~\ref{thm:inverselimitpresentationK} that $F^{-1}((P_{m}^{\infty})^{-1}(A)) = \{Y \in \CK(X) : Y \cap C^{m}_{i} \ne \emptyset\} \textrm{ if and only if } C^{m}_{i} \in A\}$. By the deeply transitive assumption, there exists $g \in G$ such that $g(\{C^{m}_{1},\ldots,C^{m}_{r}\}) = \{D^{m}_{1},\ldots,D^{m}_{r}\}$. Then $Y \in F^{-1}((P_{m}^{\infty})^{-1}(A))$ if and only if $g(Y) \in F^{-1}((P_{m}^{\infty})^{-1}(B))$, and so $g(F^{-1}((P_{m}^{\infty})^{-1}(A))) = F^{-1}((P_{m}^{\infty})^{-1}(B))$. It follows that, with respect to the $G$-action on $\varprojlim\{\CK(\CC_{m}),P^{\CK}_{m}\})$, $g$ takes $(P^{\infty}_{m})^{-1}(A)$ onto $(P^{\infty}_{m})^{-1}(B)$. Since $\mu$ is $G$-invariant, $\nu$ is $G$-invariant, so $\nu((P^{\infty}_{m})^{-1}(A)) = \nu((P^{\infty}_{m})^{-1}(B))$ as desired.
\end{proof}

We use these lemmas to complete the proof of Theorem~\ref{th:FI}. 
\begin{proof}[Proof of Theorem~\ref{th:FI}]
Let $\mu$ be an IRC for the action and  let $\mu = \nu_{F} + \nu_{I}$ be its  decomposition into the finitary and nonfinitary parts. Let $\nu = F_{*}(\nu_{I})$ and $(\nu_{m})_{m \ge 1} = (P^{\infty}_{m})_{*}(\nu)$ be the projections of $\nu$. By Lemma~\ref{lemma:eventualfullweight}, it suffices to show that $\nu_{m}(\CC_{m}) = 1$ for all $m$ sufficiently large.

Let $\varepsilon > 0$ and let $k \ge 1$. By Lemma~\ref{lemma:good-range}, we may choose $R=R(k,\varepsilon) > 0$ such that
$$\frac{|\CF_{k+i,r}(k)|}{|\CK_{r}(\CC_{k+i})|} \ge 1 - \varepsilon$$
for all $i$ sufficiently large and $R \le r$.

Since $\mu(\CK_{\fin}(X)) = 0$, by Lemma~\ref{lemma:decreasingmass} there exists $J$ such that $\nu_{k+j}(\CK_{\le R}(\mathcal{C}_{k+j})) < \varepsilon$ for all $j \ge J$. 
For each $m\in\N$, set $\CK_{> R}(\CC_{m}) = \CK(\CC_{m}) \setminus \CK_{\le R}(\CC_{m})$. 
Then for $j \ge J$, we have
\begin{equation}\label{eqn:totalsumsofsetsofsizer}
\nu_{k+j}(\CK_{> R}(\CC_{k+j})) \ge 1-\varepsilon.
\end{equation}
Suppose $A,B \in \CK(\CC_{m})$ and $|A| = |B|$. Lemma~\ref{lemma:equalsizeequalmass} then implies $\nu_{m}(A) = \nu_{m}(B)$. 
For each $r,j$, 
 define 
$$t_{k+j}(r) = \nu_{k+j}(\CK_{r}(\CC_{k+j})) = \sum_{B \in \CK_{r}(\CC_{k+j})}\nu_{k+j}(B).$$
Then for every $B \in \CK_{r}(\CC_{k+j})$, we have that
$$\nu_{k+j}(B) = \frac{t_{k+j}(r)}{|\CK_{r}(\CC_{k+j})|}.$$
Defining
$$g_{k+j}(r) = \sum_{B \in \mathcal{F}_{k+j,r}(k)}\nu_{k+j}(B),$$
it follows that 
$$g_{k+j}(r) = \frac{|\CF_{k+j,r}(k)| \cdot t_{k+j}(r)}{|\CK_{r}(\CC_{k+j})|} \ge (1-\varepsilon) t_{k+j}(r)$$
and
$$\sum_{R < r \le \kappa(k+j)} g_{k+j}(r) \ge \sum_{R < r \le \kappa(k+j)}(1-\varepsilon)t_{k+j}(r).$$
By~\eqref{eqn:totalsumsofsetsofsizer}, we have
$$\sum_{R < r \le \kappa(k+j)}t_{k+j}(r) \ge 1 - \varepsilon$$
so 
$$\sum_{R < r \le \kappa(k+j)}g_{k+j}(r) \ge (1-\varepsilon)^{2}.$$
Since $(P^{\mathcal{K}}_{k+j,k})_{*}(\nu_{k+j}) = \nu_{k}$, this implies that 
$$\nu_{k}(\CC_{k}) \ge (1-\varepsilon)^{2}.$$
Since $\varepsilon$ and $k$ are arbitrary, it follows that $\nu_{k}(\CC_{k}) = 1$ for all $k \ge 1$.
\end{proof}

\section{Finitary IRCs of deeply transitive actions}\label{sec:finitaryircs}
\subsection{Invariant measures for deeply transitive actions}
We now turn to classifying all finitary IRCs for deeply transitive actions. Together with the results of the previous section, this gives us a complete classification of all IRCs for deeply transitive actions.

We also classify the self-joinings for extremely transitive actions. The two goals are closely connected, at least in the amenable case. Indeed, if $G$ is amenable and $(X,G)$ is uniquely ergodic with unique $G$-invariant probability measure $\mu$, then every probability measure $\nu$ on $\prod_{i=1}^{k}X$ invariant under the diagonal action of $G$ is automatically a $k$-fold self-joining, since each component is uniquely ergodic. Then, as noted in Proposition~\ref{prop:purelyfinitaryIRCs}, it follows that all finitary IRCs arise as pushforwards under $\rho_{k}$ of self-joinings of $(X,G,\mu)$. In the case that the action is IC-rigid, it follows that all of the IRCs are determined by the self-joinings of $(X,G,\mu)$.

\begin{theorem}\label{thm:deeptransitivityuniqueergodicity}
Suppose $G$ acts deeply transitively on a Cantor set $X$. If $\mu$ is any $G$-invariant Borel probability measure on $X$, then $\mu = \mu_{C}$ for any tree structure $(C_{i})_{i \in \N}$ witnessing the deep transitivity. In particular, if $G$ acts deeply transitively on $X$, then it admits at most one $G$-invariant Borel probability measure, and that measure is necessarily nonatomic.
\end{theorem}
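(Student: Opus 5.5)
Fix a tree structure $(\CC_{i})_{i\in\N}$ witnessing the deep transitivity and a $G$-invariant Borel probability measure $\mu$ on $X$. The plan is to show that $\mu$ and $\mu_{\CC}$ agree on every clopen set $C\in\CC_{m}$ for every $m$. The sets $C^{i}_{j}$ form a base of the topology. Together with $\emptyset$, finite unions of them form a $\pi$-system that generates the Borel $\sigma$-algebra. So agreement on each $C^{m}_{j}$ gives agreement on finite disjoint unions, and the $\pi$-$\lambda$ theorem then gives $\mu=\mu_{\CC}$.

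The key step uses deep transitivity with $r=1$. For any $A,B\in\CC_{m}$ there is $g\in G$ with $g(A)=B$, and invariance gives $\mu(A)=\mu(g^{-1}B)=\mu(B)$. Since $\CC_{m}$ partitions $X$ into $\kappa(m)$ clopens, we get $\mu(C)=1/\kappa(m)$ for every $C\in\CC_{m}$.

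It remains to check that $\mu_{\CC}(C)=1/\kappa(m)$ as well. I would do this by induction on $m$. The base case $m=1$ holds by definition. For the inductive step, the first step applied to $\mu$ gives consistency: each $C\in\CC_{m}$ splits into $e$ children with $e\cdot(1/\kappa(m+1))=1/\kappa(m)$. So every element of $\CC_{m}$ has exactly $\kappa(m+1)/\kappa(m)$ children. This uses only that an invariant $\mu$ exists; if no invariant measure exists, the statement is vacuous. The inductive definition of $\mu_{\CC}$ then gives $\mu_{\CC}(C^{m+1}_{j})=\frac{1/\kappa(m)}{\kappa(m+1)/\kappa(m)}=1/\kappa(m+1)$. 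I expect this bookkeeping, which ties the combinatorial definition of $\mu_{\CC}$ to the uniform values forced by transitivity, to be the only mildly delicate point. Even so, it is a direct computation. One could also bypass it by noting that the values $1/\kappa(m)$ already determine a unique Borel measure, and that $\mu_{\CC}$ restricted to each level is uniform by the equal-branching fact.

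Uniqueness follows at once: any two invariant measures both equal $\mu_{\CC}$ for the same witness. For nonatomicity, take $x\in X$. For each $m$, $x$ lies in some $C\in\CC_{m}$, so $\mu(\{x\})\le 1/\kappa(m)$. Since $X$ is a Cantor set and the diameters of the clopens tend to $0$ by condition~\eqref{item:tree-two}, we have $\kappa(m)\to\infty$. Otherwise there would be finitely many clopens of arbitrarily small diameter covering an infinite perfect space, which is impossible. Hence $\mu(\{x\})=0$.
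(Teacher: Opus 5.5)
Your proposal is correct and follows the paper's proof. Deep transitivity with $r=1$ forces $\mu(C)=1/\kappa(m)$ for every $C\in\CC_{m}$, and the $\pi$-$\lambda$ theorem on the tree clopens then gives $\mu=\mu_{\CC}$. You differ from the paper in two small ways: you explicitly verify $\mu_{\CC}(C)=1/\kappa(m)$ through the equal-branching count, which the paper asserts without comment, and you get nonatomicity from $\mu(\{x\})\le 1/\kappa(m)\to 0$ rather than from minimality of the action together with $X$ being infinite.
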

\begin{proof}
Suppose $G$ acts deeply transitively on $X$ and is witnessed by the tree structure $(\CC_{i})_{i\in\N} = \{C_{1}^{i},\ldots,C^{i}_{\kappa(i)}\}$. Suppose $\mu$ is a $G$-invariant Borel probability measure for the action. Let $i \ge 1$. Then for every $1 \le j,k \le \kappa(i)$, there exists $g \in G$ such that $g(C^{i}_{j}) = C^{i}_{k}$, so $\mu(C^{i}_{j}) = \mu(C^{i}_{k})$. Since $1 = \sum_{j=1}^{\kappa(i)}\mu(C^{i}_{j})$, it follows that $\mu(C^{i}_{j}) = \frac{1}{\kappa(i)}$ for every $1 \le j \le \kappa(i)$. 
Since the collection of all $C^{i}_{j}$ form a $\pi$-system, a basis for the topology, and $\mu = \mu_{\CC}$ on this basis, by Dynkin's $\pi$-$\lambda$ Theorem, the two measures agree.

That such a measure $\mu$ is nonatomic follows from the fact that the system $(X,G)$ is minimal by Proposition~\ref{prop:deeptransitivityprime}, and the fact that $X$ is infinite.
\end{proof}

\begin{proposition}
\label{prop:lift}
Suppose $(X,G)$ and $(Y,G)$ are systems and $\pi \colon X \to Y$ is a factor map. Suppose further that there exists $N\geq 1$ such that $|\pi^{-1}(y)| \le N$ for all $y \in Y$. If $\mu \in \CM_{G}(Y)$, then there exists $\nu \in \CM_{G}(X)$ such that $\pi_{*}(\nu) = \mu$. In particular, $\CM_{G}(X) \ne \emptyset$ if and only if $\CM_{G}(Y) \ne \emptyset$.
\end{proposition}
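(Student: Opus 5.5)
The plan is to lift $\mu$ by averaging over fibers, which is explicit and avoids amenability; $G$ may be arbitrary, so we cannot appeal to surjectivity of $\pi_*$ on invariant measures. The natural candidate is
$$\nu(f) = \int_{Y} \frac{1}{|\pi^{-1}(y)|}\sum_{x \in \pi^{-1}(y)} f(x)\, d\mu(y)$$
for continuous $f$ on $X$. Formally, $\nu = \int_Y m_y\, d\mu(y)$, where $m_y$ is the uniform probability measure on the finite fiber $\pi^{-1}(y)$. By construction $\pi_*\nu = \mu$, since each $m_y$ is supported on $\pi^{-1}(y)$. Two things must then be checked: that $y \mapsto m_y$ is Borel, so that $\nu$ is a well-defined Borel probability measure, and that $\nu$ is $G$-invariant.

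For measurability I would use Proposition~\ref{prop:factors-pass}, which shows that $y\mapsto \pi^{-1}(y)\in\CK(X)$ is Borel. Its image lies in $\CK_{\le N}(X)$. It then suffices to show that the map $\CK_{\le N}(X)\to \CM(X)$ sending a finite set to its uniform measure is Borel. On each stratum $\CK_{j}(X)$ of sets of size exactly $j$, this map is continuous: near a $j$-point set, any $j$-point set splits into one point near each original point. Each stratum $\CK_{\le j}\setminus\CK_{\le j-1}$ is Borel, because $\CK_{\le j}$ is compact. This gives Borel measurability.

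Invariance is the main obstacle, since $g$ need not map fibers bijectively onto fibers. Equivariance gives $g(\pi^{-1}(y))\subset \pi^{-1}(gy)$, and surjectivity of $g$ on $X$ gives $g(\pi^{-1}(y')) $ covering the fiber over $gy'$ as $y'$ ranges over $g^{-1}(gy)$. In general, though, the uniform fiber measure is not preserved. For example, $g$ may collapse two points of one fiber while being injective on another.

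To handle this I would instead take a weak* limit point. Choose a Borel selection $s\colon Y\to X$ with $\pi\circ s=\Id$, which exists because the fibers are finite and the map of Proposition~\ref{prop:factors-pass} is Borel; alternatively, use the uniform fiber measures above. Then consider the set $\mathcal{L}=\{\nu\in\CM(X): \pi_*\nu=\mu\}$. This set is nonempty, compact, and convex. Each $g$ maps $\mathcal{L}$ into itself, since $\pi_*g_*\nu=g_*\pi_*\nu=g_*\mu=\mu$. We need a common fixed point of these continuous affine maps, which for nonamenable $G$ is not automatic.

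Here the bound $N$ should be used. For $y\in Y$, the lift $\nu$ restricted over $y$ is a probability vector on at most $N$ points. I would first try to show that $\mathcal{L}$ contains a distinguished element fixed by all $g$. One candidate is the unique element maximizing a strictly concave, $G$-equivariant functional, such as the fiberwise entropy $\int_Y H(\nu_y)\,d\mu(y)$, where $\nu_y$ is the disintegration of $\nu$ over $y$. A maximizer exists by upper semicontinuity and compactness, and it is unique by strict concavity. The remaining step is to prove that $g$ does not decrease this functional, so that $g_*$ of the maximizer is again a maximizer and therefore equals it. I expect this step, controlling how the fiber entropy transforms under a finite-to-one $g$, to be where the real work lies. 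If it fails, I would fall back on showing directly that the uniform fiber lift is invariant whenever every $g$ maps fibers bijectively onto fibers, which is the case in the intended application to $\rho_k$.

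The final statement follows at once. Given $\mu\in\CM_G(Y)$, the lift gives an element of $\CM_G(X)$. Conversely, given $\nu\in\CM_G(X)$, the pushforward $\pi_*\nu$ lies in $\CM_G(Y)$.
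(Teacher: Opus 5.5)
The proposal never proves $G$-invariance of any lift, and that is the entire content of the proposition. The paper's proof is exactly the uniform fiber lift $\nu=\int_Y\eta_y\,d\mu(y)$ you start with. The step you flag as the main obstacle is immediate for group actions. Each $g$ is a homeomorphism with inverse $g^{-1}$, and if $\pi(x)=gy$ then $\pi(g^{-1}x)=y$. So $g(\pi^{-1}(y))=\pi^{-1}(gy)$, and $g$ restricts to a bijection between these fibers. Hence $g_*\eta_y=\eta_{gy}$, and $\nu(g^{-1}E)=\int_Y\eta_{gy}(E)\,d\mu(y)=\int_Y\eta_y(E)\,d(g_*\mu)(y)=\nu(E)$.

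You mention this as a fallback, but only for the map $\rho_k$, and you never carry it out. The entropy-maximization route instead leaves open the one step everything rests on: that $g$ does not decrease $\int_Y H(\nu_y)\,d\mu$. In the group case that step reduces to the same fiberwise bijectivity, so the detour buys nothing. Your measurability argument via the strata $\CK_j(X)$ is fine, and more detailed than the paper's.

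Your worry about collapsing fibers is legitimate only for non-invertible semigroup elements, which the paper's conventions formally allow. There, however, no argument can work, because the statement is false. Here is a counterexample:
\begin{itemize}
\item Let $Y=\{0,1\}^{\N}$ with the one-sided shift $\sigma$ and the $(\frac12,\frac12)$-Bernoulli measure $\mu$, and let $X=Y\times\{0,1\}$.
\item Let the free semigroup on $b,c$ act on $Y$ by $b=c=\sigma$.
\item On $X$, set $b(y,i)=c(y,i)=(\sigma y,0)$ if $y_1=0$, and $b(y,i)=(\sigma y,1-i)$, $c(y,i)=(\sigma y,i)$ if $y_1=1$.
\end{itemize}
These maps are continuous, surjective and finite-to-one, and the projection $X\to Y$ is a factor map with fibers of size $2$. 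Let $p(y)$ denote the mass a lift of $\mu$ gives to $(y,0)$ in the fiber over $y$, and write $1y=1y_1y_2\ldots$. Invariance under $b$ forces $p(y)=1-\frac12p(1y)$, and invariance under $c$ forces $p(y)=\frac12+\frac12p(1y)$, for $\mu$-a.e.\ $y$. Hence $p(1y)=\frac12$ and $p=\frac34$ almost everywhere, which is a contradiction. In particular, $b_*$ sends the uniform lift to the lift with $p\equiv\frac34$, strictly lowering fiber entropy, so the step you left open genuinely fails here. The proposition, and the paper's one-line invariance computation, should be read for group actions. That is the setting, deeply transitive group actions, in which the paper uses it.
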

\begin{proof}
Let $\mu$ be a $G$-invariant Borel probability measure on $Y$. Given $y \in Y$, let $\eta_{y} = \frac{1}{|\pi^{-1}(y)|}\sum_{x \in \pi^{-1}(y)}\delta_{x}$ be the counting measure on the fiber $\pi^{-1}(y)$.  Note the map $y \mapsto \eta_{y}$ is Borel as a map $Y \to \CM_G(X)$. 
Then the measure $\nu$ on $X$ defined by $\nu(E) = \int_{Y}\eta_{y}(E)\, d\mu$ is a Borel probability measure on $X$ which projects to $\mu$. Moreover, $\nu$ is $G$-invariant, since for $g \in G$ we have 
\begin{multline*}
    g_{*}(\nu)(E) = \nu(g^{-1}(E)) = \int_{Y}\eta_{y}(g^{-1}(E))\, d \mu \\ = \int_{Y} g_{*}(\eta_{y})(E) \, d \mu = \int_{Y} \eta_{g(y)}(E) \, d \mu = \nu(E). \quad \qedhere
    \end{multline*}
\end{proof}

We apply this in our setting.

\begin{corollary}
\label{co:lift}
For a system $(X,G)$ and $k \ge 1$, the following are equivalent:
\begin{enumerate}
\item
\label{item:MG1}
$\CM_{G}(X) \ne \emptyset$,
 \item
 \label{item:MG2}
 $\CM_{G}(X^{k}) \ne \emptyset$,
\item
\label{item:MG3}
 $\CM_{G}(\CK_{\le k}(X)) \ne \emptyset$.

\end{enumerate}
\end{corollary}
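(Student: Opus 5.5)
We will prove the cycle of implications \eqref{item:MG1} $\Rightarrow$ \eqref{item:MG2} $\Rightarrow$ \eqref{item:MG3} $\Rightarrow$ \eqref{item:MG1}.

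For \eqref{item:MG1} $\Rightarrow$ \eqref{item:MG2}, take $\mu \in \CM_G(X)$ and push it forward under the diagonal embedding $x \mapsto (x,\ldots,x)$ of $X$ into $X^{k}$. This map is continuous and equivariant for the diagonal action, so the pushforward is a $G$-invariant Borel probability measure on $X^k$. We deliberately avoid the product $\mu^{\otimes k}$, since for a semigroup or non-amenable setting the diagonal pushforward is the cleanest choice; either would work for groups.

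For \eqref{item:MG2} $\Rightarrow$ \eqref{item:MG3}, push a measure in $\CM_G(X^k)$ forward by the factor map $\rho_k \colon X^k \to \CK_{\le k}(X)$ from \eqref{def:rho-k}. The map $\rho_k$ is continuous, equivariant and surjective onto $\CK_{\le k}(X)$, which is compact and $G$-invariant. Hence the pushforward lies in $\CM_G(\CK_{\le k}(X))$.

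The implication \eqref{item:MG3} $\Rightarrow$ \eqref{item:MG1} is the only real content, and we plan to use Proposition~\ref{prop:lift}. Note that $\CK_{\le k}(X)$ is not itself a factor of $X$, so we must go back through $X^k$. Since $\rho_k \colon X^k \to \CK_{\le k}(X)$ is a factor map, we need a uniform bound on its fibers. A fiber over a set $A$ with $|A| = j \le k$ consists of the $k$-tuples whose set of coordinates is exactly $A$, so there are at most $j^k \le k^k$ of them; this gives $N = k^k$. Proposition~\ref{prop:lift} then lifts any $\eta \in \CM_G(\CK_{\le k}(X))$ to some $\nu \in \CM_G(X^k)$. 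Projecting $\nu$ to the first coordinate, which is equivariant for the diagonal action, yields an element of $\CM_G(X)$.

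We expect the main obstacle to be the lifting step, in particular confirming that Proposition~\ref{prop:lift} applies. We need the fiber-counting map $y \mapsto \eta_y$ to be Borel and equivariant in the sense $g_*\eta_y = \eta_{gy}$. For group actions $g$ is a bijection on each fiber, so this is clear, and we take that as given from the proposition.

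Alternatively, for \eqref{item:MG3} $\Rightarrow$ \eqref{item:MG1} one could bypass $X^k$ by sending a finite set $A$ to the uniform measure on $A$ and integrating against $\eta$. This yields a measure on $X$ directly, and its invariance follows by the same computation as in the proof of Proposition~\ref{prop:lift}.
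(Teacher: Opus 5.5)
Your argument is correct and uses the same ingredients as the paper's proof: pushing forward along $\rho_k$, lifting through $\rho_k$ via Proposition~\ref{prop:lift}, and projecting $X^k \to X$, just traversed in a different cyclic order; incidentally, $\mu^{\otimes k}$ is also invariant for semigroup actions, since $(g\times\cdots\times g)_*\mu^{\otimes k} = (g_*\mu)^{\otimes k}$, so the diagonal choice is not forced. Your fiber bound $k^k$ is the correct one, whereas the paper's stated bound $k!$ fails for $k \ge 4$ (a $3$-element set has $36 > 4!$ preimages under $\rho_4$), though any finite bound suffices for Proposition~\ref{prop:lift}.
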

\begin{proof}
Let $k \ge 1$. If $\mu \in \CM_{G}(X)$, then $(\rho_{k})_{*}(\mu^{\otimes k}) \in \CM_{G}(\CK_{\le k}(X))$, and so~\eqref{item:MG1} implies~\eqref{item:MG3}. Since the map $\rho_{k} \colon X^{k} \to \CK_{\le k}(X)$ is a factor map whose fibers are bounded in size by $k!$, it follows from from Proposition~\ref{prop:lift} that~\eqref{item:MG3} implies~\eqref{item:MG2}. Lastly, since $(X^{k},G)$ factors onto $(X,G)$, if $\CM_{G}(X^{k}) \ne \emptyset$ then $\CM_{G}(X) \ne \emptyset$, and so~\eqref{item:MG2} implies~\eqref{item:MG1}.
\end{proof}

\subsection{Finitary IRCs for deeply transitive actions}

For an invariant measure $\nu$ for the action of $G$ on $\CK_{\le s}(X)$, we define
$$\delta(\nu) = \min\{m \le s: \nu(\CK_{\le m}(X)) > 0 \textrm{ and } \nu(\CK_{< m}(X)) = 0\}.$$
This quantity always exists since $\nu(\CK_{\le s}(X)) > 0$. By definition, we always have $\nu(\CK_{\le \delta(\nu)}(X)) > 0$. 
If $\nu$ is ergodic, then ${\delta(\nu) = \min\{m : \nu(\CK_{\le m}(X)) =1\}}$ since $\CK_{\le m}(X)$ is $G$-invariant for every $m\geq 1$.

For the remainder of the section, we assume $G$ acts deeply transitively with respect to a tree structure $(\mathcal{C}_{i})_{i \in \mathbb{N}} = \{C^{i}_{1},\ldots,C^{i}_{\kappa(i)}\}_{i \in \mathbb{N}}$ on a Cantor set $X$.

For $k \ge 1$, let $\CK_{k}(X) = \{Y \in \CK(X) : |Y|=k\}$. Given $i \ge 1$ and $u \in \mathbb{N}^{k}$, we define
$$U^{i}_{k}(u) = \{\{x_{1},\ldots,x_{k}\} \in \CK_{k}(X) : x_{j} \in C^{i}_{u_{j}}\}.$$
At times we also denote this by
$$U_{k}^{i}(u) = \left[C_{u_{1}}^{i},\ldots,C_{u_{k}}^{1}\right].$$
Such sets $U^{i}_{k}(u)$ form a basis for the topology on $\CK_{k}(X)$, and also form a $\pi$-system.
It follows that $\nu$ is determined by its values on such sets $U^{i}_{k}(u)$ for $i \ge 1, u \in \mathbb{N}^{k}$.

\begin{lemma}\label{lemma:knowonevalueknowbasis}
If $\mu, \nu$ are $G$-invariant probability measures on $\CK_{\le k}(X)$ and  $\delta(\mu) = \delta(\nu) = k$, then $\mu = \nu$.
\end{lemma}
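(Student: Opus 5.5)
Since $\delta(\mu)=\delta(\nu)=k$, both measures give zero mass to $\CK_{<k}(X)$. They are supported on $\CK_{\le k}(X)$, so both are concentrated on $\CK_{k}(X)$, the sets of exactly $k$ points. By the remark preceding the lemma, the sets $U^{i}_{k}(u)=[C^{i}_{u_1},\ldots,C^{i}_{u_k}]$ form a $\pi$-system generating the Borel $\sigma$-algebra of $\CK_k(X)$. So by Dynkin's $\pi$-$\lambda$ theorem, as in Theorem~\ref{thm:deeptransitivityuniqueergodicity}, it suffices to show $\mu(U^{i}_{k}(u))=\nu(U^{i}_{k}(u))$ for all $i$ and $u$. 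I will in fact compute both values explicitly and see that they depend only on $i$ and the shape of $u$.

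The first step is to reduce to distinct cells. For fixed $i$ and $k$-point sets $Y$, $\CK_k(X)$ is partitioned according to the multiset of level-$i$ cells containing the points of $Y$. I claim that for $i$ large, the sets $U^i_k(u)$ with some repeated $u_j$ have small measure, and that for arbitrary $i$ the measure of such a set is determined by finer levels. Indeed, $U^i_k(u)$ is a finite disjoint union of sets $U^{i'}_k(u')$ for $i'\ge i$. Because each measure lives on sets of exactly $k$ distinct points, the mass of those $u'$ with a repeated coordinate tends to $0$ as $i'\to\infty$: the sets of $k$-point subsets having two points in a common level-$i'$ cell decrease to the empty set by condition~\eqref{item:tree-two}. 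So it suffices to show that $\mu$ and $\nu$ agree on $U^{i}_{k}(u)$ whenever $u$ has $k$ distinct entries, for every $i$.

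The second step is transitivity. For $u$ with distinct entries, $U^i_k(u)$ is determined by the $k$-element subset $\{C^i_{u_1},\ldots,C^i_{u_k}\}$ of $\CC_i$. By deep transitivity, for any two such subsets there is $g\in G$ carrying one onto the other, and then $g$ maps the corresponding $U$-sets onto each other. Here $g$ is a homeomorphism because $G$ is a group, and $g$ permutes the $k$-point sets since it maps cells onto cells. By invariance, all such sets have the same $\mu$-measure $a_i$ and the same $\nu$-measure $b_i$.

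The third step pins down $a_i$ and $b_i$. The sets with distinct entries number $\binom{\kappa(i)}{k}$ and are disjoint, and their union is the set $D_i$ of $k$-point sets meeting $k$ distinct level-$i$ cells. Hence $a_i=\mu(D_i)/\binom{\kappa(i)}{k}$ and $b_i=\nu(D_i)/\binom{\kappa(i)}{k}$. The main obstacle is that $\mu(D_i)$ need not equal $\nu(D_i)$ a priori. I would handle this by refinement consistency. Writing a level-$i$ distinct set as a union of level-$i'$ sets, the level-$i'$ sets with distinct entries inside it number $\prod_j e(C^i_{u_j},i'-i)$. This gives $a_i \ge a_{i'}\prod_j e(C^i_{u_j},i'-i)$, with the defect being the mass of the repeated-entry terms, which tends to $0$. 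Summing over all $u$ shows $\mu(D_{i'})\to 1$, so $a_i=\lim_{i'} \binom{\kappa(i')}{k}^{-1}\prod_j e(C^i_{u_j},i'-i)$. This expression is independent of the measure, and the same holds for $b_i$. Therefore $a_i=b_i$ for all $i$, and $\mu=\nu$.
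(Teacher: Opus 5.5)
Your proof is correct. It uses the same ingredients as the paper: deep transitivity equalizes the masses of basic sets, the tree refinement links levels, and normalization fixes the constant. You organize them differently, though.

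The paper takes one set $U^1_k(u)$ of positive $\nu$-mass. It asserts that deep transitivity makes $\nu(U^m_k(w))$ independent of $w$ at every level, and concludes that $\nu$ is determined up to a scalar by that one value. It then gets $c\mu=\nu$ with $c=1$.

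You first discard the repeated-entry sets, whose total mass tends to $0$ because $\mu(\CK_{<k}(X))=0$ and diameters shrink. You apply deep transitivity only to distinct-entry sets. You then compute the common value $a_i$ explicitly, as the measure-independent limit of $\prod_j e(C^i_{u_j},i'-i)/\binom{\kappa(i')}{k}$.

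This makes explicit what the paper's sketch passes over. Deep transitivity acts on $k$-element subsets of $\CC_i$, so it only equates sets $U^i_k(w)$ with distinct entries. Sets with repeated entries generally have different masses, and if $\kappa(1)<k$ every level-one set has repeated entries. Both the repeated-entry sets and the normalization are controlled through the exhaustion $D_i\uparrow\CK_k(X)$, which is exactly your first step. The paper's proportionality framing is shorter; yours also yields an explicit formula for the unique measure.

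Two small corrections.

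\begin{itemize}
\item The ``defect'' in $a_i\ge a_{i'}\prod_j e(C^i_{u_j},i'-i)$ is in fact zero. If $u$ has distinct entries, a $k$-point set with one point in each $C^i_{u_j}$ automatically has its points in distinct level-$i'$ cells. So $D_i\subset D_{i'}$, and $a_i=\mu(D_{i'})\prod_j e(C^i_{u_j},i'-i)/\binom{\kappa(i')}{k}$ holds exactly for all $i'\ge i$. In this exact form the ratios $a_i/a_{i'}$ are purely combinatorial, which is the rigorous content of the paper's proportionality step, and $\mu(D_{i'})\to 1$ fixes the scale.
\item $\mu(D_{i'})\to 1$ does not come from summing over $u$. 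It is just your first step: continuity of $\mu$ along the increasing sets $D_{i'}$, whose union is $\CK_k(X)$.
\end{itemize}

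Finally, $a_i$ is only defined when $\kappa(i)\ge k$, but for smaller $i$ there are no distinct-entry sets and nothing to check.
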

\begin{proof}
Considering the measure $\nu$, by assumption we have that $\nu(\CK_{\le k}(X)) > 0$ and $\nu(\CK_{< k}(X)) = 0$, so there exists $u \in \mathbb{N}^{k}$ such that $\nu(U^{1}_{k}(u)) > 0$. Suppose $U^{1}_{k}(u) = \left[C^{1}_{j_{1}},\ldots,C^{1}_{j_{k}}\right]$ and that $U^{1}_{k}(u^{\prime}) = \left[D^{1}_{j_{1}},\ldots,D^{1}_{j_{k}}\right]$ is another such set for some $u^{\prime} \in \mathbb{N}^{k}$. By the deep transitivity of the action, there exists $g \in G$ such that $g(\{C^{i}_{j_{1}},\ldots,C^{i}_{j_{k}}\}) = \{D^{i}_{j_{1}},\ldots,D^{i}_{j_{k}}\}$ and hence $g(U^{1}_{k}(u)) = U^{1}_{k}(u^{\prime})$. Since $\nu$ is $G$-invariant, it follows that
$$\nu(U^{1}_{k}(u)) = \nu(U^{1}_{k}(u^{\prime})).$$

Now let $m \ge 1$ and consider some $U^{m}_{k}(v) = \left[C^{m}_{v_{1}},\ldots,C^{m}_{v_{k}}\right]$ for some $v \in \mathbb{N}^{k}$. For each $1 \le \ell \le k$, using the tree structure can write each $C^{m}_{v_{l}}$ as a disjoint union 
$$C^{m}_{v_{\ell}} = \bigcup_{r \in I(m,v_{l})}C^{m+1}_{r}.$$

We claim $U^{m}_{k}(v)$ is a disjoint union of sets of the form $U^{m+1}_{k}(w)$ for various $w \in \mathbb{N}^{k}$. Indeed, let $J = \prod_{i=1}^{k}I(m,v_{i})$. For $1 \le j \le k$ and $s \in I(m,v_{j})$ we define the set $L(j,s) = \{w \in J : w_{j} = s\}$. Then define
$$R_{j} = \bigcup_{s \in I(m,v_{j})}\bigcup_{w \in L(j,s)}U^{m+1}_{k}(w).$$

Then
$$U_{k}^{m}(v) = \bigcup_{j=1}^{k}R_{j}.$$

Again by the deep transitivity of the action, $\nu(U^{m+1}_{k}(w))$ is independent of $w$. Using this and induction, it follows that once $\nu(U^{1}_{k}(u))$ is determined for any $u \in \mathbb{N}^{k}$, then $\nu(U^{m}_{k}(v))$ is determined for all $m \ge 1$ and all $v$, and is determined completely then by Dynkin's $\pi$-$\lambda$ Theorem.

Now consider $\mu$. Again since $\mu(\CK_{\le k}(X)) > 0$ and $\mu(\CK_{< k}(X)) = 0$, there exists $\tilde{u} \in \mathbb{N}^{k}$ such that $\mu(U^{1}_{k}(\tilde{u})) > 0$. We have $\nu(U_{k}^{1}(u)) > 0$ for some $u \in \mathbb{N}^{k}$, and deep transitivity implies $\mu(U^{1}_{k}(u)) > 0$ as well. Set $c = \frac{\nu(U^{1}_{k}(u))}{\mu(U^{1}_{k}(u))}$, so $c \cdot \mu(U^{1}_{k}(u)) = \nu(U^{1}_{k}(u))$. The previous paragraph then implies $c \cdot \mu = \nu$. But $\mu$ and $\nu$ are probability measures, so $c = 1$.
\end{proof}

We briefly recall a general fact.
\begin{lemma}
\label{lemma:Fubini}
Let $\mu$ be a nonatomic Borel measure on $X$ and let $k \ge 1$. Let $E = \{x \in X^{k} : x_{i} = x_{j} \textrm{ for some } i \ne 
j, 1 \leq i,j\leq k\}$. Then $\mu^{\otimes k}(E) = 0$ and $\mu^{\CK}_{k}(\CK_{\le j}(X)) = 0$ for every $1 \le j < k$. In particular, $\delta(\mu^{\CK}_{k}) = k$.
\end{lemma}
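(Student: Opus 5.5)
The plan is to write $E$ as a finite union of the pairwise diagonals $E_{ij} = \{x \in X^{k} : x_{i} = x_{j}\}$ for $1 \le i < j \le k$, and show that each has $\mu^{\otimes k}$-measure zero. Each $E_{ij}$ is closed, hence Borel. Since $\mu^{\otimes k}$ is a product measure, after permuting coordinates it suffices to handle the set $\{x_{1} = x_{2}\}$ in $X^{2}$, multiplied by the full measure of the remaining $k-2$ factors. By Fubini--Tonelli,
$$\mu\otimes\mu(\{(x_{1},x_{2}) : x_{1}=x_{2}\}) = \int_{X}\mu(\{x_{1}\})\,d\mu(x_{1}) = 0,$$
because $\mu$ is nonatomic, so $\mu(\{x\}) = 0$ for every $x$. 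Summing over the finitely many pairs $i<j$ gives $\mu^{\otimes k}(E) = 0$.

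For the second claim, recall that $\mu^{\CK}_{k} = (\rho_{k})_{*}(\mu^{\otimes k})$. For $j < k$, the preimage $\rho_{k}^{-1}(\CK_{\le j}(X))$ is the set of $k$-tuples with at most $j$ distinct entries. By pigeonhole, any such tuple has two equal coordinates, so it lies in $E$. Hence
$$\mu^{\CK}_{k}(\CK_{\le j}(X)) \le \mu^{\otimes k}(E) = 0.$$

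Finally, $\mu^{\CK}_{k}(\CK_{\le k}(X)) = 1 > 0$, since $\rho_{k}$ maps onto $\CK_{\le k}(X)$. Together with $\mu^{\CK}_{k}(\CK_{<k}(X)) = \mu^{\CK}_{k}(\CK_{\le k-1}(X)) = 0$, this shows $m=k$ satisfies the defining condition of $\delta$. No $m<k$ satisfies it, because $\mu^{\CK}_{k}(\CK_{\le m}(X)) = 0$ for all $m<k$. Therefore $\delta(\mu^{\CK}_{k}) = k$.

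There is no substantive obstacle. The only points needing care are the measurability of $E$ (it is closed) and the correct use of Fubini on the diagonal; both are routine.
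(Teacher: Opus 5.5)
Your proposal is correct and follows essentially the same route as the paper: you split $E$ into the pairwise diagonals $E_{ij}$, use Fubini to reduce to $\int_X \mu(\{x\})\,d\mu(x)=0$, and then use $\rho_k^{-1}(\CK_{\le j}(X)) \subset E$ for $j<k$. Your explicit check that $\mu^{\CK}_{k}(\CK_{\le k}(X))>0$, which makes $\delta(\mu^{\CK}_{k})=k$ follow from the definition, is a detail the paper leaves implicit.
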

\begin{proof}
For the first part, the set $E$ is a finite union of (not necessarily disjoint) sets with two coordinates equal, and so by symmetry it suffices to check that $\mu^{\otimes k}(E_{i,j}) = 0$, where 
$E_{i,j} = \{x\in X^k: x_i = x_j\}$ for some fixed $1\leq i,j\leq k$. Then 
$$
\mu^{\otimes k}(E_{i,j}) = 
\int_{X^k} \one_{E_{i,j}}\,d\mu(x_1)\dots d\mu(x_k) = \int_{X^2}
\one_{E_{i,j}}\,d\mu(x_i) d\mu(x_j), 
$$
by using Fubini's Theorem and integrating with respect to all coordinates other than $i$ and $j$.  
Fixing $x_i$, and integrating with respect to $x_j$, this becomes 
$$\int_X \one_{E_{i,j}}\,d\mu(x_j) = \mu(\{x_i\}) 
$$
and so integrating with respect to $x_i$, we have 
$$
\mu^{\otimes k}(E_{i,j}) = 
\int_X \mu(\{x_i\}) \,d\mu(x_i).
$$
As $\mu$ is nonatomic, this integral is $0$. 

For the second part, if $1 \le j < k$ then $\rho_{k}^{-1}(\CK_{\le j}(X)) \subset E$ and so
\begin{equation*}\mu^{\CK}_{k}(\CK_{\le j}(X)) = (\rho_{k})_{*}(\mu^{\otimes k})(\CK_{\le j}(X)) = \mu^{\otimes k}(\rho_{k}^{-1}(\CK_{\le j}(X))) = 0.
\qedhere 
\end{equation*}
That $\delta(\mu^{\CK}_{k}) = k$ then follows. 
\end{proof}

\begin{lemma}
\label{lemma:ergodic}
If $\mu \in \CM_G(X)$, then $\mu^{\CK}_{k} \in \CM_G(\CK_{\le k}(X))$ is ergodic.
\end{lemma}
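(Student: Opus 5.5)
The standing assumption is that $G$ acts deeply transitively on the Cantor set $X$. By Theorem~\ref{thm:deeptransitivityuniqueergodicity}, $\mu$ is then the uniform tree measure $\mu_{\CC}$, it is nonatomic, and $\CM_G(X)=\{\mu\}$. The plan is to show that $\mu^{\CK}_{k}$ is an extreme point of $\CM_G(\CK_{\le k}(X))$. Since ergodic measures are exactly the extreme points of the simplex of invariant measures, this proves the lemma. I would argue this via the invariant $\delta(\cdot)$ and the uniqueness statement Lemma~\ref{lemma:knowonevalueknowbasis}.

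Suppose $\mu^{\CK}_{k} = t\nu_1 + (1-t)\nu_2$ with $0<t<1$ and $\nu_1,\nu_2 \in \CM_G(\CK_{\le k}(X))$. By Lemma~\ref{lemma:Fubini}, $\mu^{\CK}_{k}(\CK_{\le k-1}(X))=0$, so $\nu_i(\CK_{<k}(X))=0$ for $i=1,2$. Since each $\nu_i$ is a probability measure on $\CK_{\le k}(X)$, we also have $\nu_i(\CK_{\le k}(X))=1>0$. Hence $\delta(\nu_1)=\delta(\nu_2)=k=\delta(\mu^{\CK}_{k})$, where the last equality is Lemma~\ref{lemma:Fubini}. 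Lemma~\ref{lemma:knowonevalueknowbasis} then gives $\nu_1=\nu_2=\mu^{\CK}_{k}$, so $\mu^{\CK}_{k}$ is extreme.

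To pass from extremality to ergodicity, I would use the standard argument directly. Suppose $E\subset \CK_{\le k}(X)$ is a $G$-invariant Borel set with $0<\mu^{\CK}_{k}(E)<1$. Normalize the restrictions of $\mu^{\CK}_{k}$ to $E$ and to its complement; both are $G$-invariant because $E$ is invariant. This writes $\mu^{\CK}_{k}$ as a nontrivial convex combination of two mutually singular invariant measures, which contradicts extremality. So every invariant set has measure $0$ or $1$.

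The main point to double check is that Lemma~\ref{lemma:knowonevalueknowbasis} applies to arbitrary invariant measures with $\delta=k$, without requiring them to be of the form $\mu^{\CK}_k$. As stated it does apply in that generality, so no further obstacle is expected.
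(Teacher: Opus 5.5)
Your proof is correct and follows the paper's argument. Both show $\mu^{\CK}_{k}$ is extreme in $\CM_G(\CK_{\le k}(X))$: any convex summand $\nu_i$ must vanish on $\CK_{<k}(X)$, so $\delta(\nu_i)=k$, and Lemma~\ref{lemma:knowonevalueknowbasis} then gives $\nu_i=\mu^{\CK}_{k}$. Your version applies that lemma directly, whereas the paper re-derives a scaling constant, and you also spell out the extremality-to-ergodicity step that the paper leaves implicit.
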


We note that if $(X,G,\mu)$ is weak mixing (in the sense that $(X^{k},G,\mu)$ is ergodic for $k \ge 2$), then Lemma~\ref{lemma:ergodic} is immediate, since $\mu^{\CK}_{k}$ is a factor of the product measure $\mu^{\otimes k}$ on $X^{k}$. 

\begin{proof}
Since we are assuming $G$ acts deeply transitively on $X$, by Theorem~\ref{thm:deeptransitivityuniqueergodicity} $\mu$ must be nonatomic and unique, and hence ergodic. By Lemma~\ref{lemma:Fubini}, we have $\mu^{\CK}_{k}(\CK_{k}(X)) = 1$ and $\mu^{\CK}_{k}(\CK_{<k}(X)) = 0$, so $\delta(\mu^{\CK}_{k})=k$. Suppose $\mu^{\CK}_{k} = c_{1}\nu_{1} + c_{2}\nu_{2}$ for some $\nu_{1},\nu_{2} \in \mathcal{M}_{G}(\CK_{\le k}(X))$ and $c_{1},c_{2} \ge 0$. Then both $\nu_{1}$ and $\nu_{2}$ must satisfy $\nu_{1}(\CK_{<k}(X)) = \nu_{2}(\CK_{<k}(X)) = 0$, or else one of $c_{1},c_{2}$ must vanish. Thus $\delta(\nu_{1}) = \delta(\nu_{2}) = k$. Since $\mu^{\CK}_{k}(\CK_{k}(X)) = 1$, there must exist an $i$ and $v \in \mathbb{N}^{k}$ such that $\nu_{1}(U^{i}_{k}(v)) > 0$ or $\nu_{2}(U^{i}_{k}(v))> 0$; assume without loss of generality $\nu_{1}(U^{i}_{k}(v)) > 0$. Then $\nu_{1}(U^{1}_{k}(u)) > 0$ for some $u$. Let $e = \mu^{\CK}_{k}(U^{1}_{k}(u)) / \nu_{1}(U^{1}_{k}(u))$, so $e \cdot \nu_{1}(U^{1}_{k}(u)) = \mu^{\CK}_{k}(U^{1}_{k}(u))$. Since $\delta(\nu_{1}) = k$ and $\delta(\mu^{\CK}_{k}) = k$, Lemma~\ref{lemma:knowonevalueknowbasis} then implies that $e \cdot \nu_{1} = \mu^{\CK}_{k}$. As $\nu_{1}$ and $\mu^{\CK}_{k}$ are both probability measures, in fact $e=1$, and $\nu_{1} = \mu^{\CK}_{k}$. Then $\mu^{\CK}_{k} = c_{1}\mu^{\CK}_{k}+c_{2}\nu_{2}$ so $\mu^{\CK}_{k}(1-c_{1}) = c_{2}\nu_{2}$. Since $\mu^{\CK}_{k}$ and $\nu_{2}$ are both probability measures, we have $1-c_{1} = c_{2}$ and hence either $c_{2} = 0$ or $\mu^{\CK}_{k} = \nu_{2}$. It follows altogether that $\mu^{\CK}_{k}$ is ergodic.
\end{proof}

\begin{theorem}
Suppose that $G$ acts deeply transitively on a Cantor set $X$ and suppose $\CM_{G}(X) \ne \emptyset$. Then $|\CM^{e}_{G}(\CK_{\le s}(X))| = s$ for every $s \ge 1$, and if $\nu$ is an ergodic IRC on $\CK_{\le s}(X)$, then $\nu = \mu^{\CK}_{k}$ for some $1 \le k \le s$ where $\mu$ is the unique $G$-invariant Borel probability measure. 
\end{theorem}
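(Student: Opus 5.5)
Let $\mu$ denote the unique $G$-invariant Borel probability measure on $X$. It exists since $\CM_{G}(X)\neq\emptyset$, is unique by Theorem~\ref{thm:deeptransitivityuniqueergodicity}, and is nonatomic. The plan is to show that the $s$ measures $\mu^{\CK}_{1},\ldots,\mu^{\CK}_{s}$ are ergodic and pairwise distinct, and that every ergodic measure on $\CK_{\le s}(X)$ is one of them.

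First, for each $1\le k\le s$, the measure $\mu^{\CK}_{k}=(\rho_k)_*(\mu^{\otimes k})$ lies in $\CM_G(\CK_{\le k}(X))\subset \CM_G(\CK_{\le s}(X))$. It is ergodic by Lemma~\ref{lemma:ergodic}. By Lemma~\ref{lemma:Fubini} we have $\delta(\mu^{\CK}_{k})=k$, so the $\mu^{\CK}_k$ for different $k$ are distinct, and this gives at least $s$ ergodic measures.

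Conversely, let $\nu$ be an ergodic IRC supported on $\CK_{\le s}(X)$. Each $\CK_{\le m}(X)$ is $G$-invariant: $g$ cannot increase cardinality, and since $g$ is a bijection for a group action, cardinality is in fact preserved. Hence, as noted after the definition of $\delta$, the number $k:=\delta(\nu)=\min\{m:\nu(\CK_{\le m}(X))=1\}$ satisfies $1\le k\le s$, $\nu(\CK_{<k}(X))=0$ and $\nu(\CK_{\le k}(X))=1$. So $\nu$ is a $G$-invariant probability measure on $\CK_{\le k}(X)$ with $\delta(\nu)=k$. Since $\mu^{\CK}_{k}$ is also a $G$-invariant probability measure on $\CK_{\le k}(X)$ with $\delta(\mu^{\CK}_{k})=k$, Lemma~\ref{lemma:knowonevalueknowbasis} gives $\nu=\mu^{\CK}_{k}$. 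This proves both that $|\CM^{e}_{G}(\CK_{\le s}(X))|=s$ and the claimed description.

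The only delicate point is justifying the use of Lemma~\ref{lemma:knowonevalueknowbasis}. Its proof uses the condition $\delta=k$ to place full mass on $\CK_k(X)$, where the sets $U^i_k(u)$ form a $\pi$-system generating the Borel sets. For $\nu$ ergodic this holds: $\nu(\CK_k(X))=1$ because $\nu(\CK_{<k}(X))=0$ and $\nu(\CK_{\le k}(X))=1$. Beyond checking this, the statement is a direct assembly of the earlier lemmas, and I expect no serious obstacle.
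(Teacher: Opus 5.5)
Your proposal is correct and follows essentially the same route as the paper: ergodicity and $\delta(\mu^{\CK}_k)=k$ come from Lemmas~\ref{lemma:ergodic} and~\ref{lemma:Fubini}, and an arbitrary ergodic $\nu$ is identified with $\mu^{\CK}_{\delta(\nu)}$ via Lemma~\ref{lemma:knowonevalueknowbasis}. Your extra remarks, on the $G$-invariance of $\CK_{\le m}(X)$ and on ergodicity forcing $\nu(\CK_{\delta(\nu)}(X))=1$ so that the lemma applies, make explicit what the paper leaves implicit.
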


\begin{proof}
By Lemmas~\ref{lemma:ergodic} and~\ref{lemma:Fubini},  for every $1 \le k \le s$, the measure $\mu^{\CK}_{k}$ is ergodic and satisfies $\delta(\mu^{\CK}_{k})  = k$. Let $\eta \in \CM_{G}^{e}(\CK_{\le s}(X))$ and suppose $\delta(\eta) = r$. Then $\delta(\eta) = \delta(\mu^{\CK}_{r})$, and so by Lemma~\ref{lemma:knowonevalueknowbasis} it follows that $\eta = \mu^{\CK}_{r}$.  
Altogether it follows that $\mu^{\CK}_{k}$ is the unique ergodic $G$-invariant measure on $\CK_{\le s}(X)$ such that $\delta(\mu^{\CK}_{k})=k$.
\end{proof}

Putting together the results in this section with Theorem~\ref{th:FI}, we have proven Theorem~\ref{th:deeply}.

\begin{theorem}
\label{cor:deeply}
Suppose $G$ acts deeply transitively on a Cantor set $X$. If $\CM_G(X) = \emptyset$, then $(\CK(X),G)$ is uniquely ergodic with unique measure $\delta_{X}$. If $\CM_G(X) \ne \emptyset$ and $\mu \in \CM_G(X)$, then for every IRC $\nu$ of the system there exists a nonnegative sequence $(c_{i})_{i \ge 0}$ satisfying $\sum_{i=0}^{\infty}c_{i}=1$ such that $\nu = \sum_{i=1}^{\infty}c_{i}\mu_{i}^{\CK} + c_{0}\delta_{X}$.
\end{theorem}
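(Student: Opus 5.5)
Given an IRC $\nu$, I will first split it along its finitary and nonfinitary parts, $\nu = \nu_{F} + \nu_{I}$. Both parts are $G$-invariant, since each $g\in G$ is a homeomorphism and so $\CK_{\fin}(X)$ and $\CK_{\inf}(X)$ are invariant. If $\nu_{I}\neq 0$, its normalization is a nonfinitary IRC, so by Theorem~\ref{th:FI} it equals $\delta_{X}$. Hence $\nu_{I} = c_{0}\delta_{X}$ with $c_{0} = \nu(\CK_{\inf}(X))$.

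For the finitary part, I write $\CK_{\fin}(X)$ as the disjoint union of the invariant Borel sets $\CK_{k}(X) = \CK_{\le k}(X)\setminus \CK_{\le k-1}(X)$, $k\ge 1$. Restricting $\nu_{F}$ to each $\CK_{k}(X)$ gives an invariant measure $\nu_{k}$ supported in $\CK_{\le k}(X)$. It has $\nu_{k}(\CK_{<k}(X)) = 0$, so if $\nu_k \ne 0$ its normalization $\bar\nu_{k}$ satisfies $\delta(\bar\nu_{k}) = k$. When $\CM_{G}(X)\ne\emptyset$, let $\mu$ be its unique element (Theorem~\ref{thm:deeptransitivityuniqueergodicity}). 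By Lemma~\ref{lemma:Fubini}, $\delta(\mu^{\CK}_{k}) = k$, and Lemma~\ref{lemma:knowonevalueknowbasis} then forces $\bar\nu_{k} = \mu^{\CK}_{k}$. Setting $c_{k} = \nu(\CK_{k}(X))$ gives $\nu = \sum_{k\ge1} c_{k}\mu^{\CK}_{k} + c_{0}\delta_{X}$ with $\sum_{i\ge 0} c_{i} = 1$, which is the claimed decomposition. The $c_i$ are uniquely determined as the masses of disjoint invariant sets, and the $\mu^{\CK}_{k}$ are ergodic by Lemma~\ref{lemma:ergodic}, as is $\delta_X$. This identifies $\CM_{G}(\CK(X))$ with $\Delta(\N)$, proving Theorem~\ref{th:deeply}\eqref{item:the-measures}.

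If $\CM_{G}(X)=\emptyset$, then $\CM_{G}(\CK_{\le k}(X))=\emptyset$ for every $k$ by Corollary~\ref{co:lift}. Any nonzero $\nu_{k}$ would normalize to such a measure, since it is supported in $\CK_{\le k}(X)$. So $\nu_{F}=0$ and $\nu = \delta_{X}$. Since $\delta_{X}$ is always invariant (each $g$ is surjective), $(\CK(X),G)$ is uniquely ergodic with unique measure $\delta_X$.

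I expect the main care to lie in verifying the hypotheses of Lemma~\ref{lemma:knowonevalueknowbasis}. That lemma is stated for measures on $\CK_{\le k}(X)$ with $\delta = k$, and I must make sure $\bar\nu_{k}$ is genuinely such a measure, i.e.\ that its mass on $\CK_{<k}(X)$ vanishes. This holds by construction, since $\nu_k$ is the restriction to $\CK_k(X)$. Everything else is bookkeeping.
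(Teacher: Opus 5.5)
Your proposal is correct and follows essentially the paper's route: Theorem~\ref{th:FI} handles the nonfinitary part, Lemma~\ref{lemma:knowonevalueknowbasis} together with Lemma~\ref{lemma:Fubini} identifies the finitary part, and Corollary~\ref{co:lift} handles the case $\CM_G(X)=\emptyset$. The only difference is that you restrict $\nu$ directly to the invariant layers $\CK_k(X)$, whereas the paper classifies the ergodic measures on $\CK_{\le s}(X)$ and implicitly needs an ergodic decomposition; your version is slightly cleaner and makes Lemma~\ref{lemma:ergodic} unnecessary for the decomposition itself.
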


\section{Self-joinings of extremely transitive actions}\label{sec:selfjoiningsexttransitivity}
\subsection{Self-joinings and extremely transitive actions}
We study the case that $G$ acts extremely transitively on a Cantor set $X$.  Throughout this section, we assume that $G$ acts extremely transitively with respect to a fixed tree structure $(\mathcal{C}_{i})_{i \in \mathbb{N}} = \{C^{i}_{1},\ldots,C^{i}_{\kappa(i)}\}_{i \in \mathbb{N}}$ on a Cantor set $X$. 

Let $m \ge 1$ and $\mathcal{I} = I_{1},\ldots,I_{r}$ be a partition of $\{1,\ldots,m\}$ where each $I_{j} \ne \emptyset$. There is an embedding
$$\theta_{\mathcal{I}} \colon \prod_{i=1}^{r}X \to \prod_{i=1}^{m}X$$
where $\theta_{\CI}(x_{1},\ldots,x_{r})$ has $x_{k}$ in each coordinate whose index lies in $I_{k}$. Note that $\theta_{\CI}$ is equivariant for the diagonal actions of $G$. We write $\Delta_{\CI}(X)$ for the image of $\theta_{\CI}$. For any partition $\CI$, the subspace $\Delta_{\CI}(X)$ is invariant under the diagonal action of $G$ on $\prod_{i=1}^{m}X$. If $\mu$ is a Borel probability measure on $\prod_{i=1}^{r}X$ invariant under the diagonal action of $G$, then the pushforward $(\theta_{\CI})_{*}(\mu)$ is a Borel probability measure on $\prod_{i=1}^{m}X$ which invariant under the diagonal action of $G$ as well. We write $\mu^{\CI}$ for $(\theta_{\CI})_{*}(\mu)$. It is clear that if $\mu$ is nonatomic, then $\mu^{\CI}$ is nonatomic.

There is a partial ordering on the set $\mathcal{Q}_{m}$ of partitions of $\{1,\ldots,m\}$, defined by $\mathcal{I} \ge \mathcal{J}$ if $\mathcal{I}$ is a refinement of $\mathcal{J}$, i.e. $\mathcal{I}=\{I_{1},\ldots,I_{p}\} \ge \mathcal{J}=\{J_{1},\ldots,J_{q}\}$ if for every $1 \le i \le p$ there exists $1 \le j \le q$ such that $I_{i} \subset J_{j}$. It is readily checked that $\mathcal{J} \le \mathcal{I}$ implies $\Delta_{\CJ}(X) \subset \Delta_{\CI}(X)$. Moreover, the maximal partition $\mathcal{I}_{\max} = \{\{1\},\ldots,\{m\}\}$ is a maximal element in this ordering, and the trivial partition $\mathcal{I}_{\textrm{tr}} = \{1,\ldots,m\}$ is a minimal element. Furthermore, we have $\Delta_{\CI}(X) \cap \Delta_{\CJ}(X) = \Delta_{\CI \lor \CJ}(X)$ where $\CI \lor \CJ$ denotes the join of $\CI$ and $\CJ$.

\begin{lemma}\label{lemma:isosystems}
If $\mu \in \CM_G(X)$ is nonatomic and  $\CI=\{I_{1},\ldots,I_{r}\}$ is a partition of $\{1,\ldots,m\}$, then $(\Delta_{\CI}(X),G,(\mu^{\otimes r})^{\CI})$ is isomorphic to $(\CK_{r}(X),G,\mu^{\CK}_{r})$. Moreover, we have $(\mu^{\otimes r})^{\CI}(\Delta_{\CJ}(X)) = 0$ for all $\CJ \le \CI$ distinct from $\CI$.
\end{lemma}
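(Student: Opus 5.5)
The plan is to build the isomorphism explicitly from the maps already in the paper. Set
$$\Phi = \rho_{r}\circ\theta_{\CI}^{-1}\colon \Delta_{\CI}(X)\to\CK_{\le r}(X).$$
Since $\theta_{\CI}$ is a $G$-equivariant homeomorphism of $X^{r}$ onto $\Delta_{\CI}(X)$, and $\rho_{r}$ is $G$-equivariant, $\Phi$ is continuous and equivariant. By definition of $(\mu^{\otimes r})^{\CI}=(\theta_{\CI})_{*}(\mu^{\otimes r})$ we get
$$\Phi_{*}\bigl((\mu^{\otimes r})^{\CI}\bigr)=(\rho_{r})_{*}(\mu^{\otimes r})=\mu^{\CK}_{r}.$$
So $\Phi$ is a measure-preserving factor map. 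By Lemma~\ref{lemma:Fubini}, applied with the nonatomic measure $\mu$, the set $E\subset X^{r}$ of tuples with a repeated coordinate is $\mu^{\otimes r}$-null. Hence on the full-measure invariant set $\theta_{\CI}(X^{r}\setminus E)$ the map $\Phi$ lands in $\CK_{r}(X)$, and it is onto $\CK_{r}(X)$ there.

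The remaining step, which I expect to be the main obstacle, is invertibility modulo null sets. Off $E$, $\Phi$ is exactly $r!$-to-one, with fibres the orbits of the coordinate-permutation action of $\sym(r)$ on $X^{r}$; this action commutes with the diagonal $G$-action. I would first fix a Borel fundamental domain $D\subset X^{r}\setminus E$ for this permutation action, for instance the tuples increasing in the lexicographic order coming from the tree structure. On $D$, $\Phi$ is a Borel bijection onto $\CK_{r}(X)$. Then I would use $G$-invariance of $\mu^{\otimes r}$ together with the extreme transitivity (through the level sets $U^{i}_{r}(u)$ and Lemma~\ref{lemma:knowonevalueknowbasis}) to check that the two measures agree on the generating $\pi$-systems under this identification. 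The delicate point is that $D$ is not $G$-invariant. So I would have to verify directly that the identification is compatible with the action, i.e.\ that $g$ permutes the orderings in a way the measure does not see. If this fails, it pinpoints exactly where an additional hypothesis on the action is needed.

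For the second claim, take $\CJ\le\CI$ with $\CJ\ne\CI$. Then some block of $\CJ$ contains two distinct blocks $I_{a}\ne I_{b}$ of $\CI$. Any point $\theta_{\CI}(x_{1},\ldots,x_{r})$ lying in $\Delta_{\CJ}(X)$ must therefore satisfy $x_{a}=x_{b}$, which gives
$$\theta_{\CI}^{-1}\bigl(\Delta_{\CJ}(X)\bigr)\subset\{x\in X^{r}: x_{a}=x_{b}\}\subset E.$$
Lemma~\ref{lemma:Fubini} then yields $(\mu^{\otimes r})^{\CI}(\Delta_{\CJ}(X))\le\mu^{\otimes r}(E)=0$.
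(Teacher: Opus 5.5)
\textbf{The second assertion is fine; the first has a gap that cannot be closed.} Your proof that $(\mu^{\otimes r})^{\CI}(\Delta_{\CJ}(X))=0$ is correct and is the paper's argument. You make explicit why $\theta_{\CI}^{-1}(\Delta_{\CJ}(X))$ lies in $E$.

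For the first assertion, you correctly observe that $\Phi=\rho_r\circ\theta_{\CI}^{-1}$ is only an equivariant, measure-preserving factor map that is $r!$-to-one off the null set $\theta_{\CI}(E)$. The paper's proof gets past this only by asserting that $\rho_r$ maps $X^r\setminus E$ \emph{bijectively} onto $\CK_r(X)$. That fails for every $r\ge 2$, since swapping two coordinates of a tuple does not change its image.

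Your fundamental-domain step does not repair this. $\Phi|_D$ is a Borel bijection, but the action it transports to $D$ is $x\mapsto s(gx)$, where $s$ sends a tuple to its unique permutation in $D$. That is not the diagonal action, so you only get a second description of the quotient system. Comparing measures on the sets $U^i_r(u)$ does not help, because the obstruction is equivariance, not the values of the measures.

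\textbf{No other map can work either when $r\ge 2$ in this section's setting.} The swap $\sigma(x_1,x_2)=(x_2,x_1)$ commutes with $G$, preserves $\mu^{\otimes 2}$, and differs from the identity off the null diagonal. So $(X^2,G,\mu^{\otimes 2})$ has a nontrivial centralizer. An isomorphism would transport $\sigma$ to a non-identity automorphism $\tau$ of $(\CK_2(X),G,\mu^{\CK}_2)$; we show no such $\tau$ exists.
\begin{itemize}
\item The graph joining of $\tau$ is ergodic by Lemma~\ref{lemma:ergodic}.
\item Lift it through the finite-to-one map $\rho_2\times\rho_2$, using Proposition~\ref{prop:lift} and then taking an ergodic component. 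This gives an ergodic invariant measure on $X^4$.
\item Every such measure is some $(\mu^{\otimes s})^{\CI}$. This uses only Theorem~\ref{thm:topologicalselfjoinings} and the uniqueness statement of Lemma~\ref{lemma:knowoneknowbasisextreme}, not the present lemma.
\item Both marginals are carried by two-point sets, so $\CI$ separates $1$ from $2$ and $3$ from $4$. The joining is therefore the product, the joining where the two sets share exactly one point, or the diagonal.
\item Only the diagonal is a graph, so $\tau=\Id$, a contradiction.
\end{itemize}
Hence, for example, for the AF full group $\mathcal{F}_{n}$ acting on $X_{n}^{+}$, the two systems are not isomorphic. There is no ``additional hypothesis'' to find here. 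What is true is the factor statement you proved, together with the null-set claim.

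Where the paper uses the isomorphism (Lemma~\ref{lemma:muIergodic}), it is to lift ergodicity, and ergodicity does not pass from a factor to an extension. Instead, argue directly as in Lemma~\ref{lemma:ergodic}. Any invariant measure in a convex decomposition of $(\mu^{\otimes r})^{\CI}$ is null on the lower diagonals, so it equals $(\mu^{\otimes r})^{\CI}$ by the uniqueness lemma.
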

\begin{proof}
The system $(\Delta_{\CI}(X),G,(\mu^{\otimes r})^{\CI})$ is isomorphic to $(X^{r},G,\mu^{\otimes r})$. Let $E = \{x \in X^{r} : x_{i} = x_{j} \textrm{ for some } i \ne 
j, 1 \leq i,j\leq r\}$. By Lemma~\ref{lemma:Fubini},  $(X^{r},G,\mu^{\otimes r})$ is isomorphic to $(X^{r} \setminus E,G,\mu^{\otimes r})$. Since the map $\rho_{r} \colon X^{r} \to \CK_{\le r}(X)$ takes $X^{r} \setminus E$ bijectively onto $\CK_{r}(X)$, the result then follows since $\mu^{\CK}_{r}$ is the pushforward of $\mu^{\otimes r}$ under $\rho_{r}$.
The last part follows by combining Lemma~\ref{lemma:Fubini}, together with the fact that $\theta_{\CI}^{-1}(\Delta_{\CJ}(X)) \subset E$ and $(\Delta_{\CI}(X),G,(\mu^{\otimes r})^{\CI})$ is isomorphic to $(X^{r},G,\mu^{\otimes r})$.
\end{proof}

\begin{lemma}\label{lemma:freedomofmovement} 
Suppose $(x_{1},\ldots,x_{r}), (y_{1},\ldots,y_{r}) \in X^{r}$ with pairwise distinct entries among  $x_{i}$, and pairwise distinct entries among the  $y_{i}$. Then for every $\varepsilon > 0$, there exists $g \in G$ such that $d(g(x_{i}),y_{i}) < \varepsilon$ for every $1 \le i \le r$.
\end{lemma}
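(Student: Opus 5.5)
The plan is to use the tree structure $(\CC_{i})_{i\in\N}$ that witnesses extreme transitivity, and to pass to a level fine enough that it separates the points and controls distances.

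First we choose the level. By property~\eqref{item:tree-two} of a tree structure, we may pick $i$ so large that every $C \in \CC_{i}$ has diameter less than $\varepsilon$. The $x_{i}$ are pairwise distinct, so the minimum of $d(x_{a},x_{b})$ over $a \ne b$ is positive; similarly for the $y_{i}$. By enlarging $i$ so that all diameters are below both of these minima, we can also ensure the following:
\begin{itemize}
\item[(a)] the clopens $A_{1},\ldots,A_{r} \in \CC_{i}$ with $x_{j} \in A_{j}$ are pairwise distinct;
\item[(b)] the clopens $B_{1},\ldots,B_{r} \in \CC_{i}$ with $y_{j} \in B_{j}$ are pairwise distinct.
\end{itemize}
This uses that $\CC_{i}$ is a partition, so each point lies in exactly one of its elements. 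In particular $r \le \kappa(i)$.

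Next we apply extreme transitivity. The tuples $(A_{1},\ldots,A_{r})$ and $(B_{1},\ldots,B_{r})$ are tuples of distinct elements of $\CC_{i}$. By the definition of extreme transitivity with respect to $(\CC_{i})_{i\in\N}$, there is $g \in G$ with $g(A_{j}) = B_{j}$ for every $1 \le j \le r$. Then $g(x_{j}) \in g(A_{j}) = B_{j}$ and also $y_{j} \in B_{j}$. Since $\diam(B_{j}) < \varepsilon$, this gives $d(g(x_{j}),y_{j}) < \varepsilon$ for every $j$, as required.

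The only delicate point is to interpret "distinct tuples" in the definition as tuples with pairwise distinct entries, which is the only sensible reading. This is exactly where the distinctness hypotheses are needed: they guarantee (a) and (b) at a sufficiently fine level. There is no serious obstacle beyond choosing $i$ to satisfy both the diameter bound and the separation conditions.
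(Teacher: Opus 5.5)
Your proposal is correct and follows essentially the same route as the paper's proof: pass to a level of the witnessing tree structure fine enough that the $x_j$ and the $y_j$ lie in pairwise distinct clopens of diameter less than $\varepsilon$, then use extreme transitivity to send the tuple $(A_1,\ldots,A_r)$ to $(B_1,\ldots,B_r)$. Your explicit separation argument for (a) and (b), and your reading of ``distinct tuples'' as tuples with pairwise distinct entries, match how the paper uses the definition.
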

\begin{proof}
Since the $x_{i}$ are distinct, we may choose $m$ and $j_{1},\ldots,j_{r}$ such that $x_{i} \in C^{m}_{j_{i}}$ and the $C^{m}_{j_{i}}$ are pairwise distinct. Likewise, choose $C^{m}_{k_{i}}$ pairwise distinct such that $y_{i} \in C^{m}_{k_{i}}$. 
By Property~\eqref{item:tree-two} in the definition of a tree structure, without loss of generality we may assume $m$ is sufficiently large enough such that the diameters of all $C^{m}_{j}$ are less than $\varepsilon$. 
By the assumption of extreme transitivity, there exists $g \in G$ such that $g(C^{m}_{j_{i}}) = C^{m}_{k_{i}}$ for every $1 \le i \le r$. 
It follows that for all such $i$, we have $g(x_{i}) \in g(C^{m}_{j_{i}}) = C^{m}_{k_{i}}$, and since $\diam(C^{m}_{k_{i}}) < \varepsilon$, the statement follows. 
\end{proof}

Given $x \in X^{m}$, for $1 \le i,j \le m$ we write $i \sim_{x} j$ if $x_{i} = x_{j}$. We call the partition of $\{1,\ldots,m\}$ defined by the relation $\sim_{x}$ to be the {\em $x$-partition $\mathcal{I}_{x}$}. Note that $x \in \Delta_{\CI_{x}}(X)$.

\begin{theorem}\label{thm:topologicalselfjoinings}
If $Y \subset X^{m}$ is a nonempty compact $G$-invariant set, then $Y = \bigcup_{\CI_{j}}\Delta_{\mathcal{I}_{j}}(X)$ for some partitions $\mathcal{I}_{j}$ of $\{1,\ldots,m\}$. Moreover, if $\Delta_{\CI}(X) \subset Y$ for some partition $\CI$ of $\{1,\ldots,m\}$, then $\Delta_{\mathcal{J}}(X) \subset Y$ for every $\mathcal{J} \le \mathcal{I}$.
\end{theorem}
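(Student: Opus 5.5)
The plan is to show that for each $x \in Y$ the orbit closure of $x$ under the diagonal action is exactly $\Delta_{\CI_{x}}(X)$. Granting this, the first claim follows at once: $Y$ is closed and invariant, so it contains $\overline{Gx} = \Delta_{\CI_x}(X)$ for every $x\in Y$. Since also $x \in \Delta_{\CI_x}(X)$, we get $Y = \bigcup_{x\in Y}\Delta_{\CI_x}(X)$. There are only finitely many partitions of $\{1,\ldots,m\}$, so this union is finite.

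To prove the orbit closure statement, fix $x\in Y$ and write $\CI_x = \{I_1,\ldots,I_r\}$. Then $x = \theta_{\CI_x}(x_1',\ldots,x_r')$ with the $x_k'$ pairwise distinct.
\begin{enumerate}
\item The inclusion $\overline{Gx}\subset\Delta_{\CI_x}(X)$ holds because $\Delta_{\CI_x}(X)$ is compact, being a continuous image of $X^r$, and $G$-invariant.
\item For the reverse inclusion, take a point $\theta_{\CI_x}(y_1,\ldots,y_r)$ in $\Delta_{\CI_x}(X)$. Since $X$ is a Cantor set and has no isolated points, the tuples with pairwise distinct entries are dense in $X^r$. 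So we may first approximate $(y_1,\ldots,y_r)$ by a tuple $(y_1',\ldots,y_r')$ with pairwise distinct entries.
\item Lemma~\ref{lemma:freedomofmovement} then gives $g\in G$ with $d(gx_k',y_k')<\varepsilon$ for all $k$.
\item Because $\theta_{\CI_x}$ is equivariant and uniformly continuous, $g x = \theta_{\CI_x}(gx_1',\ldots,gx_r')$ is close to $\theta_{\CI_x}(y_1,\ldots,y_r)$. Hence $\Delta_{\CI_x}(X) \subset \overline{Gx}$.
\end{enumerate}

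For the second claim, let $\CJ\le\CI$, so $\CI$ refines $\CJ$. As noted in the text, this gives $\Delta_{\CJ}(X)\subset\Delta_{\CI}(X)$, and so $\Delta_{\CJ}(X)\subset Y$ whenever $\Delta_{\CI}(X)\subset Y$.

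The main step needing care is the orbit closure density argument. There one must use that extreme transitivity moves \emph{ordered} tuples of distinct points (Lemma~\ref{lemma:freedomofmovement}), and that collapsed coordinates, those sharing a block of $\CI_x$, stay equal under $g$. This is automatic, since $g$ acts diagonally.
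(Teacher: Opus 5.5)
Your proposal is correct and is essentially the paper's own argument. You approximate a point of $\Delta_{\CI_x}(X)$ by one with pairwise distinct block-coordinates, apply Lemma~\ref{lemma:freedomofmovement} to the distinct representatives of $x$ to get $\Delta_{\CI_x}(X)\subset\overline{Gx}\subset Y$, and obtain the second claim from $\Delta_{\CJ}(X)\subset\Delta_{\CI}(X)$; phrasing it as an orbit-closure identity and noting that the union is finite are only cosmetic differences.
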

\begin{proof}
Define $\mathcal{J} = \{\mathcal{I}_{y} : y \in Y\}$. Since $y \in \Delta_{\CI_{y}}(X)$ and $\Delta_{\CI}(X)$ is $G$-invariant for every partition $\CI$, it follows that $Y \subset \bigcup_{y \in Y}\Delta_{\CI_{y}}(X)$. We show that we also have $\bigcup_{y \in Y}\Delta_{\CI_{y}}(X) \subset Y$.

Let $y \in Y$ and $x \in \Delta_{\mathcal{I}_{y}}(X)$. Let $(x_{1},\ldots,x_{r}) = \theta_{\CI_{y}}^{-1}(x), (y_{1},\ldots,y_{r}\} = \theta_{\CI_{y}}^{-1}(y)$. 
Let $\varepsilon > 0$. Since $X$ is Cantor, there exists $(x^{\prime}_{1},\ldots,x^{\prime}_{r})$ such that $d(x_{i},x^{\prime}_{i})< \varepsilon$ for each $1 \le i \le r$ and the $x^{\prime}_{i}$ are pairwise distinct. 
By assumption, the $y_{i}$ are pairwise distinct for $1 \le i \le r$. Then by Lemma~\ref{lemma:freedomofmovement}, there exists $g \in G$ such that $d(g(y_{i}),x_{i}^{\prime}) < \varepsilon$ for every $1 \le i \le r$, and hence $d(g(y_{i}),x_{i}) < 2 \varepsilon$ for such $i$. 
It follows that $(x_{1},\ldots,x_{r})$ lies in the closure of the $G$-orbit of $(y_{1},\ldots,y_{r})$, and hence $\Delta_{\CI_{y}}(X) \subset Y$ since $y \in Y$ and $Y$ is compact and $G$-invariant. 
Since $y \in Y$ is arbitrary, we have that  $\bigcup_{y \in Y}\Delta_{\CI_{y}}(X) \subset Y$.

The last part is immediate, since $\Delta_{\CJ}(X) \subset \Delta_{\CI}(X)$ for every $\CJ \le \CI$.
\end{proof}

Suppose $\CI = \{I_{1},\ldots,I_{r}\}$ is a partition of $\{1,\ldots,m\}$. Given $i \ge 1$ and $u \in \mathbb{N}^{r}$, we define
$$U^{i}_{\CI}(u) = \{(x_{1},\ldots,x_{m}) \in \Delta_{\CI}(X) : x_{j} \in C^{i}_{u_{j}} \textrm{ if } j \in I_{j}\}.$$
Note that $U^{i}_{\CI}(u)$ is the image of $C^{i}_{u_{1}} \times \cdots \times C^{i}_{u_{r}}$ under the embedding $\theta_{\CI} \colon X^{r} \to X^{m}$. Since sets of the form $C^{i}_{u_{1}} \times \cdots \times C^{i}_{u_{r}}$ form a basis for $X^{r}$ and $\theta_{\CI}$ is a homeomorphism onto its image, it follows that the collection of $U^{i}_{\CI}(u)$ form a basis for $\Delta_{\CI}(X)$ and also a $\pi$-system. Moreover, the collection of $U^{i}_{\CI}(u)$ where the entries of $u$ are distinct form a basis for $\Delta_{\CI}(X) \setminus \bigcup_{\CJ \le \CI} \Delta_{\CJ}(X)$.

\begin{lemma}\label{lemma:knowoneknowbasisextreme}
Let $m \ge 1$ and suppose $\CI = \{I_{1},\ldots,I_{k}\}$ is a partition of $\{1,\ldots,m\}$. Suppose $\mu,\nu$ are $G$-invariant probability measures on $\Delta_{\CI}(X)$ such that $\mu(\Delta_{J}(X)) = \nu(\Delta_{\CJ}(X)) = 0$ for all $\CJ \le \CI$. Then $\mu = \nu$.
\end{lemma}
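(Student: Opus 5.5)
Identify $\Delta_{\CI}(X)$ with $X^{k}$ via the homeomorphism $\theta_{\CI}$, which is $G$-equivariant. The question then becomes one about $X^{k}$ with its diagonal action: we have two $G$-invariant probability measures $\mu,\nu$ on $X^{k}$, and both vanish on the big diagonal $E=\{x : x_{i}=x_{j} \text{ for some } i\neq j\}$. This vanishing is exactly the hypothesis $\mu(\Delta_{\CJ}(X))=\nu(\Delta_{\CJ}(X))=0$ for all $\CJ$ strictly coarser than $\CI$, since $\theta_{\CI}^{-1}$ of the union of these $\Delta_{\CJ}(X)$ is $E$. The plan is to copy the proof of Lemma~\ref{lemma:knowonevalueknowbasis}, replacing deep transitivity by extreme transitivity so that ordered tuples rather than sets are moved.

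\textbf{Step 1: invariance forces equal weights on off-diagonal boxes.} Fix a level $i$ and take $u,u'\in\{1,\ldots,\kappa(i)\}^{k}$ each with pairwise distinct entries. Extreme transitivity gives $g\in G$ with $g(C^{i}_{u_{j}})=C^{i}_{u'_{j}}$ for every $j$. Hence $g$ maps $U^{i}_{\CI}(u)=\theta_{\CI}(C^{i}_{u_1}\times\cdots\times C^{i}_{u_k})$ onto $U^{i}_{\CI}(u')$, and by invariance $\mu(U^{i}_{\CI}(u))$ is a constant $a_{i}$ independent of $u$; similarly $\nu$ gives a constant $b_{i}$.

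\textbf{Step 2: compute the constants.} Boxes with some repeated index $u_{j}=u_{l}$ are contained in the set $\{x_{j},x_{l}\in C^{i}_{u_j}\}$. These sets decrease to a subset of $\{x_{j}=x_{l}\}$ as $i\to\infty$, using condition~\eqref{item:tree-two}. So their total mass tends to $0$, but it need not vanish at a fixed level. This is the main obstacle, and I plan to handle it by a consistency argument across levels. The mass of the distinct-index boxes at level $i$ is
$$s_{i}=\kappa(i)(\kappa(i)-1)\cdots(\kappa(i)-k+1)\,a_{i},$$
and $s_{i}\to \mu(X^{k}\setminus E)=1$.

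Now fix a level $i$ and a distinct-index box at that level. Refine it to level $n>i$. The distinct-index subboxes at level $n$ contained in it are all equal in mass $a_{n}$. Their number $N_{i,n}$ is the product of the numbers of children of the $C^{i}_{u_j}$ at level $n$, and since the parents are distinct the children are automatically distinct. Because the parent box has measure $a_{i}$, the repeated-index subboxes inside it carry mass $a_{i}-N_{i,n}a_{n}$. Summed over all distinct-index boxes at level $i$, this mass is at most the total repeated-index mass at level $n$, which is $1-s_{n}\to0$. Each box carries the same amount, so each one's contribution tends to $0$ after dividing by the finite count, and therefore $a_{i}=\lim_{n}N_{i,n}a_{n}$. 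Combined with $s_{n}\to1$, this gives
$$a_{i}=\lim_{n\to\infty}\frac{N_{i,n}}{\kappa(n)(\kappa(n)-1)\cdots(\kappa(n)-k+1)},$$
a quantity determined by the tree alone. The same formula holds for $b_{i}$, so $a_{i}=b_{i}$.

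\textbf{Step 3: conclude.} The boxes $U^{i}_{\CI}(u)$ with distinct entries form a $\pi$-system generating the Borel sets of $\Delta_{\CI}(X)\setminus\bigcup_{\CJ}\Delta_{\CJ}(X)$, and both measures give full mass to this set. Since $\mu$ and $\nu$ agree on this $\pi$-system, Dynkin's $\pi$-$\lambda$ Theorem yields $\mu=\nu$.
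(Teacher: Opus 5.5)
Your argument is correct and is essentially the paper's proof. Extreme transitivity equalizes the masses of the distinct-index boxes at each level, refinement links the levels, and normalization plus Dynkin's $\pi$-$\lambda$ Theorem finishes; your explicit limit formula for $a_i$ (obtained from $s_n\to 1$) plays the role of the paper's proportionality constant $c$ with $c\mu=\nu$, and your restriction to distinct-index boxes is more careful than the paper's claim that $\nu(U^{m+1}_{\CI}(w))$ is independent of all $w$. The detour through ``repeated-index subboxes'' is unnecessary: as you yourself observe, a distinct-index box is exactly the disjoint union of its $N_{i,n}$ distinct-index subboxes, so $a_i=N_{i,n}a_n$ holds exactly.
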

In other words, if a $G$-invariant Borel probability measure on $\Delta_{\mathcal{I}}(X)$ exists satisfying $\mu(\Delta_{\mathcal{J}}(X)) = 0$ for all $\mathcal{J} \le \mathcal{I}$, then it is unique. 

\begin{proof}
The proof is similar to that of Lemma~\ref{lemma:knowonevalueknowbasis}. We start by considering the measure $\nu$. By assumption we have that $\nu(\Delta_{\CI}(X)) > 0$ and $\nu(\Delta_{\CJ}(X)) = 0$ for all $\CJ \le \CI$, and so there exists $u \in \mathbb{N}^{k}$ such that $\nu(U^{1}_{\CI}(u)) > 0$. Suppose $U^{1}_{\CI}(u) = \theta_{\CI}(C^{1}_{j_{1}} \times \cdots \times C^{1}_{j_{k}})$ and $U^{1}_{\CI}(u^{\prime}) = \theta_{\CI}(D^{1}_{j_{1}} \times \cdots \times D^{1}_{j_{k}})$ is another such set for some $u^{\prime} \in \mathbb{N}^{k}$. By the extreme transitivity of the action, there exists $g \in G$ such that $(g(C^{i}_{j_{1}}),\ldots,g(C^{i}_{j_{k}})) = (D^{i}_{j_{1}},\ldots,D^{i}_{j_{k}})$ and hence $g(U^{1}_{\CI}(u)) = U^{1}_{\CI}(u^{\prime})$. Since $\nu$ is $G$-invariant, it follows that
$$\nu(U^{1}_{\CI}(u)) = \nu(U^{1}_{\CI}(u^{\prime})).$$

Let $m \ge 1$ and consider some $U^{m}_{\CI}(v) = \theta_{\CI}(C^{m}_{v_{1}} \times \cdots \times C^{m}_{v_{k}})$ for some $v \in \mathbb{N}^{k}$. 
For each $1 \le \ell \le k$, using the tree structure can write each $C^{m}_{v_{l}}$ as a disjoint union 
$$C^{m}_{v_{\ell}} = \bigcup_{r \in I(m,v_{l})}C^{m+1}_{r}.$$

Then we can write $U^{m}_{\CI}(v)$ as a disjoint union of sets of the form $U^{m+1}_{\CI}(w)$ for various $w \in \mathbb{N}^{k}$ as follows. 
Let $J = \prod_{i=1}^{k}I(m,v_{i})$. For $1 \le j \le k$ and $s \in I(m,v_{j})$, 
set $L(j,s) = \{w \in J : w_{j} = s\}$ and define
$$R_{j} = \bigcup_{s \in I(m,v_{j})}\bigcup_{w \in L(j,s)}U^{m+1}_{\CI}(w).$$  
Then
$$U_{\mathcal{I}}^{m}(v) = \bigcup_{j=1}^{k}R_{j}.$$

By the extreme transitivity of the action, $\nu(U^{m+1}_{\CI}(w))$ is independent of $w$. Using this and induction, it follows that once $\nu(U^{1}_{\CI}(u))$ is determined for any $u \in \mathbb{N}^{k}$, then $\nu(U^{m}_{v})$ is determined for all $m \ge 1$ and all $v\in\N^k$, and then determined completely by Dynkin's $\pi$-$\lambda$ Theorem.

Next consider the measure $\mu$. Since $\mu(\Delta_{\CI}(X)) > 0$ and $\mu(\Delta_{\CJ}(X)) = 0$ for all $\CJ \le \CI$ distinct from $\CI$, there exists $w \in \mathbb{N}^{k}$ such that $\mu(U^{1}_{\CI}(w)) > 0$.
Let $c = \frac{\nu(U^{1}_{\CI}(w))}{\mu(U^{1}_{\CI}(w))}$, so $c \cdot \mu(U^{1}_{\CI}(w)) = \nu(U^{1}_{\CI}(w))$. It follows from the calculations that $c \cdot \mu = \nu$. Since $\mu$ and $\nu$ are probability measures, we have that $c=1$ and so $\mu = \nu$. 
\end{proof}

\begin{lemma}\label{lemma:muIergodic}
Let $\CI$ be a partition of $\{1,\ldots,m\}$ where $|\mathcal{I}|=r$, and let $\mu \in \CM_G(X)$. Then $(\mu^{\otimes r})^{\CI} \in \CM_G(\Delta_{\CI}(X))$ is ergodic.
\end{lemma}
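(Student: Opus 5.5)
The plan is to copy the proof of Lemma~\ref{lemma:ergodic}, replacing Lemma~\ref{lemma:knowonevalueknowbasis} by its extreme-transitive analogue Lemma~\ref{lemma:knowoneknowbasisextreme}. Throughout, $G$ acts extremely transitively, hence deeply transitively. By Theorem~\ref{thm:deeptransitivityuniqueergodicity}, $\mu$ is the unique $G$-invariant measure and is nonatomic. Write $\lambda = (\mu^{\otimes r})^{\CI}$. Then $\lambda$ is a $G$-invariant probability measure supported on $\Delta_{\CI}(X)$. By the last part of Lemma~\ref{lemma:isosystems}, $\lambda(\Delta_{\CJ}(X)) = 0$ for every partition $\CJ \le \CI$ with $\CJ \ne \CI$. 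This uses Lemma~\ref{lemma:Fubini}: under $\mu^{\otimes r}$, almost surely no two coordinates of the embedded $r$-tuple coincide.

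Next I would suppose $\lambda = c_{1}\nu_{1} + c_{2}\nu_{2}$ with $\nu_{1},\nu_{2} \in \CM_{G}(\Delta_{\CI}(X))$ and $c_{1},c_{2} \ge 0$ summing to $1$. The aim is to show that each $\nu_{i}$ with $c_{i} > 0$ equals $\lambda$. This is enough for ergodicity: if $A$ were an invariant set with $0 < \lambda(A) < 1$, the normalized restrictions of $\lambda$ to $A$ and to its complement would give a nontrivial decomposition.

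If $c_{i} > 0$, then $\nu_{i} \le c_{i}^{-1}\lambda$, so $\nu_{i}(\Delta_{\CJ}(X)) = 0$ for every proper coarsening $\CJ < \CI$. Now $\nu_{i}$ and $\lambda$ are two $G$-invariant probability measures on $\Delta_{\CI}(X)$ that both give zero mass to every $\Delta_{\CJ}(X)$ with $\CJ$ strictly below $\CI$. Lemma~\ref{lemma:knowoneknowbasisextreme} then gives $\nu_{i} = \lambda$, as in the final step of the proof of Lemma~\ref{lemma:ergodic}.

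The main obstacle is that the hypothesis of Lemma~\ref{lemma:knowoneknowbasisextreme} is literally stated for all $\CJ \le \CI$, which includes $\CJ = \CI$. Read that way it would force $\mu(\Delta_{\CI}(X)) = 0$, contradicting that the measures are probability measures on $\Delta_{\CI}(X)$. Its proof clearly uses only $\CJ \le \CI$ distinct from $\CI$, and I would invoke it in that form. With that reading, the argument reduces entirely to the absolute-continuity observation $\nu_{i} \ll \lambda$ together with the uniqueness lemma.
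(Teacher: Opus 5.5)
Your proof is correct, and it takes a genuinely different route from the paper.

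The paper's proof is two lines. It gets ergodicity of $\mu^{\CK}_{r}$ on $\CK_{r}(X)$ from Lemma~\ref{lemma:ergodic}, which needs only deep transitivity. It then transports this to $\Delta_{\CI}(X)$ via the isomorphism $(\Delta_{\CI}(X),G,(\mu^{\otimes r})^{\CI})\cong(\CK_{r}(X),G,\mu^{\CK}_{r})$ asserted in the first part of Lemma~\ref{lemma:isosystems}.

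You instead show directly that $\lambda=(\mu^{\otimes r})^{\CI}$ is an extreme point of $\CM_{G}(\Delta_{\CI}(X))$. You use only the null-set part of Lemma~\ref{lemma:isosystems}, the domination $\nu_{i}\le c_{i}^{-1}\lambda$, and the ordered uniqueness Lemma~\ref{lemma:knowoneknowbasisextreme}. Your reading of that lemma's hypothesis as ranging over $\CJ\le\CI$ with $\CJ\ne\CI$ is the intended one.

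What your route buys is independence from that isomorphism, whose justification does not hold up. For $r\ge 2$, the map $\rho_{r}$ is $r!$-to-one on $X^{r}\setminus E$, not bijective: the tuples $(x_{1},x_{2},\ldots)$ and $(x_{2},x_{1},\ldots)$ give the same set. So the paper's argument only exhibits $(\Delta_{\CI}(X),(\mu^{\otimes r})^{\CI})$ as a finite-to-one extension of $(\CK_{r}(X),\mu^{\CK}_{r})$. Ergodicity does not in general lift from a factor to an extension. Your argument uses extreme (ordered) transitivity exactly where ordered tuples need it, which is why it goes through.

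The only caveat is that you use Lemma~\ref{lemma:knowoneknowbasisextreme} as a black box, and its printed proof is a sketch. The transitivity step is valid only for cells $U^{i}_{\CI}(u)$ where $u$ has pairwise distinct entries, and there may be none at level $1$ if $\kappa(1)<|\CI|$. The statement is nevertheless true, for two reasons:
\begin{itemize}
\item the distinct-entry cells form a $\pi$-system generating the Borel sets of $\Delta_{\CI}(X)\setminus\bigcup_{\CJ<\CI}\Delta_{\CJ}(X)$, which carries full mass;
\item on these cells, any two measures satisfying the lemma's hypotheses differ by a level-independent factor, and normalization forces that factor to be $1$.
\end{itemize}
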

\begin{proof}
Since $G$ acts extremely transitively, it acts deeply transitively as well, and so $\mu^{\CK}_r$ is ergodic by Lemma~\ref{lemma:ergodic}. But $(\Delta_{\CI}(X),G,(\mu^{\otimes r})^{\CI})$ is isomorphic to $(\CK_{r}(X),G,\mu^{\CK}_{r})$ by Lemma~\ref{lemma:isosystems}, and so $(\mu^{\otimes r})^{\CI}$ is also ergodic.
\end{proof}

\begin{theorem}
Let $m\geq 1$.  If $\CM_G(X) \ne \emptyset$ and $\nu$ is an ergodic measure on $X^{m}$, then $\nu = (\mu^{\otimes r})^{\CI}$ for some partition $\CI$ of $\{1,\ldots,m\}$ of size $r$, where $\mu$ is the unique $G$-invariant Borel probability measure on $X$. Hence if $\CM_G(X) \ne \emptyset$, then $|\CM_{G}^{e}(X^{m})| = B_{m}$ for every $m\geq 1$, where $B_{m}$ denotes the $m\textsuperscript{th}$ Bell number.
\end{theorem}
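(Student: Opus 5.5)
The plan is to mirror the proof of the analogous classification on $\CK_{\le s}(X)$, replacing sizes of finite sets by partitions of $\{1,\ldots,m\}$. Let $\nu$ be an ergodic $G$-invariant measure on $X^{m}$.

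\emph{Step 1: locate $\nu$ on a minimal diagonal.} Each $\Delta_{\CI}(X)$ is closed and $G$-invariant, so $\nu(\Delta_{\CI}(X))\in\{0,1\}$. Since $\Delta_{\CI_{\max}}(X)=X^{m}$ has measure one, the set of partitions $\CI$ with $\nu(\Delta_{\CI}(X))=1$ is nonempty. It is also closed under joins, because $\Delta_{\CI}(X)\cap\Delta_{\CJ}(X)=\Delta_{\CI\lor\CJ}(X)$. I will choose $\CI$ with $\nu(\Delta_{\CI}(X))=1$ that is minimal in the ordering, so that $\nu(\Delta_{\CJ}(X))=0$ for every $\CJ<\CI$; the set of such $\CJ$ is finite. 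Concretely, one can take $\CI$ to be the coarsest such partition, namely the join of all partitions with full measure.

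\emph{Step 2: identify $\nu$.} Set $r=|\CI|$. The measure $\nu$ is then a $G$-invariant probability measure on $\Delta_{\CI}(X)$ giving zero mass to every $\Delta_{\CJ}(X)$ with $\CJ<\CI$. By Lemma~\ref{lemma:isosystems}, the measure $(\mu^{\otimes r})^{\CI}$ has the same property; this uses that $\mu$ is nonatomic, which holds by Theorem~\ref{thm:deeptransitivityuniqueergodicity}. Lemma~\ref{lemma:knowoneknowbasisextreme} then gives $\nu=(\mu^{\otimes r})^{\CI}$. The only care needed is that the statement of Lemma~\ref{lemma:knowoneknowbasisextreme} reads $\CJ\le\CI$ but should be applied to $\CJ$ distinct from $\CI$.

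\emph{Step 3: count.} Lemma~\ref{lemma:muIergodic} shows that each $(\mu^{\otimes |\CI|})^{\CI}$ is ergodic, so every partition yields an ergodic measure. Distinct partitions yield distinct measures. If $\CI\ne\CI'$, then either $\CI\lor\CI'$ is strictly below one of them, say $\CI$, or one of them, say $\CI'$, is strictly below the other. In the first case, $(\mu^{\otimes r})^{\CI}$ gives full mass to $\Delta_{\CI}(X)$ and zero mass to $\Delta_{\CI\lor\CI'}(X)=\Delta_{\CI}(X)\cap\Delta_{\CI'}(X)$, so it gives zero mass to $\Delta_{\CI'}(X)$, whereas $(\mu^{\otimes r'})^{\CI'}$ gives $\Delta_{\CI'}(X)$ full mass. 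In the second case, the measure attached to $\CI$ gives zero mass to $\Delta_{\CI'}(X)$. Hence the ergodic measures are in bijection with the partitions of $\{1,\ldots,m\}$, and there are $B_{m}$ of them.

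The main obstacle is Step 1. One must check that the intersection identity for diagonals makes the set of full-measure partitions closed under joins, so that a minimal element exists. The remaining steps are direct applications of the earlier lemmas.
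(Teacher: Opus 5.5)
Your proof is correct and follows the paper's argument. Ergodicity puts full mass on some $\Delta_{\CI}(X)$, and you pass to a minimal such $\CI$. Lemma~\ref{lemma:isosystems} together with Lemma~\ref{lemma:knowoneknowbasisextreme} (read with $\CJ$ distinct from $\CI$, as you note) then identifies $\nu$, and Lemma~\ref{lemma:muIergodic} gives ergodicity of each $(\mu^{\otimes r})^{\CI}$.

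The only differences are small improvements: you obtain a full-measure diagonal directly from $\Delta_{\CI_{\max}}(X)=X^{m}$ instead of via Theorem~\ref{thm:topologicalselfjoinings}, and you supply the check that distinct partitions give distinct measures, which the paper leaves implicit. Closure under joins in Step 1 is not actually needed, since any nonempty finite poset has a minimal element.
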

\begin{proof}
Let $\mu \in \mathcal{M}_{G}(X)$. Since $X$ is infinite and $G$ acts extremely transitively, Proposition~\ref{prop:deeptransitivityprime} implies that the system $(X,G)$ is minimal and so it follows that the measure $\mu$ is nonatomic. Then for any partition $\CI$ of $\{1,\ldots,m\}$ of size $r$, Lemma~\ref{lemma:isosystems} shows that $(\mu^{\otimes r})^{\CI}$ is nonatomic and satisfies $(\mu^{\otimes r})^{\CI}(\Delta_{\CJ}(X)) = 0$ for all $\CJ \le \CI$ distinct from $\CI$, and is ergodic by Lemma~\ref{lemma:muIergodic}. Let $\eta \in \CM_{G}^{e}(X^{m})$. By Theorem~\ref{thm:topologicalselfjoinings}, the support of $\eta$ is $\bigcup_{\CI_{j}}\Delta_{\CI_{j}}(X)$ for some partitions $\CI_{j}$ of $\{1,\ldots,m\}$. Thus there exists some partition $\CI$ such that $\eta(\Delta_{\CI}(X)) > 0$. Since $\Delta_{\CI}(X)$ is $G$-invariant and $\eta$ is ergodic, we have $\eta(\Delta_{\CI}(X)) = 1$. Without loss of generality we may assume that $\eta(\Delta_{\CJ}(X)) = 0$ for all $\CJ \le \CI$, since by ergodicity if $\eta(\Delta_{\CJ}(X)) > 0$ then $\eta(\Delta_{\CJ}(X)) = 1$.  
Then Lemma~\ref{lemma:knowoneknowbasisextreme} implies $\eta = (\mu^{\otimes |\mathcal{I}|})^{\CI}$. It follows that $|\CM_{G}^{e}(X^{m})|$ is equal to the number of partitions of $\{1,\ldots,m\}$ which is given by $m\textsuperscript{th}$ Bell number $B_{m}$.
\end{proof}

\subsection{Extremely transitive actions and $\infty$-fold topological minimal self-joinings}
For a group $G$, let $Z(G)$ denote its center. 
Following~\cite{BDP}, we say an action of $G$ on $X$ has \emph{$n$-fold topological minimal self-joinings (TMSJ)} if for every $n$ points $x_{1},\ldots,x_{n}$ in $X$ with no pair of them lying in the same $Z(G)$-orbit, the orbit of $(x_{1},\ldots,x_{n})$ in $X^{n}$ is dense under the diagonal action of $G$. The action has \emph{$\infty$-fold topological minimal self-joinings} if it has $n$-fold TMSJ for every $n \ge 1$. It is straightforward to check that if $G$ acts extremely transitively on $X$, then it has $\infty$-fold TMSJ. We summarize several consequences proved here and using~\cite{BDP}.

\begin{proposition}
If $G$ acts extremely transitively on $X$, then it has $\infty$-fold topological minimal self-joinings, and all of the following hold:
\begin{enumerate}
\item\label{item:trivial-center}
The center of $G$ is trivial; 
\item\label{item:prime}
The system $(X,G)$ is prime; 
\item
If $G$ is amenable, the $h_{\textrm{top}}(X,G) = 0$; 
\item
The centralizer of $G$ in $\textrm{Homeo}(X)$ is trivial; 
\item
The action is expansive.
\end{enumerate}
\end{proposition}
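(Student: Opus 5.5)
First I will establish $\infty$-fold TMSJ directly. Fix $n$ and points $x_1,\ldots,x_n$ with no two in the same $Z(G)$-orbit. In particular, no two of the $x_i$ are equal. Given any $(y_1,\ldots,y_n)\in X^n$ and $\varepsilon>0$, I perturb the $y_i$ to pairwise distinct points $y_i'$ within $\varepsilon$; this is possible since $X$ is a Cantor set. Lemma~\ref{lemma:freedomofmovement} then gives $g\in G$ with $d(gx_i,y_i')<\varepsilon$ for all $i$. Hence the diagonal orbit of $(x_1,\ldots,x_n)$ is dense in $X^n$.

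The listed consequences come next.

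\textbf{Item (1).} An extremely transitive action is deeply transitive, so this follows from Proposition~\ref{prop:deeptransitivecenter} in the faithful case. For a non-faithful action I would redo the argument for the kernel-free quotient. I will note that the statement is really intended for faithful actions, or else I will work with the image in $\Homeo(X)$.

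\textbf{Item (2).} This is Proposition~\ref{prop:deeptransitivityprime}.

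\textbf{Item (4).} Let $h\in\Homeo(X)$ commute with $G$, and suppose $h\neq\Id$. Pick $x$ with $hx\neq x$. Since the $G$-orbit of $x$ is dense and $h$ is continuous, $h$ moves points arbitrarily close to any given point. Choose three distinct clopens $A,B,C\in\CC_m$ with $x\in A$, $hx\in B$ and $h(A)\subset B$. Extreme transitivity gives $g$ with $gA=A$ and $gB=C$. Then $hgx\in h(A)\subset B$, while $ghx\in C$. This contradicts $hg=gh$. Alternatively, I would quote the corresponding result from~\cite{BDP}, which shows that 2-fold TMSJ forces the centralizer to be contained in $Z(G)$, which is trivial by (1).

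\textbf{Items (3) and (5).} These I would take from~\cite{BDP}, where $n$-fold TMSJ for all $n$ is shown to force zero entropy for amenable $G$, since positive entropy produces non-dense orbits in products via asymptotic or independence pairs. For expansiveness, I expect~\cite{BDP} derives it from 2-fold TMSJ combined with the triviality of the centralizer and the Cantor structure. If I had to argue directly, I would try to show that the set of pairs $(x,y)$ with $d(gx,gy)\le c$ for all $g$ is a closed $G$-invariant subset of $X^2$. By Theorem~\ref{thm:topologicalselfjoinings} such a subset is either the diagonal or all of $X^2$. For $c<\diam X$ it cannot be all of $X^2$, so it is the diagonal, and this gives expansiveness with constant $c$.

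I expect the entropy claim to be the main obstacle if it must be proved from scratch. Expansiveness, by contrast, falls out cleanly from the classification of closed invariant subsets of $X^2$.
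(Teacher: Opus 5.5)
Your proposal is correct. For the $\infty$-fold TMSJ claim and items (1)--(3) it matches the paper. The paper calls TMSJ straightforward, and your application of Lemma~\ref{lemma:freedomofmovement} after perturbing the targets to distinct points is exactly that check. It derives (1) and (2) from Propositions~\ref{prop:deeptransitivecenter} and~\ref{prop:deeptransitivityprime}, and cites~\cite[Section 4]{BDP} for the rest.

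Your caveat on (1) is justified. Proposition~\ref{prop:deeptransitivecenter} assumes faithfulness and the statement does not. Without faithfulness (1) is false: let $\CF_{n}\times\Z$ act on $X_{n}^{+}$ with the $\Z$ factor acting trivially. So only the image of $G$ in $\Homeo(X)$ can be claimed centerless.

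The genuine difference is in (4) and (5), which the paper outsources to~\cite{BDP} and you prove directly. Your expansiveness argument is complete and self-contained. The set $\{(x,y): d(gx,gy)\le c \text{ for all } g\in G\}$ is a nonempty closed $G$-invariant subset of $X^{2}$. Theorem~\ref{thm:topologicalselfjoinings} with $m=2$ leaves only the diagonal or $X^{2}$, and $c<\diam X$ excludes $X^{2}$. This is more explicit than the citation: every $c<\diam X$ is an expansive constant.

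Your clopen argument for (4) needs one repair. In general you cannot choose $A,B\in\CC_{m}$ at the \emph{same} level with $x\in A$, $hx\in B$ and $h(A)\subset B$, because a homeomorphism can expand locally. For example, on $X_{2}^{+}$ take any homeomorphism with $h(10y)=0y$ and $x=10^{\infty}$. It sends the level-$m$ cylinder $[10^{m-1}]$ onto $[0^{m-1}]\not\subset[0^{m}]$ for every $m\ge 2$, and the level-$1$ choice fails by injectivity. The fix is the one in the proof of Proposition~\ref{prop:deeptransitivecenter}: take $B\in\CC_{m}$ containing $hx$ but not $x$, then choose $A$ and the target $C$ at a deeper level $m'$ with $h(A)\subset B$ and $C\cap B=\emptyset$.

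A cleaner route reuses your expansiveness idea. For $h$ commuting with $G$, the graph $\{(x,hx):x\in X\}$ is a nonempty closed $G$-invariant subset of $X^{2}$ that is not all of $X^{2}$. Hence it is the diagonal, so $h=\Id$.

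With these, only (3) genuinely depends on~\cite{BDP}. Your remark about asymptotic or independence pairs is a guess at that source, not an argument. That is acceptable, since the paper also only cites it.
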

\begin{proof}
Property~\ref{item:trivial-center} is proven in Proposition~\ref{prop:deeptransitivecenter} and Property~\ref{item:prime} in Proposition~\ref{prop:deeptransitivityprime}, since extreme transitivity implies deep transitivity. The remaining statements are all proved in~\cite[Section 4]{BDP}. 
\end{proof}

\subsection{Prefix-permutations of real numbers}
Let $\mathcal{S}_{n}$ denote the locally-finite group defined in~\ref{subsubsection:affullgroups}. Recall that the action of $\mathcal{S}_{n}$ on $X_{n}^{+} = \{0,\ldots,n-1\}^{\mathbb{N}}$ is 
defined for a given $\tau \in \sym(n^{k})$ and  point $x = x_{1}x_{2}\ldots \in X_{n}^{+}$ by setting  $\tau(x) = \tau(x_{1}\ldots x_{k})x_{k+1}\ldots$ and this  action of $\mathcal{S}_{n}$ is extremely transitive.

The group $\CS_{n}$ also acts on $\mathbb{T} = \mathbb{R}/\mathbb{Z}$ by prefix permutation of numbers in base $n$, in the following sense. For each $n \ge 2$, we have the base $n$ coding map $\pi_{n} \colon X_{n}^{+} \to \mathbb{T}$ defined by $\pi_{n}(x) = \sum_{k=1}^{\infty}\frac{x_{k}}{n^{k}}$. Given $k \ge 1$ and $w \in \{0,\ldots,n-1\}^{k}$, define $e(w) = \pi_{n}(w000\ldots) \in \mathbb{T}$ and set $I_{w} = [\frac{e(w)}{n^{k}},\frac{e(w)+1}{n^{k}})$. For each $k$ we have $\sym(n^{k})$ act on $\{I_{w} : w \in \{0,\ldots,n-1\}^{k}$ by $\tau(I_{w}) = I_{\tau(w)}$, and this defines a measure-preserving action of $\mathcal{S}_{n}$ on the measure space $(\mathbb{T},\lambda)$ where $\lambda$ is Lebesgue measure. Note that this is not a topological action, as the maps $g\in\sym(n^k)$ do not act continuously.

For a system $(X,G)$, given a F\o{}lner sequence $F_{k}$ for $G$, a compact set $Y \subset X$, and $\varepsilon > 0$, define
$$\mathcal{Z}_{k}(Y,\varepsilon) = \frac{1}{|F_{k}|}\big\vert\{g \in F_{k} : d_{H}(g(Y),X) < \varepsilon\}\big\vert.$$
We  prove the following theorem.

\begin{theorem}\label{thm:prefixpermutcircle}
Let $n \ge 2,$ let $\mathcal{S}_{n}$ act on $\mathbb{T}$ by base $n$ prefix-permutation and let $Y \subset \mathbb{T}$ be an infinite set. Then for every $\varepsilon > 0$, we have $\mathcal{Z}_{k}(Y,\varepsilon) \to 1$ as $k \to \infty$ where $F_{k} = \sym(n^{k})$. 
\end{theorem}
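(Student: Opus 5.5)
Since $F_k=\sym(n^k)$ is a finite group, averaging over $F_k$ is the same as applying a uniformly random permutation of the $n^k$ base-$n$ intervals of length $n^{-k}$. The plan is therefore a direct combinatorial argument, modeled on the proof of Lemma~\ref{lemma:good-range}. The IRC machinery of Theorem~\ref{th:FI} is not needed, since the action on $\mathbb{T}$ is not continuous anyway.

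Fix $\varepsilon>0$ and choose $m$ with $n^{-m}<\varepsilon/2$. It suffices to show that, with probability tending to $1$ as $k\to\infty$, the set $\tau(Y)$ meets every level-$m$ interval $I_v$ with $v\in\{0,\ldots,n-1\}^m$. Here $\tau\in\sym(n^k)$ is uniform. If this holds, every point of $\mathbb{T}$ is within $n^{-m}<\varepsilon$ of $\tau(Y)$. Also $\tau(Y)\subset\mathbb{T}$ trivially. So $d_H(\overline{\tau(Y)},\mathbb{T})<\varepsilon$, and we interpret $g(Y)$ via its closure.

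First step. Because $Y$ is infinite, its points are pairwise distinct. For each $r$ there is $k_0(r)$ such that for $k\ge k_0(r)$ the set $Y$ meets at least $r$ distinct level-$k$ intervals $I_w$: pick $r$ points of $Y$ and take $k$ so large that they lie in distinct intervals.

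Second step. Let $W_k\subset\{0,\ldots,n-1\}^k$ be the set of words $w$ with $I_w\cap Y\neq\emptyset$. Then $\tau$ sends $I_w$ onto $I_{\tau(w)}$, and $\tau(W_k)$ is a uniformly random subset of size $|W_k|\ge r$. Each level-$m$ interval $I_v$ with $k\ge m$ contains exactly $n^{k-m}$ level-$k$ intervals, a fraction $n^{-m}$ of them. Suppose $\tau(W_k)$ contains some $w'$ extending $v$, say $w'=\tau(w)$ with $w\in W_k$. Then $\tau(Y)\supset\tau(Y\cap I_w)\neq\emptyset$ lies in $I_{w'}\subset I_v$. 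By the hypergeometric estimate from the proof of Lemma~\ref{lemma:good-range},
\[
\mathbb{P}\bigl(\tau(W_k)\cap \mathrm{Ext}(v)=\emptyset\bigr)\le (1-n^{-m})^{|W_k|}\le e^{-r n^{-m}}.
\]
A union bound over the $n^m$ words $v$ gives failure probability at most $n^m e^{-rn^{-m}}$.

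Third step. Given $\delta>0$, choose $r$ with $n^m e^{-rn^{-m}}<\delta$. For $k\ge \max(m,k_0(r))$ this gives $\mathcal{Z}_k(Y,\varepsilon)\ge 1-\delta$, and hence $\mathcal{Z}_k(Y,\varepsilon)\to 1$.

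The main subtlety I expect is bookkeeping rather than estimates. One point is the precise definition of $I_w$ and the fact that $\tau$ moves the set $Y\cap I_w$ by a translation into $I_{\tau(w)}$. The other is that $|W_k|\to\infty$ (monotone in $k$, since refinement only splits intervals). Both should be routine.
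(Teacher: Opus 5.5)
Your argument is correct, and it takes a different and more elementary route than the paper.

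\emph{The paper's route.} The paper does not work on $\mathbb{T}$ directly. It transfers the problem to the one-sided shift $X_n^+$ via the coding map $\xi$, which intertwines the two actions. So the discontinuity on $\mathbb{T}$ is not actually an obstacle to using the IRC machinery. It then proves Theorem~\ref{thm:prepermutesymbolic} in three steps:
\begin{enumerate}
\item take weak* accumulation points $\mu$ of $\frac{1}{|F_k|}\sum_{g\in F_k}\delta_{gY}$;
\item use Lemma~\ref{lemma:permutationdensities} to show $\mu(\CK_{\le r}(X_n^+))=0$ for every $r$;
\item invoke IC-rigidity of the extremely transitive action (Theorem~\ref{th:FI}) to force $\mu=\delta_{X_n^+}$.
\end{enumerate}

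\emph{Your route.} You use directly that $F_k=\sym(n^k)$ is the full symmetric group on the level-$k$ cylinders. Hence $\tau(W_k)$ is exactly uniformly distributed among subsets of size $|W_k|$. The hypergeometric bound from Lemma~\ref{lemma:good-range}, which is the combinatorial core of Theorem~\ref{th:FI}, then applies at each finite stage $k$. The only consequence of $Y$ being infinite that you need is the easy fact $|W_k|\to\infty$. This replaces the more delicate counting in Lemma~\ref{lemma:permutationdensities}.

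\emph{What each approach buys.} Your approach gives an explicit rate, $1-\mathcal{Z}_k(Y,\varepsilon)\le n^m e^{-|W_k|n^{-m}}$. It also avoids weak* compactness, invariance of limit measures, closures in $\CK$, and the coding map. The paper's approach gives only a qualitative conclusion, but it is deliberately presented as an application of IC-rigidity. It also extends to averaging schemes that lack exact uniformity at each stage, provided one can show that accumulation points are invariant and nonfinitary.
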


To prove Theorem~\ref{thm:prefixpermutcircle}, we first prove the analogous result for the action of $\CS_{n}$ on the one-sided full shift $X_{n}^{+}$. We begin with a lemma.
\begin{lemma}\label{lemma:permutationdensities}
Let $n \ge 2$. If $r \ge 1$ is an integer, $Y\subset X_{n}^{+}$ is an infinite compact subset, and $\varepsilon < \frac{1}{2r}$, then $$\frac{1}{n^{m}!}\big|\{g \in \sym(n^{m}) : d(g(Y),\CK_{\le r}(X_{n}^{+})) < \varepsilon\}\big| \to 0 \quad  \text{ as } m \to \infty.$$
\end{lemma}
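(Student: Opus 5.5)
Since $Y$ is infinite, I will pick $r+1$ points of $Y$ at pairwise distance at least $2\varepsilon$ after spreading them out under a random permutation, and then count how few permutations keep them clustered. The argument is combinatorial: it converts the Hausdorff-distance condition into a statement about prefixes of length $m$.

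First I reduce to a condition on prefixes. The metric on $X_n^+$ is $d(x,y)=2^{-\inf\{i: x_i\ne y_i\}}$, taken with the usual convention for one-sided sequences. Fix $j$ with $2^{-j}\le\varepsilon$. Then two points are within $\varepsilon$ iff they share the prefix of length about $j$, up to a fixed index shift. Suppose $d_H(g(Y),Z)<\varepsilon$ for some $Z$ with $|Z|\le r$. Then every point of $g(Y)$ is $\varepsilon$-close to one of at most $r$ points of $Z$. Hence the set of length-$j$ prefixes of points of $g(Y)$, call it $A_j(g(Y))$ as in Theorem~\ref{thm:inverselimitpresentationK}, has at most $r$ elements. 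So it suffices to show that the proportion of $g\in\sym(n^m)$ with $|A_j(g(Y))|\le r$ tends to $0$ as $m\to\infty$, for fixed $j$ and $r$.

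Next I use that $Y$ is infinite. For $m\ge j$, the set $A_m(Y)$ of length-$m$ prefixes occurring in $Y$ has cardinality $N_m\to\infty$, since $Y$ is infinite and points are separated at some finite level. An element $g\in\sym(n^m)$ acts only on the first $m$ coordinates, so $A_m(g(Y))=g(A_m(Y))$. Moreover, $A_j(g(Y))$ is the image of $g(A_m(Y))$ under truncation to length $j$. As $g$ ranges uniformly over $\sym(n^m)$, the set $g(A_m(Y))$ is a uniformly random $N_m$-element subset of the $n^m$ words of length $m$. The event to bound is that this random subset is contained in the union of the cylinders over some $r$ words of length $j$. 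Each such cylinder contains exactly $n^{m-j}$ words of length $m$.

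The main step is to bound this probability. Fix a set $S$ of $r$ length-$j$ words. The union of their cylinders has $rn^{m-j}$ elements out of $n^m$, a fraction $q=rn^{-j}$. This is $<1$ as soon as $n^j>r$, and I may enlarge $j$ to ensure this, since doing so only strengthens the condition. The probability that a uniformly random $N_m$-subset lies inside this union is
\[
\binom{rn^{m-j}}{N_m}\Big/\binom{n^m}{N_m}\le q^{N_m},
\]
by the same product estimate used in Lemma~\ref{lemma:good-range}. A union bound over the $\binom{n^j}{r}$ choices of $S$, a number independent of $m$, gives a total probability of at most $\binom{n^j}{r}q^{N_m}$. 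This tends to $0$ because $N_m\to\infty$.

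The only subtlety I expect is the bookkeeping with the metric. The hypothesis $\varepsilon<\frac{1}{2r}$ plays no essential role in the above; it only guarantees that $\varepsilon$ is small, and the argument works for any $\varepsilon$ once $j$ is chosen with both $2^{-j}\le\varepsilon$ and $n^j>r$. The case $n^{j}\le r$ does not arise after enlarging $j$.
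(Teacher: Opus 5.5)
Your reduction step has the inequality backwards, and the fix is not just bookkeeping. To get $|A_j(g(Y))|\le r$ from $d(g(Y),\CK_{\le r}(X_n^+))<\varepsilon$, you need $d(y,z)<\varepsilon$ to force $y$ and $z$ to agree on their first $j$ coordinates. That requires $\varepsilon\le 2^{-j}$, not $2^{-j}\le\varepsilon$.

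For the same reason you may \emph{not} enlarge $j$. A larger $j$ makes $|A_j(g(Y))|\le r$ a strictly stronger conclusion, and it no longer follows from the hypothesis.

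Your closing remark that $\varepsilon<\frac{1}{2r}$ plays no essential role is false. Since $\diam(X_n^+)=\frac12$, for $\varepsilon>\frac12$ every $g(Y)$ is within $\varepsilon$ of a singleton, so the proportion is identically $1$. The lemma also fails below the diameter. For $n=2$, $r=2$, $\varepsilon=0.3$, the $\varepsilon$-balls are the level-$1$ cylinders. Every $g(Y)$ is then within $\varepsilon$ of a set of at most two of its own points, one from each level-$1$ cylinder it meets.

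The repair is to take $j$ maximal with $\varepsilon\le 2^{-j}$, so that $2^{-j-1}<\varepsilon$. Then $d(y,z)<\varepsilon$ holds exactly when $y,z$ share their length-$j$ prefix. Moreover $2^{j+1}>1/\varepsilon>2r$ gives $n^j\ge 2^j>r$, hence $q=rn^{-j}<1$. This is precisely where the hypothesis is used.

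With that choice of $j$, the rest of your argument is correct. It is a cleaner route than the paper's. Because $\varepsilon$-balls in $X_n^+$ are cylinders, $g(A_m(Y))$ must lie in a union of at most $r$ \emph{aligned} length-$j$ cylinders. So a union bound over the $\binom{n^j}{r}$ choices, a number independent of $m$, combined with $\binom{rn^{m-j}}{N_m}/\binom{n^m}{N_m}\le q^{N_m}$ finishes the proof.

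The paper instead counts ``$r$-blocks'' of lexicographically consecutive indices with arbitrary left endpoints. This incurs an $m$-dependent factor $(n^m)^r$, which must be absorbed by the ratio $\binom{n^m}{\alpha(m)-r}/\binom{n^m}{\alpha(m)}$. The result is the weaker but still sufficient bound $p(m)\le d\,\alpha(m)^{2r}\gamma^{\alpha(m)-r}$. Your version avoids that extra estimate entirely.
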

\begin{proof}
Let $Y \subset X_{n}^{+}$ be an infinite compact set and $0 < \varepsilon < \frac{1}{2r}$. Without loss of generality, we assume $\varepsilon = \frac{1}{2^{k}} < \frac{1}{2r}$ for some $k \ge 1$. Let $\CC_{m} = \{C^{m}_{1},\ldots,C^{m}_{n^{m}}\}$ be the tree structure on $X_{n}^{+}$ whose level $m$ partition is given by the cylinder sets of words of length $m$, and we order the $C_{i}^{m}$ by lexicographical ordering of the set of words of length $m$. Then any ball of radius $\varepsilon$ in $X_{n}^{+}$ intersects $n^{m-k}$ many of the $C^{m}_{i}$. Thus if $Z$ is any compact set which satisfies $d(Z,\CK_{\le r}(X_{n}^{+})) < \varepsilon$, then $Z \subset \bigcup_{j=1}^{r}(x_{j}-\varepsilon,x_{j}+\varepsilon)$ for some $x_{1},\ldots,x_{r} \in X_{n}^{+}$, and hence $S_{m}(Z) = \{j : Z \cap C^{m}_{j} \ne \emptyset\}$ is contained in a union of at most $r$ many sets of indices of the form $[j,j+n^{m-k})$. We call such a union an $r$-block.

Let $\alpha(m) = |S_{m}(Y)|$. Note that $S_{m}(g(Y)) = g(s_{m}(Y))$ for $g \in \sym(n^{m})$. Set
$$B(m) = |\{g \in \sym(n^{m}) : g(S_{m}(Y)) \textrm{ is contained in an } r\textrm{-block}\}|.$$
We want to show that $\frac{B(m)}{n^{m}!} \to 0$ as $m \to \infty$. This is equivalent to showing that the probability $p(m)$ that a uniformly random subset $S$ of $\{1,\ldots,n^{m}\}$ is contained in an $r$-block tends  to zero as $m \to \infty$.

Set $\ell(m) = n^{m-k}$ and $A_{m} = \{1,\ldots,n^{m}\}$. If $S$ is contained in an $r$-block, then there are $r$ points $a_{1},\ldots,a_{r} \in A_{m}$ such that $S \subset \bigcup_{i=1}^{r}[a_{i},a_{i}+\ell(m))$. So to build sets $S \subset A_{m}$ which are contained in an $r$-block, we choose $r$ elements of $S$ to play the role of the $a_{i}$s, choose the values for $a_{i}$ in $A_{m}$, and then choose the remaining $\alpha(m)-r$ points for $S$ lying inside the union of the $[a_{i},a_{i}+\ell(m))$. There are at most $\binom{\alpha(m)}{r}$ choices for the first step, then $(n^{m})^{r}$ choices for the second step, then at most $\binom{r\ell(m)}{\alpha(m)-r}$ choices for the last step, so a total of $\binom{\alpha(m)}{r}(n^{m})^{r}\binom{r\ell(m)}{\alpha(m)-r}$ choices for such sets $S$. Thus the probability is
\begin{equation}\label{eqn:probbound1}
p(m) = \frac{\binom{\alpha(m)}{r}(n^{m})^{r}\binom{r\ell(m)}{\alpha(m)-r}}{\binom{n^{m}}{\alpha(m)}}.
\end{equation}

Write
$$\frac{\binom{r\ell(m)}{\alpha(m)-r}}{\binom{n^{m}}{\alpha(m)}} = \frac{\binom{r\ell(m)}{\alpha(m)-r}}{\binom{n^{m}}{\alpha(m)-r}} \frac{\binom{n^{m}}{\alpha(m)-r}}{\binom{n^{m}}{\alpha(m)}}.$$
 Set $\gamma = \frac{r}{n^{k}}$. Note since $\frac{r}{2^{k}} < 1$ by assumption, we have $\gamma < 1$. A straightforward calculation shows that
$$\frac{\binom{r\ell(m)}{\alpha(m)-r}}{\binom{n^{m}}{\alpha(m)-r}} \le \left( \frac{r\ell(m)}{n^{m}} \right)^{\alpha(m)-r} = \gamma^{\alpha(m)-r}.$$
For the second term, first note we must have $\alpha(m)-r \le r\ell(m)$ for $S$ to be contained in an $r$-block, so $\alpha(m) \le \frac{r}{n^{k}}n^{m}$. Then another straightforward calculation shows that the second term satisfies
$$\frac{\binom{n^{m}}{\alpha(m)-r}}{\binom{n^{m}}{\alpha(m)}} \le \left( \frac{\alpha(m)}{cn^{m}} \right)^{r}$$
where $c = c(r,k)$ is some constant only depending on $r$ and $k$. Plugging these into~\eqref{eqn:probbound1} we get
$$p(m) \le d \binom{\alpha(m)}{r} \gamma^{\alpha(m)-r} \alpha(m)^{r}$$
where $d = d(r,k)$ is some constant depending only on $r$ and $k$. Lastly, since $\binom{\alpha(m)}{r} \le \alpha(m)^{r}$, we get
$$p(m) \le d \alpha(m)^{2r}\gamma^{\alpha(m)-r}.$$
Now since $Y$ is infinite, we have $\alpha(m) \to \infty$ as $m \to \infty$, so $p(m) \to 0$ as $m \to \infty$.
\end{proof}

\begin{theorem}\label{thm:prepermutesymbolic}
Let $n \ge 2,$ let $\mathcal{S}_{n}$ act on $X_{n}^{+}$ by prefix-permutation and let $Y \subset X_{n}^{+}$ be an infinite set. Then for every $\varepsilon > 0$, we have $\mathcal{Z}_{k}(Y,\varepsilon) \to 1$ as $k \to \infty$ where $F_{k} = \sym(n^{k})$. 
\end{theorem}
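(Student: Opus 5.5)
The plan is to argue via invariant random compacts, using that $\sym(n^{k})$ is an increasing sequence of finite subgroups exhausting the locally finite group $\CS_{n}$ (written $\mathcal{F}_{n}$ earlier). Because each $F_{k}=\sym(n^{k})$ is a subgroup, we have $gF_{k}=F_{k}$ for $g\in F_{j}$ with $j\le k$, so $(F_{k})$ is a F\o{}lner sequence. We may replace $Y$ by its closure, since the Hausdorff distance is unchanged. Set $\mu_{k}=\frac{1}{|F_{k}|}\sum_{g\in F_{k}}\delta_{g(Y)}$. Then $\mu_{k}$ is exactly $F_{k}$-invariant, hence $F_{j}$-invariant for every $j\le k$. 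Consequently every weak* accumulation point $\mu$ of $(\mu_{k})$ is $\CS_{n}$-invariant, i.e.\ an IRC for the extremely (hence deeply) transitive action of $\CS_{n}$ on $X_{n}^{+}$.

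Next I would show that every such accumulation point $\mu$ is nonfinitary. Fix $r\ge 1$ and choose $\varepsilon_{r}<\frac{1}{2r}$. By Lemma~\ref{lemma:permutationdensities},
\[
\mu_{k}\bigl(\{Z : d(Z,\CK_{\le r}(X_{n}^{+}))<\varepsilon_{r}\}\bigr)\to 0 .
\]
The set in question is open, so Portmanteau gives $\mu(\{Z : d(Z,\CK_{\le r})<\varepsilon_{r}\})\le\liminf\mu_{k}(\cdot)=0$. In particular $\mu(\CK_{\le r}(X_{n}^{+}))=0$ for every $r$, so $\mu(\CK_{\fin}(X_{n}^{+}))=0$. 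Theorem~\ref{th:FI} (IC-rigidity of deeply transitive actions) then forces $\mu=\delta_{X_{n}^{+}}$.

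Since every subsequential limit equals $\delta_{X_{n}^{+}}$ and $\CM(\CK(X_{n}^{+}))$ is compact, we get $\mu_{k}\to\delta_{X_{n}^{+}}$ weak*. The set $\{Z : d_{H}(Z,X_{n}^{+})<\varepsilon\}$ is an open neighborhood of $X_{n}^{+}$, so Portmanteau yields $\liminf_{k}\mathcal{Z}_{k}(Y,\varepsilon)\ge\delta_{X_{n}^{+}}(\cdot)=1$, which is the claim.

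The main obstacle is the middle step, obtaining nonfinitariness of the limit. This needs exactly the uniform quantitative bound of Lemma~\ref{lemma:permutationdensities} together with the Portmanteau inequality applied to the open neighborhood of $\CK_{\le r}$, rather than merely to $\CK_{\le r}$ itself, which is closed and for which Portmanteau gives the wrong inequality. The earlier example in the paper shows why this care is needed: there, limits charge $\CK_{\le r}$ through support points while assigning it zero mass. The invariance of the limit measure is routine precisely because the F\o{}lner sets here are subgroups.
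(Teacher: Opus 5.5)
Your proposal is correct and is essentially the paper's own argument. You form the averages $\mu_k$ over $\sym(n^k)$, use Lemma~\ref{lemma:permutationdensities} to see that every weak* limit is nonfinitary, invoke the IC-rigidity of Theorem~\ref{th:FI} to get $\mu=\delta_{X_n^+}$, and conclude by compactness. You also make explicit the points the paper leaves implicit: limits are $\CS_n$-invariant because the $F_k$ are nested subgroups, and Portmanteau is applied to the open $\varepsilon_r$-neighborhood of $\CK_{\le r}(X_n^+)$ and, at the end, to the open $\varepsilon$-neighborhood of $X_n^+$.
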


\begin{proof}
Let $Y \subset X_{n}^{+}$ be infinite. It suffices to prove the result for the closure of $Y$ in $X_{n}^{+}$, and so without loss of generality we assume $Y$ is compact.

Consider the sequence of IRCs
$$\mu_{k} = \frac{1}{|F_{k}|}\sum_{g \in F_{k}}\delta_{gY},$$
where $F_{k} = \sym(n^{k})$. It suffices to show that $\mu_{k} \to \delta_{X_{n}^{+}}$ as $k\to\infty$. Suppose $\mu_{k_{j}}$ is a subsequence converging to some $\mu$. It follows from Lemma~\ref{lemma:permutationdensities} $\mu(\CK_{\le r}(X_{n}^{+})) = 0$ for every $r \ge 1$, and thus $\mu$ is a nonfinitary IRC. Since $\CS_{n}$ acts extremely transitively on $X_{n}^{+}$, Theorem~\ref{th:FI} implies the system is IC-rigid, and hence $\mu = \delta_{X_{n}^{+}}$. It follows that the only accumulation point of the sequence $\mu_{k}$ is $\delta_{X_{n}^{+}}$, and hence $\mu_{k} \to \delta_{X_{n}^{+}}$ as $k \to \infty$.  
\end{proof}

We have now assembled the tools to prove Theorem~\ref{thm:prefixpermutcircle}.
\begin{proof}
We note that the map $\pi_{n} \colon X_{n}^{+} \to \mathbb{T}$ is not equivariant for the respective actions of $\CS_{n}$.  
The map $\pi_{n}$ has fibers of size one or two, and fibers of size two have the form $\pi^{-1}(x) = \{x^{\ell},x^{u}\}$ where $x^{\ell} = x_{1}\ldots x_{k} a 00\ldots$, $x^{u} = x_{1} \ldots x_{k} (a-1) (n-1)(n-1)\ldots$ and $a \in \{1,\ldots,n-1\}$.  
Define $\xi \colon \mathbb{T} \to X_{n}^{+}$ by $\xi(x) = \pi^{-1}(x)$ if $|\pi^{-1}(x)| = 1$, and $\xi(x) = x^{\ell}$ if $|\pi^{-1}(x)|=2$. Then $x \in I_{w}$ if and only if $\xi(x) \in [w]$, and $\xi gx  = g\xi x$ for every $g \in \CS_{n}$ and $x \in \mathbb{T}$. Let $Z = \xi(Y)$ and let $\varepsilon > 0$. Then $Z$ is an infinite subset of $X_{n}^{+}$, and so by Theorem~\ref{thm:prepermutesymbolic} we have $\mathcal{Z}_{k}(Z,\varepsilon) \to 1$ as $k \to \infty$. Since $\xi(g(Y)) = g\xi(Y)$ for every $g \in G$, if $gZ$ is $\frac{1}{2^{p}}$-dense in $X_{n}^{+}$, then $\pi_{n}(gZ)$ is $\frac{1}{n^{p}}$-dense in $\mathbb{T}$ and rhe result follows. 
\end{proof}

\end{document}